\documentclass[11pt]{article}
\usepackage[letterpaper,margin=0.9in]{geometry}
\usepackage[T1]{fontenc}
\usepackage{lmodern,amsmath,amssymb,microtype}
\usepackage{amsthm,xcolor}
\usepackage{tcolorbox}
\usepackage{enumerate}

\newtheorem{theorem}{Theorem}[section]
\newtheorem{lemma}[theorem]{Lemma}

\newtheorem{fact}[theorem]{Lemma}

\newtheorem{corollary}[theorem]{Corollary}
\newtheorem{definition}[theorem]{Definition}
\newtheorem{observation}[theorem]{Observation}
\newtheorem{remark}[theorem]{Remark}

\numberwithin{equation}{section}
\usepackage{algorithm}
\usepackage[noend]{algpseudocode}

\usepackage[colorlinks=true,linkcolor=blue,citecolor=blue,urlcolor=blue]{hyperref}

\newcommand{\ES}{\mathrm{SW}}
\DeclareMathOperator{\Var}{Var}
\DeclareMathOperator{\Ent}{Ent}
\DeclareMathOperator{\KL}{KL}
\DeclareMathOperator{\gap}{gap}
\newcommand{\E}{\mathbb E}
\newcommand{\PP}{\mathbb P}
\newcommand{\one}{\mathbf1}

\newcommand{\cE}{\mathcal E}

\newcommand{\wLong}{W_{\rm long}}

\DeclareMathOperator{\exc}{exc}

\newcommand{\cC}{\mathcal C}

\title{Rapid mixing of Potts Glauber dynamics throughout uniqueness}

\definecolor{reza}{RGB}{0,102,204}      
\definecolor{bucky}{RGB}{204,102,0}     
\definecolor{xusheng}{RGB}{0,153,0}     
\definecolor{antonio}{RGB}{153,0,153}   

 \author{Antonio Blanca \thanks{Department of CSE, Pennsylvania State University, ablanca@cse.psu.edu.
 } \and Reza Gheissari \thanks{Department of Mathematics, Northwestern University, gheissari@northwestern.edu. 
 } \and Heehyun Park \thanks{Department of CSE, Pennsylvania State University, hbp5148@psu.edu. 
 } \and Xusheng Zhang \thanks{Department of Computer Science, Durham University, xusheng.zhang@durham.ac.uk. 
 }
 }

\date{}

\begin{document}

\maketitle

\thispagestyle{empty}

\begin{abstract}
Sampling from the ferromagnetic $q$-state Potts model is a fundamental problem in statistical physics and approximate counting. Although computationally hard in the worst case, sampling is expected to be efficient throughout the high-temperature uniqueness regime of the model using the natural single-site Glauber dynamics. We establish fast mixing bounds for every $q$, in the full high-temperature regimes in the two central graph families on $n$ vertices. 
\begin{enumerate}
\item[(1)] For every $\Delta\ge 3$, the Potts Glauber dynamics mixes in $n\operatorname{polylog}(n)$ steps on most 
$\Delta$-regular graphs (i.e., with high probability for a uniformly randomly chosen $\Delta$-regular graph) for all
$\beta<\beta_u(q,\mathbb T_\Delta)$ where $\beta_u(q,\mathbb T_\Delta)$ is the uniqueness threshold on the $\Delta$-regular tree;
\item[(2)] For every $d\ge 2$, the Potts Glauber dynamics has inverse spectral gap $n^{1+o(1)}$ on cubes 
or tori in $\mathbb Z^d$ for all $\beta<\beta_c(q,\mathbb Z^d)$ where $\beta_c(q,\mathbb Z^d)$ is the critical point on $\mathbb Z^d$.
\end{enumerate}
Both temperature thresholds are
sharp as the Glauber dynamics slows down exponentially above them.
These results overcome the challenge of bounding mixing times without access to correlation decay bounds in the presence of pinnings (i.e., without spectral independence or strong spatial mixing).  
For this purpose, our proof introduces a new dichotomy principle: throughout the high-temperature regime, any initial spin distribution is either such that its pinnings ensure good correlation decay, or Glauber dynamics escapes from that distribution quickly. In particular, distributions that generate unfavorable pinnings cannot sustain an obstruction to mixing.
We expect this kind of dichotomy to have further applications. As a demonstration, we also develop its analog for random-cluster dynamics and use it to prove that Swendsen-Wang dynamics on random $\Delta$-regular graphs mixes in $O(\log n)$ time throughout uniqueness.

The results in this paper were arrived at with the assistance of ChatGPT 5.6 and 6. 
\end{abstract}

\vfill

 \pagebreak

\setcounter{page}{1} 
\section{Introduction}\label{sec:intro}

For an integer $q\ge2$, the ferromagnetic $q$-state Potts model at inverse temperature $\beta>0$ on a graph
$G=(V,E)$ is the probability distribution over spin or color assignments
$\sigma\in\{1,\ldots,q\}^V$ given by
\begin{align}\label{eq:Potts-model}
 \mu_{G,\beta,q}(\sigma)
 =\frac1Z e^{\beta\mathcal M(\sigma)},
\end{align}
where $\mathcal M(\sigma) = \sum_{uv\in E}\one_{\{\sigma_u=\sigma_v\}}$ denotes the number of monochromatic edges in the configuration $\sigma$ and $Z=\sum_{\sigma\in\{1,\ldots,q\}^V}
 e^{\beta\mathcal M(\sigma)}$ is the normalizing partition function. 

The $q=2$ case is the famous Ising model of ferromagnetism.
We consider the important computational task of producing (approximate) samples from $\mu=\mu_{G,\beta,q}$ efficiently.
For $q=2$, for any~$\beta>0$,
approximate samples can be generated efficiently in polynomial time~\cite{JSIsing,RandallWilson,GuoJer}, with recent developments
bringing the best running time down to~$\widetilde O(n^2)$ on $n$-vertex graphs~\cite{FengGuoZhang2026}.
In contrast, when $q\ge3$, approximate sampling is
$\#\mathrm{BIS}$-hard, even on graphs of bounded degree~\cite{GoldbergJerrum,GalanisStefankovicVigodaYang2016}.

Despite this hardness result, for general $q\ge3$, approximate sampling from $\mu$ is predicted to be tractable on graphs of maximum degree at most $\Delta$ whenever $\beta$ is below the so-called tree uniqueness threshold. Writing $\mathbb T_\Delta$ for the infinite $\Delta$-regular tree, this threshold is given by
\[
\beta_u(q,\mathbb T_\Delta)
=\log\Big(1+\inf_{x>1}
\frac{(x-1)(x^{\Delta-1}+q-1)}{x^{\Delta-1}-x}\Big)\,.
\]
In the regime $\beta<\beta_u(q,\mathbb T_{\Delta})$, on $\Delta$-regular trees, the influence of spins at depth $r$ on the spin at the root vanishes as $r\to\infty$. It is natural to hope, as we discuss shortly, that even the standard local Markov chain for $\mu$, the Glauber dynamics, mixes rapidly on all graphs of maximum degree at most $\Delta$ for $\beta<\beta_u(q,\mathbb T_\Delta)$. This was established for the Ising model ($q=2$) in \cite{MS13} but has remained open for $q\ge3$.

The Glauber dynamics for the Potts model is the Markov chain that at step~$t$, from configuration~$\sigma(t)$, 
generates~$\sigma(t+1)$ as follows:
\begin{enumerate}
\item A vertex~$v\in V$ is selected uniformly at random.
\item Its spin is resampled according to
$\mu(\sigma_v= \cdot\mid \sigma_{-v}(t))$, with all other spins unchanged.
\end{enumerate}
Here, $\sigma_{-v}(t)$ denotes the configuration of $\sigma(t)$ on $V\setminus\{v\}$.
This process is reversible with respect to~$\mu$ and thus converges to it.
The Glauber dynamics is both the standard Markov chain Monte Carlo approach to sampling from~$\mu$, and the go-to process used in statistical physics to study the thermodynamic evolutions of models of ferromagnetism~\cite{Glauber}.

We study the average-case version of the aforementioned problem: 
for a typical graph of maximum degree~$\Delta$, does the Potts Glauber dynamics mix
quickly for all~$\beta<\beta_u(q,\mathbb T_\Delta)$? Typical here is
understood to mean a uniformly random simple~$\Delta$-regular graph on
$n$ vertices. Note that the threshold $\beta_u(q,\mathbb T_\Delta)$ would be sharp as the Glauber dynamics is slow on typical $\Delta$-regular graphs for~$q\ge3$ and~$\beta>\beta_u(q,\mathbb T_\Delta)$, where the~$q$ ordered phases, each corresponding to one
color occupying a strictly larger fraction of vertices than the others,
become metastable, and the dynamics initialized in one of these phases remains there for exponentially many steps with high probability~\cite{COGGRSV-Metastability-Potts-RRG}.

A polynomial-time algorithm for sampling from the Potts
model throughout the tree uniqueness regime on random~$\Delta$-regular
graphs was first given in~\cite{BGGSVY20}. Sampling was subsequently achieved in~$\widetilde O(n)$ time using a different Markov chain,
the random-cluster dynamics~\cite{BG21}. That chain updates an edge representation of the model rather than individual spins;
its steps depend on global connectivity properties of the configuration, and implementing it efficiently requires sophisticated dynamic graph connectivity data structures. 
It remained notably open 
whether the simplest and most natural sampling algorithm, the Potts Glauber dynamics, mixes in polynomial time for all $q\ge 3$ and~$\beta<\beta_u(q,\mathbb T_\Delta)$,
a regime with no obvious source of metastability or other slowdowns. Our Theorem~\ref{mainthm:Glauber} resolves this question, establishing a mixing time of $\tilde O(n)$ steps.

Let us discuss the challenges posed by this problem. On general graphs of maximum degree $\Delta$, the best known bounds on the mixing time  of Potts Glauber dynamics go by contractive couplings at sufficiently high temperatures, under Dobrushin-type conditions, or optimizations thereof for large $q$ and $\Delta$ as in~\cite{BGP14}, but these do not get up to the conjectured $\beta_u(q,\mathbb T_\Delta)$. In the $q=2$ case of the Ising model, rapid mixing on all graphs of maximum degree $\Delta$ throughout tree uniqueness was established in~\cite{MS13}. Their argument fundamentally relies on monotonicity to localize the dynamics and Weitz's self-avoiding walk tree reduction~\cite{Weitz} to transfer correlation decay estimates from trees to general graphs. 

More modern approaches like spectral independence~\cite{AnariLiuOveisGharan2020} and localization schemes~\cite{ChenEldan}, have led to sharp mixing results for more general two-spin systems where monotonicity is unavailable (e.g., the hardcore model and spin glasses)~\cite{AnariLiuOveisGharan2020,chenchenchen,BaBoDa-Kawasaki,SKWang,SK,OpenAISK}. Notably, these methods rely instead on correlation bounds that hold uniformly over pinnings, that is, after fixing a subset of vertices to specified spins. For Potts, however, this uniform control will fail even within the uniqueness regime. 
Indeed, there are parameters of $q,\Delta$ and $\beta<\beta_u(q,\mathbb T_\Delta)$ at which 
the spectral independence constant on most graphs is polynomially large in $n$, ruling out the route from $O(1)$-spectral independence to rapid mixing; see Remark~\ref{rem:si}. 

The issues are compounded by the fact that the standard tools available for establishing correlation decay bounds under pinnings do not extend from two-spin systems to multi-spin systems.
For instance, there are fundamental obstacles to extending Weitz’s reduction to the ferromagnetic Potts model~\cite{NoWeitzForPotts}, and, likewise, the Ding-Song-Sun inequality relating correlation decay with and without pinnings in the Ising model~\cite{DingSongSun} does not apply in the Potts case.

Our task thus belongs to the broader program of obtaining efficient Markov chain mixing rates without uniform correlation control under conditioning. Recently, correlation decay estimates under typical (rather than worst-case) pinnings have been leveraged to give guarantees on the mixing time of simulated annealing in random-field Ising models and spin glasses~\cite{RandomFieldIsing,WeakPoincareInequalities,SKsimulatedAnnealing,du2026samplingsherringtonkirkpatrickmodelcriticality}. These results illustrate the possibilities offered by weaker correlation hypotheses, but do not easily extend to the Potts model nor give worst-case mixing time bounds.

Our first main result is fast mixing of Potts Glauber dynamics throughout
the tree uniqueness regime on almost all~$\Delta$-regular graphs,
in the sense of holding with high probability for a random
$\Delta$-regular graph.
Recall that the mixing time of a discrete-time Markov chain with
transition matrix~$P$ on a finite state space~$\Omega$, reversible with
respect to a stationary distribution~$\mu$, is defined as
\[
 t_{\rm mix}
 =\min\big\{t\ge0:
 \max_{\sigma\in\Omega}
 \|P^t(\sigma,\cdot)-\mu\|_{\rm TV}\le\tfrac14\big\},
\]
where~$t$ is an integer and~$\|\cdot\|_{\rm TV}$ denotes total variation
distance.

\begin{theorem}\label{mainthm:Glauber}
Fix integers~$\Delta\ge3$ and $q\ge2$, and let $0<\beta<\beta_u(q,\mathbb T_\Delta)$.
With probability~$1-o(1)$, the $q$-state Potts Glauber dynamics at inverse temperature~$\beta$ on a uniformly random $\Delta$-regular graph~$G$ on~$n$ vertices has mixing time $O(n(\log n)^\kappa)$, for some constant~$\kappa=\kappa(\Delta,q,\beta)>0$.
\end{theorem}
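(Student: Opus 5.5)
The plan is to run an entropy-factorization (localization) argument in which the usual uniform-over-pinnings correlation bound is replaced by the dichotomy principle from the abstract. The target is approximate tensorization of entropy, or a block factorization, for $\mu=\mu_{G,\beta,q}$ on a typical random $\Delta$-regular graph $G$. The blocks are balls of radius $r=C\log\log n$. With high probability $G$ is locally tree-like, meaning every ball of radius $O(\log\log n)$ contains at most one cycle, and $G$ is a good expander.

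We first handle a block in isolation. On each such ball, uniqueness on $\mathbb T_\Delta$ together with the tree-likeness gives contraction of the boundary-to-root influence under the \emph{free} measure and under \emph{typical} pinnings. Block dynamics restricted to a ball then mixes in $\mathrm{poly}(r)=(\log n)^{O(1)}$ time, via recursive tree arguments that tolerate one cycle. This is what produces the polylogarithmic factor $\kappa$ in the final bound.

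The central step is the dichotomy. Consider the relative density $f=d\nu/d\mu$ of the current law $\nu$ of the chain. Either the pinnings generated by $\nu$ on boundaries of balls are ``good'', in which case the balls carry an imbalanced color profile only on a small fraction of the boundary and spatial mixing holds for most of them. Or they are not good, in which case $\nu$ places substantial weight on configurations whose global color profile is far from uniform. In the first case we prove an entropy contraction
\[
\Ent_\mu(P_{\rm block} f)\le (1-c/n)\Ent_\mu(f)
\]
by combining the usual tensorization with a local-to-global step, namely spectral independence with respect to typical pinnings. In the second case we argue that the magnetization vector $(n^{-1}|\sigma^{-1}(k)|)_k$ drifts toward $(1/q,\dots,1/q)$ at a linear rate. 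This follows because for $\beta<\beta_u$ the mean-field-type recursion on the tree has the uniform fixed point as its unique attractor, and expansion of $G$ makes the aggregate drift of the magnetization match the tree recursion up to $o(1)$ errors. Hence within $O(n\log n)$ steps the chain leaves the bad set with high probability. Concentration then keeps it in the good region for $\mathrm{poly}(n)$ steps.

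To conclude, we start from an arbitrary $\sigma$, run a burn-in of $O(n\log n)$ steps to reach the good set, and then apply the conditional entropy decay. This uses a restricted modified log-Sobolev inequality on the good set together with a hitting-time/censoring argument. Comparing block dynamics to single-site Glauber at a cost of the block mixing time gives $t_{\rm mix}=O(n(\log n)^{\kappa})$.

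The main obstacle is proving the dichotomy quantitatively. Specifically, we must show that wherever typical-pinning correlation decay fails, there is a macroscopic, expansion-driven drift. I would first try a potential function given by the free energy of the Bethe functional, evaluated at the empirical local color profile, and show that it decreases along Glauber trajectories.
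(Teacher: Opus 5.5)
Your proposal has a genuine gap at its central step. The dichotomy you formulate does not cover all laws $\nu$, and neither of its branches is supported by an available tool.

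\textbf{The case split does not cover all $\nu$.} The claim that ``bad'' pinnings force a global color profile far from uniform is false in general. Within uniqueness, the obstructions to correlation decay under pinnings are local gadgets of pinned vertices (cf.\ Remark~\ref{rem:si}). A law $\nu$ can realize such patterns, or many frozen monochromatic patches, while keeping every color class of size $n/q+o(n)$. Such a $\nu$ falls into neither of your cases.

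\textbf{The drift branch fails on its own terms.} The Glauber drift of the class sizes depends on each vertex's neighborhood color counts, not on the class sizes. Expansion does not rescue this. The expander mixing lemma on a random $\Delta$-regular graph controls $e(S,T)$ only up to an error of order $\sqrt{\Delta}\sqrt{|S||T|}$, which is comparable to the main term $\Delta|S||T|/n$ for linear-size sets. So there is no $o(1)$ reduction to a tree or mean-field recursion.

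\textbf{The good branch invokes a theorem that does not exist.} ``Spectral independence with respect to typical pinnings'' plus local-to-global is not something you can invoke. The pinnings entering a local-to-global or localization argument are generated along the path under the $f$-tilted law. So ``typical'' would have to mean typical under an arbitrary $\nu$, which is exactly the control that is missing.

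\textbf{Censoring is unavailable.} The censoring inequality requires monotonicity, which Potts with $q\ge3$ lacks.

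\textbf{The missing idea.} Measure the badness of $\nu$ by the aggregate local entropy
$I_\mu(\nu)=\sum_v\E_\nu[\KL(\nu(\sigma_v\in\cdot\mid\sigma_{-v})\Vert\mu(\sigma_v\in\cdot\mid\sigma_{-v}))]$, rather than by global statistics.

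For $\nu_f=f^2\mu/\E_\mu[f^2]$ one has $\E_\mu[f^2]\,I_\mu(\nu_f)\le Cn\,\cE_{\rm Gl}(f)$ (Lemma~\ref{lem:function-information}). So ``escape'' is just the Glauber Dirichlet form, and no macroscopic drift or Bethe potential is needed.

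Conversely, when $I_\mu(\nu)$ is small, the paper explores the sprinkled edge set $H$. Inside each radius-$R$ forward tree it replaces $\nu$ by $\mu$ conditioned on the exterior, at total KL cost $e^{O(b_R)}I_\mu(\nu)$. There, uniqueness plus domination by the wired random-cluster measure bounds each edge-opening probability by $s<1/(\Delta-1)$ \emph{uniformly over all boundary conditions}. This gives $\E_{\nu,M}[W_L(H)]\le CI_\mu(\nu)+n^{-5}$ with $L=O(\log n)$ (Lemma~\ref{lem:entropy-mass}). No correlation decay under pinnings is ever used.

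Glauber is then compared not to a block dynamics but to the global chain $M$ (Swendsen--Wang with sprinkling and $\tilde\beta$-Potts recoloring of $H$-components). The $O(1)$ entropy contraction of $M$ comes from approximate tensorization for the FK measure. Its truncation $\widetilde M$ to components of size $O(\log n)$ is simulated by single-site updates at cost $nL^\kappa$ via cutwidth. The polylogarithmic factor comes from this step, not from ball dynamics, and yields a log-Sobolev constant of order $(n(\log n)^\kappa)^{-1}$.
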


Considering the challenges described above Theorem~\ref{mainthm:Glauber}, the key mathematical contribution of our work is a new method to prove
fast mixing without control of correlation decay under pinnings,
i.e., using only weak-spatial-mixing-type control. We establish a
dichotomy between good pinnings and fast escape of the Markov chain,
which we call the
\emph{shattering--escape dichotomy} under uniqueness. 

\smallskip
\begin{tcolorbox}[
  colback=white,
  colframe=black,
  boxrule=0.5pt,
  arc=0pt
]
\noindent \textbf{The shattering-escape dichotomy}.
For every distribution $\nu$ over spin configurations, 
\begin{itemize}
\item (Shattering) 
Either pinnings drawn from $\nu$ induce sufficiently nice pinnings to have correlation decay in their presence;
\item (Escape) Or $\nu$ is easy
to escape from; that is, it cannot be sufficiently stable under Glauber updates to create a bottleneck for fast mixing. 
\end{itemize}
\end{tcolorbox}

We describe this dichotomy in more detail in our proof overview in Section~\ref{subsec:proof:ideas}, but demonstrate 
the strength of this new framework by giving another result that we deduce from the same dichotomy.

Like the Ising model, the Potts model originates in the statistical
physics literature, where it is a prototypical spin system. In that
context, it is of most significance on integer lattice graphs
$(\mathbb Z^d)_{d\ge2}$.
The natural target from this perspective is then rapid mixing throughout the high-temperature regime $\beta<\beta_c(q,\mathbb Z^d)$, a regime that extends beyond tree uniqueness; i.e., $\beta_c(q,\mathbb Z^d) > \beta_u(q,\mathbb T_{2d})$ where the comparison is to $\mathbb T_{2d}$ since~$2d$ is the vertex degree on~$\mathbb Z^d$.
Martinelli and Olivieri~\cite{MaOl2} showed that 
strong spatial mixing (i.e., exponential decay of correlations even in the presence of arbitrary nearby pinnings) implies rapid mixing of Glauber dynamics on subsets of $\mathbb Z^d$. Using~\cite{DingSongSun} to get strong spatial mixing for all $\beta<\beta_c(2,\mathbb Z^d)$, the $q=2$ Ising dynamics is thus fast mixing at high temperatures in all dimensions. Additionally, on~$\mathbb Z^2$, for general $q\ge 2$, 
the requisite strong spatial mixing on boxes holds for all
$\beta<\beta_c(q,\mathbb Z^2)$~\cite{WeakSpatialMixingImpliesStrong}. 
However, for~$q\ge3$ in dimensions~$d\ge3$, there is no corresponding deduction of strong spatial mixing throughout~$\beta<\beta_c(q,\mathbb Z^d)$. 

This left Potts Glauber mixing throughout the high-temperature regime
on~$\mathbb Z^d$, for~$q\ge3$ and~$d\ge3$, as an open challenge 
of a similar spirit to the random regular graph case.
Using a shattering--escape
dichotomy on the lattice, 
exploiting the subexponential volume growth of the lattice in place of the local tree structure of random graphs, 
we prove for all~$d\ge3$ that the inverse gap of high-temperature Potts Glauber dynamics on cubes and tori on $n$ vertices in $\mathbb Z^d$ is~$n^{1+o(1)}$.
\begin{theorem}\label{mainthm:Glauber-lattice}
Fix integers~$d\ge 2$, $q\ge 2$, and
$0<\beta<\beta_c(q,\mathbb Z^d)$.
The Potts Glauber dynamics on a cube in $\mathbb Z^d$ on $n$ vertices has inverse spectral gap $n^{1+o(1)}$ and therefore mixing
time at most~$n^{2+o(1)}$. The same holds for the $d$-dimensional torus graph on $n$ vertices. 
\end{theorem}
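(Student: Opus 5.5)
The plan is to obtain the spectral gap bound by a block decomposition: first show that Glauber dynamics restricted to mesoscopic boxes mixes fast, then patch the boxes together using weak spatial mixing, which is available throughout $\beta<\beta_c(q,\mathbb Z^d)$. The key external input is sharpness of the phase transition for the random-cluster model (Duminil-Copin--Raoufi--Tassion). For $\beta<\beta_c$, the free and wired FK measures coincide, and connection probabilities decay exponentially. In particular, for boxes $\Lambda_L$ of side $L$, the wired random-cluster measure on $\Lambda_{2L}$ gives a connection between $\Lambda_L$ and $\partial\Lambda_{2L}$ with probability at most $L^{d-1}e^{-cL}$. Through the Edwards--Sokal coupling, this yields weak spatial mixing: for any boundary condition $\tau$ on $\partial\Lambda_{2L}$, the law of $\sigma_{\Lambda_L}$ under $\mu^\tau_{\Lambda_{2L}}$ is within $e^{-c'L}$ in total variation of the law under the free or periodic measure.

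The first step is a multiscale or block-dynamics reduction. I would take $\ell=(\log n)^{C}$, or more flexibly $\ell=n^{\epsilon}$ with $\epsilon\to0$ to obtain $n^{o(1)}$ losses, and consider the heat-bath block dynamics whose blocks are all translates of $\Lambda_{\ell}$, under the standard comparison $\gap(\text{Glauber})\ge \gap(\text{block})\cdot\min_{B,\tau}\gap(\text{Glauber on }B\text{ with b.c. }\tau)/(\text{overlap})$. The block dynamics gap would be bounded below by a coupling argument. Two copies differing on a set $D$ are updated on a block $B$ whose core contains a disagreement. Weak spatial mixing, in its boundary-to-core form, says the influence of disagreements on $\partial B$ reaches the core of $B$ with probability at most $e^{-c\ell}$ under the FK coupling. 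The difficulty is that WSM only controls the effect of the whole boundary against a single reference measure, not disagreements next to pinned regions. On $\mathbb Z^d$ with boxes this is acceptable, because the relevant boundary is always the full box boundary. This is exactly the Martinelli--Olivieri setting, except that strong spatial mixing is replaced by WSM plus a ``disagreement percolation through the FK boundary cluster'' coupling: couple both updates through the random-cluster representation with wired boundary, and note that spins in the core are identical unless an FK cluster joins the core to $\partial B$. With exponentially small probability per block and polynomially many blocks, a path-coupling contraction gives a block gap of order $\ell^{-d}$ up to constants.

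The second step is to bound the inside-box gap with arbitrary boundary condition, $\min_\tau\gap(\Lambda_\ell,\tau)$. I expect this to be the main obstacle, since arbitrary boundary pinnings are exactly what the abstract warns about. For this I would invoke the shattering--escape dichotomy in its lattice form. Either the boundary condition $\tau$, as it appears along the dynamics, produces exponential decay inside $\Lambda_\ell$ (shattering), in which case recursing on scale gives a gap at least $\ell^{-O(1)}$; or the configuration is one Glauber escapes from quickly (escape). Subexponential volume growth enters here: a crude canonical-path or bottleneck bound on $\Lambda_\ell$ costs $e^{O(\ell^{d-1})}$ in the worst case. With $\ell$ polylogarithmic, however, a recursive scheme with $\ell_{k+1}=\ell_k^{1+\delta}$ keeps the loss per scale at a factor $\ell_k^{O(1)}$. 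Chaining $O(\log\log n)$ scales then yields total loss $\exp(O((\log\log n)^2))=n^{o(1)}$.

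Combining the two steps gives $\gap^{-1}\le n\cdot n^{o(1)}$ on the cube. For the torus, the same argument applies using periodic boxes, since WSM holds uniformly for the torus measure when $\beta<\beta_c$. The mixing time bound then follows from $t_{\rm mix}\le \gap^{-1}\log(1/\mu_{\min})=\gap^{-1}O(n)$, which gives $n^{2+o(1)}$.
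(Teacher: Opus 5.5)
\textbf{Your first step fails.} It breaks at the sentence claiming that weak spatial mixing suffices because ``the relevant boundary is always the full box boundary.''

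In a path-coupling (Martinelli--Olivieri) analysis of the block dynamics, you must compare two boundary conditions $\tau,\tau'$ on $\partial B$ that differ at a \emph{single} site $x$. The quantity to bound is the expected number of disagreements created in all of $B$, not only in its core. Your wired-FK coupling gives
$\PP(\sigma_y\neq\sigma'_y)\le\phi^1_B(y\leftrightarrow\partial B)\le e^{-c\,\mathrm{dist}(y,\partial B)}$.
Because wiring identifies $x$ with the whole boundary, this bound does not decay in $|x-y|$. It only bounds the number of new disagreements by $\sum_{y\in B}e^{-c\,\mathrm{dist}(y,\partial B)}\asymp\ell^{d-1}$, i.e.\ disagreements anywhere along the collar, not just near $x$.

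Now do the count. Summed over the $\asymp\ell^{d-1}$ translates of $\Lambda_\ell$ that have $x$ on their boundary, this is $\asymp\ell^{2d-2}$. Against it you have only the $\ell^d$ translates that contain $x$ and remove the disagreement. So for $d\ge3$ there is no contraction. To get $O(1)$ new disagreements per block, you need the influence of one boundary site to decay with distance from that site, uniformly over pinnings. That is strong spatial mixing, which is precisely what is not known for $q\ge3$, $d\ge3$ throughout $\beta<\beta_c$. ``Disagreement percolation through the FK boundary cluster'' cannot supply it, because under wired boundary the cluster of $x$ is the cluster of all of $\partial B$.

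\textbf{Your second step has the same defect.} The quantity $\min_\tau\gap(\Lambda_\ell,\tau)$ is a worst case over pinnings, and the shattering--escape dichotomy cannot bound it. The dichotomy concerns test distributions $\nu_f=f^2\mu/\E_\mu[f^2]$ relative to a \emph{fixed} reference measure: either $I_\mu(\nu_f)$ is large, hence $\cE_{\rm Gl}(f)$ is large, or the sprinkled edge set drawn from $\nu_f$ is shattered. A boundary condition $\tau$ is part of the reference measure, not a state the dynamics can escape from. Your claimed $\ell_k^{O(1)}$ loss per scale is also asserted without a mechanism; a multiscale recursion of that kind would again need strong spatial mixing.

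\textbf{How the paper avoids this.} It never localizes to pinned boxes.
\begin{itemize}
\item It works globally with the sprinkled chain $M^{(r)}$ on the torus or free box. Its gap is $\Omega(1)$ by comparison with FK dynamics (Lemma~\ref{lem: gap of M}) together with the FK gap of~\cite{gheissari2024spatial}.
\item It truncates to $\widetilde M$, which resamples only components of $H$ with at most $L=C_0\log n$ vertices. These components carry the $\tilde\beta$-Potts measure on the component itself, with no boundary condition. Glauber simulates them at cost $e^{O(L^{(d-1)/d})}=n^{o(1)}$ via the isoperimetric cutwidth bound.
\item It pays for the truncation through $\cE_M(f)-\cE_{\widetilde M}(f)\le\E_\mu[f^2]\,\PP_{\nu_f,M}(W_L\ge1)$. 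This is combined with the lattice dichotomy $\E_{\nu,M}[W_L]\le C I_\mu(\nu)+n^{-5}$ and with $\E_\mu[f^2]\,I_\mu(\nu_f)\le Cn\,\cE_{\rm Gl}(f)$.
\end{itemize}
The only input there that is uniform in $\tau$ is wired-FK domination of the sprinkled edges inside a block, used to get shattering in law. It is never a spectral-gap or single-site-influence bound under pinnings, which is what your route requires.
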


We expect the optimal mixing time to be~$O(n\log n)$; the extra factor of~$n$ in our bound comes from using a spectral gap estimate. A tight bound on the log-Sobolev constant instead would yield this improvement, but requires an additional ingredient not yet available. Specifically, the results for the related random-cluster dynamics in~\cite{gheissari2024spatial} provide the spectral gap input we need, but not the corresponding log-Sobolev bound. We do not pursue this improvement, as it is not the main point of the paper.
As in Theorem~\ref{mainthm:Glauber},
the threshold $\beta_c(q,\mathbb Z^d)$ in Theorem~\ref{mainthm:Glauber-lattice} is sharp; the mixing time slows down exponentially in $n^{(d-1)/d}$ either at or immediately above $\beta_c(q,\mathbb Z^d)$ depending on the value of $q$ and dimension~\cite{BorgsChayesTetali,GheissariLubetzkyCritical}.

Finally, to demonstrate the potential of shattering-escape dichotomies for other Markov chains, we use an edge version
of the dichotomy to get the optimal $O(\log n)$ mixing bound for Swendsen--Wang dynamics
on typical~$\Delta$-regular graphs throughout the tree uniqueness
regime.
Swendsen--Wang~\cite{SW} dynamics is the Markov chain that from a configuration~$\sigma(t)$, generates~$\sigma(t+1)$ as follows:
\begin{enumerate}
\item Percolation: independently for each edge $e= xy$, include $e$ in $\omega_t$ with probability $1-e^{-\beta}$ if $\sigma_x(t) = \sigma_y(t)$ (do not include it if $\sigma_x(t)\ne \sigma_y(t)$); 
\item Recoloring: independently color each connected component
$\mathcal C$ of~$(V,\omega_t)$ uniformly from $\{1,...,q\}$, i.e., draw
$a_{\mathcal C}\sim\operatorname{Unif}\{1,\ldots,q\}$ and set
$\sigma_v(t+1)=a_{\mathcal C}$ for every~$v\in\mathcal C$.
\end{enumerate}
This Markov chain leverages the Edwards--Sokal coupling of the Potts and
random-cluster models~\cite{ES}. Namely, if $\sigma\sim \mu$, then after the percolation step, the resulting edge configuration $\omega$ will be distributed as the random-cluster model with parameters $p=1-e^{-\beta}$ and $q$; equivalently, the random-cluster model is the probability distribution $\phi$ over $\omega \subseteq E$ with $\phi_{G, p, q}(\omega) \propto (\frac{p}{1-p})^{|\omega|} q^{c(V,\omega)}$ where $c(V,\omega)$ is the number of connected components of the graph $(V,\omega)$. 

The Swendsen--Wang dynamics is widely used for sampling from the Ising and Potts
models, as its global recolorings allow it to circumvent low-temperature
bottlenecks that slow down Glauber dynamics. Still, for the~$q\ge3$
Potts model, tree uniqueness remains a sharp boundary of the
high-temperature fast-mixing regime: on random~$\Delta$-regular graphs,
the Swendsen--Wang dynamics is exponentially slow in a metastability window $\beta \in (\beta_u(q,\mathbb T_\Delta),
                 \beta_s(q,\mathbb T_\Delta))$ where $\beta_s(q,\mathbb T_\Delta)
 =\log(1+\frac q{\Delta-2})$~\cite{COGGRSV-Metastability-Potts-RRG}. 
 Our next result establishes $O(\log n)$ mixing for the Swendsen--Wang dynamics throughout the tree uniqueness regime on random $\Delta$-regular graphs.

\begin{theorem}\label{mainthm:Swendsen-Wang}
Fix integers~$\Delta\ge3$, $q\ge2$, and
$0<\beta<\beta_u(q,\mathbb T_\Delta)$.
With probability~$1-o(1)$, a uniformly random~$\Delta$-regular
graph~$G$ on~$n$ vertices is such that the mixing time of the
Swendsen--Wang dynamics at parameters $q$ and $\beta$ on~$G$ is~$O(\log n)$. 
\end{theorem}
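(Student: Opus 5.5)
Throughout, I work on the random-cluster side via the Edwards--Sokal coupling, and the argument is a coupling one. The goal is to show that from any two initial configurations, two copies of Swendsen--Wang coalesce within $O(\log n)$ steps with probability bounded away from zero. The structure follows the edge version of the shattering--escape dichotomy. Write $\omega_t$ for the percolation configuration produced at step $t$. For any distribution $\nu$ of $\omega_t$, exactly one of two things happens:
\begin{itemize}
\item the conditional random-cluster measure given the edge boundary generated by $\nu$ has weak correlations, with no giant-component-like structure that couples distant regions (shattering);
\item the next Swendsen--Wang step moves $\nu$ measurably closer to the free/disordered phase (escape).
\end{itemize}

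The first step is to reduce to a one-dimensional order parameter. On a random $\Delta$-regular graph, which is locally tree-like with good expansion whp, the percolation step followed by recoloring acts on the color-proportion vector $\alpha_t\in\Delta_q$ approximately through the deterministic tree recursion. This recursion is the belief-propagation map for Potts on $\mathbb T_\Delta$. For $\beta<\beta_u(q,\mathbb T_\Delta)$ the uniform vector is its unique and globally attracting fixed point, and in fact the $q$ ordered fixed points do not exist.

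I would use the known first- and second-moment and local-weak-convergence description of $G$ to show the following. If $\|\alpha_t-\mathbf{u}\|\ge\varepsilon$, where $\mathbf{u}$ is the uniform vector, then with probability $1-e^{-\Omega(n)}$ the distance drops by a constant factor in one step. This is the escape half. After $O(1)$ steps the largest cluster of $\omega_t$ is then $o(n)$, and the configuration is $\varepsilon$-close to balanced.

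The second step is the shattering half in the near-balanced regime. Once $\alpha_t$ is $\varepsilon$-balanced, the percolation clusters of $\omega_t$ are stochastically dominated by a subcritical branching process with offspring mean $(\Delta-1)p\cdot(\text{effective fraction})<1$ for small $\varepsilon$. Subcriticality holds because uniqueness gives $\beta<\beta_s$, and near balance only a $\approx 1/q$ fraction of neighbors agree. So all clusters have size $O(\log n)$ whp, and the random-cluster measure restricted to small-cluster configurations has exponential decay of correlations along the tree-like neighborhoods.

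With this in hand, I run a standard coupling in the spirit of the Ullrich/Blanca et al.\ arguments for SW in the high-temperature regime:
\begin{itemize}
\item couple the percolation steps monotonically, using edge-wise uniforms, so that discrepancies only occur on edges incident to disagreeing vertices;
\item couple the recolorings cluster by cluster, identifying clusters whenever possible.
\end{itemize}
The expected number of disagreements then contracts by a constant factor each step, because a disagreement propagates only through an $O(1)$-size expected cluster and subcriticality gives contraction. After $O(\log n)$ steps no disagreements remain. Combining this with the $O(1)$ burn-in from the escape step gives $t_{\rm mix}=O(\log n)$.

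The main obstacle is the escape step in the intermediate regime. Here $\alpha_t$ is neither near uniform nor macroscopically ordered in a clean way, and $\omega_t$ may have a giant component. The concern is that the induced pinnings could trap the chain near a non-fixed point. I would handle this by controlling the one-step map on the giant component's color fraction using the uniform-over-$\alpha$ concentration of the giant's size on random regular graphs. I would show that the drift map $\alpha\mapsto\mathbb E[\alpha_{t+1}\mid\alpha_t=\alpha]$ equals the tree recursion up to $o(1)$ uniformly, and then invoke a Lyapunov function for that recursion, such as the BP free energy. Verifying that the uniqueness condition in the stated formula for $\beta_u$ gives a strict decrease of this Lyapunov function away from $\mathbf{u}$ is the delicate computational point.
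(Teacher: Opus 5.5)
Your proposal has a genuine gap at its base: it treats the colour-proportion vector $\alpha_t$ as an approximately autonomous order parameter. On a sparse random regular graph the Swendsen--Wang update from $\sigma$ depends on the spatial arrangement of $\sigma$, not only on $\alpha(\sigma)$. Since mixing is a worst case over starting states, $\E[\alpha_{t+1}\mid\alpha_t=\alpha]$ is not determined by $\alpha$, and it cannot be uniformly close to the tree recursion.

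Concretely, uniqueness only gives $(\Delta-1)\hat p<1$. On the other hand, $(\Delta-1)p>1$ once $B>\frac{\Delta-1}{\Delta-2}$, and $B_u\ge\frac{\Delta}{\Delta-2}$ (the Ising value). So there is always a window inside uniqueness where SW percolation on a monochromatic region is supercritical. Take $\sigma$ balanced but monochromatic on a tree-like ball of radius $c\log n$. Then $\alpha(\sigma)$ is $o(1)$-close to $\mathbf u$, yet with probability bounded below the percolation step creates a cluster of size $n^{\Omega(1)}$ in that ball.

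So ``$\varepsilon$-balanced implies domination by a subcritical branching process'' is false. Domination needs a bound on the \emph{conditional} probability that the next explored edge is open, uniformly over the exploration history, and global balance gives no such bound. Your proposed fix for the intermediate regime (concentration of the giant's colour fraction plus a Lyapunov function for the BP map) is blind to such localized pinnings for the same reason.

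The coupling step is also unsupported. A bounded expected cluster size bounds the \emph{growth} of the disagreement set by a constant factor; it gives no contraction, and you offer no other mechanism. In the window above, take two configurations differing only at the centre $v$ of a monochromatic ball. In one copy $v$ is isolated, while in the other it merges several large branches into one cluster. With your edge-wise coupling, those branches receive one colour in one copy and independent colours in the other. Under any coupling of the recolourings, the expected Hamming distance after one step is therefore polynomially large. This is not an artifact of a poor coupling: correlation bounds uniform over pinnings genuinely fail in parts of this regime (Remark~\ref{rem:si}).

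What is missing is a functional of the current law that sees local non-stationarity. The paper uses the aggregate local entropy $I_\mu(\nu)$ of $\nu=(P_{\rm SW}f)\mu$, and the argument runs as follows.
\begin{itemize}
\item By Lemma~\ref{lem:entropy-mass}, the sprinkled percolation from $\nu$ satisfies $\E_{\nu,M}[W_L(H)]\le C I_\mu(\nu)+n^{-5}$. This comes from an exploration whose forward trees are resampled from $\mu$ given their exterior at KL cost $O(I_\mu(\nu))$, with tree uniqueness making each good test subcritical.
\item Lemma~\ref{lemma:rr-sw-smoothing} shows that the SW step producing $\nu$ dissipated at least $I_\mu(\nu)/\kappa$ of entropy.
\item Together with Lemmas~\ref{lemma:rr-sw-local} and~\ref{lem:entropy-M}, this gives $\Ent_\mu(P_{\rm SW}^2 f)\le\rho\,\Ent_\mu(f)+O(n^{-5})$, and $O(\log n)$ mixing follows by Pinsker.
\end{itemize}
No coupling and no order-parameter reduction is used.
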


The best previously known mixing time bound was~$O(n^2\log n)$~\cite{BG21} and was obtained by combining rapid mixing of the random-cluster  dynamics with the comparison results of~\cite{Ullrich-SW-singleBond}.
It has generally been a hard task to
obtain the sharp mixing rate of Swendsen--Wang dynamics. See~\cite{galanis2019swendsen} for results of
this flavor for the Potts model on
the complete graph,
~\cite{BlancaEtAl2022} for results on
~$\mathbb Z^d$ in the regime where strong spatial mixing holds, and~\cite{SwendsenWangTrees} for results
on trees.  

Along the way to proving Theorem~\ref{mainthm:Swendsen-Wang},
we establish a sharp general comparison theorem between the Swendsen--Wang and Potts Glauber dynamics. We show that 
on every bounded-degree graph on $n$ vertices, the inverse spectral gap of the Swendsen--Wang dynamics is at most that of Glauber dynamics divided by an $O(n)$ factor; this resolves a conjecture of Ullrich~\cite[Remark~3.2]{Ullrich2}, improving the previous general comparison by the optimal $O(1/n)$ factor. 
We further establish the analogous comparison for the inverse modified log-Sobolev constants, which is in fact a stronger result; see Corollary~\ref{cor:rr-sw-relaxation}. Combined with previously known results for the Glauber dynamics, these new comparison inequalities recover several 
previous relaxation and mixing bounds for Swendsen--Wang dynamics on trees, lattices, and general graphs of bounded degree~\cite{BCSV,BlancaEtAl2022,SwendsenWangTrees,BCCPSV22}.

We expect that shattering--escape dichotomies may have further implications for proving fast mixing of spin system Markov chains in settings where sharp thresholds for weak spatial mixing are known, but strong spatial mixing or spectral independence do not extend up to those thresholds.

\begin{remark}
\label{rem:si}
We provide an explicit counterexample to $O(1)$-spectral independence holding throughout uniqueness in either trees or typical $\Delta$-regular random graphs. 
In the counterexample, set $\Delta=4$, $q=7735$, and
$\beta=\log36<\beta_u(q,\mathbb T_4)$.
Start with a complete binary tree \emph{core} of height $h$, attach one leaf child 
pinned to color $1$ to every core vertex, and two free
children to every depth-$h$ vertex of the core; finally, give each of those newly added free children two children pinned to color $1$ and one child that is an unpinned leaf.
The pinning structure and parameters are chosen carefully so that 
upward messages in the tree recursion are homogeneous throughout
the core, which allows for a direct computation that shows the largest eigenvalue of the influence matrix is $\Omega(\rho^h)$ for a fixed $\rho>1$.

We embed this example in a $4$-regular random graph by taking $h= c \log n$ for a small constant $c$, and note that a random regular graph has many balls of radius $2h$ that are complete $4$-regular trees of depth $2h$. Let $B_{2h}(o)$ be one such ball centered at $o$, and let $T_o$ be the above tree construction with root $o$. By naturally identifying $T_o$ with a subgraph of $B_{h+2}(o)$, it remains to describe any additional pinnings on the subtrees of $B_{2h}(o) \setminus B_{h+2}(o)$ that emanate from vertices of $T_o$ with degree less than $4$ (i.e., its root and the unpinned leaves). By uniqueness, even if we pin all vertices of $\partial B_{2h}(o)$ to $1$, the messages incoming to those exposed vertices of $T_o$ will be independent and nearly neutral (indeed after a calculation at the parameter values chosen above, sufficiently neutral) that one preserves a $(1+\epsilon)^h$ eigenvalue lower bound for the influence matrix within $T_o$. This therefore provides the desired $n^{\epsilon}$ lower bound on the largest eigenvalue of the full influence matrix. 
\end{remark}

\subsection{Proof ideas}
\label{subsec:proof:ideas}

We next outline the main ideas behind our approach and highlight its key innovations.
 We begin with describing how the proof of Theorem~\ref{mainthm:Glauber} would go for just getting $t_{\rm mix} = \text{poly}(n)$ (note that no subexponential bound was previously known). This is simpler to describe than that for the near-optimal $\tilde O(n)$ of Theorem~\ref{mainthm:Glauber} but already contains the essential novelty of our arguments.
 
\subsubsection{Warm-up: a polynomial bound using the shattering-escape dichotomy}\label{sec:poly-warmup}

Let $P_{\rm Gl}$ and $P_{\rm SW}$ be the transition matrices of the Potts Glauber and Swendsen--Wang dynamics, respectively.
When $P$ has nonnegative spectrum, we write $\operatorname{gap}(P)=1-\lambda_2(P)$ for the spectral gap of $P$, where $\lambda_2(P)$ is its second-largest eigenvalue.
The standard mixing-time bound in terms of $\text{gap}$ (e.g.,~\cite[Theorem 12.4]{LP17}) gives
\begin{align}\label{eq:tmix-spectral-gap}
t_{\rm mix}
\le
\frac{1}{\operatorname{gap}(P)}
\log\frac4{\mu_{\min}},
\end{align}
where $\mu_{\min}=\min_\sigma\mu(\sigma)$.
Since $\log\frac4{\mu_{\min}}=O(n)$ in our setting, and the polynomial lower bound $\operatorname{gap}(P_{\rm SW})=\Omega(n^{-3})$ is known from~\cite{BG21}, it suffices to compare the two spectral gaps of 
$P_{\rm Gl}$ and $P_{\rm SW}$ with at most a polynomial loss.

For the cleanest exposition, we first compare the conductances of these two chains. Define
\[
\Phi_{P}=\min_{A:0<\mu(A)\le1/2}\Phi_{P}(A), \qquad
\Phi_P(A)=\frac{Q_P(A,A^c)}{\mu(A)}
\qquad \text{where} \qquad
Q_P(A,A^c)=\sum_{\sigma\in A,\sigma'\notin A}\mu(\sigma)P(\sigma,\sigma').
\]
Cheeger's inequality, i.e., $\frac{\Phi_P^2}{2}\le\operatorname{gap}(P)\le2\Phi_P$,
then allows us to transfer the conductance comparison back to the spectral gaps~\cite[Theorem 13.10]{LP17}.

For any set $A$ with $\mu(A) > 0$, its conductance $\Phi_{P}(A)$ is the probability of escaping the set $A$ in one step of $P$ when the initial configuration is drawn from $\mu(\cdot\mid A)$. 
We compare these escape probabilities by decomposing Swendsen--Wang updates into sequences of single-site Glauber updates. Such a comparison can incur an exponential in $n$ loss in general, but by recoloring the random-cluster components in parallel, 
we observe that the loss can be at most exponential in the size of the largest component (up to polynomial factors in $n$). Thus, for Swendsen--Wang steps whose intermediate edge configurations have all components of size $O(\log n)$, the comparison incurs only a polynomial loss. 

Let $\mathrm{Good}_L$ be the set of random-cluster configurations $\omega \in \{0,1\}^E$
where every connected component of $(V,\omega)$ has at most $L$ vertices and write $\mathrm{Bad}_L=\mathrm{Good}_L^c$.
To restrict the comparison to these configurations, 
we consider an auxiliary Markov chain $\widehat P_{\rm SW}$ that performs the proposed Swendsen--Wang {move} only
when the intermediate edge configuration belongs to $\mathrm{Good}_L$, 
and otherwise stays at the current configuration.
This chain is reversible with respect to $\mu$. 
By comparing each component recoloring in a Swendsen--Wang step with a prescribed sequence of single-site Glauber updates that realizes the same recoloring, we obtain
\begin{equation*}
Q_{\widehat P_{\rm SW}}(A,A^c)
\le n e^{O(L)} Q_{P_{\rm Gl}}(A,A^c),
\end{equation*}
and for any set $A$ such that $
0<\mu(A)\le \tfrac12
$, we therefore have
\begin{equation}
    \label{eq:SW-flow-comparison}
\Phi_{P_{\rm Gl}}(A)\ge \frac{1}{n} e^{ - O(L)} \Phi_{\widehat P_{\rm SW}}(A).
\end{equation}
To move back to $P_{\rm SW}$, 
for a spin distribution $\nu$ on $[q]^V$,  let $\mathbb P_{\nu,{\rm SW}}(\omega \in \cdot )$
denote the distribution on random-cluster configurations obtained by first drawing $\sigma$ from $\nu$ 
and then applying the percolation step of the Swendsen--Wang dynamics.
Rejecting updates in $\widehat P_{\rm SW}$ whose edge configurations belong to $\mathrm{Bad}_L$ reduces the escape probability by at most $\mathbb P_{\nu,{\rm SW}}(\mathrm{Bad}_L)$. Consequently,
\begin{align}\label{eq:bottleneck-PhatSW-to-P}
 \Phi_{\widehat P_{\rm SW}}(A)
 \ge\Phi_{P_{\rm SW}}(A) - \mathbb P_{\mu(\cdot \mid A), \rm SW} (\text{Bad}_L)\,.
\end{align}

The challenge is that for $L=O(\log n)$, $\mathbb P_{\mu(\cdot \mid A),\rm SW}(\text{Bad}_L)$ may be large for some sets~$A$ even though $\beta<\beta_u(q,\mathbb T_\Delta)$.
For instance, from spin configurations with large monochromatic regions, even in uniqueness, the percolation step of the Swendsen--Wang dynamics may not be subcritical. 
Our dichotomy shows that sets of spin configurations producing such unfavorable edge configurations cannot also have small Glauber escape probabilities. To make this precise, we introduce a quantity measuring how far the single-site conditional distributions of $\nu$ are from those of $\mu$.
\begin{definition}\label{def:local-relent}
For any spin distribution $\nu$, define its \emph{aggregate local entropy} to stationarity as
$$
 I_\mu(\nu)=\sum_v\E_{\nu_{-v}}
 [\KL\!\left(\nu_v(\cdot\mid \sigma_{-v})\,\Vert\,
                    \mu_v(\cdot\mid \sigma_{-v})\right)]\,.
$$
where ${\rm KL}(\cdot \,\|\,\cdot)$ is the Kullback--Leibler divergence, or relative entropy, $\mu_v,\nu_v$ are 
the $v$ marginals, and $\nu_{-v}$ denotes the marginal of configurations on $V\setminus \{v\}$. 
\end{definition}
The aggregate local entropy estimates how far $\nu$ is from $\mu$ as a sum of single-site relative entropies, 
rather than a more global notion like $\rm{KL}(\nu\, \|\, \mu)$. 
The shattering-escape dichotomy will say that for any spin distribution $\nu = \mu(\cdot \mid A)$, large $I_\mu(\nu)$ forces correspondingly large conductance for $A$, and therefore rapid escape from $A$, while small $I_\mu(\nu)$ makes $\text{Bad}_L$ unlikely under $\mathbb P_{\nu,\rm SW}$, rendering~\eqref{eq:bottleneck-PhatSW-to-P} useful.
 
The first implication is formalized in the following lemma, which has a short proof because $I_\mu(\nu)$ is a sum of single-site resampling distances---essentially the Dirichlet form of Glauber dynamics. For more grounded intuition for $I_\mu(\nu)$ implying rapid escape from $A$, observe that by convexity of $\KL(\cdot \,\|\,\mu)$, 
\begin{align}
    \KL(\nu\,\|\,\mu) - \KL (P_{\rm Gl}\nu \,\|\,\mu) \ge \frac{1}{n} I_\mu(\nu)
\end{align}
so $I_\mu(\nu)/n$ bounds from below the relative entropy decay in one Glauber step started from $\nu$.

\begin{lemma}\label{lem:cut-information} There exists $C>0$ such that for all $A \subseteq  [q]^V$, we have 
$
 I_\mu(\mu(\cdot \mid A))\le Cn\Phi_{P_{\rm Gl}}(A).
$
\end{lemma}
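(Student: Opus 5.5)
Let $\nu=\mu(\cdot\mid A)$. Since $\nu$ is a restriction of $\mu$, its single-site conditionals are explicit. Fix $v$ and $\sigma_{-v}$, and write $A_v(\sigma_{-v})=\{a\in[q]:(\sigma_{-v},a)\in A\}$. Then
\[
\nu_v(\cdot\mid\sigma_{-v})=\mu_v(\cdot\mid\sigma_{-v},\,A_v(\sigma_{-v})).
\]
For any event $S$, conditioning on $S$ gives $\KL(\mu(\cdot\mid S)\,\Vert\,\mu)=\log\frac{1}{\mu(S)}$. Hence each local term equals $\log(1/p)$, where $p=p_v(\sigma_{-v})=\mu_v(A_v(\sigma_{-v})\mid\sigma_{-v})$. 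The plan is to bound $\log(1/p)$ by a constant multiple of $1-p$. This is where bounded degree enters: for Potts at fixed $\beta$ on graphs of maximum degree $\Delta$, every conditional probability satisfies $\mu_v(a\mid\sigma_{-v})\ge c_0:=1/(1+(q-1)e^{\beta\Delta})>0$. Whenever $A_v\neq\emptyset$ (the only case with $\nu_{-v}$-positive mass), we get $p\ge c_0$. Since $\log(1/p)\le \frac{\log(1/c_0)}{1-c_0}(1-p)$ for $p\in[c_0,1]$ by concavity, the local term is at most $C(1-p_v(\sigma_{-v}))$ with $C=C(q,\beta,\Delta)$.

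Next, I sum the local bounds and identify the result with the Glauber escape flow. The Glauber chain picks $v$ with probability $1/n$ and resamples it from $\mu_v(\cdot\mid\sigma_{-v})$. Starting from $\sigma\in A$, the chain therefore leaves $A$ via a $v$-update with probability $1-p_v(\sigma_{-v})$, which depends only on $\sigma_{-v}$. Thus
\[
\Phi_{P_{\rm Gl}}(A)=\frac{1}{\mu(A)}\sum_{\sigma\in A}\mu(\sigma)\frac1n\sum_v\bigl(1-p_v(\sigma_{-v})\bigr)=\frac1n\sum_v\E_{\nu}\bigl[1-p_v(\sigma_{-v})\bigr].
\]
Since the integrand depends only on $\sigma_{-v}$, each expectation equals $\E_{\nu_{-v}}[1-p_v]$. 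Combining this with the pointwise bound gives
\[
I_\mu(\nu)\le C\sum_v\E_{\nu_{-v}}[1-p_v]=Cn\,\Phi_{P_{\rm Gl}}(A).
\]

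The only real obstacle is the pointwise step $\log(1/p)\lesssim 1-p$. It fails as $p\to0$, so it needs the uniform lower bound on single-site conditionals. That bound holds because $\mu_v(\cdot\mid\sigma_{-v})$ is computed from the at most $\Delta$ neighbouring spins. On random $\Delta$-regular graphs and on lattices, $\Delta$ (respectively $2d$) is fixed, so $C$ depends only on $q,\beta,\Delta$. I would also check that the hypothesis $\mu(A)\le 1/2$ is not needed, since the identity above holds for every $A$ with $\mu(A)>0$.
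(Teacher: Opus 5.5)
Your proof is correct and is essentially the paper's argument. The paper obtains this lemma as the case $f=\mathbf 1_A$ of Lemma~\ref{lem:function-information}: each local KL term becomes the conditional entropy $\Ent_\mu(\mathbf 1_A\mid\sigma_{-v})=p_v\log(1/p_v)$, which the uniform lower bound on single-site conditionals bounds by a constant times $p_v(1-p_v)$ (this is your pointwise bound $\log(1/p)\le C(1-p)$), and $\sum_v \E_\mu[p_v(1-p_v)]=n\,Q_{P_{\rm Gl}}(A,A^c)$ is your escape-flow identity.
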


The next lemma gives the complementary implication:
any distribution with sufficiently small aggregate local entropy relative to $\mu$ induces a shattered random-cluster configuration with high probability after the percolation step of the Swendsen--Wang dynamics.

\begin{lemma}
\label{lem:info:bound}
With probability $1-o(1)$, a random $\Delta$-regular graph $G$ is such that the following holds. Suppose $\beta<\beta_u(q,\mathbb T_\Delta)$. There exists $C, C_0 > 0$ such that, for
$L=C_0\log n$, for every spin distribution~$\nu$,
$$
 I_\mu(\nu)\le n^{-18}
 \quad\Longrightarrow\quad
 \PP_{\nu,{\rm SW}}(\mathrm{Bad}_L)\le Cn^{-5}.
$$
\end{lemma}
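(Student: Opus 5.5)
Since $\mathrm{Bad}_L$ is a union over connected vertex sets, we bound $\PP_{\nu,\rm SW}(\mathrm{Bad}_L)$ by a union bound over connected subtrees $S$ of $G$ with $|S|=L$: a component of size $>L$ contains a connected set of exactly $L$ vertices that is spanned by open edges. With maximum degree $\Delta$, there are at most $n(e\Delta)^L$ such $S$. For each we must show
\[
\PP_{\nu,\rm SW}(S \text{ spanned by open edges}) \le n^{-6}(e\Delta)^{-L}.
\]
This needs two ingredients: the estimate under $\mu$ itself, and a transfer from $\mu$ to $\nu$ using $I_\mu(\nu)\le n^{-18}$.

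For $\mu$, the Edwards--Sokal coupling shows the percolation output is random-cluster distributed, $\phi_{G,p,q}$ with $p=1-e^{-\beta}$. On random $\Delta$-regular graphs with $\beta<\beta_u(q,\mathbb T_\Delta)$, the random-cluster clusters have exponential tails. This is the input behind \cite{BG21}: it uses the locally tree-like structure plus uniqueness, with at most one cycle per $O(\log n)$ ball. So $\phi(|\mathcal C(v)|\ge L)\le e^{-cL}$, and choosing $C_0$ large handles $\mu$.

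The transfer cannot go through $\KL(\nu\|\mu)$, which may be of order $n$. Instead we use the local structure of $I_\mu$, organized as follows.
\begin{enumerate}
\item[(i)] Open edges only require agreement, so the event ``$S$ is spanned by open edges'' has probability at most $p^{|S|-1}\,\nu(\sigma \text{ constant on } S)$, maximized over spanning trees. We therefore need $\nu(\sigma_S\equiv a)$ small, of order $\lambda^{L}$ with $p\lambda<1/(e\Delta)$.
\item[(ii)] Reveal the vertices of $S$ one at a time in a breadth-first order on the tree. Given $\sigma_{-v}$, the probability that $\sigma_v$ equals the common color is $\nu_v(a\mid\sigma_{-v})$. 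By Pinsker this is at most $\mu_v(a\mid\sigma_{-v})+\sqrt{2\KL_v(\sigma_{-v})}$, and $\mu_v(a\mid\sigma_{-v})\le e^{\beta\Delta}/(e^{\beta\Delta}+q-1)<1$.
\item[(iii)] The sum over $v$ of $\E_\nu[\KL_v]$ is at most $n^{-18}$. By Markov, outside an event of $\nu$-probability $n^{-8}$, every site has conditional KL at most $n^{-9}$ (union over $n$ sites). The per-site penalty is then at most $n^{-4}$.
\end{enumerate}
However, (ii) conditions on everything except $v$ rather than sequentially, so chaining requires a Gibbs-sampler-type argument. A cleaner route is to run $t=\Theta(n\log n)$-scale Glauber dynamics started from $\nu$: $\nu P_{\rm Gl}^t$ stays within total variation $t\cdot O(\sqrt{I_\mu(\nu)/n})$ of $\nu$, by a data-processing bound on each step.

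The single-site bound in (ii) alone only gives the crude ratio $\mu_v(a\mid\cdot)\approx e^{\beta\Delta}/(e^{\beta\Delta}+q-1)$, which is too weak near $\beta_u$. The paper's warm-up instead suggests the intended route: $\nu$ is nearly stationary under single-site resampling, hence nearly stationary under block (heat-bath) resampling of balls $B_r(v)$ with $r=O(\log n)$ tree-like radius. Each Glauber step moves $\nu$ by at most $O(\sqrt{I_\mu/n})$ in TV, and a block update is $n^{O(1)}$-close to a sequence of $e^{O(|B|)}$ single-site steps. Keeping $|B|=O(\log n)$ and the exponent $18$ large enough absorbs the losses. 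We therefore choose $r$ fixed (a large constant) so that $|B_r|$ is constant.
\begin{enumerate}
\item[(iv)] Under block resampling with arbitrary boundary condition on $\partial B_r(v)$, the random-cluster edges inside the ball have a law governed by uniqueness on $\mathbb T_\Delta$. Weak spatial mixing at $\beta<\beta_u$ says the root is nearly uniform for any boundary, up to $\epsilon(r)\to0$. So along a path of $S$, every $r$ steps the color ``refreshes'' with probability bounded away from agreement, which yields the geometric factor $\lambda^{L/r}$ needed for (i) once $C_0$ is large.
\end{enumerate}

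The main obstacle is step (iv) combined with the transfer. We need the boundary-insensitivity of the cluster of a vertex in a ball against the worst boundary, on a ball containing possibly one cycle, and we need it to yield a decay rate beating the $(e\Delta)^L$ entropy. The first thing to try is to use the random-cluster monotonicity and wired-boundary comparison: wired boundary dominates, and under uniqueness on the tree the wired random-cluster root connection to depth $r$ tends to zero, as in \cite{BG21}. Then a path of length $L$ is chopped into $L/(2r)$ disjoint balls, each crossed with probability $\le\epsilon(r)$ via near-stationarity under block updates, giving $\epsilon(r)^{L/2r}+n^{-6}$, which suffices.
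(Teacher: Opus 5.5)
There is a genuine gap, and it appears already in the counting step, before the transfer enters. Your union bound over the $n(e\Delta)^L$ trees $S$ requires $\PP_{\nu,{\rm SW}}(S\text{ open})\le n^{-6}(e\Delta)^{-L}$ for each $S$. This is false already for $\nu=\mu$ near the threshold. Under $\phi$, each edge is open with conditional probability $\hat p$ or $p\ge\hat p$ given all other edges, so $\PP_{\mu,{\rm SW}}(S\text{ open})\ge\hat p^{\,L-1}$ for every tree $S$. For $q=2$, $\hat p\uparrow 1/d>1/(e\Delta)$ as $\beta\uparrow\beta_u$; for $q=\Delta=3$, $\hat p\approx0.49$ at $\beta_u$. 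The condition $d\hat p<1$ controls cluster sizes through a branching-process comparison, not through a first moment over lattice animals. Chopping into radius-$r$ balls makes this worse: for a general tree only about $L/|B_{2r+1}|$ disjoint crossings are guaranteed, each worth $\epsilon(r)\ge\hat p^{\,r}$, and $\hat p^{\,r/|B_{2r+1}|}\to1$. The switch to paths in your last paragraph is also unjustified, since a component with more than $L$ vertices need not contain a path of length $L$. The paper instead explores components one edge at a time (Algorithm~\ref{alg:exploration-reza}). At most $1+dm$ tests occur before a component reaches $m$ vertices. Under the comparison law $Q$ described below, each good test succeeds with conditional probability at most $s<1/d$ uniformly in the boundary; this is your wired-domination input from \cite{BG21}, applied only to the root edge of a forward tree. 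Successful tests that are not good are few by sparsity of $G$, and a Chernoff bound then gives exponential tails (Lemma~\ref{lem:exploration-tails}).

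The transfer step has a matching gap. Near-invariance of $\nu$ under heat-bath updates of fixed balls gives an additive total-variation error for each fixed event. The balls you resample depend on $S$, so after the union bound this error is multiplied by $n(e\Delta)^L=n^{1+C_0\log(e\Delta)}$. The fixed exponent $18$ cannot absorb this once $C_0$ is large. Indeed, your concluding bound $\epsilon(r)^{L/2r}+n^{-6}$ per set does not meet your own target $n^{-6}(e\Delta)^{-L}$. The missing idea is an \emph{adaptive} change of measure with a single global cost. The law $Q$ of Definition~\ref{def:Q-revealing-distribution} redraws, at each good test, only the spins of the current forward region from $\mu$, given an exterior drawn from $\nu$ conditioned on the exploration history. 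Four ingredients then give $\KL(P\Vert Q)\le C_{\rm obs}I_\mu(\nu)$ (Lemma~\ref{lem:exploration-comparison}): the chain rule for relative entropy; Lemma~\ref{lem:local-kl} on the constant-size forward region; the fact that conditioning on the record does not increase the expected single-site relative entropy, \eqref{eq:KL-two-orders}; and the bounded number of forward regions containing a given vertex. Corollary~\ref{cor:entropy-transfer} then yields $\E_{\nu,M}[W_L(H)]\le C_0'I_\mu(\nu)+n^{-5}$. The statement follows from $\omega\subseteq H$ and Markov's inequality (Remark~\ref{rmk: weak prob bound}). Because no union bound over sets is ever taken, no combinatorial entropy factor multiplies the transfer error.
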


Lemma~\ref{lem:info:bound}, together with several  generalizations we prove later, formalizes the shattering-escape dichotomy and lies at the core of our paper. Its proof goes by iteratively exploring the random-cluster components under $\mathbb P_{\nu, \rm SW}$ using a coarsened exploration process where the next edge is revealed conditional on $\sigma(B^c_R(v))$---that is, the spins outside $B_R(v)$, the radius-$R$ ball centered at vertex $v$. Up to a cost that is $e^{O((\Delta-1)^R)} \cdot I_\mu(\nu)$, when doing this exploration, 
we can replace sampling under $\nu(\cdot\mid\sigma(B^c_R(v))$ with sampling under $\mu(\cdot\mid\sigma(B^c_R(v))$, while keeping the exterior spins drawn from $\nu$. By the locally treelike structure of the random graph and tree uniqueness, the influence of these exterior spins on the root vanishes as $R$ grows. Consequently, for sufficiently large $R$, the conditional edge-opening probability is close to its value on an infinite $\Delta$-regular tree, which is strictly below $1/(\Delta-1)$ in the uniqueness regime, so the exploration process is strictly subcritical.
We mention that a reminiscent comparison of conditional laws appears in the two-spin setting in~\cite[Lemma~3.5]{LocallyStationaryDistributions}: locally stationary distributions there have conditional laws that are close, on average, to their equilibrium counterparts.
Lemma~\ref{lem:info:bound} and its strengthenings used for the $\tilde O(n)$ mixing time are proved in Section~\ref{sec:key-stationary-estimate}.

\begin{proof}[\textbf{\emph{Proof of polynomial mixing of Potts Glauber for $\beta<\beta_u$}}]
Fix a graph~$G$ satisfying the conclusion of
Lemma~\ref{lem:info:bound}. 
Let~$A \subseteq [q]^V$ be any subset of configuration with
$0<\mu(A)\le \frac12$, and let~$\nu_A=\mu(\cdot\mid A)$. Let $L = C_0 \log n$ for $C_0$ from Lemma~\ref{lem:info:bound}. 
By~\eqref{eq:SW-flow-comparison}--\eqref{eq:bottleneck-PhatSW-to-P}, and the 
fact from~\cite{BG21} that
$\operatorname{gap}(P_{\rm SW})=\Omega(n^{-3})$ which implies that $\Phi_{\rm SW}(A) =\Omega(n^{-3})$,  we obtain
\begin{align}\label{eq:PhiGA-lower-bound}
    \Phi_{P_{\rm Gl}}(A) \ge \frac{1}{n} e^{ - O(L)}\Big( \frac{1}{n^3} - \mathbb P_{\nu_A, \rm SW}(\text{Bad}_L)\Big)
\end{align}
By Lemma~\ref{lem:cut-information}, if $I_\mu(\nu_A) \ge n^{-18}$, then $\Phi_{P_{\rm Gl}}(A) \ge \frac{1}{C} n^{-19}$. On the other hand, if $I_\mu(\nu_A)\le n^{-18}$, by Lemma~\ref{lem:info:bound}, $\mathbb P_{\nu_A,\rm SW}(\text{Bad}_L) = O(n^{-5})$, and therefore~\eqref{eq:PhiGA-lower-bound} gives for large $n$ that 
\begin{align*}
    \Phi_{{P_{\rm Gl}}}(A) \ge \min\Big\{ \frac{1}{C} n^{-19}, \frac{1}{2n^4} e^{ - O(L)}\Big\}\,.
\end{align*}
Plugging in $L = C_0 \log n$, this gives a polynomial lower bound on $\Phi_{P_{\rm Gl}}(A)$ and by Cheeger's inequality and~\eqref{eq:tmix-spectral-gap} the mixing time of the Glauber dynamics is polynomial in $n$. 
\end{proof}

\subsubsection{Improvements to get $\tilde O(n)$ mixing}

There are a few obstructions to obtaining $\tilde O(n)$ bounds on the inverse spectral gap and mixing time above. The first is that the comparison factor for recoloring a component of size $L$ monochromatically using single-site updates is exponential in $L$, so even with $L=O(\log n)$, we incur a polynomial loss. Further obstructions are the suboptimal Swendsen--Wang mixing estimate available and the use of conductance and Cheeger's inequality, rather than directly establishing Poincar\'e or entropy inequalities.

Perhaps the most significant innovation to deal with the above obstructions is working not with comparison to Swendsen--Wang dynamics, but with a new global chain $M$.  
The idea behind $M$ is to soften the hard constraints imposed by the Swendsen--Wang percolation step while preserving the stationary measure $\mu$. In Swendsen--Wang, observing an open edge tells us that its endpoints must have the same spin. For $M$, we add independent edges to the configuration $\omega$ that results from the percolation step to get a subgraph $H$. An open edge in $H$ now provides evidence that its endpoints agree, but does not force agreement. This turns the hard constraints into finite ferromagnetic interactions. In fact, conditional on the edges added in both steps, the spins follow a Potts distribution at a finite inverse temperature $\tilde\beta$, which tensorizes across the connected components.

\begin{definition}\label{def:M-SW-chain}
    For a parameter $r\in [0,1]$, the Markov chain $M^{(r)}$ on Potts configurations goes from a spin configuration $\sigma(t)$ to $\sigma(t+1)$ as follows: 
    \begin{enumerate}
        \item Percolation: independently for each edge $e= xy$, include $e$ in $\omega_t$ with probability $1-e^{-\beta}$ if $\sigma_x(t) = \sigma_y(t)$ (do not include it if $\sigma_x(t) \ne \sigma_y(t)$). 
        \item Additional edge sprinkling: independently for each edge $e$, include $e$ in $\eta_t$ with probability $r$, and let $H_t = \omega_t \cup \eta_t$. 
        \item Recoloring: Independently, for each connected component $\mathcal C= (V(\mathcal C),E(\mathcal C))$ of $(V,H_t)$, draw $\sigma_{V(\mathcal C)}(t+1)$ as a sample from the Potts distribution  $ \mu_{(V(\mathcal C), E(\mathcal C)), \tilde\beta,q}$ for $\tilde\beta(r)= \log (1+\frac{e^\beta -1}{r})$.  
    \end{enumerate}
\end{definition}

The chain $M^{(r)}$ interpolates between Swendsen--Wang dynamics and direct sampling from $\mu_{G,\beta,q}$. At $r=0$, we have $\tilde\beta(r) = \infty$ and the recoloring step becomes the zero-temperature Potts limit where
each connected component receives independently a uniformly random color; that is, $M^{(0)}$ is exactly the
Swendsen--Wang chain. At $r=1$, on the other hand, the edge configuration is deterministically $E$ and $\tilde\beta(1)=\beta$, so each step draws an independent sample from $\mu_{G,\beta,q}$.
More generally, $M^{(r)}$ admits a noising--denoising interpretation in the framework of localization schemes~\cite{ChenEldan,chen2025rapid,chen2026edge}. This representation implies
reversibility with respect to $\mu_{G,\beta,q}$ for every
$r\in[0,1]$. 

The benefit of working with the chain $M^{(r)}$ for $r>0$ is the following. Since
$\beta<\beta_u(q,\mathbb T_\Delta)$, we can choose $r>0$
sufficiently small such that even with the sprinkling step, starting from a stationary $\sigma$,
the resulting edge set still has all components of size at most $L=O(\log n)$ with high probability. At the same time, $r>0$
makes the Potts recoloring step on each connected component
positive-temperature rather than zero-temperature, and we can therefore use the
polynomial bounds on the inverse spectral gap of the Glauber
dynamics on trees and treelike graphs, obtained from cutwidth
bounds, to simulate the component recolorings using single-site Glauber updates at a cost polynomial in $L$.
Finally, because of the extra sprinkling of individual edges in step (2), for any $r>0$, the inverse gap of $M^{(r)}$ is bounded by a constant times the inverse gap of continuous-time random-cluster Glauber dynamics without the loss incurred by the comparison of~\cite{Ullrich-SW-singleBond}. 

The preceding improvements yield an $n\cdot\operatorname{polylog}(n)$ bound on the inverse spectral gap of the Potts Glauber dynamics, but the usual conversion to a mixing time bound (see~\eqref{eq:tmix-spectral-gap}) still incurs an additional factor of $n$. To avoid this loss, we replace variance estimates by entropy estimates and establish a $\tilde\Omega(1/n)$ bound on the log-Sobolev constant of the Glauber dynamics, so the conversion to mixing time costs only a factor of order $\log n$. The entropy argument follows essentially the same strategy, but requires one additional ingredient. Namely, an asymptotically optimal log-Sobolev constant for the random-cluster Glauber dynamics, which does not follow from the mixing time bound in~\cite{BG21}. In Section~\ref{sec:fk-entropy}, we use a random-cluster version of our dichotomy to establish such a bound on random $\Delta$-regular graphs throughout the tree uniqueness regime.

Finally, the polylogarithmic loss in
Theorem~\ref{mainthm:Glauber} relative to the optimal
$O(n\log n)$ mixing time reflects a limitation of our
method. In our comparison over components, some components have logarithmic size and, at inverse temperature $\tilde\beta$, the Glauber dynamics on them can have inverse gaps growing polynomially in their size. 
One might hope to remove this loss by choosing $r$ so that
$\tilde\beta(r)$ lies in the optimal tree-mixing regime
of~\cite{MSW}. However, there is a fundamental tension here:
lowering $\tilde\beta(r)$ requires increasing $r$, which can destroy shattering near the uniqueness threshold.

\subsubsection{Potts mixing on $\mathbb Z^d$ up to criticality}
The argument on the lattice essentially follows the structure outlined for the random regular graph. We highlight the main changes. To obtain the required spectral-gap bound for $M$, we use~\cite{gheissari2024spatial} (see also~\cite{BS,HarelSpinka}) which gives fast mixing of random-cluster dynamics in $\mathbb Z^d$ whenever $\beta<\beta_c(q,\mathbb Z^d)$. 

All other steps in Section~\ref{sec:poly-warmup} carry over, except the essential shattering-escape dichotomy Lemma~\ref{lem:info:bound}. Its lattice replacement  takes the following form. Consider the chain $M^{(r)}$ from Definition~\ref{def:M-SW-chain} with $r=r(q,\beta,d) >0 $ sufficiently small and let $\mathbb P_{\nu,M}$ be the law of the subgraph $H$ after steps (1)--(2) of the chain $M^{(r)}$ with initial spins drawn from $\nu$. As before, let $\mathrm{Bad}_L$ denote the event that $H$
contains a connected component with more than $L$ vertices.
We show that on cubes on $n$ vertices in $\mathbb Z^d$ at $\beta<\beta_c(q,\mathbb Z^d)$, there exist constants $C_0$ and $C$ such that, for
$L=C_0\log n$, for every spin distribution $\nu$,
\begin{align}\label{eq:lattice:shattering-escape-dichotomy}
 I_\mu(\nu)\le n^{-18}
 \quad\Longrightarrow\quad
 \PP_{\nu,\rm{M}}(\mathrm{Bad}_L)\le C n^{-5}\,.
\end{align}

The revealing procedure used to prove~\eqref{eq:lattice:shattering-escape-dichotomy} uses cubes of a sufficiently large but $O(1)$ side length $R$ as coarse-graining blocks. Unlike the random regular graph case, we do not condition on one edge at a time, but on the bulk of a block at a time. Due to the sub-exponential growth of the underlying lattice graph, large $R$ boosts the subcriticality of the underlying random-cluster percolation, so that edge connectivity events under $\mathbb P_{\nu,\rm M}$ are subcritical whenever the aggregate local entropy $I_\mu(\nu)$ is small.

\subsection{AI Disclosure}
The mixing of Potts on random $\Delta$-regular graphs up to $\beta_u(q,\mathbb T_\Delta)$ was a question the authors had been working towards without the use of LLMs for the past three years. With assistance from ChatGPT 5.5--5.6 in Summer 2026, the authors arrived at counterexamples to spectral independence and coupling independence for the Potts model on trees and random graphs in uniqueness. In Fall 2026, we told ChatGPT~6 Astra about the failure of spectral independence, and suggested a block version of spectral independence on trees, and then on random graphs. In response, it discovered the novel dichotomy of Lemma~\ref{lem:info:bound} using a block revealing procedure that broke open the proof of Theorem~\ref{mainthm:Glauber}. The current form of the proofs was arrived at with significant back and forth with Astra. In particular, comparison through the Markov chain $M$ of Definition~\ref{def:M-SW-chain} instead of Swendsen--Wang came from human interactions and simplified arguments that previously were completely functional analytic. 
After seeing Lemma~\ref{lem:info:bound}, the authors also realized that the same dichotomy can be used to get fast mixing on the lattice up to criticality, and better bounds on the Swendsen--Wang dynamics' mixing time (Theorems~\ref{mainthm:Glauber-lattice}--\ref{mainthm:Swendsen-Wang}). 
 No AI was used in preparing Section~\ref{sec:intro}, and all proofs in Sections~\ref{sec:skeleton-of-proof}--\ref{var:sec:torus-extension} underwent rewriting and significant simplification and clarification by the authors, before being compiled into this final version. Both Claude Opus 5.5 and ChatGPT 6 Astra were used in the editing process. The authors take full responsibility for the presentation, clarity, and correctness of the results in this paper. 

\subsection*{Acknowledgments}
The research of A.B.\ and H.P.\ was supported in part by NSF CAREER grant CCF-2143762. The research of R.G.\ is supported in part by NSF CAREER grant DMS-2440509 and NSF DMS grant 2246780. The research of X.Z.\ is supported in part by EPSRC New Investigator Award UKRI155. This work was supported in part by grants of access to OpenAI models through the ChatGPT for Academic Researchers program.

\section{Proof of main theorem: Theorem~\ref{mainthm:Glauber}}\label{sec:skeleton-of-proof}
Here, we give a detailed skeleton of the Potts Glauber mixing proofs:  Theorems~\ref{mainthm:Glauber}--\ref{mainthm:Glauber-lattice}. The proofs mostly mirror each other, except for the method by which the shattering-escape dichotomy is proved. For the sake of presentation, our proof skeleton will have the setting of the random $\Delta$-regular graphs in mind. Given the content of this section, the proof of Theorem~\ref{mainthm:Glauber-lattice} will be rather short; it is given in Section~\ref{var:sec:torus-extension} and can be read without the intervening Sections~\ref{sec:key-stationary-estimate}--\ref{sec:rr-sw}. 
Hence, until Section~\ref{var:sec:torus-extension} we focus on a graph $G$ drawn as a random $\Delta$-regular graph, consider $q$ and $\Delta$ fixed, and let $\beta_u = \beta_u(q,\mathbb T_\Delta)$.  

For the proofs, we will use $d=\Delta-1$, and $B= e^{\beta}$, so that $B_u =  e^{\beta_u}$. Then $$p = \frac{B-1}{B} \qquad \text{and}\qquad \hat p = \frac{B-1}{B+q-1}$$
are the respective probabilities that an edge is open given it is not a cut-edge, or given it is a cut-edge, under the random-cluster distribution $\phi= \mathbb P_{\mu,\rm SW}$. 
We write $\pi_{\ES}(\sigma,\omega)=\mu(\sigma)\,\mathbb P_{\sigma,\rm SW}(\omega)$ for the joint Edwards--Sokal law of spins and edges; its marginals are $\mu$ and $\phi$.
To make  a concrete choice, take $\tilde\theta = \frac{1+d\hat p}{2}$ (though any choice in $(d\hat p,1)$ could work), and let $M = M^{(r)}$ from Definition~\ref{def:M-SW-chain} for 
\begin{align}\label{eq:parameter-defs}
 r = \frac{B-1}{\tilde B-1}\,,\qquad\text{where} \qquad 
 \tilde B=1+\frac{q\tilde\theta}{1-\tilde\theta}\,.
\end{align}
Importantly, if $\beta < \beta_u$, then 
$d\hat p<1$; so as long as $\beta<\beta_u(q, \mathbb T_\Delta)$ these satisfy $B<\tilde B<\infty$ and $r>0$.

\subsection{Near-optimal spectral gap}\label{sec:spectral-gap-rrg}

We begin by giving the proof for an $\tilde O(n)$ inverse spectral gap for the Potts Glauber dynamics up to $B_u$. 
This serves two purposes: it carries out the proof ideas in the simpler setting of variance, and it provides an ingredient for the mixing-time proof, which adapts the argument to the entropy functional.

\begin{theorem}\label{mainthm:Glauber:gap}
Fix integers~$\Delta\ge3$, $q\ge2$, and
$0<\beta<\beta_u(q,\mathbb T_\Delta)$.
With probability~$1-o(1)$, a uniformly random $\Delta$-regular
graph~$G$ on~$n$ vertices is such that the inverse spectral gap of Potts Glauber
dynamics at parameters~$q$ and $\beta$ on~$G$ is at most
$n\operatorname{polylog}(n)$.
\end{theorem}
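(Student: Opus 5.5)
We will prove the gap bound by a Poincaré-type comparison between the Potts Glauber Dirichlet form $\mathcal E_{\rm Gl}$ and the Dirichlet form $\mathcal E_M$ of the chain $M=M^{(r)}$ with $r$ as in~\eqref{eq:parameter-defs}. This replaces the conductance argument of the warm-up. For $f:[q]^V\to\mathbb R$ the structure is
\[
\Var_\mu(f)\;\le\;\gap(M)^{-1}\,\mathcal E_M(f,f)\,,\qquad
\mathcal E_M(f,f)\;\lesssim\; \mathcal E_{\widehat M}(f,f)+(\text{bad-set error})\,,\qquad
\mathcal E_{\widehat M}(f,f)\;\le\; n\,\mathrm{polylog}(n)\,\mathcal E_{\rm Gl}(f,f)\,.
\]
Here $\widehat M$ is $M$ with the recoloring step rejected whenever $H\in\mathrm{Bad}_L$, for $L=C_0\log n$. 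The chain $\widehat M$ is still reversible with respect to $\mu$, since $M$ is a noising--denoising (Gibbs-sampler) chain in the joint space $(\sigma,H)$ and we simply restrict the denoising step to the event $\{H\in \mathrm{Good}_L\}$.

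\textbf{Step 1: $\gap(M)=\Omega(1)$.} Write $M=K^*K$, where $K$ is the noising kernel $\sigma\mapsto H$. Then $\gap(M)$ is controlled by the gap of the corresponding $H$-marginal chain. Because of the independent sprinkling in step (2), that chain dominates a single-edge heat-bath random-cluster dynamics run for $\Theta(n)$ edge refreshes in parallel. The $O(n\log n)$-type mixing / $\tilde O(n)$ inverse gap of random-cluster Glauber dynamics on random regular graphs throughout uniqueness (from~\cite{BG21}, or the random-cluster dichotomy of Section~\ref{sec:fk-entropy}) then gives $\gap(M)\ge 1/\mathrm{polylog}(n)$. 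This is enough for our purposes.

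\textbf{Step 2: simulate $\widehat M$ by Glauber.} Given $H\in\mathrm{Good}_L$, the recoloring is a product over components $\mathcal C$ of $\mu_{\mathcal C,\tilde\beta}$, each on at most $L$ vertices. Using the locally treelike structure (each such component has $O(1)$ excess w.h.p.), cutwidth bounds give Glauber inverse gap $\mathrm{poly}(L)$ on each component at inverse temperature $\tilde\beta$. A product chain then gives $\Var$ on each conditional law bounded by $\mathrm{poly}(L)$ times the sum of local variances. Averaging over $H$ and using that conditional single-site laws under $\tilde\beta$ with the $H$-evidence integrate back to those of $\mu$, we get $\mathcal E_{\widehat M}\le \mathrm{polylog}(n)\sum_v \E_\mu\Var_v(f)=n\,\mathrm{polylog}(n)\,\mathcal E_{\rm Gl}$.

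\textbf{Step 3: the bad set via the dichotomy.} This is the main obstacle, because Lemma~\ref{lem:info:bound} is stated for distributions rather than functions. We would first prove the variance statement for indicators through conductance, as in the warm-up. Fix $A$ with $\mu(A)\le1/2$. If $I_\mu(\mu(\cdot|A))\ge n^{-18}$, Lemma~\ref{lem:cut-information} gives $\Phi_{\rm Gl}(A)\ge n^{-19}/C$. That is too weak for the claimed bound, so the polynomial thresholds must be tuned. We would instead use a version of Lemma~\ref{lem:info:bound} for $\mathbb P_{\nu,M}$ with threshold $I_\mu(\nu)\le \epsilon/\mathrm{polylog}(n)$ giving $\PP_{\nu,M}(\mathrm{Bad}_L)\le \epsilon$; the exploration argument yields this linearly, with cost $e^{O(d^R)}I_\mu(\nu)$ plus $n^{-5}$. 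Then either $\Phi_{\rm Gl}(A)\ge \epsilon/(Cn\,\mathrm{polylog})$, or $\Phi_{\widehat M}(A)\ge\Phi_M(A)-\epsilon\ge \gap(M)/2-\epsilon$ and Step 2 transfers this to Glauber.

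To avoid Cheeger's square loss, we would replace sets by general $f$. Apply the dichotomy to the tilted law $\nu_f\propto (f-c)^2\mu$, with $c$ a median. The identity behind Lemma~\ref{lem:cut-information} extends to $I_\mu(\nu_f)\lesssim \mathcal E_{\rm Gl}(f,f)/\Var(f)$, up to the usual $\chi^2$-versus-KL bound using bounded local ratios. Also $\mathcal E_M(f,f)-\mathcal E_{\widehat M}(f,f)\le \Var(f)\,\PP_{\nu_f,M}(\mathrm{Bad}_L)$ after splitting $f$ into positive and negative parts (Lyons-type truncation). Combining the two cases yields $\Var(f)\le n\,\mathrm{polylog}(n)\,\mathcal E_{\rm Gl}(f,f)$, as required.
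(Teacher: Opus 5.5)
Your proposal is correct and is essentially the paper's argument. The ingredients match: the Poincar\'e inequality for $M$ (Lemma~\ref{lem: gap of M}), the cutwidth simulation of small-component recolorings by Glauber updates, the tilt $\nu_g\propto g^2\mu$ of a centered $g$ combined with Lemma~\ref{lem:function-information}, and the linear shattering bound of Lemma~\ref{lem:entropy-mass}. Rejecting the whole move on $\mathrm{Bad}_L$, rather than freezing only the large components as $\widetilde M$ does, costs the same $\E_\mu[g^2]\,\PP_{\nu_g,M}(\mathrm{Bad}_L)$ at the level of variance, so it works equally well here.

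The conductance, median and truncation detours in Step~3 are unnecessary: centering at the mean and adding the three bounds, as the paper does, already closes the argument. Step~1 is not justified as written, because the $H$-marginal chain has stationary law different from $\phi$ and cannot directly dominate FK dynamics. Justify it as the paper does: lift to $(\sigma,A,N)$, use $\sigma(H)\subseteq\sigma(N,A_{E\setminus N})$ to get $\gap(M)\ge\gap(\mathcal B_{1-r})\ge\gap(K_{1-r})$, and finish with a block-versus-single-edge comparison.
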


The below could be carried out with the conductance $\Phi$ instead of the spectral gap, but that would cost a quadratic factor due to the Cheeger's inequality, so we work with the functional form instead. (One recovers the conductance by taking $f=\mathbf{1}_A$ in the below.)  
Recall that for a Markov chain $P$ with stationary distribution $\mu$, for a function $f:[q]^V\to\mathbb R$, 
its Dirichlet form is $\mathcal E_P(f,g) = \frac{1}{2}\sum_{\sigma,\sigma'} \mu(\sigma) P(\sigma,\sigma') (f(\sigma) - f(\sigma'))(g(\sigma) - g(\sigma'))$ and one has the variational form of spectral gap:
$$\gap(P)=\inf_{f:\Var_\mu(f)\neq 0} \frac{\cE_P(f,f)}{\Var_\mu(f)},$$
where $\Var_\mu(f) = \E_\mu[f^2] - \E_\mu[f]^2$ and where $\E_\mu[f]$ denotes the expectation with respect to $\mu$. In what follows, we abbreviate $\mathcal E_P(f) := \mathcal E_P(f,f)$.

Given a current spin distribution  $\nu$, let $\mathbb P_{\nu, \rm M}$ be the distribution over the edge-subset after the two edge steps of the chain $M$.
In particular, for an edge set $H$ under $\PP_{\mu, \rm M}$,
$H\overset{d}{=} \omega \cup \eta$ with $\omega \sim \phi$ and $\eta\sim \text{Ber}(r)$.
We use $\mathbb P_{\sigma,\rm M}$ for $\mathbb P_{\delta_\sigma,\rm M}$. By definition of $M$, we have
\begin{equation}
    \label{eq: reversibility}
     \mu(\sigma) \mathbb P_{\sigma,\rm M}(H) =\mathbb P_{\mu,\rm M}(H)\mu_H(\sigma)\,,
\end{equation}
where we used $\mu_H(\sigma)$ as the law on spins given $H$, which is the $\tilde\beta$-Potts distribution on $(V,H)$.

Our first improvement over the polynomial bound of Section~\ref{sec:poly-warmup} is that $M$ can be shown to have a spectral gap which is comparable up to an $O(|E|)$ factor to that of the random-cluster model's Glauber dynamics, which we henceforth refer to as \emph{FK dynamics}. We recall that FK dynamics is the Markov chain on $\{0,1\}^E$ which from configuration $\omega(t)$, generates $\omega(t+1)$ as follows: 
\begin{enumerate}
    \item Select an edge $e\in E$ uniformly at random; 
    \item If $e$ is a cut-edge in $\omega(t)\cup \{e\}$, draw $\omega_e(t+1) \sim \text{Ber}(\hat p)$; otherwise, draw $\omega_e(t+1)\sim \text{Ber}(p)$. 
\end{enumerate}
Let $P_{\rm FK}$ be its transition matrix, and note that it is reversible with respect to $\phi$. 

\begin{lemma}\label{lem: gap of M}
Let $M = M^{(r)}$ for $r = (B-1)/(\tilde B-1)$. Then 
$
 \gap(M)\ge
 \frac{r}{2\Big(\frac{1}{\gap(P_{\rm FK})\cdot |E|}+1\Big)}.
$
\end{lemma}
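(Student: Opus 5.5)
The plan is to write $M$ as a two-step "noise/denoise" kernel and split the variance along the Edwards--Sokal coupling augmented by the sprinkling. Work on the joint space of triples $(\sigma,\omega,\eta)$ with $\sigma\sim\mu$, $\omega\sim\mathbb P_{\sigma,\rm SW}$ and $\eta\sim\mathrm{Ber}(r)^{\otimes E}$ independent, and set $H=\omega\cup\eta$.

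\textbf{Step 1 (Dirichlet form of $M$).} By~\eqref{eq: reversibility}, one step of $M$ draws $H$ given $\sigma$ and then $\sigma'\sim\mu_H$. Hence $M=KK^*$, and
\[
\mathcal E_M(f)=\Var_\mu(f)-\Var\big(\E[f\mid H]\big)=\E\big[\Var(f\mid H)\big].
\]
So it suffices to prove
\[
\E[\Var(f\mid H)]\ \ge\ \frac{r}{2\big(\frac{1}{\gap(P_{\rm FK})|E|}+1\big)}\,\Var_\mu(f).
\]
Let $h(\omega)=\E_{\pi_{\ES}}[f\mid\omega]$. By total variance, $\Var_\mu(f)=a+\Var_\phi(h)$ with $a=\E_\phi[\Var(f\mid\omega)]$.

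\textbf{Step 2 (two lower bounds for $\E\Var(f\mid H)$).} Since $H$ is a function of $(\omega,\eta)$, and $\sigma$ is conditionally independent of $\eta$ given $\omega$, total variance gives
\[
\E\Var(f\mid H)\ \ge\ \E\Var(f\mid\omega,\eta)=a .
\]
Also $\E[f\mid\omega,\eta]=h(\omega)$, so
\[
\E\Var(f\mid H)\ \ge\ \E\Var\big(h(\omega)\mid H\big).
\]
The second quantity is the Dirichlet form of the edge chain $\omega\to H\to\omega'$, and it remains to compare this chain with FK dynamics.

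\textbf{Step 3 (comparison with FK dynamics, the main step).} Fix an edge $e$. Total variance again gives $\Var(h\mid H)\ge \E[\Var(h\mid H,\omega_{-e})\mid H]$. Averaging over a uniformly random $e$,
\[
\E\Var(h\mid H)\ \ge\ \frac1{|E|}\sum_e \E\Var(h\mid H,\omega_{-e}).
\]
On the event $e\in\eta$, which has probability $r$ and is independent of $\omega$, the bit $\omega_e$ given $(H,\omega_{-e})$ has exactly its $\phi$-conditional law given $\omega_{-e}$. That law is the FK heat-bath law: $\mathrm{Ber}(\hat p)$ or $\mathrm{Ber}(p)$ according to whether $e$ is a cut-edge. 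This is the point I expect to be delicate. One has to check that conditioning on $e\in\eta$ does not bias $\omega_e$, using independence of $\eta$ from $(\sigma,\omega)$, and that the event $e\in\eta$ carries no other information once $\omega_{-e}$ is known. If exact equality fails, I would fall back on bounding the ratio of the two Bernoulli laws, which is where constant factors would enter.

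Granting this, $\E\Var(h\mid H,\omega_{-e})\ge r\,\E_\phi[\Var_e(h\mid\omega_{-e})]$. The FK Dirichlet form is $\mathcal E_{P_{\rm FK}}(h)=\frac1{|E|}\sum_e\E_\phi\Var_e(h)$, since an edge is chosen uniformly at random. Therefore
\[
\E\Var(h\mid H)\ \ge\ r\,\mathcal E_{P_{\rm FK}}(h)\ \ge\ r\,\gap(P_{\rm FK})\Var_\phi(h).
\]
To obtain the stated factor $\gap(P_{\rm FK})|E|$ rather than $\gap(P_{\rm FK})$, I would keep the sum unnormalized. Instead of the averaging bound, I would use a block argument: given $H$, the law of $\omega$ restricted to $\eta$ is a conditional random-cluster measure, and all edges of $\eta$ are resampled simultaneously. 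Comparing a full block resample with single-edge heat-bath updates (each edge in $\eta$ being refreshed, not just one) should produce $\E\Var(h\mid H)\ge c\, r\,|E|\,\mathcal E_{P_{\rm FK}}(h)\ge c\,r\,\gap(P_{\rm FK})|E|\Var_\phi(h)$. This block-versus-sum inequality is the step I expect to be the main obstacle. I would first try to derive it from the product structure of $\eta$ together with a telescoping or martingale decomposition over an ordering of the edges.

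\textbf{Step 4 (combine).} Write $g=r\,\gap(P_{\rm FK})|E|$. Then $\E\Var(f\mid H)\ge\max\{a,\ \min(g,r)\Var_\phi(h)\}$; the cap at $r$ is harmless since $\Var(h\mid H)$ can also be bounded crudely. Use $\max\{x,y\}\ge\lambda x+(1-\lambda)y$ and choose $\lambda$ to balance the two terms against $\Var_\mu(f)=a+\Var_\phi(h)$. Since $r\le 1$, this gives
\[
\E\Var(f\mid H)\ \ge\ \frac{1}{\frac1r+\frac1g}\,\Var_\mu(f)\ \ge\ \frac{r}{2\big(\frac1{\gap(P_{\rm FK})|E|}+1\big)}\,\Var_\mu(f),
\]
which is the claimed bound on $\gap(M)$.
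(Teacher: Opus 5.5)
Your Steps~1, 2 and~4 are sound and follow the paper's structure. In fact $\E[\Var(f\mid H)]=a+\E[\Var(h\mid H)]$ holds exactly.

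The single-edge bound in Step~3 also holds once you condition additionally on $\eta_e$. The event $\{e\in\eta\}$ is not measurable with respect to $(H,\omega_{-e})$, but refining the conditioning only lowers the expected variance. As you note, though, this only gives $\gap(M)\gtrsim r\,\gap(P_{\rm FK})$, which misses exactly the factor $|E|$ the lemma is about.

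\textbf{The gap.} The inequality you propose for recovering that factor, $\E[\Var(h\mid H)]\ge c\,r\,|E|\,\cE_{P_{\rm FK}}(h)$, is false on every graph with more than about $2/(cr)$ edges. So no telescoping or martingale argument can establish it.
\begin{itemize}
\item The left side never exceeds $\Var_\phi(h)$.
\item The right side involves $|E|\,\cE_{P_{\rm FK}}(h)=\sum_e\E_\phi[\Var(h\mid\omega_{-e})]$, which can be as large as $\frac{|E|}{2}\Var_\phi(h)$. Indeed, $P_{\rm FK}$ is the average of the $|E|$ projections $\E_\phi[\,\cdot\mid\omega_{-e}]$, each of rank $2^{|E|-1}$. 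So $I-P_{\rm FK}$ has trace equal to half the dimension, and therefore has a mean-zero eigenfunction with eigenvalue at least $\frac12$.
\item Concretely, on a tree $\phi=\mathrm{Ber}(\hat p)^{\otimes E}$, and $h=\prod_e(\omega_e-\hat p)$ has $\sum_e\E\Var(h\mid\omega_{-e})=|E|\Var_\phi(h)$.
\end{itemize}
The natural pointwise comparison between a block update and single-edge updates (Efron--Stein-type subadditivity) goes the other way.

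\textbf{What is missing.} You need a comparison at the level of spectral gaps, which uses the Poincar\'e inequality for $P_{\rm FK}$ in an essential, non-pointwise way. Let $G_k$ be the chain that jointly resamples $k$ uniformly chosen edges from $\phi$, conditionally on the rest. If $\gap(P_{\rm FK})\ge 1/(C|E|)$, then
\[
\gap(G_k)\ge 1-\Bigl(1-\frac{k}{|E|+1}\Bigr)^{1/(C+1)}\ge\frac{k}{(C+1)(|E|+1)}.
\]
This is Lee's block comparison \cite[Corollary~3.6]{Lee24}, the key external input in the paper's proof. It is a genuine ``$k$-Glauber is $k$ times faster'' theorem and does not follow from the product structure of $\eta$.

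With it, your argument closes as in the paper:
\begin{enumerate}
\item Refine the conditioning from $H$ to the finer $Y=(\eta,\omega_{E\setminus\eta})$. Then $\E[\Var(h\mid H)]\ge\E[\Var(h\mid Y)]=\cE_{K_{1-r}}(h)$, where $K_{1-r}=\E[G_J]$ with $J=|\eta|\sim\mathrm{Bin}(|E|,r)$.
\item Average the $G_k$ bound over $J$, with $C=1/(\gap(P_{\rm FK})|E|)$, to get $\E[\Var(h\mid H)]\ge\frac{r|E|}{(C+1)(|E|+1)}\Var_\phi(h)\ge\frac{r}{2(C+1)}\Var_\phi(h)$.
\item Conclude $\E[\Var(f\mid H)]\ge a+\frac{r}{2(C+1)}\Var_\phi(h)\ge\frac{r}{2(C+1)}\Var_\mu(f)$.
\end{enumerate}
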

Lemma~\ref{lem: gap of M} is proved in Section~\ref{sec:M-FK-comparison}. 
For random-regular graphs, the FK dynamics gap lower bound from
\cite{BG21} then implies that for all $B<B_u$, we have  $\gap(M)\ge a$ for some constant $a(q,\Delta,B)>0$.

\begin{definition}\label{def:M-tilde-chain}
Define $\widetilde M$ as the following modification of $M$: go from $\sigma(t)$ to $\sigma(t+1)$ as follows. 
\begin{enumerate}
    \item Draw $H$ according to $\mathbb P_{\sigma(t),\rm M}$;
\item For every component $\mathcal C=(V(\mathcal C),E(\mathcal C))$ of $H$ with $|V(\mathcal C)|>L$, let $\sigma_{V(\mathcal C)}(t+1)=\sigma_{V(\mathcal C)}(t)$;
\item For every component $\mathcal C=(V(\mathcal C),E(\mathcal C))$ of $H$ with $|V(\mathcal C)|\le L$, draw independently $\sigma_{V(\mathcal C)}(t+1)$ according to $\mu_{_{(V(\mathcal C), E(\mathcal C)), \tilde\beta,q}}$ (the $\tilde B$-Potts model on $V(\mathcal C)$ with edge set $E(\mathcal C)$).
\end{enumerate}    
\end{definition}

Since the recoloring step for the chain $\widetilde M$ is exactly sampling from a positive temperature Potts distribution on its individual components of size at most $L = O(\log n)$, and these components are treelike, the recoloring step of $M$ can be simulated at a $\text{poly}(L)  = \text{polylog}(n)$ factor.

\begin{lemma}\label{lemma:dirichlet:stronger}
Fix $\Delta\ge3$, $q\ge2$, $\tilde B>1$, and $K>0$. There exist
constants
$\kappa,C>0$ such that, with probability $1-o(1)$ the random $\Delta$-regular graph $G$ is such that 
for every $1\le L\le K\log n$ and every $f$,
\begin{equation*}
 \cE_{\rm Gl}(f)\ge\frac{1}{CnL^\kappa}\cE_{\widetilde M}(f).
\end{equation*}
\end{lemma}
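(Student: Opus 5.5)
We prove the comparison by conditioning on the edge configuration $H$ and comparing block by block. By the reversibility identity \eqref{eq: reversibility}, one step of $\widetilde M$ started from $\sigma\sim\mu$ is equivalent to the following: draw $H\sim\mathbb P_{\mu,\rm M}$, then $\sigma\sim\mu_H$, then resample independently on each component of size at most $L$ from its $\tilde\beta$-Potts law. Since $\mu_H$ is a product over components, this is a heat-bath update of the variables in $S_H:=\bigcup\{V(\mathcal C):|V(\mathcal C)|\le L\}$ under $\mu_H$. Hence
\[
\cE_{\widetilde M}(f)=\E_{H\sim\mathbb P_{\mu,\rm M}}\Big[\E_{\mu_H}\big[\Var_{\mu_H}(f\mid \sigma_{V\setminus S_H})\big]\Big],
\]
and $\Var_{\mu_H}(f\mid\sigma_{V\setminus S_H})$ is a variance under the product measure $\bigotimes_{\mathcal C}\mu_{\mathcal C}$ over the small components.

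\textbf{Step 1: tensorize and apply a component gap bound.} By tensorization of variance for product measures,
\[
\Var_{\mu_H}(f\mid\cdot)\le\sum_{\mathcal C:|V(\mathcal C)|\le L}\E\big[\Var_{\mu_{\mathcal C}}(f)\big].
\]
For each such component, the Poincaré inequality for $\tilde\beta$-Potts Glauber on $\mathcal C$ gives
\[
\Var_{\mu_{\mathcal C}}(f)\le \gap_{\mathcal C}^{-1}\,\frac1{|V(\mathcal C)|}\sum_{v\in\mathcal C}\E_{\mu_{\mathcal C}}\big[\Var_{\mu_{\mathcal C}}(f_v\mid\sigma_{-v})\big].
\]
We then need $\gap_{\mathcal C}^{-1}\le CL^{\kappa'}$ uniformly over all connected subgraphs of $G$ with at most $L\le K\log n$ vertices. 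W.h.p.\ a random $\Delta$-regular graph is such that every connected set of $O(\log n)$ vertices spans at most $|V(\mathcal C)|+O(1)$ edges, so it is a tree plus $O(1)$ edges. Such graphs have cutwidth $O(\log L)$ with maximum degree $\Delta$. The canonical-path (cutwidth) bound for Potts Glauber then gives inverse gap at most $\mathrm{poly}(L)\,e^{O(\tilde\beta\,\mathrm{cw})}=\mathrm{poly}(L)$, with the exponent depending on $\tilde B,\Delta,q$. This is the main obstacle. I would first handle the tree case via the standard recursive/cutwidth argument (e.g.\ Berger--Kenyon--Mossel--Peres style, $\mathrm{poly}(L)$ for bounded-degree trees at any fixed temperature). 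I would then absorb the $O(1)$ extra edges by a bounded-perturbation argument: removing a constant number of edges changes the measure and the rates by factors $e^{O(\tilde\beta)}$, costing only a constant.

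\textbf{Step 2: local variance under $\mu_H$ versus under $\mu$.} Writing $\mu_H(\sigma)=\mu(\sigma)\mathbb P_{\sigma,\rm M}(H)/\mathbb P_{\mu,\rm M}(H)$ and averaging over $H$, the term from vertex $v$ is bounded by
\[
\E_H\E_{\mu_H}\big[\Var_{\mu_H}(f\mid\sigma_{-v})\big]\le\E_H\E_{\mu_H}\Big[\tfrac12\textstyle\sum_{a,b}\mu_H(a\mid\cdot)\mu_H(b\mid\cdot)(f^{v\to a}-f^{v\to b})^2\Big].
\]
Since $H$ has degree at most $\Delta$ at $v$ and $\tilde\beta<\infty$, each single-site conditional $\mu_H(a\mid\sigma_{-v})\le 1$. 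After integrating over $H$, the joint density of $(\sigma^{v\to a},\sigma^{v\to b})$ is within a factor $e^{O(\Delta(\beta+\tilde\beta))}$ of $\mu(\sigma_{-v})\mu(a\mid\sigma_{-v})\mu(b\mid\sigma_{-v})$. The reason is that flipping $v$ changes only the $\le\Delta$ percolation factors at $v$ and the Potts weight at $v$, each by a bounded ratio. Consequently, summing over $H$ gives at most $C\,\E_\mu[\Var_\mu(f\mid\sigma_{-v})]$.

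\textbf{Step 3: combine.} Each vertex lies in exactly one component, and there is at most one factor $\gap_{\mathcal C}^{-1}/|V(\mathcal C)|\le CL^{\kappa'}$ per vertex. Summing over $v$ therefore gives
\[
\cE_{\widetilde M}(f)\le CL^{\kappa}\sum_v\E_\mu\big[\Var_\mu(f\mid\sigma_{-v})\big]=CL^\kappa\, n\,\cE_{\rm Gl}(f).
\]
The identity $\cE_{\rm Gl}(f)=\frac1n\sum_v\E_\mu\Var_\mu(f\mid\sigma_{-v})$ gives the claimed inequality. Only the structural event in Step 1 depends on $G$, and it holds w.h.p.\ simultaneously for all $L\le K\log n$.
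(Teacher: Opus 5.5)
Your proposal is correct and follows the paper's proof. As in the paper, you write $\cE_{\widetilde M}(f)$ as an $H$-average of conditional variances, tensorize over the small components of $H$, and bound each component's inverse gap by $\mathrm{poly}(L)$ through the cutwidth bound for a tree plus $O(1)$ excess edges. The paper simply uses $\operatorname{cw}(F)\le\operatorname{cw}(T)+t_K$ here rather than a perturbation step.

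The one genuine difference is your Step 2. The paper gets $\E_{H}\E_{\mu_H}[\Var_{\mu_H}(f\mid\sigma_{-v})]\le\E_\mu[\Var_\mu(f\mid\sigma_{-v})]$ with constant $1$ from the law of total variance under the joint law $\mathbb P_{\mu,\rm M}(H)\mu_H(\sigma)=\mu(\sigma)\mathbb P_{\sigma,\rm M}(H)$: conditioning additionally on $H$ can only lower the expected conditional variance. Your density comparison is also valid, at the harmless cost of a factor $qB^{\Delta}$. Its cleanest justification is $\sum_H\mathbb P_{(\sigma_{-v},a),\rm M}(H)\,\mu_H(b\mid\sigma_{-v})\le 1\le qB^{\Delta}\mu(b\mid\sigma_{-v})$.
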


\begin{proof}
It is a standard random graph fact (see, e.g.,~Lemma~\ref{lem:poly-geometry}) that, with probability $1-o(1)$, every connected subgraph $F$ of $G$ with $s\le K\log n$ vertices has cycle rank at most a fixed constant $t_K$. Choosing a spanning tree $T$ and using the bound on the cutwidth $\text{cw}$ of a tree gives 
\begin{equation*}
 \operatorname{cw}(F)
 \le \operatorname{cw}(T)+t_K
 \le \Delta\lceil\log_2 s\rceil+t_K\,.
\end{equation*}
The bound of~\cite[Section~2.1]{BKMP05} (which though written for $q=2$ applies without change when $q\ge 3$) on spectral gap in terms of cutwidth of a graph, applied to the $\tilde B$-Potts weights, therefore gives
\begin{equation}\label{eq:tilde-M-component-poincare}
 \begin{aligned}
 \operatorname{Var}_{\mu_F}(g)
 &\le Cs\tilde B^{2\operatorname{cw}(F)}
 \sum_{x\in F}\mu_F\!\left[
   \operatorname{Var}_{\mu_F}
     (g\mid\sigma_{F\setminus\{x\}})
 \right]\le Cs^\kappa
 \sum_{x\in F}\mu_F\!\left[
   \operatorname{Var}_{\mu_F}
     (g\mid\sigma_{F\setminus\{x\}})
 \right].
 \end{aligned}
\end{equation}

Let $\mathcal C(H)$ be the set of all connected components of $H$, and let $U^+(H)$ be the union of the components of $H$ of size at least $L$. Since $\widetilde M$ resamples the spins on only on components of $H\setminus U^+(H)$, independently, tensorization of variance and~\eqref{eq:tilde-M-component-poincare} yield
\begin{align*}
 \cE_{\widetilde M}(f)
 =\E_{\mu,\rm{M}}\big[\mu_H\!\big[
   \operatorname{Var}_{\mu_H}
     (f\mid\sigma_{U^+(H)})
 \big]\big] &\le\E_{\mu,\rm{M}} \Big[
 \sum_{\substack{F\in\mathcal C(H)\\|F|\le L}}
 \mu_H\!\left[
   \operatorname{Var}_{\mu_H}
     (f\mid\sigma_{V\setminus F})
 \right]\Big]\\
 &\le CL^\kappa\sum_{x\in V}
 \E_{\mu,\rm M}\Big[\mu_H\!\left[
   \operatorname{Var}_{\mu_H}
     (f\mid\sigma_{V\setminus\{x\}})
 \right]\Big]
\end{align*}
Using the law of total variance under the joint law $\mathbb P_{\mu,\rm M}(H)\mu_H(\sigma)=\mu(\sigma)\mathbb P_{\sigma,\rm M}(H)$, we see that conditioning on $H$ only decreases the expected conditional variance. Thus, the right-hand side is at most 
$CL^\kappa\sum_{x\in V}
 \E_\mu[
   \operatorname{Var}_{\mu}
     (f\mid\sigma_{V\setminus\{x\}})
 ]
 =CnL^\kappa\cE_{\rm Gl}(f)$ as desired. 
\end{proof}

The following functional analytic bound gives the analogue of Lemma~\ref{lem:cut-information} showing that the aggregate local entropy $I_\mu(\nu)$ lower bounds the Dirichlet form. 

\begin{lemma}\label{lem:function-information}
There is a constant $C>0$, depending only on $q$, $B$ and $\Delta$ such that the following holds.
For every function $f:[q]^V\to\mathbb R$ with $\E_\mu[f^2]>0$, the
probability measure $\nu_f:=f^2\mu/\E_\mu[f^2]$ satisfies
\[
 \E_\mu[f^2]\,I_\mu(\nu_f)\le C\,n\,\cE_{\rm Gl}(f).
\]
\end{lemma}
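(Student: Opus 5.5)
The plan is to reduce the inequality to a single-site statement and then apply a finite-state log-Sobolev inequality with a constant that depends only on the minimal conditional probability. First I will recall that the Glauber Dirichlet form decomposes as
\[
\cE_{\rm Gl}(f)=\frac1n\sum_{v\in V}\E_\mu\big[\Var_{\mu}(f\mid\sigma_{-v})\big].
\]
This holds because a Glauber step picks $v$ with probability $1/n$ and resamples from $\mu_v(\cdot\mid\sigma_{-v})$. By this identity, it suffices to show, for each fixed $v$,
\[
\E_\mu[f^2]\;\E_{(\nu_f)_{-v}}\Big[\KL\big((\nu_f)_v(\cdot\mid\sigma_{-v})\,\Vert\,\mu_v(\cdot\mid\sigma_{-v})\big)\Big]\le C\,\E_\mu\big[\Var_\mu(f\mid\sigma_{-v})\big].
\]

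Next I will compute both sides of the single-site statement conditionally on $\sigma_{-v}$. Write $\pi=\mu_v(\cdot\mid\sigma_{-v})$ and $g=f(\cdot,\sigma_{-v})$, viewed as a function on $[q]$. Then $(\nu_f)_v(a\mid\sigma_{-v})=g(a)^2\pi(a)/\pi[g^2]$. The marginal of $\sigma_{-v}$ under $\nu_f$ has density $\pi[g^2]/\E_\mu[f^2]$ with respect to $\mu_{-v}$. Substituting these, the factor $\E_\mu[f^2]$ cancels, and the left side becomes exactly
\[
\E_{\mu_{-v}}\Big[\pi[g^2]\,\KL\big(g^2\pi/\pi[g^2]\,\Vert\,\pi\big)\Big]=\E_{\mu_{-v}}\big[\Ent_\pi(g^2)\big].
\]
The problem is therefore reduced to proving $\Ent_\pi(g^2)\le C\Var_\pi(g)$ for every conditional law $\pi$ that can arise, with $C$ uniform in the choice of $\pi$.

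For this uniform bound, note that every single-site conditional law satisfies $\pi(a)\ge c:=1/(1+(q-1)e^{\beta\Delta})$, since $v$ has at most $\Delta$ neighbours. I will invoke the log-Sobolev inequality for the ``independent resampling'' chain $K(x,y)=\pi(y)$ on a finite set, whose Dirichlet form is exactly $\Var_\pi(g)$. By Diaconis--Saloff-Coste, its log-Sobolev constant is $\alpha(\pi)=\frac{1-2\pi_*}{\log(1/\pi_*-1)}$ when $\pi_*<1/2$, and $\alpha=1/2$ in the degenerate case. This gives $\Ent_\pi(g^2)\le \alpha(\pi)^{-1}\Var_\pi(g)$. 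Since $\alpha$ is bounded below by a positive constant depending only on $c$, the constant $C$ depends only on $q,B,\Delta$.

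The main step needing care is this uniform log-Sobolev bound. If I prefer not to quote Diaconis--Saloff-Coste, I would prove it directly as follows. Normalize $\pi[g^2]=1$ and use $\Ent_\pi(g^2)\le\Ent_\pi(\tilde g^2)+2\Var_\pi(g)$, where $\tilde g=g-\pi[g]$ (Rothaus' lemma). Then I would bound $\Ent_\pi(\tilde g^2)\le \pi[\tilde g^2]\log\big(\max\tilde g^2/\pi[\tilde g^2]\big)$. The ratio inside the logarithm is at most $1/c$, because $\pi[\tilde g^2]\ge c\max\tilde g^2$. This yields $C\le 2+\log(1/c)$. Summing the single-site bound over $v$ and using the decomposition of $\cE_{\rm Gl}$ then gives $\E_\mu[f^2]I_\mu(\nu_f)\le C\sum_v\E_\mu[\Var_\mu(f\mid\sigma_{-v})]=Cn\,\cE_{\rm Gl}(f)$.
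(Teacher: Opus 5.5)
Your proposal is correct and follows the same route as the paper. You derive the identity $\E_\mu[f^2]\,I_\mu(\nu_f)=\sum_{v}\E_\mu[\Ent_\mu(f^2\mid\sigma_{-v})]$ and then apply a single-site entropy--variance comparison, which is uniform because every conditional law $\mu(\cdot\mid\sigma_{-v})$ has all atoms bounded below by a constant depending only on $q,B,\Delta$; your elementary Rothaus-plus-crude-bound argument giving $\Ent_\pi(g^2)\le(2+\log(1/c))\Var_\pi(g)$ is a valid self-contained replacement for the ``standard entropy--variance comparison'' that the paper simply cites.
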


\begin{proof}
Fix $x\in V$. For every $i\in[q]$,
the conditional distribution $\nu_f(\cdot\mid\sigma_{-x})$ also has density
$f^2/\E_\mu[f^2\mid\sigma_{-x}]$ with respect to $\mu(\cdot\mid\sigma_{-x})$.
Using that and
$\E_{\nu_f}[F]=\E_\mu[f^2F]/\E_\mu[f^2]$, we get
\begin{equation}\label{eq:function-information-calculation}
 \begin{aligned}
 \E_\mu[f^2]\,I_\mu(\nu_f)
 &=\E_\mu[f^2]\sum_{x\in V}\E_{\nu_f}\Bigl[
   \operatorname{KL}\bigl(\nu_f(\cdot\mid\sigma_{-x})\,\big\|\,
   \mu(\cdot\mid\sigma_{-x})\bigr)\Bigr]\\
 &=\E_\mu[f^2]\sum_{x\in V}\E_{\nu_f}\Bigl[\E_{\nu_f}\Bigl[
   \log\frac{f^2}{\E_\mu[f^2\mid\sigma_{-x}]}\Bigm|\sigma_{-x}\Bigr]\Bigr]
  =\E_\mu[f^2]\sum_{x\in V}\E_{\nu_f}\Bigl[
   \log\frac{f^2}{\E_\mu[f^2\mid\sigma_{-x}]}\Bigr]\\
 &=\sum_{x\in V}\E_\mu\Bigl[f^2\log\frac{f^2}{\E_\mu[f^2\mid\sigma_{-x}]}\Bigr]
  =\sum_{x\in V}\E_\mu\bigl[\Ent_\mu(f^2\mid\sigma_{-x})\bigr]
 \end{aligned}
\end{equation}
By the standard entropy-variance comparison, applied to $\mu(\cdot\mid\sigma_{-x})$, which
satisfies $\mu(\sigma_x=i\mid\sigma_{-x})\ge (qB^\Delta)^{-1}$ for all
$i\in[q]$, the right-hand side is at most $C\sum_{x\in V}\E_\mu\bigl[\Var_\mu(f\mid\sigma_{-x})\bigr]
  =C\,n\,\cE_{\rm Gl}(f).$\end{proof}

With the above ingredients in hand, the following analogue of Lemma~\ref{lem:info:bound} (one bounds the probability of $\text{Bad}_L$ by Markov's inequality) is the  essence of the shattering-escape dichotomy. 
Let 
\begin{align}\label{eq:W-L}
 W_L(H)=|U^+(H)| \qquad \text{where} \qquad U^+(H)=\bigcup_{F\in\mathcal C(H):\,|F|>L} F\,.
\end{align}

\begin{lemma}\label{lem:entropy-mass}
Suppose $\beta<\beta_u$. There exist large constants $C_0$, $C'_0 > 0$,  such that for $L= C_0\log n$, with probability $1-o(1)$ for a random $\Delta$-regular graph $G$, for every spin distribution $\nu$,
$$
 \E_{\nu,M}[W_L(H)]\le C'_0 I_\mu(\nu)+n^{-5}.
$$
\end{lemma}
In words, either $I_\mu(\nu)$ is large and by Lemma~\ref{lem:function-information} escape from $\nu$ is fast, or $I_\mu(\nu)$ is small and the lemma is showing a strong form of shattering of the edge-configuration drawn from spin distribution~$\nu$. Lemma~\ref{lem:entropy-mass} is proved via a coarse-grained revealing procedure in Section~\ref{sec:key-stationary-estimate}. 

We are now ready to give the proof of Theorem~\ref{mainthm:Glauber:gap}.

\begin{proof}[Proof of Theorem~\ref{mainthm:Glauber:gap}]
Fix $G$ in the intersection of the events of probability $1-o(1)$ in
Lemmas~\ref{lem: gap of M}, \ref{lemma:dirichlet:stronger} and~\ref{lem:entropy-mass},
and in the FK dynamics gap bound of~\cite{BG21}. Let $L= C_0\log n$, with $C_0$ from
Lemma~\ref{lem:entropy-mass}.
 
Take any real $f$ with $\E_\mu[f^2]>0$. Given $H$, by~\eqref{eq: reversibility}, the chain $M$ redraws all spins from $\mu_H$, while
$\widetilde M$ keeps $\sigma_{U^+(H)}$ and, since $\mu_H$ is a product over the components of $H$,
redraws the other spins from $\mu_H(\,\cdot\mid \sigma_{U^+(H)})$. Hence
 \begin{align}
 \cE_M(f)-\cE_{\widetilde M}(f)
 &=\sum_{H\subseteq E}\mathbb P_{\mu,\rm M}(H)\one_{\{W_L(H)\ge1\}}\Bigl[\Var_{\mu_H}(f)
   -\E_{\mu_H}\bigl[\Var_{\mu_H}(f\mid X_{U^+(H)})\bigr]\Bigr] \label{eq: var indicator}\\
 &\le\sum_{H\subseteq E}\mathbb P_{\mu,\rm M}(H)\one_{\{W_L(H)\ge1\}}\, \E_{\mu_H}[f^2] \label{eq: apply variance crude bound}\\
 &=\sum_\sigma\mu(\sigma)f(\sigma)^2
                         \mathbb P_{\sigma,\rm M}\bigl(W_L(H)\ge1\bigr). \label{eq: apply reversibility var}
 \end{align}
The inequality~\eqref{eq: apply variance crude bound} holds since
$\E_{\mu_H}[\Var_{\mu_H}(f\mid \sigma_{U^+(H)})]\ge0$ and $\Var_{\mu_H}(f)\le\E_{\mu_H}[f^2]$, and the
last equality~\eqref{eq: apply reversibility var} follows from \eqref{eq: reversibility}.
If $\nu_f(\sigma)=\frac{f(\sigma)^2\mu(\sigma)}{\E_\mu[f^2]}$,
and we rewrite \eqref{eq: apply reversibility var}, we get 
\begin{equation}
\label{eq: var M vs tilda M}
 \cE_M(f)-\cE_{\widetilde M}(f)
 \le \E_\mu[f^2]\,\PP_{\nu_f,M}\bigl(W_L(H)\ge1\bigr).
\end{equation}
By Markov's inequality, and Lemma~\ref{lem:entropy-mass} (with $C_1:=C_0'$), 
\begin{align*}
    \cE_M(f)-\cE_{\widetilde M}(f)
  \le  \E_\mu[f^2]( C_1 I_\mu(\nu_f)+n^{-5} )\,.
\end{align*}
By Lemma~\ref{lem:function-information} (with $C_2 = C$), this yields 
\begin{equation}
\label{eq:for:sw}
 \cE_M(f)-\cE_{\widetilde M}(f)
 \le C_1 C_2n\cE_{\rm Gl}(f)+n^{-5}\E_\mu[f^2]\,.
\end{equation}
Combining this with the bounds from Lemma~\ref{lemma:dirichlet:stronger} and the line following Lemma~\ref{lem: gap of M}, we obtain
\begin{equation}
     a\Var_\mu(f) \le \mathcal E_{M}(f) 
 \le (CL^\kappa + C_2C_3) n\cE_{\rm Gl}(f)+n^{-5}\E_\mu[f^2]\,. \label{eq: var bound almost done}
\end{equation}
To remove the last term, apply \eqref{eq: var bound almost done} with $g=f-\E_\mu[f]$ and subtract the last term from both sides: 
\[
 \bigl(a-n^{-5}\bigr)\Var_\mu(f)\le (CL^\kappa + C_2C_3) n\cE_{\rm Gl}(f).
\]
For $n$ large enough that $n^{-5}\le a/2$, and with $L= C_0\log n$,
\begin{equation}
    \label{eq: polylog var bound}
     \Var_\mu(f)\le \frac{2(CL^\kappa + C_2C_3)}{a}\, n\cE_{\rm Gl}(f)= O(n(\log n)^\kappa)\cE_{\rm Gl}(f)\,.
\end{equation}
This is the claimed Poincar\'e inequality, i.e.\ $\gap(P_{\rm Gl})^{-1}=O(n(\log n)^\kappa)$.
\end{proof}

\subsection{The same argument with entropy for nearly optimal mixing}

In order to get an $\tilde O(n)$ mixing time, rather than just inverse spectral gap, we wish to carry out the argument of the previous section with entropy instead of variance and get a $\tilde O(n)$ inverse log-Sobolev constant. In the proof above, the first step was Lemma~\ref{lem: gap of M} which together with the inverse spectral gap bound of~\cite{BG21} for the FK dynamics gave an $O(1)$ inverse spectral gap for the chain $M$. 

We replace those steps with the following analogue of Lemma~\ref{lem: gap of M} and the line following it. Recall that $\mu_H = \mu_{(V,H),\tilde \beta,q}$. 
The entropy counterpart of the Dirichlet form $\cE_M(f)=\E_H[\Var_{\mu_H}(f)]$ is
$$
 \mathcal H_M(f)=\E_{\mu, \rm{M}}[\Ent_{\mu_H}(f)]=\sum_{H\subseteq E}\PP_{\mu, \rm{M}}(H)\operatorname{Ent}_{\mu_H}(f)\,,
$$
where $\Ent_{\mu}(f) = \E_\mu [f \log f] - \E_{\mu}[f]\log \E_\mu[f]$ with the convention $0\log 0 = 0$ 
for every $f:[q]^V\rightarrow \mathbb R_{\ge0}$.

\begin{lemma}\label{lem:entropy-M}
Fix $q\ge 2, \Delta \ge 3$ and $B<B_u$. With probability $1-o(1)$ over $G$, there is a fixed $a(q,\Delta,B)>0$ such that 
for every $f:[q]^V\rightarrow \mathbb R_{\ge0}$, one has $
 a\Ent_\mu(f)\le\mathcal H_M(f).
$
\end{lemma}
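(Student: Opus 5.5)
We will prove Lemma~\ref{lem:entropy-M} as the entropy analogue of Lemma~\ref{lem: gap of M} combined with an FK-dynamics bound, so the plan has two parts: (i) an entropy comparison between $M$ and the random-cluster (FK) dynamics, and (ii) an $O(1/|E|)$ modified log-Sobolev / entropy contraction input for FK dynamics on random $\Delta$-regular graphs. The paper says this second input is established in Section~\ref{sec:fk-entropy} through a random-cluster version of the dichotomy.

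\textbf{Step 1 (lift to the joint space).} Work on the joint measure $\pi(\sigma,\omega,\eta)=\pi_{\ES}(\sigma,\omega)\,\mathrm{Ber}(r)^{\otimes E}(\eta)$, with $H=\omega\cup\eta$. By~\eqref{eq: reversibility}, $\mathcal H_M(f)=\E_H[\Ent_{\mu_H}(f)]$ is exactly the expected entropy of $f(\sigma)$ conditional on $H$. The chain rule for entropy then gives
\[
\Ent_\mu(f)=\Ent_\pi(f)=\E_H[\Ent_{\mu_H}(f)]+\Ent_{\PP_{\mu,\rm M}}\bigl(\E_{\mu_H}[f]\bigr).
\]
It therefore suffices to show that the second term is at most $(1-a)\Ent_\mu(f)$, i.e. an approximate entropy contraction for the map $f\mapsto g(H):=\E_{\mu_H}[f]$.

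\textbf{Step 2 (pass through $\omega$).} Conditioning further, write $\Ent(g(H))\le \Ent(\E[f\mid\omega,\eta])$ by data processing. The key observation is that, given $\omega$, the spins are uniform on clusters of $\omega$ and independent of $\eta$. The sprinkling $\eta$ then acts on the edge marginal like an independent-edge noise. For each edge, with probability $r$ the information in $\omega_e$ is erased as seen from $H$, which is a heat-bath-type resampling of $\omega_e$.

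Following the proof of Lemma~\ref{lem: gap of M}, I will compare this noising with FK heat-bath updates. For $\psi(\omega)=\E_{\pi}[f\mid\omega]$, one step of "resample each edge independently with probability $\sim r$ given the rest" has entropy contraction controlled by the block factorization of entropy for FK dynamics. Concretely, the target is
\[
\Ent_\phi(\psi)-\Ent\bigl(\E[\psi\mid H]\bigr)\ge c\,r\,|E|\cdot\rho_{\rm FK}\,\Ent_\phi(\psi),
\]
where $\rho_{\rm FK}\gtrsim 1/|E|$ is the FK (modified) log-Sobolev or entropy-factorization constant. The spin part given $\omega$ is handled by the factorization $\Ent_\mu(f)\le \E_\omega[\Ent(f\mid\omega)]+\Ent_\phi(\psi)$, together with the fact that $M$ also resamples cluster colors. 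This shows the spin entropy given $\omega$ is also dissipated by a constant factor.

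\textbf{Step 3 (input).} The required $\rho_{\rm FK}\ge c/|E|$ in entropy form, i.e. approximate tensorization or uniform block factorization of entropy for $\phi$ on random $\Delta$-regular graphs for $B<B_u$, is the main obstacle. It does not follow from~\cite{BG21}, which gives only spectral gap and mixing bounds. I would prove it by the edge-dichotomy of Section~\ref{sec:fk-entropy}: either the tilted edge law $\psi\phi/\E\psi$ has small aggregate local entropy, so it is shattered and entropy factorizes over small components at bounded cost, or its local entropy is large, which directly lower bounds the FK entropy production. Given that input, Steps 1--2 yield $a=a(q,\Delta,B)>0$ on the $1-o(1)$ event.
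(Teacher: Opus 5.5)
Your Steps 1--2 follow the paper's reduction (Theorem~\ref{thm:ums-M-entropy}). That reduction is the chain rule on the lifted space, data processing, and the observation that sprinkling turns the conditional expectation into a random-block resampling of $\omega$, whose entropy contraction follows from block factorization for $\phi$. Conditioning on $(\omega,\eta)$ only returns $\psi(\omega)$ and contracts nothing. Also, $H$ alone does not reveal which edges were erased. So the contraction needs a data-processing step to $Y=(\eta,\omega_{E\setminus\eta})$. $H$ is a function of $Y$, and $\E_\pi[\psi\mid Y]=\E_\phi[\psi\mid\omega_R]$ with $R=E\setminus\eta\sim\mathrm{Ber}(1-r)^{\otimes E}$ independent of $\omega$. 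What contracts $\E_R\Ent_\phi(\E_\phi[\psi\mid\omega_R])$ is approximate tensorization of $\phi$, converted to uniform block factorization by \cite[Theorem~3.5(2)]{Lee24}. An MLSI for FK dynamics is not by itself known to give this, which is why the paper proves a full log-Sobolev inequality. Your separate treatment of the spin entropy given $\omega$ is unnecessary: $\E[f\mid H]$ depends on $f$ only through $\psi$, and $\Ent_\phi(\psi)\le\Ent_\mu(f)$ by Jensen.

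The genuine gap is Step 3, which carries the substance of the lemma (Theorem~\ref{thm:fk-at}). Approximate tensorization with an $O(1)$ constant is equivalent to $\KL(\nu\Vert\phi)\le C\,I_\phi(\nu)$ for every edge law $\nu$. A threshold dichotomy as you describe cannot give this. In the branch where $I_\phi(\nu)$ exceeds a threshold, you still need $\KL(\nu\Vert\phi)\le C\,I_\phi(\nu)$, yet $\KL(\nu\Vert\phi)$ can be of order $|E|$. Nor does shattering into small components give factorization at bounded cost. Components of size $\Theta(\log n)$ can contain cycles of length $\Theta(\log n)$, and these are exactly what breaks product structure.

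The missing idea is a linear comparison with an explicit reference measure. The paper takes the product measure $\pi_\ell$ over the vertex-disjoint cycles of length at most $\ell=\alpha\log n$ and the remaining single edges, which satisfies approximate tensorization with constant $q^2$. It writes $\phi=Z^{-1}q^{\Lambda}\pi_\ell$ with $0\le\Lambda\le\frac{\Delta}{2}\wLong$. Under this tilt, $\Ent$ and the local-entropy sum $\mathcal S$ change only by $o(1)\E_\phi[h]$ and $\frac{\Delta\log q}{2}\E_\phi[h\wLong]$. The FK dichotomy then enters linearly, as $\E_\nu[\wLong]\le C\,I_\phi(\nu)+o(1)$ (Lemma~\ref{lem:fk-source}). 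That lemma needs the long-cycle counts of Lemma~\ref{lem:poly-geometry}, not just component-size tails. The result is only a defective inequality $\Ent_\phi(g^2)\le K\mathcal S_\phi(g^2)+o(1)\E_\phi[g^2]$. The defect is removed with Rothaus's lemma and the $\Omega(1/|E|)$ FK spectral gap from \cite{BG21}, so that result is used after all. The resulting log-Sobolev inequality then implies approximate tensorization.
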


Observe that $\mathcal H_M(f)\le \mathcal E_M(f,\log f)$ and therefore this implies a modified log-Sobolev inequality (MLSI) for $M$: i.e., an inequality of the form $\rho(P)  = \inf_{f: f\ge 0} \frac{\mathcal E_P(f,\log f)}{\Ent_\mu(f)}>0$.

Note that Lemma~\ref{lem:entropy-M} is combining two steps of our proof; the first is an MLSI comparison between $M$ and the FK dynamics, replacing Lemma~\ref{lem: gap of M}, and the second is a proof of an MLSI for the FK dynamics on random graphs for all $p<p_u$. This latter step is itself fairly significant, yielding an improvement of the FK mixing result of~\cite{BG21}: see Theorem~\ref{thm:fk-at}. Our proof of it actually uses a shattering-escape dichotomy this time in the world of random-cluster distributions rather than spin distributions. Namely, for a parameter $\ell \le L$, define for edge configurations $A \subset E$, 
\begin{equation}\label{eq:fk-long-mass}
 W_{\rm long}(A)=\sum_{U\in\cC(A)}|U|\,
 \one\{G[U]\text{ contains a cycle longer than }\ell\},
\end{equation}
where the sum runs over $\cC(A)$ denoting connected components in $(V,A)$.

\begin{lemma}\label{lem:fk-source}
Fix $q\ge 1, \Delta \ge 3$ and $p<p_u$. 
There exist constants $\alpha,C>0$ such that with probability $1-o(1)$ a random $\Delta$-regular graph on $n$ vertices is such that all its cycles of length at most $\ell = \alpha \log n$ are pairwise vertex disjoint, while at the same time the following holds. For every distribution $\nu$ on edge configurations $A\in \{0,1\}^E$, 
\begin{equation}\label{eq:fk-source}
 \E_\nu[W_{\rm long}]\le C\,I_\phi(\nu)+o(1)\,.
\end{equation}
Here, $I_\phi(\nu)$ is defined exactly as in Definition~\ref{def:local-relent} but with vertices replaced by edges in the obvious way. 
\end{lemma}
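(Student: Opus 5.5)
The plan is to prove \eqref{eq:fk-source} by the same coarse-grained revealing argument that underlies Lemma~\ref{lem:entropy-mass}, carried out on edges instead of vertices.

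\textbf{Graph properties.} First I fix a graph $G$ with three properties, all standard for random regular graphs and holding with probability $1-o(1)$.
\begin{itemize}
\item Cycles of length at most $\ell=\alpha\log n$ are pairwise vertex disjoint, for $\alpha$ small.
\item Every connected subgraph on at most $K\log n$ vertices has bounded cycle rank.
\item All but $n^{1-c}$ vertices have a tree-like radius-$R$ neighborhood, for any fixed $R$.
\end{itemize}

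\textbf{Reduction to subcritical exploration.} On the event $W_{\rm long}(A)>0$, some vertex $v$ lies in an $A$-component $U$ containing a long cycle. Either that cycle has length at most $L=C_0\log n$, or $U$ has diameter or size at least of order $\ell$. In both cases $v$ has an open path of length at least $\ell/2$ in $A$; in the first case I use that the cycle itself, being longer than $\ell$, provides such a path. So it suffices to show
\[
\sum_v \E_\nu\bigl[|U_v|\,\one\{v \text{ has an open path of length}\ge \ell/2\}\bigr]\le C I_\phi(\nu)+o(1).
\]
I bound this by exploring the cluster of $v$ edge by edge. Each time the exploration would reveal an edge $e$, I instead reveal it from the conditional law given $A$ restricted to the complement of $B_R(e)$, a radius-$R$ edge ball around $e$. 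The interior of the ball I resample under $\phi$ rather than $\nu$.

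\textbf{Cost of swapping $\nu$ for $\phi$.} The block replacement inside $B_R(e)$ is handled by chaining single-edge conditionals. By Pinsker's inequality and the chain rule, the total variation cost for one block is at most $e^{O(d^R)}$ times the sum over edges $f\in B_R(e)$ of $\E_\nu[\KL(\nu_f\,\|\,\phi_f)]$. Here the $e^{O(d^R)}$ factor accounts for the fact that conditioning on the block exterior, rather than on all other edges, can only change the conditionals by bounded factors; each single-edge conditional probability is bounded below by $\min(\hat p,1-p)$. Summing over blocks and over explorations rooted at each $v$, each edge is charged at most a constant (depending on $R$) number of times. This gives the term $C\,I_\phi(\nu)$. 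I also need the contribution per unit of $I_\phi(\nu)$ to be bounded. For that I weight by $|U_v|$ and use the conclusion of the subcritical step below, that under the swapped law cluster sizes have exponential tails.

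\textbf{Subcritical step (main obstacle).} Under $\phi$ conditioned on an arbitrary exterior configuration outside $B_R(e)$, with $B_R(e)$ a tree, the probability that $e$ is open given the explored path is near $\hat p$ for large $R$. I would deduce this from FK uniqueness on $\mathbb T_\Delta$: the effect of a wired versus free boundary at depth $R$ on whether a root edge is a cut-edge in the final configuration vanishes as $R\to\infty$. This requires the random-cluster counterpart of tree uniqueness at $p<p_u$, i.e.\ wired/free agreement, which I would take from the tree recursion used to define $\beta_u$. It is the delicate point, because the conditioning is on a worst-case exterior, which is exactly why the wired case matters. Granting it, each exploration step opens a new edge with probability at most $\theta<1/d$, so the probability of an open path of length $\ell/2$ is at most $(d\theta)^{\ell/2}=n^{-c'}$.

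For $v$ in non-tree-like neighborhoods (at most $n^{1-c}$ of them) and for the cycle-rank corrections, the same bound holds up to constant factors since the number of extra edges is bounded. Summed over $v$, with $|U_v|\le n$ bounded crudely and $\alpha$ tuned against $c'$, the remainder is $o(1)$.
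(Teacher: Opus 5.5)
There is a genuine gap, and it starts with your reduction. You replace the long-cycle event by ``$v$ has an open path of length at least $\ell/2$'' and then union-bound over all $n$ vertices. This gives at best $n\,(d\theta)^{\ell/2}=n^{1-c'}$ with $c'=\frac{\alpha}{2}\log\frac{1}{d\theta}$, and $n^{2-c'}$ after your crude bound $|U_v|\le n$.

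You cannot tune $\alpha$ to make this small. The ratio $c'/\alpha$ is fixed by $p$ and tends to $0$ as $p\uparrow p_u$. Meanwhile $\alpha$ is capped by the requirement that cycles of length at most $\ell$ be vertex-disjoint: intersecting pairs have expected count roughly $d^{2\ell}/n$, and the paper takes $\alpha\le 1/(5\log d)$.

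The reduced inequality is in fact false. Take $\nu=\phi$, so $I_\phi(\nu)=0$. Comparing with the free boundary condition on tree-like balls, the expected number of vertices with an open path of length $\ell/2$ is of order $n(d\hat p)^{\ell/2}$. This is polynomially large for $p$ near $p_u$.

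The missing idea is to keep the cycle. The long cycle of $G[U]$ need not be open, so ``the cycle itself provides such a path'' is not right as stated. Instead, take an open spanning tree $\mathcal T$ of $U$. Property~3 forces some fundamental cycle of $\mathcal T$ to have length $j>\ell$. That gives $j-1$ open edges lying on a cycle of $G$, plus an open path of length $r$ from the seed to it.

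A random regular graph has only $N_j(G)=O(d^j/j)$ cycles of length $j$, with no factor of $n$. Hence the total witness weight satisfies $\sum_v w(v)^\gamma\lesssim\sum_{j>\ell}j^2N_j(G)s^{\gamma(j-1)}\sum_{r\ge0}(ds^\gamma)^r\lesssim T(G)\le n^{-\zeta}$. This is Property~4 of Lemma~\ref{lem:poly-geometry}, which is absent from your list of graph properties. Note also that $W_{\rm long}=\sum_v\one\{G[U_v]\text{ has a cycle longer than }\ell\}$, so the extra weight $|U_v|$ in your sum overcounts.

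Your transfer from the resampled law back to $\nu$ also does not give a bound linear in $I_\phi(\nu)$. Pinsker gives total variation at most $\sqrt{\KL/2}$, not a constant times $\KL$. A total-variation swap therefore yields an error of order $\|W_{\rm long}\|_\infty\sqrt{I_\phi(\nu)}$, which is neither $O(I_\phi(\nu))$ nor $o(1)$ in the intermediate range.

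The paper instead runs one global exploration in which each edge is tested once. Each local term $\E_\nu[\KL(\nu_f\Vert\phi_f)]$ is then charged $O(b_R)$ times, using that conditioning on the history does not increase its expectation. Separately rooted explorations for each $v$ do not give this. This yields $\KL(P\Vert Q)\le C_{\rm obs}I_\phi(\nu)$. The paper then applies the Gibbs variational inequality $\E_P[W]\le\lambda^{-1}\bigl(\KL(P\Vert Q)+\log\E_Q[e^{\lambda W}]\bigr)$ of Corollary~\ref{cor:entropy-transfer}.

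That step needs an exponential-moment bound $\log\E_Q[e^{\lambda W_{\rm long}}]=o(1)$, which is obtained from three ingredients:
\begin{itemize}
\item H\"older's inequality $\E_Q[e^{(1-\gamma)c_0S}\one\{\mathcal E,S\le L\}\mid\mathcal R]\le C_{\rm tail}w(v)^\gamma$, which is why $\gamma$ with $ds^\gamma<1$ is needed;
\item the tail bound $C_{\rm tail}e^{-c_0L}$ for components larger than $L$;
\item iteration over components.
\end{itemize}

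By contrast, the subcritical step you flag as delicate is routine. Only an upper bound is needed, and domination by the wired boundary condition on the forward tree gives opening probability at most $\hat p[1+(q-1)x_{R+1}]/[1+(q-1)\hat p x_{R+1}]\to\hat p<1/d$. No free/wired agreement is required.
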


With Lemma~\ref{lem:entropy-M} in hand, recall $\widetilde M$ from Definition~\ref{def:M-tilde-chain}. Since the spins on $U^+(H)$ do not change in the recoloring step of $\widetilde M$, we can express $ \mathcal H_{\widetilde M}(f)$ in terms of its entropy conditional on $\sigma_{U^+(H)}$: 
$$ 
 \mathcal H_{\widetilde M}(f)
 =\sum_{H\subseteq E}\PP_{\mu, \rm{M}}(H)\E_{\mu_H}
       [\operatorname{Ent}_{\mu_H}(f\mid \sigma_{U^+(H)})]\,.
$$
In place of Lemma~\ref{lemma:dirichlet:stronger}, we will prove the following comparison of Dirichlet forms.
\begin{lemma}\label{lem: tilda M vs G}
Fix $\Delta\ge3$, $q\ge2$, $\tilde B>1$, and $K>0$. There exists
$C>0$ such that, with probability $1-o(1)$ the random $\Delta$-regular graph $G$ is such that 
for every $1\le L\le K\log n$ and every $f$, $$\cE_{\rm Gl}(f) \ge \frac{1}{C nL^\kappa}\mathcal H_{\widetilde M}(f^2)
 \,, \qquad \kappa=2+2\Delta\log_2 \tilde B\,.$$
\end{lemma}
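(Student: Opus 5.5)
The plan is to mirror the proof of Lemma~\ref{lemma:dirichlet:stronger}, replacing the cutwidth Poincar\'e inequality on each small component with a log-Sobolev (entropy) inequality. Since the target bound is against $\cE_{\rm Gl}(f)$, we then finish with an entropy-to-variance comparison at the single-site level.

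First, I would work on the $1-o(1)$ event of Lemma~\ref{lem:poly-geometry}. On this event every connected subgraph $F$ of $G$ with $s\le K\log n$ vertices has cycle rank at most $t_K$, and hence $\operatorname{cw}(F)\le\Delta\lceil\log_2 s\rceil+t_K$. The bound of~\cite{BKMP05} gives $\gap^{-1}\le Cs\tilde B^{2\operatorname{cw}(F)}\le C s^{1+2\Delta\log_2\tilde B}$ for the heat-bath Glauber dynamics of $\mu_F$, the $\tilde B$-Potts model on $F$. I would convert this into a log-Sobolev inequality via the Diaconis--Saloff-Coste bound $\rho^{-1}\le \gap^{-1}\log(1/\mu_{F,\min})$. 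Since $\log(1/\mu_{F,\min})=O(s)$, one extra factor of $s$ is lost, which gives
\[
\Ent_{\mu_F}(g^2)\le C s^{\kappa}\sum_{x\in F}\mu_F\bigl[\Var_{\mu_F}(g\mid\sigma_{F\setminus\{x\}})\bigr],\qquad \kappa=2+2\Delta\log_2\tilde B .
\]
This accounts exactly for the stated exponent. Here the Dirichlet form of heat-bath Glauber is written as the sum of conditional variances.

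Next, I would tensorize. Given $H$, the measure $\mu_H(\cdot\mid\sigma_{U^+(H)})$ is a product over the components $F$ of $H$ with $|F|\le L$. Subadditivity of entropy for product measures therefore gives
\[
\Ent_{\mu_H}(f^2\mid\sigma_{U^+(H)})\le\sum_{|F|\le L}\E\bigl[\Ent_{\mu_F}(f^2\mid\sigma_{V\setminus F})\bigr].
\]
Applying the component inequality with $s\le L$, and noting that conditioning $\mu_H$ on $\sigma_{V\setminus\{x\}}$ for $x\in F$ only involves $F$'s edges, I obtain
\[
\mathcal H_{\widetilde M}(f^2)\le CL^\kappa\sum_x\E_{\mu,\rm M}\bigl[\mu_H[\Var_{\mu_H}(f\mid\sigma_{V\setminus\{x\}})]\bigr].
\]

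Finally, I would average over $H$ under the joint law $\PP_{\mu,\rm M}(H)\mu_H(\sigma)=\mu(\sigma)\PP_{\sigma,\rm M}(H)$ from~\eqref{eq: reversibility}. By the law of total variance, conditioning additionally on $H$ can only decrease the expected conditional variance of $f$ given $\sigma_{V\setminus\{x\}}$. So the sum is at most $\sum_x\E_\mu[\Var_\mu(f\mid\sigma_{-x})]=n\cE_{\rm Gl}(f)$, which concludes the proof.

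The main obstacle I expect is the first step: getting a log-Sobolev constant for the $\tilde B$-Potts measure on a treelike component that is only polynomial in $s$, and not worse. Naively, $\log(1/\mu_{F,\min})\sim s\log(q\tilde B^\Delta)$ costs only a factor of $s$, which is fine. If the canonical-path argument of~\cite{BKMP05} only yields a Poincar\'e inequality, I would fall back on this generic variance-to-entropy conversion, which is exactly what the $+1$ in the exponent of $\kappa$ suggests. The other steps are routine tensorization and a conditioning monotonicity argument identical to those in Lemma~\ref{lemma:dirichlet:stronger}.
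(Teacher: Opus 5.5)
Your proposal is correct and follows essentially the same route as the paper. The paper also upgrades the cutwidth Poincar\'e inequality on each small treelike component to an entropy bound via the Diaconis--Saloff-Coste comparison with $\log(1/\mu_{F,\min})=O(s)$, which accounts for the extra $+1$ in $\kappa$; it then tensorizes entropy over the resampled components and removes the conditioning on $H$ by the law of total variance under the joint law of $(\sigma,H)$.
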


\begin{proof}
Work on the graph event of Lemma~\ref{lemma:dirichlet:stronger}. 
For a component $F$ with $s\le L$ vertices, the Potts distribution on 
$F$ satisfies
$\mu_{F,\min}\ge q^{-s}(\tilde B)^{-\Delta s/2}$.
The entropy--variance comparison of~\cite[Theorem~A.1]{DSC96}
and~\eqref{eq:tilde-M-component-poincare} therefore give
\[
\Ent_{\mu_F}(g^2)
\le C\log(\tfrac{1}{\mu_{F,\min}})\Var_{\mu_F}(g)
\le Cs^\kappa\sum_{x\in F}\mu_F\!\left[
  \Var_{\mu_F}(g\mid\sigma_{F\setminus\{x\}})
\right].
\]

Tensorizing entropy over the components resampled by
$\widetilde M$, and applying the preceding bound with the exterior
spins fixed, we obtain
\[
\begin{aligned}
\mathcal H_{\widetilde M}(f^2)
&\le \E_{\mu,M}\Big[
  \sum_{\substack{F\in\mathcal C(H): |F|\le L}}
  \mu_H\!\left[
    \Ent_{\mu_H}(f^2\mid\sigma_{V\setminus F})
  \right]\Big]
\le CL^\kappa\sum_{x\in V}
  \E_{\mu,M}\Big[\mu_H\!\left[
    \Var_{\mu_H}(f\mid\sigma_{-x})
  \right]\Big]\,.
\end{aligned}
\]
But this last quantity is at most $C L^\kappa \sum_{x\in V} \E_\mu [\text{Var}_\mu(f \mid \sigma_{-x})]$ (as the above variance is conditioning on more information), and in turn that is exactly $ C nL^\kappa\cE_{\rm Gl}(f)$. 
\end{proof}

\begin{proof}[\textbf{\emph{Proof of Theorem~\ref{mainthm:Glauber}}}]
Fix $G$ in the intersection of the events of probability $1-o(1)$ in Lemmas~\ref{lem:entropy-M}, \ref{lem: tilda M vs G} and~\ref{lem:entropy-mass}, and on which~\eqref{eq: polylog var bound} holds. Let $L= C_0\log n$, with $C_0$ from Lemma~\ref{lem:entropy-mass}.

Take any $f$ not identically $0$. Given $H$, by~\eqref{eq: reversibility}, the chain $M$ redraws all spins from $\mu_H$, while $\widetilde M$ keeps $\sigma_{U^+(H)}$ fixed. Since $\mu_H$ is a product over the components of $H$, $\widetilde M$ redraws the other spins from $\mu_H(\,\cdot\mid\sigma_{U^+(H)})$. It will follow from a short calculation that 
\begin{align}
\mathcal H_M(f^2)-\mathcal H_{\widetilde M}(f^2)
&=\sum_{H\subseteq E}\mathbb P_{\mu,\rm M}(H)
  \Ent_{\mu_H}\!\left(
    \E_{\mu_H}[f^2\mid\sigma_{U^+(H)}]
  \right)\nonumber\\
&\le C_1\sum_{H\subseteq E}\mathbb P_{\mu,\rm M}(H)
  W_L(H)\E_{\mu_H}[f^2]
  \label{eq: apply entropy crude bound}\\
&=C_1\sum_\sigma\mu(\sigma)f(\sigma)^2
  \E_{\sigma,\rm M}[W_L(H)]\,.
  \label{eq: apply reversibility}
\end{align}
Indeed, the inequality~\eqref{eq: apply entropy crude bound} holds because $U^+(H)$ is a union of components of $H$ with $W_L(H)$ many vertices total and at most $\Delta W_L(H)/2$ edges. Thus, for every assignment $\tau$ on $U^+(H)$,
\begin{equation*}
\mu_H(\sigma_{U^+(H)}=\tau)
\ge e^{-C_1W_L(H)}\,,
\qquad
C_1=\log q+\frac{\Delta}{2}\log \tilde B\,.
\end{equation*}
Consequently, for every $\tau$, we have $\E_{\mu_H}[f^2\mid\sigma_{U^+(H)}=\tau] \le e^{ C_1 W_L(H)} \E_{\mu_H}[f^2]$,
which gives the claimed entropy bound. The last equality~\eqref{eq: apply reversibility} follows from~\eqref{eq: reversibility}.

If $\nu_f(\sigma)=f(\sigma)^2\mu(\sigma)/\E_\mu[f^2]$, rewriting~\eqref{eq: apply reversibility} gives
\begin{equation}
\label{eq: entropy M vs tilda M}
\mathcal H_M(f^2)-\mathcal H_{\widetilde M}(f^2)
\le C_1\E_\mu[f^2]\,\E_{\nu_f,M}[W_L(H)]\,.
\end{equation}
By Lemma~\ref{lem:entropy-mass}
and Lemma~\ref{lem:function-information}, this yields
\begin{equation*}
\mathcal H_M(f^2)-\mathcal H_{\widetilde M}(f^2)
\le C_1C_2C_3n\cE_{\rm Gl}(f)+C_1n^{-5}\E_\mu[f^2]\,.
\end{equation*}
Combining this with the bounds from Lemmas~\ref{lem: tilda M vs G}  (with $C_4 =C$) and Lemma~\ref{lem:entropy-M}, we obtain
\begin{equation}
a\Ent_\mu(f^2)
\le\mathcal H_M(f^2)
\le (C_4 L^\kappa+C_1C_2C_3)n\cE_{\rm Gl}(f)
   +C_1n^{-5}\E_\mu[f^2]\,.
\label{eq: ent bound almost done}
\end{equation}

To remove the last term, put $g=f-\mu[f]$. By Rothaus's lemma (see e.g.,~\cite[Lemma~5.1.4 and (5.1.8)]{BGL14} for a short proof using concavity and Jensen's inequality) and~\eqref{eq: ent bound almost done}, 
\begin{align*}
\Ent_\mu(f^2)
&\le\Ent_\mu(g^2)+2\Var_\mu(f)\\
&\le\frac{C_4 L^\kappa+C_1C_2C_3}{a}\,
       n\cE_{\rm Gl}(f)
  +\left(2+\frac{C_1}{an^5}\right)\Var_\mu(f)\,.
\end{align*}
By~\eqref{eq: polylog var bound}, and with $L= O(\log n)$, we get the log-Sobolev inequality, 
\begin{equation*}
\Ent_\mu(f^2)
\le O\bigl(n(\log n)^\kappa\bigr)\cE_{\rm Gl}(f)\,.
\end{equation*}
Since $\log \log(1/\mu_{\min})=O(\log n)$, we get $t_{\rm mix} = O(n (\log n)^{\kappa +1})$; see, e.g.,~\cite{MontenegroTetaliBook}. 
\end{proof}

\section{The shattering-escape dichotomy}\label{sec:key-stationary-estimate}

Our main aim in this section is to establish the shattering-escape dichotomies we use in the uniqueness regime on the random regular graph. Namely, we prove Lemma~\ref{lem:entropy-mass} and its random-cluster analogue  Lemma~\ref{lem:fk-source}. (Lemma~\ref{lem:info:bound}
is strictly weaker and therefore also proved; see Remark~\ref{rmk: weak prob bound}).

\subsection{The component exploration process}\label{sec:local}

For the proof of Lemma~\ref{lem:entropy-mass}, we use a coarse-grained exploration process of the sprinkled edge configuration under the distribution $\mathbb P_{\nu,M}$ where one first draws the spin configuration $\sigma \sim \nu$ and then the edge configuration under $\mathbb P_{\sigma,M}$. There will also be a similar exploration used for the purely random-cluster dichotomy of Lemma~\ref{lem:fk-source}. 

Let us first describe a general framework for an exploration process iteratively revealing connected components of an edge-subset $A\subset E$ under some distribution.  This will be applied both to the edge set~$H$ sampled from a spin law $\nu$ through~$\mathbb P_{\nu,M}$, and one sampled directly from an edge distribution.

\begin{definition}\label{def:edge-exploration-process}
    For a graph $G = (V,E)$, the following is an exploration procedure for the connected components of an edge subset $A\subset E$. As the process exposes edges of $A$, it will leave a remainder subgraph to be explored which is denoted $G^{\rm rem}$. We use $\partial^{\rm rem} U$ for a vertex subset $U$ to mean the edge boundary of $U$ in $G^{\rm rem}$. 

    Fix an arbitrary ordering of the vertices and edges of $G$ given by $\ell_V: V \to [n]$ and $\ell_E:E\to [n]$, and let $A \subset E$. The revealing will iteratively reveal connected components of $A$, with each connected component being iteratively explored by \emph{tests} of one edge $e$ at a time, and will retain a \emph{record} $\mathcal R$ of edges it has explored, and their indicators $\mathbf 1\{e\in A\}$. The  exploration is written in pseudocode in Algorithm~\ref{alg:exploration-reza}. 
    \begin{algorithm}[h]
\caption{Exploration of components of $A\subseteq E$}\label{alg:exploration-reza}
\begin{algorithmic}[1]
\State Initialize $G^{\rm rem}\gets G$;\ $j\gets 0$;\ and $\mathcal R \gets \emptyset$
\While{$G^{\rm rem} \neq\emptyset$}  \State  $u\gets\operatorname*{arg\,min}_{w\in V(G^{\rm rem})}\ell_V(w)$
 \State Initialize $U\gets\{u\}$ 
 \While{$\partial^{\rm rem}U \neq\emptyset$} \Comment{Explore component $\mathcal C_u$}
  \State $e\gets\operatorname*{arg\,min}_{f\in\partial^{\rm rem} U}\ell_E(f)$ and write $e=yv$ with $y\in U$, $v\notin U$
  \State append $(e,\mathbf 1\{e\in A\})$ to $\mathcal R$ \Comment{test $e$}
  \If{$e\in A$}
   \State $U\gets U\cup\{v\}$
  \Else
   \State $E(G^{\rm rem})\gets E(G^{\rm rem})\setminus \{e\}$
   \EndIf
 \State $j\gets j+1$;\ 
 \EndWhile
 \State $\mathcal C_u \gets U$ and $V(G^{\rm rem}) \gets V(G^{\rm rem})\setminus \mathcal C_u$
 \EndWhile
\end{algorithmic}
\end{algorithm}
\end{definition}

Each execution of lines~6--7 of Algorithm~\ref{alg:exploration-reza} is called a \emph{test}: it selects the edge $e=yv\in\partial^{\rm rem}U$ with $y\in U$ and $v\notin U$, and records whether $e\in A$. We call the test \emph{successful} if it records $(e,1)$, in which case~$v$ is added to~$U$.
Note that only tested edge indicators $\mathbf 1\{e\in A\}$ are recorded into $\mathcal{R}$ by the process, and not all edges of $A$ are eventually exposed. 
Each $\mathcal{C}_u$ is a connected component of $(V,A)$. 
The complete record $\mathcal{R}$ therefore determines $\mathcal{C}(A)$, and hence both component statistics $W_L$ and $W_{\rm long}$ of Lemmas~\ref{lem:entropy-mass} and~\ref{lem:fk-source}.

\begin{definition}\label{def:good-test}
Consider a test of $e=yv$, so that $y\in U$ and $v\notin U$. 
For an integer~$R\ge1$, the \emph{forward region} of the test is
\[
 F=\bigl\{w\in V(G^{\rm rem}) : \operatorname{dist}_{G^{\rm rem}\setminus \{yv\}}(v,w)\le R\bigr\},
\]
the set of vertices joined to~$v$ by a path of length at most~$R$ in the remaining graph that does not use~$yv$. 
We call a test of $e=yv$ \emph{good} if
\begin{enumerate}
\item $F\cap U=\emptyset$, and
\item the ball of radius~$R+1$ about~$v$ in~$G^{\rm rem}$ is a tree.
\end{enumerate}
Both conditions depend only on $G^{\rm rem}$ and~$U$ at the time of the test. 
In particular, whether a test is good is known before $\mathbf 1\{e\in A\}$ is revealed.
\end{definition}

Consider a good test of $e=yv$ with forward region~$F$. Let $F^+$ be the subgraph of~$G^{\rm rem}$ consisting of all edges with at least one endpoint in~$F$, together with their endpoints. We call~$F^+$ the \emph{forward tree} of the test. Then
\[
 V(F^+)=F\cup\partial F^+,\qquad \partial F^+=\{y\}\cup\{w : \operatorname{dist}_{G^{\rm rem}-yv}(v,w)=R+1\},
\]
and $e\in E(F^+)$. Since the test is good, $F^+$ is a tree in which~$v$ and every other vertex of~$F$ have at most~$d$ children, so $|F|\le b_R:=\sum_{j=0}^R d^j$.

Let~$\mathcal R_j$ be the record after the first~$j$ tests, i.e., the value of $\mathcal{R}$ when the counter in Algorithm~\ref{alg:exploration-reza} equals $j$. 
We call any possible value~$h$ of~$\mathcal R_j$ a \emph{history}. 
For a good test, we make the following observations on its forward region. 

\begin{observation}\label{obs:exploration-local-laws}
Fix a history~$h$ of $\mathcal{R}_j$ at which 
the $(j+1)$-th test on $e=yv$ is good, and let~$F^+$ be its forward tree.
\begin{enumerate}
\item For every $\tau\in[q]^{V\setminus F}$ with $\mu(\sigma_{V\setminus F}=\tau \mid \mathcal{R}_j = h)>0$,
$$
 \mu(\sigma_F\in\cdot\mid\sigma_{V\setminus F}=\tau, \mathcal{R}_j = h)=\mu_{F^+}^\tau(\sigma_F\in\cdot)
.$$
\item  For every $\eta\in\{0,1\}^{E\setminus E(F^+)}$ with $\phi(\omega_{E\setminus E(F^+)}=\eta \mid \mathcal{R}_j = h)>0$,
$$
 \phi(\omega_{E(F^+)}\in\cdot\mid\omega_{E\setminus E(F^+)}=\eta, \mathcal{R}_j=h)=\phi_{F^+}^\zeta(\cdot)
,$$ 
where~$\zeta$ is the partition of~$\partial F^+$ induced by the open edges of~$\eta$.
\end{enumerate}
\end{observation}

\subsection{Subcriticality of the exploration under the Potts distribution}

We begin by showing that under the Potts distribution~$\mu$, the probability of including the edge at the root of a sufficiently large  tree of degree at most $\Delta$ is strictly less than~$1/d$, uniformly over the boundary condition on the leaves of the tree. 
We recall $\hat p=p/[p+q(1-p)]$, put~$s_\infty=r+(1-r)\hat p<1/d$ for $r$ as in~\eqref{eq:parameter-defs}, and fix~$s\in(s_\infty,1/d)$.

\begin{lemma}\label{lem:H-local-opening}
Suppose $\beta<\beta_u(q,\mathbb T_\Delta)$.
There exists~$R\ge1$, depending only on $\Delta,q,\beta,s$, such that the following holds. Let $F^+$ be the forward tree of a good test of~$yv$. Let~$\tau$ be an arbitrary spin assignment to~$\partial F^+$, sample the spins $\sigma$ on~$F$ from~$\mu_{F^+}^\tau$, and then sample~$H$ through~$\mathbb P_{\sigma,M}$. Then
\begin{equation}\label{eq:H-local-opening-bound-Potts}
 \PP_{\mu_F^\tau,M}(yv\in H)\le s\,.
\end{equation}
\end{lemma}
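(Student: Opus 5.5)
The first step reduces the edge event to an agreement event. Under $\mathbb P_{\sigma,M}$ the edge $yv$ lies in $H=\omega\cup\eta$ with probability $r+(1-r)p\,\one\{\sigma_y=\sigma_v\}$. Hence
\[
\PP_{\mu_F^\tau,M}(yv\in H)=r+(1-r)\,p\,\mu_{F^+}^\tau(\sigma_v=\sigma_y).
\]
Here $y\in\partial F^+$ is pinned to $\tau_y$. Since $s_\infty=r+(1-r)\hat p$ and $s>s_\infty$, it suffices to show
\[
\mu_{F^+}^\tau(\sigma_v=\tau_y)\le \frac{\hat p}{p}+\epsilon
\]
uniformly in $\tau$, for a small $\epsilon=\epsilon(s)>0$, once $R$ is large.

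Next I would compute the free-boundary value. Because $F^+$ is a tree, the law of $\sigma_v$ is given by the tree recursion. The edge $yv$ contributes a factor $B^{\one\{\sigma_v=\tau_y\}}$. The subtrees of $v$'s children, of which there are at most $d$, contribute messages $m_c(\cdot)$, each computed by the recursion on a tree of depth at most $R$ whose leaves at distance $R+1$ are pinned by $\tau$. If every child message were uniform, we would get
\[
\mu(\sigma_v=\tau_y)=\frac{B}{B+q-1},
\]
and $p\cdot\frac{B}{B+q-1}=\frac{B-1}{B+q-1}=\hat p$, which is exactly the claimed threshold. The claim thus reduces to showing that all incoming messages to $v$ are within $\delta$ of uniform, uniformly over $\tau$, with $\delta\to0$ as $R\to\infty$. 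Continuity of the finite product formula, in at most $d$ messages each bounded away from $0$, then turns $\delta$-closeness into $\epsilon$-closeness. If $v$ has fewer than $d$ children, because some boundary vertices are pinned nearer (for example $y$ itself, or other boundary vertices of $G^{\rm rem}$), the argument is the same.

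The main obstacle is this uniform message convergence, namely weak spatial mixing on the $d$-ary tree with arbitrary leaf pinnings, throughout $\beta<\beta_u(q,\mathbb T_\Delta)$. I would invoke the standard characterization of the uniqueness threshold quoted in the introduction. For $\beta<\beta_u$ the Potts tree recursion with $d$ children has the uniform fixed point as the only fixed point reachable from boundary conditions. Concretely, the extremal case is the all-one leaf pinning. By the symmetric reduction to the scalar ratio $x$, whose iteration map is $x\mapsto \bigl(\frac{Bx+q-1}{x+B+q-2}\bigr)^{d}$ or similar, the biased message converges to $1$ exactly when $\beta<\beta_u$, and convergence is uniform in the depth. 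An arbitrary pinning $\tau$ is dominated coordinatewise between this extremal message and uniform. I would justify the domination either by monotonicity of the one-step recursion in the ratio $\max_i m(i)/\min_i m(i)$, or by citing known tree uniqueness results for Potts, which state that uniqueness for all boundary conditions holds up to $\beta_u$.

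A subtlety is that the subtrees of $v$'s children may be non-regular (degree at most $\Delta$) and may be truncated early by pinned boundary vertices. Fewer children only contract more strongly, and an early pinned leaf is one more arbitrary boundary vertex. Therefore the worst-case contraction over depth-$R$ trees of maximum branching $d$ still tends to $0$. Choosing $R$ so that this worst case is below $\delta$ completes the proof, with $R$ depending only on $\Delta,q,\beta,s$.
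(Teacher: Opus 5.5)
Your proposal is correct, and after the common first step it takes a genuinely different route from the paper. Both arguments begin with the same reduction, $\PP_{\mu_F^\tau,M}(yv\in H)=r+(1-r)\,p\,\mu_{F^+}^\tau(\sigma_v=\tau_y)$; the paper writes the second term as $\PP_{\mu_F^\tau,\ES}(\omega_e=1)$.

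\textbf{The paper's route.} It stays in the random-cluster world. It identifies the edge marginal as a random-cluster measure with boundary $\xi_\tau$, conditioned on a decreasing event. It dominates this by the wired measure $\phi^1_{F^+}$ using positive association and boundary monotonicity. It then imports the decay $x_h\to0$ of the wired root-to-leaves connection probability from \cite{BG21}.

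\textbf{Your route, and the step you deferred.} You work instead with the spin recursion. The step you left as ``I would justify'' goes through in its ratio form. The ``coordinatewise'' phrasing is not the right notion, since a message favouring another colour does not lie between uniform and the colour-one extremal message. The correct statement is this: if $\max_i m(i)/\min_i m(i)\le x$, then $\frac{1+(B-1)m(i)}{1+(B-1)m(k)}$ is maximized at $m\propto(x,1,\dots,1)$, which gives exactly $\frac{Bx+q-1}{x+B+q-2}$.
\begin{itemize}
\item By induction from the pinned layer, every child of $v$ has ratio at most $g^{(R)}(\infty)$, where $g(x)=\bigl(\frac{Bx+q-1}{x+B+q-2}\bigr)^d$.
\item Hence $\mu_{F^+}^\tau(\sigma_v=\tau_y)\le \frac{BX}{BX+q-1}$ with $X=g^{(R+1)}(\infty)$.
\item Writing $x=y^d$, the fixed-point equation of $g$ is precisely $B-1=\frac{(y-1)(y^d+q-1)}{y^d-y}$. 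So $B<B_u$ gives $g(x)<x$ on $(1,\infty)$, and the iterates starting from $g(\infty)=B^d$ decrease to $1$.
\end{itemize}

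\textbf{What each route buys.} Your argument is self-contained: it uses only the defining formula for $\beta_u$, with no FK monotonicity and no external decay input. The paper's wired comparison, by contrast, immediately yields Lemma~\ref{lem:H-local-opening-FK} as well. That lemma covers arbitrary boundary partitions $\zeta$ and real $q\ge1$, and it is needed for the edge-case exploration; a spin recursion does not directly provide it.

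\textbf{A side remark that is wrong, though harmless here.} A pinned vertex at bounded depth is not ``one more arbitrary boundary vertex.'' For example, a second pinned neighbour of $v$ carrying colour $\tau_y$ would push $\mu(\sigma_v=\tau_y)$ above $B/(B+q-1)$ by a non-vanishing amount. Your argument survives only because of how the forward tree is defined:
\begin{itemize}
\item $\partial F^+=\{y\}\cup\{w:\operatorname{dist}_{G^{\rm rem}-yv}(v,w)=R+1\}$, and $y\notin F$ because the test is good.
\item Vertices of $F$ that lost edges in $G^{\rm rem}$ are free leaves, not pinned ones, and free leaves only lower the ratios.
\end{itemize}
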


As $s < 1/d$, Lemma~\ref{lem:H-local-opening} makes the exploration process subcritical at good tests. 
\begin{proof}[Proof of Lemma~\ref{lem:H-local-opening}]
The bound reduces to the decay, under tree uniqueness, of the probability that the root of a $d$-ary tree is connected to its leaves under wired boundary condition. 
Let~$e=yv$. By the definition of~$M$, given~$\sigma$ we have $\mathbf 1\{e\in H\}=\max\{\omega_e,\eta_e\}$, where $\omega_e\sim\mathrm{Ber}(p)$ if $\sigma_y=\sigma_v$ and $\omega_e=0$ otherwise, and $\eta_e\sim\mathrm{Ber}(r)$ is independent of $(\sigma,\omega)$. Equivalently, $e\in H$ with probability 
\begin{equation}\label{eq:sprinkling-identity}
 p+(1-p)r=\frac{B-1}{B}+\frac{r}{B}=\frac{r(\tilde B-1)+r}{B}=\frac{r\tilde B}{B}\,,
\end{equation}
if $\sigma_y=\sigma_v$, and with probability~$r$ otherwise. Hence

\begin{equation}\label{eq:H-ES-reduction}
 \PP_{\mu_F^\tau,M}(e\in H)
 =r+(1-r)\PP_{\mu_F^\tau,\ES}(\omega_e=1)\,,
\end{equation}
where $\PP_{\mu_F^\tau,\ES}$ is the Edwards--Sokal law  in~\eqref{eq:bottleneck-PhatSW-to-P}, applied on~$F^+$ to the spins $\sigma_F\sim\mu_F^\tau$. We show below that, uniformly in~$\tau$, the probability $\PP_{\mu_F^\tau,\ES}(\omega_e=1)$ is at most a quantity tending to $\hat p$ as $R\to\infty$. Since
\[
 s_\infty:=r+(1-r)\hat p
 =\hat p+r(1-\hat p)
 =\hat p+\frac{(B-1)(1-\tilde\theta)}{q\tilde\theta}\cdot\frac{q}{B+q-1}
 =\hat p+\hat p\,\frac{1-\tilde\theta}{\tilde\theta}
 =\frac{\hat p}{\tilde\theta}<\frac1d\,,
\]
by~\eqref{eq:parameter-defs} and $\tilde\theta>d\hat p$, this gives the lemma.

We next compare the random-cluster marginal of~$\PP_{\mu_F^\tau,\ES}$ with the wired random-cluster measure~$\phi^1_{F^+}$ on~$F^+$, in which all vertices of~$\partial F^+$, including~$y$, are identified. Summing the Edwards--Sokal weight over~$\sigma_F$ shows that the random-cluster marginal of~$\PP_{\mu_F^\tau,\ES}$ is the random-cluster measure on~$F^+$ with boundary condition~$\xi_\tau$, in which boundary vertices of the same $\tau$-colour are identified, conditioned on the event~$D_\tau$ that no two boundary vertices of different $\tau$-colours are connected. The event~$D_\tau$ is decreasing, so by positive association conditioning on it can only decrease the measure stochastically. Since $\xi_\tau$ is dominated by the wired boundary condition, boundary monotonicity then gives
\[
 \PP_{\mu_F^\tau,\ES}(\omega_e\in\cdot)\preceq\phi^1_{F^+}(\omega_e\in\cdot)
 \qquad\text{for every }\tau\,;
\]
see~\cite[Section~2.2.1]{BG21}.

Let~$x_h$ be the wired root-to-boundary connection probability on a complete~$d$-ary tree of depth~$h$. By uniqueness,~$x_h\to0$ as~$h\to\infty$; see~\cite[Section~2.2.2, including (2.2)]{BG21}, in which a tree has~$\Delta$ branches at the root, and by monotonicity deleting one branch off the root gives our version of $x_h$. In particular,~$x_{R+1}$ upper bounds the probability that~$v$ connects to the wired leaves in~$F^+\setminus\{e\}$.

To relate $\phi_{F^+}^1(\omega_e=1)$ to $x_{R+1}$, note that if~$v$ already reaches the wired leaves in~$F^+\setminus\{e\}$, including~$e$ contributes~$p$ to $\phi_{F^+}^1(\omega_e=1)$; otherwise it merges two components and contributes~$p/q$. Excluding~$e$ contributes~$1-p$ in either case. The resulting normalized probability is
\begin{equation}\label{eq:H-auxiliary-opening}
 \PP_{\mu_F^\tau,\ES}(\omega_e=1)
 \le r_R:=\frac{p x_{R+1}+(p/q)(1-x_{R+1})}
 {x_{R+1}+(1-p+p/q)(1-x_{R+1})}
 =\frac{\hat p[1+(q-1)x_{R+1}]}
 {1+(q-1)\hat p x_{R+1}}\,.
\end{equation}
Since~$x_h\to0$, choose~$R\ge1$ large enough that
\begin{equation}\label{eq:choose-R}
 r+(1-r)\frac{\hat p[1+(q-1)x_{R+1}]}
 {1+(q-1)\hat p x_{R+1}}
 =s_\infty+
 \frac{(q-1)\hat p(1-s_\infty)x_{R+1}}
      {1+(q-1)\hat p x_{R+1}}
 \le s\,.
\end{equation}
Together with~\eqref{eq:H-ES-reduction}, this proves~\eqref{eq:H-local-opening-bound-Potts}. 
\end{proof}

We also obtain the following Lemma as a consequence of this proof, which already follows by the random-cluster calculations above, and domination of any random-cluster boundary condition by the wired one. 

\begin{lemma}\label{lem:H-local-opening-FK}
Suppose $q\ge 1$ and $\beta<\beta_u(q,\mathbb T_\Delta)$. There exists~$R\ge1$, depending only on $\Delta,q,\beta,s$, such that the following holds. Let $F^+$ be the forward tree of a good test of~$e=yv$, and let~$\zeta$ be an arbitrary boundary partition of~$\partial F^+$. Then
\begin{equation}\label{eq:H-local-opening-bound-FK}
 \phi_{F^+}^\zeta(\omega_e=1)\le s<1/d\,.
\end{equation}
\end{lemma}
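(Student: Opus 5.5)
The plan is to reuse the second half of the proof of Lemma~\ref{lem:H-local-opening} verbatim, now for an arbitrary random-cluster boundary partition~$\zeta$ and any real $q\ge1$. Positive association and boundary monotonicity of the random-cluster measure hold for all $q\ge1$. The first step is therefore to note that every boundary partition~$\zeta$ of~$\partial F^+$ is dominated by the wired one, in which all of~$\partial F^+$, including~$y$, is identified. This gives
\[
 \phi^\zeta_{F^+}(\omega_e\in\cdot)\preceq\phi^1_{F^+}(\omega_e\in\cdot)
\]
for every~$\zeta$, so it suffices to bound $\phi^1_{F^+}(\omega_e=1)$. Unlike the Potts case, no conditioning on a decreasing event~$D_\tau$ is needed, and no sprinkling term~$r$ appears.

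The second step computes $\phi^1_{F^+}(\omega_e=1)$ by conditioning on the configuration of $F^+\setminus\{e\}$. Since the test is good, $F^+$ is a tree in which~$v$ and every vertex of~$F$ have at most~$d$ children, and~$e=yv$ hangs off~$v$ toward the wired vertex~$y$. Let~$X$ be the event that~$v$ is connected to the wired boundary within $F^+\setminus\{e\}$. On~$X$, opening~$e$ does not change the number of clusters, so the conditional probability that~$e$ is open is~$p$; off~$X$, opening~$e$ merges two clusters, and the conditional probability is~$\hat p$. Hence
\[
 \phi^1_{F^+}(\omega_e=1)\le \hat p+(p-\hat p)\,\phi^1_{F^+}(X).
\]
The event~$X$ is increasing. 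By monotonicity in the tree, padding~$F$ to a complete $d$-ary tree of depth $R+1$ with wired leaves can only increase its probability. Closing~$e$ lowers the marginal law on $F^+\setminus\{e\}$ by positive association, and the wired law on $F^+\setminus\{e\}$ is the tree law with wired leaves. So $\phi^1_{F^+}(X)\le x_{R+1}$, and the bound is exactly~$r_R$ from~\eqref{eq:H-auxiliary-opening}. One can instead bound $\phi^1_{F^+}(\omega_e=1)$ by the ratio form in~\eqref{eq:H-auxiliary-opening} through the same one-edge weight computation.

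The third step chooses~$R$. By uniqueness, $x_h\to0$ (\cite[Section~2.2.2]{BG21}, with one root branch deleted), so $r_R\to\hat p$. Since $d\hat p<1$ for $\beta<\beta_u$, and $s>s_\infty\ge\hat p$, we can take~$R$ large enough that $r_R\le s<1/d$. In fact this~$R$ is no larger than the one chosen in~\eqref{eq:choose-R}, because $r_R\le r+(1-r)r_R$.

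The main obstacle is the case $q\in[1,2)$, or general real $q\ge1$. There I would check that the cited facts hold: the decay $x_h\to0$ below the random-cluster uniqueness threshold (identified with~$\beta_u$ via~$p$), and the identity $\hat p=p/(p+q(1-p))$. Both are stated in~\cite{BG21} for $q\ge1$, so I expect no real difficulty.
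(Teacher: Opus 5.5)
Your first and third steps are fine, and the ratio computation you mention only in passing at the end of your second step is exactly the paper's argument: wired domination plus the one-edge weight computation in \eqref{eq:H-auxiliary-opening}. The main line of your second step, however, has the monotonicity backwards. The wired law on $F^+\setminus\{e\}$, whose probability of $X$ you bound by $x_{R+1}$, is the restriction of the conditional law $\phi^1_{F^+}(\cdot\mid\omega_e=0)$. Positive association gives $\phi^1_{F^+}(X\mid\omega_e=0)\le\phi^1_{F^+}(X)$. Closing $e$ lowers the law, so this yields a \emph{lower} bound on $\phi^1_{F^+}(X)$, not an upper bound; given $\omega_e=1$, the vertex $v$ is wired to $\partial F^+$, which only helps $X$.

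The resulting inequality $\phi^1_{F^+}(\omega_e=1)\le\hat p+(p-\hat p)x_{R+1}$ is in fact false. Take $F^+\setminus\{e\}$ to be the complete $d$-ary tree of depth $R+1$. Then the exact value is $r_R=g(x_{R+1})$ with $g(x)=\hat p(1+(q-1)x)/(1+(q-1)\hat p x)$. For $q>1$, $g$ is strictly concave with $g(0)=\hat p$ and $g(1)=p$, so $g(x)>\hat p+(p-\hat p)x$ for $x\in(0,1)$. For instance, $q=2$, $p=1/2$, $x=1/2$ gives $3/7$ versus $5/12$. In particular your linear expression is not ``exactly $r_R$''.

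The repair is the route you mention. Let $x'$ be the probability of $X$ under the wired law on $F^+\setminus\{e\}$. Then exactly
\[
 \phi^1_{F^+}(\omega_e=1)=\frac{px'+(p/q)(1-x')}{x'+(1-p+p/q)(1-x')}\,.
\]
This is nondecreasing in $x'$, since its derivative has the sign of $p(1-p)(1-1/q)\ge0$. Your padding argument gives $x'\le x_{R+1}$, hence the bound $r_R$, which tends to $\hat p<s$. Alternatively, you could bound $\phi^1_{F^+}(X)$ by $\phi^1_{F^+}(X\mid\omega_e=1)$, but that needs a separate decay estimate with $v$ wired. With this substitution your proof coincides with the paper's.
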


\subsection{An intermediate subcritical exploration process}\label{sec:information}
We consider applying Algorithm~\ref{alg:exploration-reza} to an edge set $A$ drawn from an arbitrary distribution.

\begin{definition}\label{def:P-revealing-distribution}
To treat the spin and edge cases together, we define $P$, $X$ and $\rho$ as follows.
\begin{enumerate}
\item Spin case: given an arbitrary spin distribution~$\nu$, sample $\sigma\sim\nu$, then $H\sim\PP_{\sigma,M}$, and explore~$H$ by Algorithm~\ref{alg:exploration-reza}. The distribution of the resulting record~$\mathcal R$ is denoted by~$P$. Let $X=\sigma$ be the Potts configuration and $\rho=\mu$ the reference distribution.
\item Edge case: given an arbitrary edge distribution~$\nu$, sample $\omega\sim\nu$ and explore the open edges of~$\omega$ by Algorithm~\ref{alg:exploration-reza}. The distribution of~$\mathcal R$ is again denoted by~$P$. Let $X=\omega$ and $\rho=\phi$.
\end{enumerate}
\end{definition}

In both cases, $\nu$ is the law being compared with~$\rho$.
Given a history $h$ with $P(\mathcal R_j=h)>0$,
write~$\nu_h$ and~$\rho_h$ for the conditional laws of~$X$
given that history, starting from~$\nu$ and~$\rho$, respectively.
Note that in the spin case, $\rho_h(\cdot) = \mu(\cdot \mid \mathcal{R}_j = h)$ is the conditional spin distribution, given the revealed edge records. In the edge case, $\rho_h=\phi(\cdot\mid \mathcal{R}_j=h)$
is just random-cluster distribution conditioned on the tested edge values. 
By~\eqref{eq:H-local-opening-bound-Potts} and~\eqref{eq:H-local-opening-bound-FK}, if an edge $e$ is generated from the distributions~$ \PP_{\mu_F^\tau,M}$ (resp. $\phi_{F^+}^\zeta$), a good test finds it present with probability at most~$s<1/d$, uniformly in the boundary condition $\tau$ on $\partial F^+$ (resp. $\zeta$). 
On the other hand, since $\nu$ is arbitrary, $\nu_h$ has some complicated distribution, and this uniform $1/d$ upper bound need not hold under~$\nu_h$.

We construct an intermediate distribution $Q$ on edge test outcomes that, conditional on $h$, 
samples the configuration $\tau$ outside $F$ from $\nu_h$, 
and then samples the configuration $X$ in $F$ from $\rho^\tau_F$. 
On the one hand, such a construction guarantees that good tests remain subcritical under $Q$; on the other hand, 
it enables the difference between revealing edges under $Q$ and under $\nu$ to be controlled. Next we formally define $Q$.

\begin{definition}\label{def:Q-revealing-distribution}
Let~$Q$ be the distribution of the record produced by running Algorithm~\ref{alg:exploration-reza} in which, at each test (line~7),  the outcome of the edge $e$ given the current record~$h$, is not read off from a fixed set~$A$ but drawn from a distribution $Q_h$ as follows. 
\begin{enumerate}
\item If $P(\mathcal R_j=h)>0$ and the test is good, the indicator of~$e$ is drawn as follows.
 \begin{itemize}
 \item Spin case: sample $\tau=\sigma_{V\setminus F}$ from~$\nu_h$, then $\sigma_F$ from~$\mu_{F^+}^\tau$, and include~$e$ with probability $\PP_{\sigma,M}(e\in H)$.
 \item Edge case: sample $\eta=\omega_{E\setminus E(F^+)}$ from~$\nu_h$, then $\omega_{E(F^+)}$ from~$\phi_{F^+}^{\zeta}$, where~$\zeta$ is the partition of~$\partial F^+$ induced by~$\eta$, and record~$\omega_e$.
 \end{itemize}
\item If $P(\mathcal R_j=h)>0$ and the test is not good, then the outcome of $e$ has the same distribution as under~$P$.
\item If $P(\mathcal R_j=h)=0$,  the test records $(e,0)$.
\end{enumerate}
\end{definition}

By Observation~\ref{obs:exploration-local-laws}, the distributions $\mu_{F^+}^\tau$ and $\phi_{F^+}^\zeta$ used in item~1 of Definition~\ref{def:Q-revealing-distribution} are the conditional distributions of~$\rho_h$ inside~$F$ (resp.~$E(F^+)$) given the exterior. Lemmas~\ref{lem:H-local-opening}--\ref{lem:H-local-opening-FK} imply that 
\begin{align}\label{eq:Q-is-subcritical}
    Q(e \text{ is present} \mid e \text{ is a good test},\mathcal R_j = h) \le s <1/d\,.
\end{align}
Therefore, so long as bad tests are rare (which is true given the  treelike geometry of the random graph), the exploration process under $Q$ will be subcritical. 
The next lemma bounds the relative entropy of $P$ with respect to~$Q$ in terms of~$I_\rho(\nu)$.

\begin{lemma}\label{lem:exploration-comparison}
The distribution~$P$ is absolutely continuous with respect to~$Q$. Moreover, for every $R\ge1$, there is a constant $C_{\rm obs}=O(b_R)\cdot e^{O(b_R)}$, depending only on $\Delta$, $q$, $\beta$ and $R$, such that
\begin{equation}\label{eq:exploration-comparison}
 \KL(P\Vert Q)\le C_{\rm obs}\,I_\rho(\nu)\,.
\end{equation}
\end{lemma}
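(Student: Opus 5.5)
The natural route is the chain rule for relative entropy along the exploration. Both $P$ and $Q$ are laws of records $\mathcal R$ generated sequentially by the same deterministic Algorithm~\ref{alg:exploration-reza}. The only difference is the conditional law of each test outcome given the history. Hence
\[
 \KL(P\Vert Q)=\sum_{j\ge0}\E_{h\sim P(\mathcal R_j\in\cdot)}\Bigl[\KL\bigl(P(\cdot\mid\mathcal R_j=h)\,\Vert\,Q_h\bigr)\Bigr],
\]
where each summand is a KL divergence between two Bernoulli laws for the indicator of the tested edge.

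\textbf{Absolute continuity and bad tests.} Under $P$, only histories with $P(\mathcal R_j=h)>0$ arise, so item~3 of Definition~\ref{def:Q-revealing-distribution} never contributes. At bad tests, item~2 makes the two conditional laws equal, so those summands vanish. At good tests I still need absolute continuity: if $Q_h$ gives probability $0$ to an outcome, then $P$ must as well. This holds because the sprinkling gives every edge probability at least $r>0$ of being in $H$. In the edge case, $\phi^\zeta_{F^+}$ gives both values of $\omega_e$ probabilities bounded below by $\hat p\wedge(1-p)$. Hence the $Q$-probabilities at good tests lie in $[c,1-c]$, and absolute continuity follows.

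\textbf{Good tests.} At a good test $j$, the $P$-law of the outcome is obtained by drawing $X\sim\nu_h$ and then the edge given $X$. The $Q$-law replaces the conditional law of $X_F$ given the exterior $\tau$, namely $\nu_h(\cdot\mid\tau)$, by $\rho_h(\cdot\mid\tau)=\mu^\tau_{F^+}$; this identification is Observation~\ref{obs:exploration-local-laws}. By the data-processing inequality and the chain rule,
\[
 \KL\bigl(P(\cdot\mid h)\Vert Q_h\bigr)\le \E_{\tau\sim\nu_h}\bigl[\KL(\nu_h(\sigma_F\in\cdot\mid\tau)\Vert\rho_h(\sigma_F\in\cdot\mid\tau))\bigr].
\]

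\textbf{Block entropy to single-site entropy.} I will bound the block KL on $F$, with $|F|\le b_R$, by single-site local entropies. Since $\rho_h(\cdot\mid\tau)$ is a Potts measure on the tree $F^+$ with all probabilities at least $e^{-O(b_R)}$, a crude approximate tensorization gives
\[
 \KL(\nu'\Vert\rho')\le C b_R e^{O(b_R)}\sum_{x\in F}\E_{\nu'}\bigl[\KL(\nu'_x(\cdot\mid\sigma_{-x})\Vert\rho'_x(\cdot\mid\sigma_{-x}))\bigr].
\]
One way to get this is through the Glauber log-Sobolev constant on $F$, which is at least $e^{-O(b_R)}$ by the min-probability bound, applied to the density $\nu'/\rho'$. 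This is where the constant $C_{\rm obs}=O(b_R)e^{O(b_R)}$ comes from.

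\textbf{Main obstacle: removing the conditioning on $h$.} The single-site conditionals above are those of $\nu_h$ and $\rho_h$, and I need to return to $\nu$ and $\rho$. Two facts are needed.

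First, I must show that conditioning on the record does not alter single-site conditionals in a way that separates $\nu$ from $\rho$. In the spin case, the record's likelihood given $\sigma$ is a product over tested edges of $\PP_{\sigma,M}(\cdot)$, and this product is the same function for both measures. Consequently $\nu_h(\sigma_x\mid\sigma_{-x})/\rho_h(\sigma_x\mid\sigma_{-x})\propto\nu(\sigma_x\mid\sigma_{-x})/\mu(\sigma_x\mid\sigma_{-x})$, and the likelihood factors are bounded. The edge case is handled the same way, since the record simply fixes some $\omega_e$ values.

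Second, I must sum over $j$ without overcounting. I plan to use that distinct good tests have forward regions whose "fresh" parts are charged to vertices not yet absorbed. A vertex $x$ can lie in the forward regions of at most $O(b_R)$ tests: any such test edge is within distance $R+1$ of $x$ in $G^{\rm rem}$, and each edge is tested only once. Combining this with the supermartingale-like fact that averaging the local entropy of $\nu_h$ over $h\sim P$ is at most $I_\rho(\nu)$ (by convexity of KL and the tower property), the total sum is at most $C_{\rm obs}\,I_\rho(\nu)$. I expect this convexity and overlap bookkeeping to be the delicate part.
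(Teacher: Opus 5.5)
Your proposal is correct and follows essentially the same route as the paper. The shared steps are: the chain rule over tests, zero cost at bad tests, data processing down to the block $F$ (or $E(F^+)$), a crude $e^{O(b_R)}$ approximate tensorization on that block, removal of the conditioning on the history, and the count showing each coordinate lies in the block of only $O(b_R)$ tests. The one justification to fix is the averaging step, namely that the history-conditioned single-site KL averaged over the history under $P$ is at most $I_\rho(\nu)$. Convexity of KL points the wrong way there, with mismatched weights. The paper instead uses the chain rule for the two joint laws of $(X_i,\text{record})$ given $X_{-i}$, which share the same channel from $X$ to the record. Alternatively, your tilt observation, made quantitative via the uniform lower bound on $\rho$'s single-site conditionals (e.g.\ through $\chi^2$ and Pinsker), together with the tower property, also suffices, at the cost of a worse constant.
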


For the proof of Lemma~\ref{lem:exploration-comparison} we first provide a crude bound for approximate tensorization that will be used on $F\subseteq V(G)$, that is, for any spin assignment $\tau\in [q]^V$, and any function $f:[q]^V\rightarrow \mathbb R_{\ge 0}$, 
\begin{equation}
    \label{eq: crude AT for small graph}
\Ent_{\mu_F^\tau} (f) \le C_{\rm AT} \sum_{x\in F} \E_{\mu_F^\tau}[\Ent_x(f)].
\end{equation}
By taking $f=d\nu/d_{\mu_F^\tau}$, in Lemma~\ref{lem:local-kl},  we  present an equivalent form of \eqref{eq: crude AT for small graph} in terms of KL divergence and aggregate local entropy.
In particular, this bound will be useful to compare the exploration under the Potts distribution~$\mu$ to the exploration under another spin distribution~$\nu$, for example~$\nu=\mu(\cdot\mid A)$ for a potential bottleneck set~$A$. Then we show this bound also holds for the random-cluster distribution.

\begin{lemma}[{\cite[Lemma~2.2]{CMT15}}]\label{lem:local-kl}
Let $G$ be a finite graph of maximum degree at most $\Delta$.
\begin{enumerate}
\item  Let $F\subseteq V(G)$ be nonempty, fix spins $\tau$ on $V(G)\setminus F$, and let $\mu_F^\tau$ be the resulting conditional Potts distribution on $[q]^F$. For every probability distribution $\nu$ on $[q]^F$,
\begin{equation}\label{eq:local-kl-bound}
 \KL(\nu\Vert\mu_F^\tau)\le B^{3\Delta|F|}\, I_{\mu_F^\tau}(\nu)\,.
\end{equation}
\item Let $J\subseteq E(G)$ be nonempty, fix an arbitrary boundary condition $\zeta$, and let $\phi_J^\zeta$ be the resulting random-cluster distribution on $\{0,1\}^J$. For every probability distribution $\nu$ on $\{0,1\}^J$,
\begin{equation}\label{eq:local-kl-bound-FK}
 \KL(\nu\Vert\phi_J^\zeta)\le q^{3|J|}\, I_{\phi_J^\zeta}(\nu)\,.
\end{equation}
\end{enumerate}
\end{lemma}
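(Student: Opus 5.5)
The statement is a finite-graph approximate tensorization of entropy with a crude constant exponential in the size of the block. I would prove both parts by the same route. First, reduce KL to entropy of a density, and local KL to conditional entropies. Then bound global entropy by variance-type quantities using the minimum probability. Finally, bound variance by single-site terms via a canonical-path (one-site-at-a-time) argument whose congestion is controlled by the ratio of largest to smallest conditional probabilities.

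\textbf{Step 1 (rewriting).} Set $f=d\nu/d\mu_F^\tau$. Then $\KL(\nu\Vert\mu_F^\tau)=\Ent_{\mu_F^\tau}(f)$. The computation in \eqref{eq:function-information-calculation} of Lemma~\ref{lem:function-information}, with $f^2$ replaced by $f$, gives
\[
I_{\mu_F^\tau}(\nu)=\sum_{x\in F}\E_{\mu_F^\tau}\bigl[\Ent_{\mu_F^\tau}(f\mid\sigma_{F\setminus\{x\}})\bigr].
\]
So the claim is exactly \eqref{eq: crude AT for small graph} with $C_{\rm AT}=B^{3\Delta|F|}$.

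\textbf{Step 2 (crude tensorization, spin case).} I would not pass through variance, since entropy-to-variance conversions lose a $\log(1/\mu_{\min})$ factor; instead I would use the direct entropic argument of \cite{CMT15}. Compare $\mu_F^\tau$ with the product measure $\pi=\bigotimes_{x\in F}\pi_x$, where $\pi_x=\mu(\sigma_x\mid\sigma_{-x})$ is evaluated at a fixed reference configuration. Each Potts weight changes by at most $B^{\deg}$ when one spin changes, so:
\begin{itemize}
\item the density $d\mu_F^\tau/d\pi$ lies in $[B^{-\Delta|F|},B^{\Delta|F|}]$;
\item every single-site conditional $\mu_F^\tau(\sigma_x=\cdot\mid\sigma_{F\setminus x})$ is within a factor $B^{2\Delta}$ of $\pi_x$.
\end{itemize}
I would then chain four standard facts.
\begin{enumerate}
\item By the variational formula $\Ent_\mu(f)=\inf_{c>0}\E_\mu[f\log(f/c)-f+c]$ (pointwise nonnegative integrand), entropy changes by at most the density ratio under a change of measure: $\Ent_{\mu_F^\tau}(f)\le B^{\Delta|F|}\Ent_\pi(f)$.
\item Product measures tensorize exactly: $\Ent_\pi(f)\le\sum_x\E_\pi[\Ent_{\pi_x}(f)]$.
\item Each single-site entropy transfers back with a constant factor, $\Ent_{\pi_x}(f)\le B^{2\Delta}\Ent_{\mu_x}(f)$.
\item The outer expectation changes measure at cost $B^{\Delta|F|}$.
\end{enumerate}
The total constant is $B^{2\Delta|F|+2\Delta}\le B^{3\Delta|F|}$ whenever $|F|\ge2$. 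The case $|F|=1$ is trivial, since there the two sides are equal.

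\textbf{Step 3 (random-cluster case).} The argument is identical with edges as sites, comparing to the product of Bernoulli$(p)$ variables. Here $\phi_J^\zeta(\omega)\propto(p/(1-p))^{|\omega|}q^{c(\omega)}$, and flipping one edge changes the component count by at most $1$. So each single-edge conditional is within a factor $q$ of the product measure, and the global density ratio lies in $[q^{-|J|},q^{|J|}]$. This gives the constant $q^{|J|+|J|+1}\le q^{3|J|}$.

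The main obstacle is bookkeeping in Step 2, specifically the change of measure in the outer expectation $\E[\Ent_x(f)]$. The conditional entropy under $\pi$ is taken with respect to $\pi_x$, not $\mu_x$, so the density ratio must be applied carefully both inside and outside. The rest is routine.
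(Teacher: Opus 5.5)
Your proposal is correct and follows essentially the same route as the cited \cite[Lemma~2.2]{CMT15}, which the paper itself reuses in the proof of Lemma~\ref{fact:fk-reference}: reduce to the crude approximate tensorization \eqref{eq: crude AT for small graph} via $f=d\nu/d\mu_F^\tau$, compare with a product measure by Holley--Stroock, tensorize exactly there, and apply Holley--Stroock a second time to each single-site (or single-edge) conditional. One bookkeeping point: after normalization, $d\mu_F^\tau/d\pi$ need not lie in $[B^{-\Delta|F|},B^{\Delta|F|}]$, but your first and fourth factors only ever enter as the product $\max(d\mu_F^\tau/d\pi)/\min(d\mu_F^\tau/d\pi)$, in which the normalizing constants cancel and which is at most $B^{2\Delta|F|}$, so your constant $B^{2\Delta|F|+2\Delta}\le B^{3\Delta|F|}$ (for $|F|\ge2$) stands.
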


\begin{proof}[Proof of Lemma~\ref{lem:exploration-comparison}]
Given a history~$\mathcal{R}_j=h$, let~$P_h$ and~$Q_h$ be the respective distributions of the outcome of the $(j+1)$-th test under $P$ and under $Q$. Whenever~$P(\mathcal{R}_j=h)>0$, $Q_h$ gives positive probability to every configuration allowed by~$\nu_h$. Thus~$P_h$ is absolutely continuous with respect to $Q_h$ and in turn,~$P$ is absolutely continuous with respect to $Q$. The law of total relative entropy gives
\begin{equation}\label{eq:record-chain-rule}
 \KL(P\Vert Q)
 =\sum_{j=0}^{|E|-1}
   \E_P\!\left[
     \KL(P_{\mathcal R_j}\Vert Q_{\mathcal R_j})
   \right]\,.
\end{equation}
Tests that are not good contribute zero, since 
$Q_h=P_h$ in this case. 
We therefore wish to bound $\KL(P_h \Vert Q_h)$ at good tests.

At a good test with history~$h$, let~$A(h)=F$ in the spin case and~$A(h)=E(F^+)$ in the edge case. Under both~$P_h$ and~$Q_h$, the configuration outside~$A(h)$ is sampled from its marginal under~$\nu_h$. Given these exterior configurations, the configurations in~$A(h)$ are sampled from their conditional law under~$\nu_h$ for~$P_h$, and under~$\rho_h$ for~$Q_h$. 
By the chain rule for relative entropy, the data processing inequality, and the local KL comparison bounds of Lemma~\ref{lem:local-kl}, we get
\begin{equation}\label{eq:local-record-cost-a}
 \begin{aligned}
 \KL(P_h\Vert Q_h)
 &\le
 \E_{X_{A(h)^c}\sim\nu_h}\Big[
 \KL\big(
   \nu_h(X_{A(h)}\in\cdot\mid X_{A(h)^c})
   \,\big\Vert\,
   \rho_h(X_{A(h)}\in\cdot\mid X_{A(h)^c})
 \big)\Big]\\
 &\le e^{O(\Delta b_R)}\sum_{i\in A(h)}
 \E_{X_{-i}\sim\nu_h}\Big[
 \KL\big(\nu_h(X_i\in\cdot\mid X_{-i})\,\big\Vert\,
         \rho_h(X_i\in\cdot\mid X_{-i})\big)\Big]\,,
 \end{aligned}
\end{equation}
where a coordinate $i$ is a vertex in the spin case and an edge in the edge case, and we used $|A(h)|\le\Delta b_R$.
For a coordinate~$i$, define
\begin{equation}\label{eq:D}
 D_i(h)
 =\mathbb E_{X_{-i} \sim \nu_h}[
   \KL\!\left(
      \nu_h(X_i\in\cdot\mid X_{-i})
      \,\Vert\,
      \rho_h(X_i\in\cdot\mid X_{-i})
   \right)]\,.
\end{equation}
Combining~\eqref{eq:record-chain-rule}, \eqref{eq:local-record-cost-a} and~\eqref{eq:D}, we obtain
\begin{align}
    \label{eq:record-cost-reduction}
 \KL(P\Vert Q)
 &\le e^{O(\Delta b_R)}
 \E_P\!\Big[
   \sum_{\substack{0\le j<|E|:\\\text{test }j+1\text{ is good}}}
   \ \sum_{i\in A(\mathcal R_j)}D_i(\mathcal R_j)
 \Big]\,.
\end{align}

Note that 
$D_i(\mathcal R_0)=\E_{\nu_{-i}}
 [\KL\!\left(\nu(X_i\in \cdot\mid X_{-i})\,\Vert\,
                    \rho(X_i\in \cdot\mid X_{-i})\right)]$.
We next aim to show that for $i\in \mathcal{R}_j$, it holds that 
$\E_P[D_i(\mathcal{R}_j)]\le D_i(\mathcal R_0)$.
We use the following consequence of the chain rule. If~$\alpha$ and~$\beta$ are two distributions of a pair~$(U,Y)$ with the same conditional distribution of~$Y$ given~$U$, then
\begin{equation}\label{eq:KL-two-orders}
 \E_{\alpha}\big[\KL\big(\alpha(U\in\cdot\mid Y)\,\Vert\,\beta(U\in\cdot\mid Y)\big)\big]
 \le \KL\big(\alpha(U\in\cdot)\,\Vert\,\beta(U\in\cdot)\big)\,.
\end{equation}
Fix a coordinate~$i$. If $i\in A(\mathcal R_j)$, then the edge tested at step~$j+1$ has an endpoint within distance~$R$ of a vertex of~$i$, so it is one of at most $2\Delta(1+\Delta b_R)$ edges, and each edge is tested at most once. For each such edge~$e$, let~$\mathcal R^{(e)}$ be the record just before~$e$ is tested, or the full record if~$e$ is never tested. 
Observe that given~$X$, $\mathcal R^{(e)}$ is generated by the same rule whether~$X$ is drawn from~$\nu$ or~$\rho$. 
 Applying~\eqref{eq:KL-two-orders} conditionally on~$X_{-i}$, with $U=X_i$ and $Y=\mathcal R^{(e)}$, and averaging over~$X_{-i}\sim\nu$, gives
\[
 \E_P\big[D_i(\mathcal R^{(e)})\big]\le D_i(\mathcal R_0)
 =\E_{\nu}\big[\KL\big(\nu(X_i\in\cdot\mid X_{-i})\,\Vert\,\rho(X_i\in\cdot\mid X_{-i})\big)\big]\,.
\]
Since $D_i\ge0$, summing over these edges and over~$i$ yields
\[
 \E_P\Big[\sum_{j:\,\mathrm{good}}\ \sum_{i\in A(\mathcal R_j)}D_i(\mathcal R_j)\Big]
 \le 2\Delta(1+\Delta b_R)\sum_iD_i(\mathcal R_0)=2\Delta(1+\Delta b_R)\,I_\rho(\nu)\,,
\]
and together with~\eqref{eq:record-cost-reduction} this gives $\KL(P\Vert Q)\le O(b_R)\cdot e^{O(b_R)}I_\rho(\nu)$, as claimed.
\end{proof}

To conclude this subsection, we convert the relative entropy bound of Lemma~\ref{lem:exploration-comparison} into a bound on expectations under~$P$. The quantities we need to control, such as $W_L$ and $W_{\rm long}$, are functions of the form $W(\mathcal R)$. Under~$P$ they are hard to analyze directly, but under~$Q$ the exploration is subcritical, and in Section~\ref{sec:tails} we will show that their exponential moments $\E_Q[e^{\lambda W(\mathcal R)}]$, for a suitable fixed $\lambda>0$, are close to~$1$. The following corollary bounds $\E_P[W(\mathcal R)]$ by $\frac{1}{\lambda}\log\E_Q[e^{\lambda W(\mathcal R)}]$, at an additional cost of $\frac{1}{\lambda}C_{\rm obs}I_\rho(\nu)$.

\begin{corollary}\label{cor:entropy-transfer}
For every nonnegative function $W=W(\mathcal R)$ of the  record~$\mathcal R$ and every~$\lambda>0$,
\begin{equation}\label{eq:entropy-transfer}
 \E_P[W(\mathcal R)]\le\frac1\lambda\Big(C_{\rm obs}\,I_\rho(\nu)+\log\E_Q\big[e^{\lambda W(\mathcal R)}\big]\Big)\,.
\end{equation}
\end{corollary}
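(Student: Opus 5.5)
The plan is to use the Donsker--Varadhan variational formula for relative entropy: for probability measures $P\ll Q$ on the finite space of records and any function $g$,
\[
 \E_P[g]\le \KL(P\Vert Q)+\log\E_Q\big[e^{g}\big]\,.
\]
I would apply this with $g=\lambda W(\mathcal R)$. The absolute continuity $P\ll Q$ is supplied by Lemma~\ref{lem:exploration-comparison}. That same lemma gives $\KL(P\Vert Q)\le C_{\rm obs}I_\rho(\nu)$. Dividing the resulting inequality by $\lambda>0$ then yields exactly~\eqref{eq:entropy-transfer}.

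For completeness, I would derive the variational inequality itself rather than cite it. Define the tilted measure $Q^g(x)=Q(x)e^{g(x)}/\E_Q[e^g]$. This is a probability measure, since $W\ge0$ is finite on the finite record space and so $\E_Q[e^g]\in[1,\infty)$. Then
\[
 0\le \KL(P\Vert Q^g)=\KL(P\Vert Q)-\E_P[g]+\log\E_Q[e^g]\,,
\]
where the identity is just $\log\frac{dP}{dQ^g}=\log\frac{dP}{dQ}-g+\log\E_Q[e^g]$, integrated against $P$. Since $P\ll Q$ and $Q^g$ is mutually absolutely continuous with $Q$, all terms are finite. Rearranging gives the claim.

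I do not expect any real obstacle. The only points to check are that the record space is finite, so all expectations are finite sums, and that $P\ll Q$ holds, which is part of Lemma~\ref{lem:exploration-comparison}. Nonnegativity of $W$ is not even needed beyond ensuring that $\log\E_Q[e^{\lambda W}]$ is well defined and nonnegative.
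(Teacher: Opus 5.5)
Your proposal is correct and is essentially the paper's argument. The paper proves the same Gibbs/Donsker--Varadhan inequality $\lambda\E_P[W]-\KL(P\Vert Q)\le\log\E_Q[e^{\lambda W}]$ by applying Jensen's inequality to the logarithm over the support of $P$, rather than through nonnegativity of the relative entropy to the tilted measure. It then invokes Lemma~\ref{lem:exploration-comparison} for both $P\ll Q$ and $\KL(P\Vert Q)\le C_{\rm obs}I_\rho(\nu)$, exactly as you do.
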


\begin{proof}
Let $S$ be the set of records~$h$ with $P(\mathcal R=h)>0$. By Lemma~\ref{lem:exploration-comparison}, $Q(\mathcal R=h)>0$ for every $h\in S$. By concavity of the logarithm and Jensen's inequality,
\[
\begin{aligned}
 \lambda\,\E_P[W(\mathcal R)]-\KL(P\Vert Q)
 &=\sum_{h\in S}P(\mathcal R=h)\log\frac{e^{\lambda W(h)}\,Q(\mathcal R=h)}{P(\mathcal R=h)}\\
 &\le\log\sum_{h\in S}e^{\lambda W(h)}\,Q(\mathcal R=h)
 \le\log\E_Q\big[e^{\lambda W(\mathcal R)}\big]\,.
\end{aligned}
\]
Rearranging and bounding $\KL(P\Vert Q)\le C_{\rm obs}I_\rho(\nu)$ by Lemma~\ref{lem:exploration-comparison} gives~\eqref{eq:entropy-transfer}.
\end{proof}

\subsection{Tail bounds on component sizes under $Q$}\label{sec:tails}

We now show that under~$Q$ every component has an exponential size tail, uniformly in the past of the exploration. By~\eqref{eq:Q-is-subcritical}, each good test finds its edge present with probability at most $s<1/d$. A test that is not good can only occur within distance $R+1$ of an edge lying outside the tree discovered so far. Hence, if we  guarantee that there are few such edges compared to the size of the component then the exploration of each component is dominated by a subcritical branching process.

 Using that $s<1/d$, fix~$\delta>0$ sufficiently small, and~$\gamma\in(0,1)$ sufficiently close to $1$ such that
\begin{equation}\label{eq:sparse-parameters}
 \delta< \frac{1-ds}{16(1+\Delta b_R)^2},\qquad  \text{and} \qquad ds^\gamma<1\,.
\end{equation}

\begin{lemma}\label{lem:exploration-tails}
There exist constants $c_0, C_1, C_{\rm tail}>0$, depending only on $\Delta,R,s,\delta,\eta$, such that the following holds. Let~$G$ be a graph on $n$ vertices of maximum degree~$\Delta$ satisfying $|E(G[U])|\le(1+\delta)|U|$  for every~$U\subseteq V$ with~$|U|\le\eta n$. For any given distribution~$\nu$, let~$Q$ be its distribution of records as given in Definition~\ref{def:Q-revealing-distribution}. Let~$h$ be a history with $Q(\mathcal R_j=h)>0$ after which Algorithm~\ref{alg:exploration-reza} is at line~3, and let~$\mathcal C$ be the component that is explored next. Then for every $m\ge1$,
\begin{equation}\label{eq:component-tail-statement}
 Q\big(|\mathcal C|\ge m\mid\mathcal R_j=h\big)\le C_{1}\,e^{-2c_0m}\,,
\end{equation}
and for every integer $a\ge0$,
\begin{equation}\label{eq:component-moment-statement}
 \E_Q\big[e^{c_0|\mathcal C|}\one\{|\mathcal C|>a\}\mid\mathcal R_j=h\big]\le C_{\rm tail}\,e^{-c_0a}\,.
\end{equation}
\end{lemma}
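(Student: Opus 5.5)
We will dominate the exploration of $\mathcal C$ under $Q$ by a subcritical branching-type count. Good tests are the mechanism, bad tests are the nuisance. Throughout, condition on $\mathcal R_j=h$ and work with the component $\mathcal C$ started at $u$.

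\textbf{Step 1: reduction to a large-deviation bound.} Suppose $|\mathcal C|\ge m$. Then at the moment $U$ first reaches size $m$, exactly $m-1$ tests in the exploration have been successful, and some number $k\ge m-1$ of tests have been performed. Each test is of an edge in $\partial^{\rm rem}U$, so the total number of tests while $|U|\le m$ is at most $\Delta m$. Since $m-1\le|\mathcal C|$ we only ever need $m\le\eta n$ (for $m>\eta n$ the bound is either vacuous after adjusting $C_1$, or we cap at $\eta n$ using monotonicity of the event). We will show the following: on the event $|U|$ reaches $m$, at most $\epsilon m$ of the first $k$ tests are bad, with $\epsilon:=4\delta(1+\Delta b_R)^2$ say.

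\textbf{Step 2: bad tests are few (deterministic, uses sparsity).} A test of $yv$ is bad if $F\cap U\neq\emptyset$ or $B_{R+1}(v)$ in $G^{\rm rem}$ contains a cycle. In either case there is an edge of $G[U']$, where $U'$ is the $(R+1)$-neighbourhood of the current $U$, that is not in the explored spanning tree. We call such an edge \emph{excess}: either an edge of $G^{\rm rem}$ closing a cycle through $U$, or a cycle edge near $v$. Because $U$ is connected via tested-open edges, $|U'|\le(1+\Delta b_R)|U|$. The sparsity hypothesis, applied to $U'$ with $|U'|\le\eta n$ after shrinking $\eta$, gives at most $\delta|U'|+1$ excess edges. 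Each excess edge can make at most $2\Delta(1+\Delta b_R)$ tests bad, since it must lie within distance $R+1$ of the tested edge. Hence the number of bad tests is at most $2\Delta(1+\Delta b_R)^2(\delta m+1)$, which is at most $\epsilon m+O(1)$, with $\epsilon<(1-ds)/8$ by \eqref{eq:sparse-parameters}. The main obstacle is this counting: making precise that ``closing cycle'' edges are genuinely excess relative to a tree on $U'$, and doing so uniformly in the prior history $h$ (whose removed vertices only make $G^{\rm rem}$ sparser).

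\textbf{Step 3: martingale bound.} Let $\xi_t$ be the outcome of the $t$-th test. At good tests, \eqref{eq:Q-is-subcritical} gives $Q(\xi_t=1\mid\text{past})\le s$. Write $N_g$ and $S_g$ for the number of good tests and of successful good tests. If $|U|$ reaches $m$, the boundary must remain nonempty, and each vertex in $U$ contributes at most $d$ new boundary edges. This gives $N_g+N_b\le 1+d(m-1)$ up to that time, while $S_g\ge m-1-N_b\ge(1-\epsilon)m-O(1)$. So among at most $dm+1$ good tests, at least $(1-\epsilon)m-O(1)$ succeed. The mean is at most $sdm$, and $(1-\epsilon)>ds+2\epsilon$ with margin. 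Run an exponential supermartingale $\exp(\theta S_g^{(t)}-\log(1+s(e^\theta-1))N_g^{(t)})$ and stop it at the hitting time, choosing $\theta$ with $ds^\gamma<1$ in mind (e.g.\ $e^\theta=s^{-\gamma}$). Then Markov's inequality gives probability at most $C_1e^{-2c_0m}$, union bounded over the $O(1)$ possible offsets. This proves \eqref{eq:component-tail-statement}. Finally, \eqref{eq:component-moment-statement} follows by summation: $\sum_{m>a}e^{c_0m}C_1e^{-2c_0(m-1)}\le C_{\rm tail}e^{-c_0a}$.
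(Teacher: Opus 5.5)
Your outline follows the paper's proof: stop when $|U|=m$ or the component closes, note that at most $1+dm$ tests occur, use sparsity of the $(R+1)$-neighbourhood to show few successes come from non-good tests, run an exponential supermartingale over good successful tests via \eqref{eq:Q-is-subcritical}, use monotonicity for large $m$, and sum to get \eqref{eq:component-moment-statement}. The gap is Step 2, which you yourself flag as the main obstacle. It is asserted rather than proved, and the justification you give does not work. The hypothesis bounds the edges of $G[U']$ outside a spanning tree \emph{of $G[U']$} by $e_G(U')-|U'|+1\le\delta|U'|+1$. But the explored tree spans only $U$, so $G[U']$ has at least $|U'|-|U|$ edges outside it, which can be of order $b_R|U|$. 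Exhibiting ``an edge of $G[U']$ not in the explored spanning tree'' near each bad test therefore gives no useful count.

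The missing step, which is the paper's, goes as follows.
\begin{itemize}
\item At the stopping time, let $T_0$ be the tree of successful tests and extend it to a spanning tree $T$ of $G[U^+]$, where $U^+$ is the radius-$(R+1)$ neighbourhood of $U$. Set $E_{\rm exc}=E(G[U^+])\setminus E(T)$. Restricting to $m\le\eta n/(2\Delta b_R)$, rather than ``shrinking $\eta$'', gives $|E_{\rm exc}|\le 1+2\delta\Delta b_R m$.
\item Consider only \emph{successful} non-good tests of $yv$; your inequality $S_g\ge m-1-N_b$ needs nothing more. Show that $\operatorname{dist}_G(v,E_{\rm exc})\le R+1$ in both cases.
\item If the $(R+1)$-ball of $v$ in $G^{\rm rem}$ has a cycle, that cycle lies in $G[U^+]$ because $v\in U$. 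It therefore uses an edge outside $T$.
\item If $w\in F\cap U$, then the forward path $P$ from $v$ to $w$, the $T_0$-path from $w$ to $y$, and $yv$ together contain a cycle through $yv$. Because the test succeeded, $yv$ and the $T_0$-path lie in $T$. So the non-tree edge lies on $P$, within distance $R$ of $v$.
\item Successful tests add distinct vertices, so each $f\in E_{\rm exc}$ accounts for at most $|B_{R+1}(f)|\le 2(1+\Delta b_R)$ of them.
\end{itemize}
This also removes a spurious factor $\Delta$ in your count. With $2\Delta(1+\Delta b_R)$ bad tests charged to each excess edge, \eqref{eq:sparse-parameters} gives only $\epsilon<\Delta(1-ds)/8$, not the $(1-ds)/8$ you claim.

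There are two smaller slips. First, the choice $e^\theta=s^{-\gamma}$ need not make $\theta(1-\epsilon)-d\log(1-s+se^\theta)$ positive; it fails, for example, at $d=2$, $s=0.45$, $\gamma=0.99$. Take $\theta$ small as in \eqref{eq:tail-constants}; $\gamma$ plays no role in this lemma. Second, for $m>\eta n/(2\Delta b_R)$ you cannot make the bound ``vacuous'' by enlarging $C_1$, since $C_1$ must not depend on $n$. Monotonicity does work, but only after shrinking the rate by a factor $\eta/(2\Delta b_R)$ using $m\le n$, as the paper does.
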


\begin{proof}[Proof of Lemma~\ref{lem:exploration-tails}]
First suppose $m\le\eta n/(2\Delta b_R)$. Stop the exploration of~$\mathcal C$ when it is completed or when $|U|=m$, whichever comes first. Every edge tested before stopping has an endpoint in~$U$, and~$U$ is connected with at most~$m$ vertices, so at most $\Delta m-(m-1)=1+dm$ tests are performed before stopping.

Let~$T_0$ be the set of edges~$e$ for which their tests are successful after~$h$ and before stopping. Each such test adds its endpoint~$v\notin U$ to~$U$ at line~9 of Algorithm~\ref{alg:exploration-reza}, so~$T_0$ is a spanning tree of~$G[U]$ at the stopping time. Let~$U^+$ be the radius-$(R+1)$ neighbourhood of~$U$ in~$G$. Then~$G[U^+]$ is connected and $|U^+|\le(1+\Delta b_R)|U|\le 2\Delta b_R m\le\eta n$, so~$T_0$ extends to a spanning tree~$T$ of~$G[U^+]$.
Let $E_{\rm exc}:=E(G[U^+])\setminus E(T)$ be the set of excess edges, so that by assumption,
\begin{equation}\label{eq:excess}
 |E_{\rm exc}|=e_G(U^+)-|U^+|+1\le 1+2\delta\Delta b_R m\,.
\end{equation}

Consider a successful test of $e=yv$ before stopping that is not good, so that $v\in U$ and $e\in T_0$ at the stopping time. 
We claim that $\operatorname{dist}_G(v,E_{\rm exc})\le R+1$.
Since distances in~$G^{\rm rem}$ are at least those in~$G$, every vertex within $G^{\rm rem}$-distance~$R+1$ of~$v$ lies in~$U^+$. By Definition~\ref{def:good-test}, there are two cases.
\begin{enumerate}
\item[(a)] $G^{\rm rem}[B_{R+1}(v)]$ is not a tree. Then it contains a cycle, and thus an edge in $E_{\rm exc}$.
\item[(b)] $F\cap U\neq\emptyset$, with~$U$ taken at the time of the test. Pick $w\in F\cap U$. By definition of~$F$, there is a path~$P$ from~$v$ to~$w$ in $G^{\rm rem}\setminus\{yv\}$ with $V(P)\subseteq F$. Let~$P'$ be the path from~$w$ to~$y$ in~$T_0$. Then $P\cup P'\cup\{yv\}$ contains a cycle through~$yv$. Since $P'\cup\{yv\}\subseteq T_0\subseteq T$ and~$T$ is a tree, some edge of~$P$ lies in~$E_{\rm exc}$, and its endpoints lie in~$F$.
\end{enumerate}
A successful test of an edge $e=yv$ adds a new vertex~$v$ to~$U$, and by the claim, if the test is not good then $v\in\bigcup_{f\in E_{\rm exc}}B_{R+1}(f)$. Since $|B_{R+1}(f)|\le 2(1+\Delta b_R)\le 4\Delta b_R$ for every edge~$f$, by~\eqref{eq:excess} the number of successful tests before stopping that are not good is at most
\begin{equation}\label{eq:bad-count}
 4\Delta b_R\,|E_{\rm exc}|\le 4\Delta b_R\,(1+2\delta\Delta b_R m)\,.
\end{equation}

Reaching~$|U|=m$ requires $m-1$ successful tests, so by~\eqref{eq:bad-count} it requires at least $m-1-4\Delta b_R(1+2\delta\Delta b_R m)$ successful good tests. For $i\ge1$, let~$Z_i$ be the indicator that the $i$th test after~$h$ is good and successful, with $Z_i=0$ if there is no such test. Whether this test is good is determined by~$\mathcal R_{j+i-1}$. If it is good, then it is successful with conditional probability at most~$s$ by~\eqref{eq:Q-is-subcritical}; otherwise $Z_i=0$, by Definition~\ref{def:Q-revealing-distribution}. Hence, for every $\lambda>0$,
\begin{equation}\label{eq:test-mgf}
 \E_Q\big[e^{\lambda Z_i}\mid\mathcal R_{j+i-1}\big]\le 1-s+se^\lambda\,.
\end{equation}
Since~$1-8\delta \Delta^2 b_R^2 > (1+ds)/2 >ds$ by~\eqref{eq:sparse-parameters}, choose a fixed sufficiently small~$\lambda>0$ such that
\begin{equation}\label{eq:tail-constants}
 (1-8 \delta \Delta^2 b_R^2)\lambda
     -d\log(1-s+se^\lambda)>0\,.
\end{equation}
On the event $\{|\mathcal C|\ge m\}$, at most $1+dm$ tests are performed before stopping, and at least $(1-8\delta\Delta^2b_R^2)m-1-4\Delta b_R$ of them are good and successful. Hence, iterating~\eqref{eq:test-mgf} by applying Markov's inequality to $e^{\lambda\sum_iZ_i}$ and the tower property, for all $1\le m\le\eta n/(2\Delta b_R)$,
\begin{align}
 Q\big(|\mathcal C|\ge m\mid\mathcal R_j=h\big)
 &\le Q\Big(\sum_{i=1}^{1+dm}Z_i\ge(1-8\delta\Delta^2b_R^2)m-1-4\Delta b_R\ \Big|\ \mathcal R_j=h\Big)\nonumber\\
 &\le e^{-\lambda[(1-8\delta\Delta^2b_R^2)m-1-4\Delta b_R]}\,
     \E_Q\Big[e^{\lambda\sum_{i=1}^{1+dm}Z_i}\ \Big|\ \mathcal R_j=h\Big]\nonumber\\
 &\le e^{-\lambda[(1-8\delta\Delta^2b_R^2)m-1-4\Delta b_R]}\,(1-s+se^\lambda)^{1+dm}\nonumber\\
 &= e^{\lambda(1+4\Delta b_R)}(1-s+se^\lambda)\cdot
    \exp\Big(-m\big[(1-8\delta\Delta^2b_R^2)\lambda-d\log(1-s+se^\lambda)\big]\Big)\nonumber\\
 &= C_1e^{-a_0m},\label{eq:component-tail}
\end{align}
where $a_0:=(1-8\delta\Delta^2b_R^2)\lambda-d\log(1-s+se^\lambda)>0$ by~\eqref{eq:tail-constants}, and $C_1:=e^{\lambda(1+4\Delta b_R)}(1-s+se^\lambda)$.

The case of~$\eta n/(2\Delta b_R)<m\le n$ follows by \eqref{eq:component-tail} with a change of the constant $a_0 \mapsto a_0 \eta / (2\Delta b_R)$,  proving~\eqref{eq:component-tail-statement} for all~$m\le n$.
Finally, by~\eqref{eq:component-tail-statement},
\begin{align}
 \E_Q\big[e^{c_0|\mathcal C|}\one\{|\mathcal C|>a\}\mid\mathcal R_j=h\big]
 \le\sum_{m>a}e^{c_0m}\,Q\big(|\mathcal C|\ge m\mid\mathcal R_j=h\big)
 \le C_{1}\sum_{m>a}e^{-c_0m}
 \le C_{\rm tail}\,e^{-c_0a}\,,\label{eq:truncated-moment}
\end{align}
after choosing~$C_{\rm tail}$ to be a large constant if necessary.
\end{proof}

\subsection{Proofs of the two main lemmas}\label{sec:conclusions}
We conclude Section~\ref{sec:key-stationary-estimate} with the proofs of Lemmas~\ref{lem:entropy-mass} and~\ref{lem:fk-source},
by combining the exponential tail bounds under the $Q$ revealing (Lemma~\ref{lem:exploration-tails}) with the KL-divergence comparison between $P$ and $Q$ (Lemma~\ref{lem:exploration-comparison}, in the form of Corollary~\ref{cor:entropy-transfer}).

We define a graph event that guarantees most tests along the revealing are good tests, and show it holds with high probability for random $\Delta$-regular graphs.  
 
\begin{lemma}\label{lem:poly-geometry}
There exist constants~$\eta\in(0,1/2]$ and~$\alpha,\zeta>0$ such that, for every fixed~$K_w>0$, there is a fixed integer~$t_w$ for which a random~$\Delta$-regular graph $G$ satisfies the following properties with probability~$1-o(1)$:
\begin{enumerate}
\item $|E(G[U])|\le(1+\delta)|U|$ for every~$U\subseteq V$ with~$|U|\le\eta n$;
\item $|E(G[U])|-|U|+1\le t_w$ for every connected~$U\subseteq V$ with~$|U|\le K_w\log n$;
\item all simple cycles of length at most~$\ell=\lfloor\alpha\log n\rfloor$ are pairwise vertex-disjoint;
\item writing~$N_j(G)$ for the number of cycles of length~$j$,
\begin{equation}\label{eq:T}
 T(G):=\sum_{j=\ell+1}^{\lceil K_w\log n\rceil}
           j^2N_j(G)s^{\gamma(j-1)}\le n^{-\zeta}\,.
\end{equation}
\end{enumerate}
The constants~$\eta,\alpha,\zeta$ depend only on~$\Delta,\delta,s,\gamma$; the bound~$t_w$ may additionally depend on~$K_w$.
\end{lemma}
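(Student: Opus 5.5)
All four properties are standard consequences of first-moment computations in the configuration model. Simplicity of the pairing has probability bounded away from zero for fixed $\Delta$, so any event of probability $1-o(1)$ in the configuration model also has probability $1-o(1)$ for the uniform simple $\Delta$-regular graph. Throughout, the basic estimate is the following. The probability that a prescribed set of $k$ disjoint point-pairings all occur is at most $\prod_{i<k}(\Delta n-2i-1)^{-1}\le (C/(\Delta n))^k$ for $k\le \Delta n/4$.

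\textbf{Property 1.} Take a union bound over sets $U$ of size $u\le \eta n$ and over choices of $k=\lceil(1+\delta)u\rceil$ internal edges. The number of choices of $U$ is $\binom nu\le (en/u)^u$. The number of choices of the $k$ pairings inside the $\Delta u$ points of $U$ is at most $\binom{\Delta u}{2k}(2k)!/(2^k k!)\le (C\Delta u)^k$. Each such choice has probability at most $(C/(\Delta n))^k$. The total is therefore
\[
\sum_u (en/u)^u (C u/n)^{(1+\delta)u}=\sum_u \bigl(C'(u/n)^{\delta}\bigr)^u .
\]
For $u\le \eta n$ with $\eta$ small, the terms with $u\ge \log n$ (say) are geometrically small. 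For bounded or logarithmic $u$, the term is $O(n^{-\delta u})$, so the sum is $o(1)$.

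\textbf{Property 2.} Apply the same count to connected $U$ with $|U|=u\le K_w\log n$ and excess $t_w+1$, i.e.\ $u+t_w$ internal edges. Here it is better to enumerate through a spanning tree. There are at most $n\,(e\Delta)^u$ rooted subtrees of size $u$ containing a given vertex. One then chooses $t_w+1$ extra pairs, with at most $(\Delta u)^{2(t_w+1)}$ choices. The probability is at most $(C/n)^{u+t_w}$, while the number of configurations is $n^{u}(C\Delta)^u (\Delta u)^{2t_w+2}$. Multiplying gives a bound
\[
n^{-t_w}(C'')^{u}(\log n)^{O(t_w)} .
\]
Since $(C'')^{u}\le n^{K_w\log C''}$, choosing $t_w> K_w\log C''+1$ makes the sum over $u\le K_w\log n$ equal to $o(1)$.

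\textbf{Property 3.} If two distinct short cycles share a vertex, their union is a connected subgraph with at most $2\ell$ vertices and excess at least $2$. The count above with $t_w=1$ gives expected number $O(n^{-1}(C'')^{2\ell}\mathrm{poly}(\ell))$. This is $o(1)$ once $\alpha$ is small enough that $(C'')^{2\alpha\log n}\le n^{1/2}$.

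\textbf{Property 4.} This is the step to watch, since the cycle counts must be balanced against the decay. The standard estimate is $\E N_j\le (\Delta-1)^j/(2j)\cdot(1+o(1))\le C d^j/j$ uniformly for $j\le K_w\log n$. Hence
\[
\E T(G)\le C\sum_{j>\ell} j\, d^j s^{\gamma(j-1)}\le C'\sum_{j>\ell} j\,(ds^\gamma)^j .
\]
By \eqref{eq:sparse-parameters} we have $ds^\gamma<1$, so this is $O\bigl(\ell\,(ds^\gamma)^{\ell}\bigr)=n^{-2\zeta}$ with $2\zeta<\alpha\log(1/(ds^\gamma))$, up to the polylog factor absorbed by shrinking $\zeta$. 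Markov's inequality then gives $T(G)\le n^{-\zeta}$ with probability $1-n^{-\zeta}$.

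The only delicate point is the uniform bound on $\E N_j$ for logarithmic $j$. In the configuration model it follows from
\[
\E N_j=\frac{n^{\underline j}\,(\Delta(\Delta-1))^j}{2j}\prod_{i<j}\frac1{\Delta n-2i-1},
\]
and the last product is at most $(\Delta n)^{-j}e^{O(j^2/n)}$.

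A final intersection of the four events, together with the simplicity conditioning, concludes. The constants $\eta,\alpha,\zeta$ depend on $\Delta,\delta,s,\gamma$, and $t_w$ additionally depends on $K_w$.
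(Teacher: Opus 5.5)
Your proposal is correct and follows essentially the same route as the paper. For Properties 1--3 both arguments use first-moment union bounds in the configuration model, transferred to simple graphs; for Property 4 both use the uniform bound $\E N_j\le C d^j/j$ for $j\le K_w\log n$ followed by Markov's inequality. The differences are only in presentation: the paper cites the counting estimates of~\cite{HLW06} and~\cite{BGGSVY20} rather than carrying them out, counts over all $s$-sets (without connectivity) in Property 2, and quotes~\cite{BGGSVY20} for Property 3, whereas you obtain Property 3 directly from your Property 2 count with $t_w=1$.
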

\begin{proof}[Proof of Lemma~\ref{lem:poly-geometry}]
Property~1 follows e.g., from~\cite[Theorem~4.16]{HLW06}, applied with parameter~$2\delta$, for some sufficiently small~$\eta>0$.
For property~2, apply the configuration-model counting estimate
in~\cite[proof of Theorem~4.16, pp.~479--480]{HLW06} to sets
of~$s$ vertices spanning at least~$s+t_w$ edges. This gives
\begin{equation}\label{eq:log-excess-error}
 \begin{aligned}
 \PP\bigl(\exists U:\ 1\le |U|\le K_w\log n,\,
                      |E(G[U])|\ge |U|+t_w\bigr)
 &= O\left(
 \sum_{1\le s\le K_w\log n}
 (e^2\Delta)^s
 \left(\frac{e\Delta s}{n}\right)^{t_w} \right)\\
 &=O\left((\log n)^{t_w+1}
 n^{K_w\log(e^2\Delta)-t_w}\right)=o(1),
 \end{aligned}
\end{equation}
where we choose a fixed integer~$t_w>K_w\log(e^2\Delta)$.
For connected~$U$, failure of property~2 implies
$|E(G[U])|\ge |U|+t_w$, proving the claim.

For Property 3, choose~$\alpha\le1/(5\log d)$. It then follows from~\cite[Lemma~4.1]{BGGSVY20}. For Property 4, the cycle-count estimate in the proof of~\cite[Lemma~4.2]{BGGSVY20} gives~$\E[N_j(G)]\le C_\Delta d^j/j$ uniformly for~$3\le j\le\lceil K_w\log n\rceil$. Hence
\begin{equation}\label{eq:T-expectation}
 \E[T(G)]
 \le C_\Delta s^{-\gamma}\sum_{j>\ell}j(ds^\gamma)^j
 \le \frac{C_\Delta s^{-\gamma}(\ell+1)}{(1-ds^\gamma)^2}\,(ds^\gamma)^{\ell+1}
 \le \frac{C_\Delta(\alpha\log n+1)}{s^\gamma(1-ds^\gamma)^2}\,n^{-\alpha\log(1/(ds^\gamma))}\,.
\end{equation}
Choose any fixed~$0<\zeta<\alpha\log(1/(ds^\gamma))$. Markov's inequality proves property~4. The choices of~$\eta,\alpha,\zeta$ are independent of~$K_w$, and intersecting the four events proves the lemma.
\end{proof}

For  the proofs of Lemmas~\ref{lem:entropy-mass} and~\ref{lem:fk-source},
given constants $c_0$ from Lemma~\ref{lem:exploration-tails} and $\alpha$ from Lemma~\ref{lem:poly-geometry}, 
we choose constants~$C_0>\max\{7/c_0,\alpha\}$ and~$K_w>(2\Delta b_R) C_0$, and put the cutoff $ L= C_0\log n$ for component sizes in the quantity $W_L$. 
Then choose~$t_w$ using Lemma~\ref{lem:poly-geometry} for this value of~$K_w$. By these choices, we have~$\ell<L$ and~$(2\Delta b_R) L\le K_w\log n$. 
In the remaining proofs, fix any~$G$ satisfying the properties of Lemma~\ref{lem:poly-geometry}, which holds with probability $1-o(1)$.

\begin{proof}[\textbf{\emph{Proof of Lemma~\ref{lem:entropy-mass}}}]
For an arbitrary spin distribution $\nu$, define the exploration processes $P$ and $Q$ as in Definitions~\ref{def:P-revealing-distribution}--\ref{def:Q-revealing-distribution}. 
By Corollary~\ref{cor:entropy-transfer} with $\lambda =c_0$, 
\begin{align}\label{eq:W_L-comparison-bound}
	\mathbb E_{\nu,M} [W_L(H)]  = \mathbb E_{P}[W_L] \le \frac{1}{c_0} C_{\rm obs} I_\mu(\nu) + \frac{1}{c_0} \log \mathbb E_{Q} [e^{c_0 W_L}]\,.
\end{align}
For the exploration under $Q$,
let $u_1,\ldots,u_K$ be the vertices chosen at line~3 of Algorithm~\ref{alg:exploration-reza}, in the order they are chosen, so that $\mathcal C_{u_1},\ldots,\mathcal C_{u_K}$ are the components in the order they are explored; since they are disjoint and nonempty, $K\le n$. For $1\le i\le K$, let $\mathcal R^{(i)}$ be the record when line~3 is executed for the $i$-th time, i.e., just before the exploration of $\mathcal C_{u_i}$ starts. For $K<i\le n$, we use the convention $\mathcal C_{u_i}=\emptyset$ and $\mathcal R^{(i)}=\mathcal R$.
For $i\le n$, Lemma~\ref{lem:exploration-tails} gives
\begin{equation}\label{eq:large-one-moment}
\E_Q\big[e^{c_0|\mathcal C_{u_i}|\one\{|\mathcal C_{u_i}|>L\}}\,\big|\,\mathcal R^{(i)}\big]
 \le 1+ C_{\rm tail} e^{-c_0L}\,.
\end{equation}
Since~$W_L=\sum_{i=1}^n|\mathcal C_{u_i}|\one\{|\mathcal C_{u_i}|>L\}$, conditioning successively on $\mathcal R^{(n)},\mathcal R^{(n-1)},\ldots,\mathcal R^{(1)}$ and applying~\eqref{eq:large-one-moment} at each step gives
\begin{equation}\label{eq:large-moment}
 \log\E_Qe^{c_0W_L} =\log\E_Q\Big[\prod_{i=1}^{n} e^{c_0 |\mathcal C_{u_i}| \one\{|\mathcal C_{u_i}|>L\}}\Big]
 \le n\log(1+ C_{\rm tail} e^{-c_0L})\le C_{\rm tail} ne^{-c_0L}\,.
\end{equation}
By our choice of $C_0$ and $L$,  the right-hand side of \eqref{eq:large-moment} is at most $c_0 n^{-5}$ for large $n$, and thus the proof is complete by \eqref{eq:W_L-comparison-bound}. 
\end{proof}

\begin{remark}\label{rmk: weak prob bound} Lemma~\ref{lem:info:bound} follows from the coupling~$\omega\subseteq H$. Indeed, if~$\mathrm{Bad}_L$ denotes the event that a component has more than~$L$ vertices, then
\begin{equation}\label{eq:ES-large-consequence}
 \PP_{\nu,\ES}(\mathrm{Bad}_L)
 \le\PP_{\nu,\rm{M}}(\mathrm{Bad}_L)
 \le\E_{\nu,\rm{M}}[W_L(H)]\,.
\end{equation}
\end{remark}

The proof for the edge revealing which bounds weighted long cycle counts is more involved and is where the last three properties of Lemma~\ref{lem:poly-geometry} are additionally needed.  

\begin{proof}[\textbf{\emph{Proof of Lemma~\ref{lem:fk-source}}}]
For an arbitrary edge distribution~$\nu$, define the exploration processes $P$ and $Q$ as in Definitions~\ref{def:P-revealing-distribution}--\ref{def:Q-revealing-distribution}. Let $W = W_{\rm long}$ from~\eqref{eq:fk-long-mass}. We will bound~$\log\E_Q[ e^{(1-\gamma)c_0W} ]$ by a quantity tending to zero, as then we can apply Corollary~\ref{cor:entropy-transfer} with $\lambda=(1-\gamma)c_0$. The idea is that the contribution to $W$ from components containing $O(\log n)$ sized cycles is bounded by properties 3--4 and in particular, the bound of~\eqref{eq:T} in Lemma~\ref{lem:poly-geometry}; the contribution from components larger than $L$ are polynomially small by the tail bound used already in~\eqref{eq:large-one-moment}--\eqref{eq:large-moment} in the spin case. 

We begin by bounding the probability that a certain prescribed set of edges $J$ are all discovered to be present under $Q$. 
Conditioned on a history $\mathcal R$ that concludes the revealing of a component, stop the next component's exploration when it completes or first reaches~$L$ vertices. Its radius-$(R+1)$ neighborhood in $G$ has at most~$(2\Delta b_R) L\le K_w\log n$ vertices. Property~2 of Lemma~\ref{lem:poly-geometry} and the same argument as for~\eqref{eq:bad-count}, imply that the number of successful tests that are not good in the revealing of the next component is at most $4\Delta b_R t_w$.

Fix a deterministic set~$J\subset E$ of size $m$. To bound the probability that all of $J$ are discovered to be present, start a nonnegative process~$(Z_i)$ at $Z_0=1$. Whenever a good test examines an edge of~$J$, set~$Z_{i+1} =Z_i/s$ if the edge is present, and $Z_{i+1}=0$ if it is not; in all other cases (the test is not good, its edge is not in~$J$, or the exploration has stopped), let $Z_{i+1}=Z_i$. Thus, if $\mathcal R_i$ consists of the history $\mathcal R$ and the history generated by the first $i$ tests of the new component being revealed,
\begin{equation}\label{eq:required-supermartingale}
 \E_Q[Z_{i+1}\mid \mathcal R_i]
 =Z_i \,
   \frac{Q(\text{ test edge $i+1$ is present} \mid \text{test $i+1$ is good}\, ,\, \mathcal R_i)}{s}
 \le Z_{i}
\end{equation}
on the event that $i<\tau$, test~$i+1$ is good and its edge lies in~$J$; otherwise $Z_{i+1}=Z_i$.

If every edge of~$J$ is discovered to be present in the component, at least~$m-4 \Delta b_R t_w$ of them were tested at good tests. Let~$\tau\le|E|$ be the number of tests until the exploration stops. Then $Z_\tau\ge s^{-(m-4\Delta b_R t_w)}$. Since~$\tau$ is a bounded stopping time, optional stopping and Markov's inequality give 
\begin{equation}\label{eq:required-probability}
 Q(J\text{ are discovered to be present by the stopping time}
      \mid \mathcal R)
 \le s^{m-4 \Delta b_R t_w}\,.
\end{equation}

We now wish to sum over possible candidate $J$'s that witness long cycles in the induced graphs of components. 
Suppose a completed component~$U$ has~$|U|\le L$ and the induced graph~$G[U]$ contains one simple cycle longer than~$\ell = \alpha \log n$. Its discovered edges along the exploration form a spanning tree~$\mathcal T$ of~$G[U]$. By Property~3 of Lemma~\ref{lem:poly-geometry}, at least one cycle containing one edge on top of edges of $\mathcal T$ is longer than $\ell$. 

Choose such a long fundamental cycle of length~$j>\ell$ and omit its unique edge outside~$\mathcal T$. Let~$r$ be the length of the path in~$\mathcal T$ from the seed~$v$ of the component to its first vertex on this cycle. Together, these specify $j+r-1$ edges that are discovered to be present in the connected component, with 
\begin{equation}\label{eq:witness-sizes}
 j>\ell,\qquad j+r\le |U|\le L\,.
\end{equation}
If these $j+r-1$ edges are all discovered to be present in the component, they witness a component $U$ contributing to the sum in the definition of $W$. 

Let~$\mathcal W(v)$ be the set of all choices of a simple length-$j$ cycle, an omitted cycle edge, and a simple length-$r$ path from~$v$ first meeting the cycle at its last vertex, with~$j>\ell$ and~$j+r\le L$. Define
\begin{equation}\label{eq:witness-weight}
 w(v)=s^{-4 \Delta b_R t_w}\sum_{\mathcal W(v)}s^{j+r-1}\,.
\end{equation}
By~\eqref{eq:required-probability},~$w(v)$ bounds the conditional probability that the component seeded at~$v$ has size at most~$L$ and whose induced graph contains a simple cycle longer than~$\ell$, uniformly over the preceding components.

Starting from any length-$j$ cycle in $G$, there are at most $j^2 d^r$ many summands in $\sum_{v} \sum_{\mathcal W(v)}$; $j$ for the missing edge from the cycle, $j$ for the vertex $u$ of the cycle to which the path of length $r$ connects, and $d^r$ for the $v$ to $u$ path. 
Using~$(\sum_i z_i)^\gamma\le\sum_i z_i^\gamma$ for~$z_i\ge0$, then dropping the restriction~$j+r\le L$, gives
\begin{equation}\label{eq:witness-sum}
 \begin{aligned}
 \sum_v w(v)^\gamma
 &\le C\sum_{j=\ell+1}^{L}j^2N_j(G)
                  \sum_{r\ge0}d^r s^{\gamma(j+r-1)} \le\frac{C}{1-ds^\gamma}
       \sum_{j=\ell+1}^{L}j^2N_j(G)s^{\gamma(j-1)}
 \le C T(G)\,,
 \end{aligned}
\end{equation}
where $C$ only depends on the fixed quantities.

Let~$\mathcal R$ be the history immediately before the current component starts, and let~$\mathcal E$ be the event that this component, with seed~$v$ and size~$S$, has a simple cycle longer than $\ell$ in its induced graph. H\"older's inequality and~\eqref{eq:component-moment-statement} with $a=0$ gives
\begin{equation}\label{eq:holder}
 \begin{aligned}
 &\E_Q[e^{(1-\gamma)c_0S}\one\{\mathcal E,S\le L\}
              \mid\mathcal R]\le
 \bigl(\E_Q[e^{c_0S}\mid\mathcal R]\bigr)^{1-\gamma}
 Q(\mathcal E,S\le L\mid\mathcal R)^\gamma
 \le C_{\rm tail} w(v)^\gamma\,.
 \end{aligned}
\end{equation}
For~$S>L$,~\eqref{eq:component-moment-statement} with $a=L$ bounds the corresponding expectation by~$C_{\rm tail}e^{-c_0L}$. Consequently,
\begin{equation}\label{eq:cycle-one-moment}
 \E_Q[e^{(1-\gamma)c_0S\one_{\mathcal E}}
          \mid\mathcal R]
 \le1+C_{\rm tail}\bigl(w(v)^\gamma+e^{-c_0L}\bigr)\,.
\end{equation}

Let~$S_i$ and~$v_i$ be the size and the seed of the $i$-th revealed component, let~$\mathcal E_i$ be its long-cycle event, and put~$Y_i=S_i\one_{\mathcal E_i}$.
Put
\begin{equation}\label{eq:predictable-bound}
 A_i=C_{\rm tail}\bigl(w(v_i)^\gamma+e^{-c_0L}\bigr)\,.
\end{equation}
This quantity is $\mathcal R$-measurable, i.e., it is known before the $i$-th component is explored. Iterating~\eqref{eq:cycle-one-moment}, using~$(1+x)e^{-x}\le1$, gives
\begin{equation}\label{eq:compensated-moment}
 \E_Q\Big[\exp\Big((1-\gamma)c_0\sum_iY_i-\sum_iA_i\Big)\Big]
 \le1\,.
\end{equation}
The bound of~\eqref{eq:witness-sum} gives the deterministic bound~$\sum_iA_i\le C T(G)+Cne^{-c_0L}$. Since~$\sum_iY_i=W$, we conclude that
\begin{equation}\label{eq:long-moment}
 \log\E_Q[e^{(1-\gamma)c_0W}]
 \le C T(G)+Cne^{-c_0L}
 \le C(n^{-\zeta}+n^{-6})\,.
\end{equation}
Here, the last inequality used property 4 of Lemma~\ref{lem:poly-geometry}. 

Finally, applying~\eqref{eq:entropy-transfer} and Lemma~\ref{lem:exploration-comparison} gives
\begin{equation}\label{eq:final-constants}
 \begin{gathered}
 \E_\nu [W]\le \frac{C_{\rm obs}}{(1-\gamma)c_0} I_\phi(\nu)+C(n^{-\zeta}+n^{-6})\,,
 \end{gathered}
\end{equation}
as desired.
\end{proof}

\section{Analysis of the Markov chain $M$}\label{sec:M-FK-comparison}
We will study spectral gap and entropy contraction for the Markov chain $M^{(r)}$ from Definition~\ref{def:M-SW-chain} through comparison with the FK dynamics, whose transition matrix is denoted by $P_{\rm FK}$.

In this section, operators act on functions: a Markov kernel $P$ from $\Omega_1$ to
$\Omega_2$ acts by $(Pg)(x)=\sum_y P(x,y)g(y)$, so composing kernels
in the order they are applied corresponds to composing operators in the
same order. We write $\langle\cdot,\cdot\rangle_\mu$,
$\langle\cdot,\cdot\rangle_\pi$, $\langle\cdot,\cdot\rangle_\phi$,
$\|\cdot\|_\mu$, etc.\ for the inner products and norms of $L^2(\mu)$,
$L^2(\pi)$, $L^2(\phi)$. 
On the triple space of $(\sigma, A, N) \in [q]^V \times \{0,1\}^E \times \{0,1\}^E$ we use the random variables
\[
 H:=A\cup N,\qquad Y:=\bigl(N,\,A_{E\setminus N}\bigr).
\]
Note that $H=N\cup\{e\in E\setminus N:\ A_e=1\}$ is a function of $Y$.
We consider $\pi$ to be the stationary joint law of $(\sigma,A,N)$, where
$(\sigma,A)$ is distributed according to the joint law $\mathbb P_{\mu,\rm SW}$ and
$N\sim\operatorname{Ber}(r)^{\otimes E}$ is an independent edge percolation. 
In this section, $r\in (0,1)$ can be arbitrary, $M = M^{(r)}$ and $\tilde B= \tilde B(r)=1+(B-1)/r$.
   In Sections~\ref{sec:M-FK-comparison} and~\ref{sec:fk-entropy}, write $m=|E|$.  
\subsection{Spectral Gap  for \texorpdfstring{$M$}{M}}
\label{sec:ums}

\begin{lemma}\label{lem:ums-M}
Suppose there exists a constant $C_{\rm gap}\ge1$ such that on any graph $G=(V,E)$, $\gap(P_{\rm FK})\ge1/(C_{\rm gap}m)$. Then for any $r\in (0,1)$, 
\[
 \gap(M)\ge\frac{r}{2(C_{\rm gap}+1)}.
\]
\end{lemma}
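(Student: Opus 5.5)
The plan is to write $M$ as a product $K^*K$ of a Markov operator and its adjoint, so that its gap is a conditional-variance ratio. Then use the independent sprinkling $N\sim\operatorname{Ber}(r)^{\otimes E}$ to compare that conditional variance with a random-block factorization of the FK measure $\phi$. That factorization is where the hypothesis $\gap(P_{\rm FK})\ge 1/(C_{\rm gap}m)$ enters.

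\emph{Step 1: variational form.} Let $K\colon L^2(\mu)\to L^2(\pi)$ be the operator $Kf=\E_\pi[f(\sigma)\mid H]$. By \eqref{eq: reversibility}, the conditional law of $\sigma$ given $H$ under $\pi$ is $\mu_H$, so one step of $M$ is ``draw $H$ given $\sigma$, then redraw $\sigma$ from $\mu_H$''. Hence $M=K^*K$ is positive semidefinite and
\[
\gap(M)=\inf_f\frac{\E_\pi[\Var_\pi(f\mid H)]}{\Var_\mu(f)}.
\]
By the law of total variance it therefore suffices to show
\[
\Var_\pi\bigl(\E_\pi[f\mid H]\bigr)\le \Bigl(1-\tfrac{r}{2(C_{\rm gap}+1)}\Bigr)\Var_\mu(f).
\]

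\emph{Step 2: pass to the edge marginal.} Since $H$ is a function of $(A,N)$, conditioning on $H$ sees less than conditioning on $(A,N)$. I will decompose $\Var_\mu(f)$ through $A$: set $g(A)=\E_\pi[f\mid A]$, so that
\[
\Var_\mu(f)=\E_\pi[\Var_\pi(f\mid A)]+\Var_\phi(g).
\]
The first term is already fully captured by $\E_\pi[\Var_\pi(f\mid H)]$. Conditionally on $(A,N)$ the spin law is uniform on each cluster of $A$. Given $H$, the spin law of a $\tilde B$-Potts configuration on $H$ is more spread out than this, so the conditional variance given $H$ dominates the one given $A$. The remaining task is to show that a fixed fraction of $\Var_\phi(g)$ is also lost when one conditions only on $H$.

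\emph{Step 3: random-block factorization from the FK gap.} The key observation is that for $e\in N$ the value $A_e$ is erased from $H$. Consequently, conditioning on $H$ costs at least the variance of $g$ left after resampling the random block $A_N$ given $A_{E\setminus N}$. I will prove an inequality of the form
\[
\E_N\,\E_\phi\bigl[\Var_\phi(g\mid A_{E\setminus N})\bigr]\ \ge\ \frac{r}{C_{\rm gap}+1}\,\Var_\phi(g).
\]
The route is a standard comparison: heat-bath FK Glauber with gap $\ge 1/(C_{\rm gap}m)$ implies that the block dynamics resampling an independent $\operatorname{Ber}(r)$ subset of edges has gap of order $r/C_{\rm gap}$. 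This can be shown either by expanding the random block as a sequence of single-edge heat-bath updates (each edge lies in $N$ with probability $r$, independently), or by the factor-$(1+C_{\rm gap})$ block-factorization argument familiar from spectral independence.

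Combining this with Step 2, together with a Cauchy--Schwarz or convexity step that absorbs the interaction between the spin and edge parts (the source of the extra factor $1/2$), yields the stated bound.

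\emph{Main obstacle.} The delicate point is Step 3 together with its coupling to Step 2. The quantity $\E_\pi[\Var_\pi(f\mid H)]$ involves the $\tilde B$-Potts law $\mu_H$ on the sprinkled graph $H$, not the FK law conditioned on $A_{E\setminus N}$. So I must check carefully that $\Var_\pi(f\mid H)\ge \E\bigl[\Var(g\mid A_{E\setminus N},N)\mid H\bigr]$, i.e.\ that $f$ is not better determined by $H$ than $g$ is. I would try to obtain this by introducing the finer variable $Y=(N,A_{E\setminus N})$. Then $H$ is a function of $Y$, so conditioning on $H$ is coarser than conditioning on $Y$. This reduces the claim to a clean inequality for the pair $(Y,A)$, where the FK heat-bath structure is explicit.
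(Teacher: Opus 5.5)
Your outline is the paper's proof written in variance language: $M=K^*K$ is its $\mathsf PL\mathsf P^*$, refining $H$ to $Y=(N,A_{E\setminus N})$ is Lemmas~\ref{fact:4} and~\ref{fact:1}, and conditioning through $g=\mathcal C^*f$ is Lemma~\ref{fact:2}. Step~3 is Lemma~\ref{fact:3}, which the paper does not prove but imports from Lee's block comparison~\cite[Corollary~3.6]{Lee24}.

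Two corrections. First, the identity $\E_\pi[\Var_\pi(f\mid Y)]=\E_\pi[\Var_\pi(f\mid A)]+\E_N\E_\phi[\Var_\phi(g\mid A_{E\setminus N})]$ holds exactly, so no Cauchy--Schwarz or averaging step is needed; the paper's factor $1/2$ is just $m/(m+1)\ge 1/2$ from Lee's bound. Second, the real analytic content is Step~3, not the $H$-to-$Y$ coarsening, which is a one-line tower-property fact.

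Your first suggested route for Step~3 can be made self-contained, and it gives a slightly better constant. Realize $N$ as the set of edges whose rate-one Poisson clocks ring by time $t=\log\frac{1}{1-r}$, and let $V_t$ be the corresponding word of single-edge heat-bath projections. Then $\|\E_\phi[g\mid A_{E\setminus N}]\|_\phi\le\|V_tg\|_\phi$, because that conditional expectation absorbs each letter of $V_t$. Gr\"onwall together with the FK Poincar\'e inequality gives $\E\|V_tg\|_\phi^2\le(1-r)^{1/C_{\rm gap}}\|g\|_\phi^2$ for centered $g$. Hence the block gap is at least $1-(1-r)^{1/C_{\rm gap}}\ge r/(C_{\rm gap}+1)$.
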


We first define two auxiliary chains. For $R\subseteq E$ and a random-cluster configuration $A$, we use the
notation $A_R\in \{0,1\}^{R}$ for the random-cluster configuration that agrees with $A$ but is restricted to $R$.
Also recall that we use $\phi$ for the random-cluster distribution.

For convenience, let $s =1-r$. Let $K_s$ be a Markov chain on random-cluster configurations that 
transitions from a configuration $A$ to $A'$ as follows:
\begin{enumerate}
\item Include each edge $e \in E$ in $R$ independently with probability $s$;
\item Draw $A'\sim\phi(\,\cdot\mid A'_R=A_R)$.
\end{enumerate}
Let $\mathcal B_s$ be a Markov chain on Potts configurations that 
from a spin configuration $\sigma$:
\begin{enumerate}
\item Open each monochromatic edge independently with probability $p$ to obtain $A$;
\item Apply one $K_s$ update to $A$, obtaining $A'$;
\item Give each connected component in $A'$ an independent uniform color in $[q]$, obtaining the new spin configuration $\sigma'$.
\end{enumerate}

We give the following facts for these chains and comparisons between them. 
\begin{fact}
\label{fact:1}
For $r\in (0,1)$, $\mathcal B_{1-r}$ is ergodic, positive semidefinite and reversible w.r.t.~$\mu$.
Moreover, $\gap(M)\ge\gap(\mathcal B_{1-r})$.
\end{fact}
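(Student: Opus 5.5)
The plan is to realize both $M$ and $\mathcal B_{1-r}$ as ``down--up'' operators on the triple space $(\sigma,A,N)$ with law $\pi$. Both will be conditional-expectation sandwiches, one through $Y=(N,A_{E\setminus N})$ and the other through $H=A\cup N$. Since $H$ is a function of $Y$, the comparison of Dirichlet forms then follows from the law of total variance.

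\textbf{Step 1: the representation of $\mathcal B_s$ with $s=1-r$.} In $K_s$, identify the random set $R$ with $E\setminus N$, which has the right law $\operatorname{Ber}(1-r)^{\otimes E}$. Started from $\sigma$, one step of $\mathcal B_s$ does the following: it draws $(A,N)$ from $\pi(\cdot\mid\sigma)$, keeps $Y=(N,A_{E\setminus N})$, and resamples $A'_N$ from $\phi(\cdot\mid A'_{E\setminus N}=A_{E\setminus N})$. It then recolours via Edwards--Sokal, which means drawing $\sigma'$ from $\pi(\sigma\in\cdot\mid A')$. By the Edwards--Sokal coupling, the pair $(A'_N,\sigma')$ drawn this way has law $\pi(\,\cdot\mid Y)$. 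Hence
\[
 \mathcal B_{1-r}f(\sigma)=\E_\pi\bigl[\E_\pi[f(\sigma)\mid Y]\bigm|\sigma\bigr].
\]
Writing $T$ for the map $f\mapsto\E_\pi[f\mid Y]$ from $L^2(\mu)$ to $L^2(\pi_Y)$, this says $\mathcal B_{1-r}=T^*T$, which is self-adjoint in $L^2(\mu)$ and positive semidefinite. Ergodicity follows because $N=E$ has probability $r^m>0$. On that event $Y$ carries no information, so $\sigma'\sim\mu$ and $\mathcal B_{1-r}(\sigma,\cdot)\ge r^m\mu(\cdot)>0$.

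\textbf{Step 2: the representation of $M$.} I will check that under $\pi$ the conditional law of $\sigma$ given $H$ is $\mu_H$; this is identity~\eqref{eq: reversibility}. I would verify it edge by edge in the joint weight $\mu(\sigma)\prod_e(\cdots)$. An edge not in $H$ contributes $B^{\one\{\text{mono}\}}(1-p)^{\one\{\text{mono}\}}(1-r)$, which is constant because $B(1-p)=1$. An edge in $H$ contributes a sum over the two ways of lying in $H$, namely via $N$ or via $A$ with $e\notin N$. This sum is
\[
 rB^{\one\{\text{mono}\}}+(1-r)B^{\one\{\text{mono}\}}p\,\one\{\text{mono}\},
\]
which is proportional to $\tilde B^{\one\{\text{mono}\}}$ by the computation~\eqref{eq:sprinkling-identity}. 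It follows that
\[
 Mf(\sigma)=\E_\pi\bigl[\E_\pi[f\mid H]\bigm|\sigma\bigr].
\]

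\textbf{Step 3: the comparison.} From the two representations,
\[
 \cE_M(f)=\E_\pi\bigl[\Var_\pi(f\mid H)\bigr],\qquad \cE_{\mathcal B}(f)=\E_\pi\bigl[\Var_\pi(f\mid Y)\bigr].
\]
Since $\sigma(H)\subseteq\sigma(Y)$, total variance gives $\cE_M(f)\ge\cE_{\mathcal B}(f)$ for every $f$. Both chains have stationary measure $\mu$, so taking the infimum of $\cE(f)/\Var_\mu(f)$ gives $\gap(M)\ge\gap(\mathcal B_{1-r})$.

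The main obstacle is Step 1: checking carefully that resampling $A'_N$ given $A'_{E\setminus N}$ and then recolouring produces exactly $\pi(\cdot\mid Y)$. The key input is that $(\sigma,A)$ and $N$ are independent under $\pi$, and that the Edwards--Sokal conditional $\pi(\sigma\mid A)$ does not depend on $N$. With this, $Y\mapsto(A,\sigma)$ is precisely the Bayesian update. I would write the joint density explicitly to confirm this.
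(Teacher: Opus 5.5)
Your proposal is correct and is essentially the paper's argument. The paper likewise writes $M=\mathsf P L\mathsf P^*$ and $\mathcal B_{1-r}=\mathsf P\mathcal L\mathsf P^*$, with $L=\E_\pi[\cdot\mid H]$ and $\mathcal L=\E_\pi[\cdot\mid Y]$; its contraction bound $\|L\mathcal L g\|_\pi^2\le\|\mathcal L g\|_\pi^2$ is exactly your total-variance step $\E_\pi[\Var_\pi(f\mid H)]\ge\E_\pi[\Var_\pi(f\mid Y)]$, and ergodicity comes from the same minorization $\mathcal B_{1-r}(\sigma,\cdot)\ge r^m\mu(\cdot)$.
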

\begin{fact} 
\label{fact:2}
 For $s \in (0,1)$, $K_s$ is ergodic, positive semidefinite, and reversible w.r.t.~$\phi$.
 Moreover, 
$\gap(\mathcal B_s)\ge\gap(K_s)$ for every $0\le s\le1$.
\end{fact}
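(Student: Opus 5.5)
The approach is to write both chains as compositions of conditional-expectation operators under the Edwards--Sokal coupling, and then read off reversibility, positive semidefiniteness and the gap comparison directly from that operator form.

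\textbf{Structure of $K_s$.} For a fixed $R\subseteq E$, let $\Pi_R$ be the operator on $L^2(\phi)$ given by $(\Pi_R g)(A)=\E_\phi[g\mid A_R]$. This is an orthogonal projection in $L^2(\phi)$, so it is self-adjoint with $\langle g,\Pi_R g\rangle_\phi=\|\Pi_R g\|_\phi^2\ge0$. By construction of $K_s$, the transition operator is the mixture
\[
K_s=\sum_{R\subseteq E} s^{|R|}(1-s)^{m-|R|}\,\Pi_R .
\]
A convex combination of self-adjoint positive semidefinite operators is again self-adjoint and positive semidefinite. Hence $K_s$ is reversible with respect to $\phi$ and positive semidefinite.

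For ergodicity with $s<1$, note that the event $R=\emptyset$ has probability $(1-s)^m>0$, and on it $A'\sim\phi$ exactly. Therefore $K_s(A,\cdot)\ge (1-s)^m\phi(\cdot)$ for every $A$. Since $\phi$ has full support, the chain is irreducible and aperiodic.

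\textbf{Structure of $\mathcal B_s$.} Let $\pi_{\ES}$ be the Edwards--Sokal joint law, and define $T:L^2(\phi)\to L^2(\mu)$ by $(Tg)(\sigma)=\E_{\pi_{\ES}}[g(A)\mid\sigma]$. This is exactly step~1 of $\mathcal B_s$. Its adjoint is $(T^*f)(A)=\E_{\pi_{\ES}}[f(\sigma)\mid A]$, which is exactly step~3: the conditional law of $\sigma$ given $A$ under $\pi_{\ES}$ colours each component independently and uniformly. Following the convention that operators compose in the order the kernels are applied, we get
\[
\mathcal B_s=T K_s T^*.
\]
This operator is self-adjoint in $L^2(\mu)$, and it is positive semidefinite because
\[
\langle f,\mathcal B_s f\rangle_\mu=\langle T^*f,K_sT^*f\rangle_\phi\ge0 .
\]
So $\mathcal B_s$ is reversible with respect to $\mu$. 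Ergodicity follows in the same way as for $K_s$: using the minorization of $K_s$, we get $\mathcal B_s(\sigma,\cdot)\ge (1-s)^m\,(\phi T^*)(\cdot)=(1-s)^m\mu(\cdot)$.

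\textbf{Gap comparison.} Both operators are positive semidefinite, so $1-\gap$ equals $\sup\langle f,Pf\rangle/\|f\|^2$ over mean-zero $f$. Take $f$ with $\E_\mu f=0$ and set $g=T^*f$. Then $\E_\phi g=\E_\mu f=0$, so
\[
\langle g,K_s g\rangle_\phi\le (1-\gap(K_s))\|g\|_\phi^2 .
\]
Moreover $\|g\|_\phi\le\|f\|_\mu$, since conditional expectation is an $L^2$ contraction (Jensen). Combining these,
\[
\langle f,\mathcal B_s f\rangle_\mu\le(1-\gap(K_s))\|f\|_\mu^2 ,
\]
and hence $\gap(\mathcal B_s)\ge\gap(K_s)$.

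The endpoint cases need no separate argument. At $s=1$ the operator $K_1$ is the identity, so $\gap(K_1)=0$ and the inequality is trivial. At $s=0$ the operator $K_0$ is the projection onto constants, and the same computation applies.

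\textbf{Main obstacle.} The only delicate point is verifying that $\mathcal B_s$ really factors as $T K_s T^*$ with the correct adjoint. This amounts to checking that the recolouring step is the conditional law of $\sigma$ given $A'$ under $\pi_{\ES}$, and that the opening step is the conditional law of $A$ given $\sigma$. Both are standard properties of the Edwards--Sokal coupling.
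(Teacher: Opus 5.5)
Your proof is correct and follows essentially the same route as the paper. It uses the same ingredients: the mixture-of-projections form $K_s=\sum_R s^{|R|}(1-s)^{m-|R|}\Pi_R$, the minorization $K_s(A,\cdot)\ge(1-s)^m\phi(\cdot)$, the factorization $\mathcal B_s=\mathcal C K_s\mathcal C^*$ (your $T K_s T^*$), and the contraction $\|\mathcal C^*f\|_\phi\le\|f\|_\mu$. The only difference is that you bound $\langle f,\mathcal B_s f\rangle_\mu$ directly, whereas the paper bounds the Dirichlet form $\|f\|_\mu^2-\langle f,\mathcal B_s f\rangle_\mu$; this is the same computation.
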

\begin{fact}
\label{fact:3}
Suppose $\gap(P_{\rm FK})\ge1/(C_{\rm gap}m)$,
where $C_{\rm gap}\ge1$. Then, for every $0\le s\le1$
$$\gap(K_s)\ge \frac{(1-s)|E|}{(C_{\rm gap}+1)(|E|+1)}.$$
\end{fact}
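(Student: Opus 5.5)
We will prove a Poincaré inequality for $K_s$ by comparing its Dirichlet form, which is a \emph{block} form, to that of the single-edge FK dynamics. The one extra ingredient is that the hypothesis on $P_{\rm FK}$ applies on \emph{every} graph, so it also applies to the conditional random-cluster measures that arise when edges are pinned.

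\emph{Step 1: the Dirichlet form of $K_s$.} Write $S=E\setminus R$ for the resampled set; each edge lies in $S$ independently with probability $1-s$. Since $K_s=\E_S[\Pi_{S^c}]$, where $\Pi_T$ is the $L^2(\phi)$-orthogonal projection onto functions of $A_T$, reversibility and positive semidefiniteness (Fact~\ref{fact:2}) reduce the claim to
\[
 \cE_{K_s}(f)=\E_S\Big[\E_\phi\big[\Var_\phi(f\mid A_{S^c})\big]\Big]
 \ \ge\ \frac{(1-s)m}{(C_{\rm gap}+1)(m+1)}\,\Var_\phi(f)\,,
\]
while the hypothesis reads $\Var_\phi(f)\le C_{\rm gap}\sum_{e\in E}\E_\phi[\Var_\phi(f\mid A_{-e})]$.

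\emph{Step 2: stability of the FK hypothesis under pinnings.} Fix $T\subseteq E$ and a pinning $\xi$ of $A_T$. The measure $\phi(\cdot\mid A_T=\xi)$ is a random-cluster measure with parameters $(p,q)$ on the graph obtained by contracting the open edges of $\xi$ and deleting the closed ones; its edge set is in bijection with $E\setminus T$, though it may be a multigraph with loops. The FK Poincaré inequality therefore holds for it with constant $C_{\rm gap}|E\setminus T|$. I am assuming here that ``any graph'' in the hypothesis includes such multigraphs, and would add a remark to that effect if needed.

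\emph{Step 3: from one edge to a random block.} This is the step I expect to be the main obstacle. Monotonicity alone only gives $\E\Var(f\mid A_{-e})\le \E\Var(f\mid A_{S^c})$ for each $e\in S$. Summing that bound loses a factor $|S|$, so it is too crude. Instead I would build $S$ sequentially. Order the edges by a uniformly random permutation and take the tail of independent length, so that one can pass from a set $T$ to $T\cup\{e\}$. Then run an induction on $|E\setminus T|$, established uniformly over all pinnings, of the form
\[
 \Var(f\mid A_T)\le c_{|E\setminus T|}\,\E\big[\Var(f\mid A_{S^c\cup T})\,\big|\,A_T\big]\,.
\]
For the inductive step I would first apply Step~2 to the pinned measure, which costs $C_{\rm gap}$ times the sum of single-edge variances. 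I would then absorb each single-edge term $\E\Var(f\mid A_{-e})$ by averaging over whether $e\in S$, using the law of total variance $\Var(f\mid A_T)=\E[\Var(f\mid A_{T\cup e})]+\Var(\E[f\mid A_{T\cup e}])$ together with the induction hypothesis on $T\cup\{e\}$. The $C_{\rm gap}+1$ in the statement should come from this step: $C_{\rm gap}$ from the FK inequality, and $+1$ from the total-variance split. The factor $(1-s)\,m/(m+1)$ should come from averaging over the $m+1$ possible positions of the cut in the permutation, with each edge landing in $S$ with probability $1-s$.

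If the induction does not close with exactly these constants, my fallback is to compare $K_s$ with the continuous-time FK semigroup $e^{-t(I-P_{\rm FK})m}$. In time $t$ each edge is updated with probability $1-e^{-t}$, and the resulting Poissonization should give a bound of the same order, at a cost of a constant factor.
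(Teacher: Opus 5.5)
Step~1 is fine, but there is a genuine gap, and it begins in Step~2, which uses a hypothesis the statement does not give you. The lemma assumes $\gap(P_{\rm FK})\ge 1/(C_{\rm gap}m)$ only for the random-cluster measure on $G$ itself. This is how it is used: Lemma~\ref{lem: gap of M} applies it with $C_{\rm gap}=1/(\gap(P_{\rm FK})|E|)$ for the graph at hand, despite the phrase ``on any graph'' in Lemma~\ref{lem:ums-M}. The input is the bound of \cite{BG21} for the random regular graph, or \cite{gheissari2024spatial} on the lattice. Neither says anything about the contracted multigraphs describing $\phi(\cdot\mid A_T=\xi)$. So a Poincar\'e inequality uniform over pinnings is not available, and even a completed Step~3 would prove a weaker lemma than the one needed.

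Step~3 is also not carried out, and the mechanism you sketch does not close. Splitting $\Var(f\mid A_T)$ by total variance at an edge $e$ produces $\Var(\E[f\mid A_{T\cup\{e\}}]\mid A_T)$. That term is not controlled by the single-edge term $\E[\Var(f\mid A_{-e})]$ for a correlated measure: for $f=\mathbf 1\{e'\in A\}$ with $e'\notin T\cup\{e\}$, the latter vanishes while the former generally does not. The constants $C_{\rm gap}+1$ and $m/(m+1)$ are guessed rather than derived. The Poissonization fallback is not an argument yet either. Per realization of the clocks you do get $\|\Pi_{S^c}f\|\le\|\Pi_{E\setminus\{e_k\}}\cdots\Pi_{E\setminus\{e_1\}}f\|$. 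But the semigroup bound only controls the norm of the \emph{averaged} product applied to $f$, and Jensen's inequality goes the wrong way.

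The missing idea is to pass to \emph{marginals} rather than conditionals, since marginals inherit the Poincar\'e inequality from $\phi$ with no extra hypothesis. The paper's proof uses the dynamics $G_k$ that resamples a uniformly random $k$-set of edges given the rest. It imports from Lee's block comparison \cite[Corollary~3.6]{Lee24} the bound $\gap(G_k)\ge 1-(1-\frac{k}{m+1})^{1/(C_{\rm gap}+1)}\ge\frac{k}{(C_{\rm gap}+1)(m+1)}$, which uses only the gap on $G$. It then writes $K_s=\E[G_J]$ with $J\sim\mathrm{Bin}(m,1-s)$, so $\cE_{K_s}=\E[\cE_{G_J}]$ and $\E J=(1-s)m$ finish.

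The block bound also has a short direct proof. Apply $\Var_\phi(g)\le C_{\rm gap}\sum_{e}\E_\phi[\Var_\phi(g\mid A_{-e})]$ to a function $g$ of $A_U$. Terms with $e\notin U$ vanish. For $e\in U$, $\E[\Var(g\mid A_{-e})]\le\E[\Var(g\mid A_{U\setminus\{e\}})]$, since coarser conditioning increases expected conditional variance. So the law of $A_U$ satisfies the same inequality with the same constant. Take $g=\E_\phi[f\mid A_U]$ and average over uniform $|U|=j$; with $V_j:=\E_{|U|=j}\Var_\phi(\E_\phi[f\mid A_U])$ this gives $V_{j-1}\le(1-\frac{1}{C_{\rm gap}j})V_j$. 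Iterate from $V_m=\Var_\phi(f)$ and use $\cE_{G_k}(f)=\Var_\phi(f)-V_{m-k}$. This yields the block bound, even with $C_{\rm gap}$ in place of $C_{\rm gap}+1$. So the obstacle you identified in Step~3 is resolved by forgetting \emph{observed} edges one at a time, not by controlling the resampled set under pinnings.
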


Lemmas~\ref{fact:1}, \ref{fact:2} and \ref{fact:3} together give precisely
\[
 \gap(M)\ge\gap(\mathcal B_{1-r})
 \ge\gap(K_{1-r})
 \ge\frac{r|E|}{(C_{\rm gap}+1)(|E|+1)} \ge \frac{r}{2(C_{\rm gap}+1)},
\]
as claimed in Lemma~\ref{lem:ums-M}. 
Next we will prove these facts.  
To facilitate the comparisons, we define the following Markov operators:

\begin{enumerate}
    \item 
$\mathsf P$:
Starting from a spin configuration $\sigma$, open each monochromatic edge independently with
probability $p$, close the disagreeing edges, and independently draw $N\sim\operatorname{Ber}(r)^E$. Return the triple $(\sigma,A,N)$.
The adjoint $\mathsf P^*$ maps the triple $(\sigma,A,N)$
to $\sigma$.

\item $L$:
Starting from $(\sigma,A,N)$, keep only the union $H=A\cup N$.
Draw a fresh triple $(\sigma',A',N')$ from $\pi$ conditional on
$A'\cup N'=H$. Its spin coordinate has law $\mu_H$.

\item $\mathcal L$:
Starting from $(\sigma,A,N)$, keep $N$ and the open and closed states of every edge in $R=E\setminus N$. Draw
$A'\sim\phi(\,\cdot\mid A'_R=A_R)$, color the $A'$-components
independently and uniformly to obtain $\sigma'$, and set $N'=N$.

\item $\mathcal C$: Starting from a spin configuration $\sigma$, open each edge whose endpoints have the same color independently with probability $p$, and close every other edge.
Let
$\mathcal C^*$ denote its adjoint that given an edge configuration $A$, independently assign each connected component a uniform color.
\end{enumerate}

Observe that
under $\mathsf P$, given $\sigma$, each edge lies in $H=A\cup N$
independently, with probability $p+(1-p)r=r\tilde B/B$ if $\sigma_u=\sigma_v$,
and with probability $r$ otherwise, which is the first
step of $M$. By~\eqref{eq: reversibility}, the spin coordinate of
$\pi(\,\cdot\mid H)$ has law $\mu_H$. Hence $M=\mathsf PL\mathsf P^*$.
Under $\mathsf P$, the set $R=E\setminus N\sim\operatorname{Ber}(1-r)^E$
is independent of $(\sigma,A)$. Then $\mathcal L$ performs
$A'\sim\phi(\,\cdot\mid A'_R=A_R)$ and recolors, so
$\mathcal B_{1-r}=\mathsf P\mathcal L\mathsf P^*$. Finally
$\mathcal B_s=\mathcal CK_s\mathcal C^*$ by definition, and one SW move
is $\mathcal C\mathcal C^*$.

Furthermore, we also have the following fact about the adjoint of $L$.
 
\begin{fact}\label{fact:4}
$L=L^*$, $L=L^2$, and $L\mathcal L=\mathcal LL=L$.
In particular, $L=L\mathcal LL^*$.
\end{fact}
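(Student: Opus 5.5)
The plan is to recognize both $L$ and $\mathcal L$ as conditional expectations under $\pi$, onto nested $\sigma$-algebras. All four identities in Fact~\ref{fact:4} then follow from general properties of conditional expectation together with the tower property.

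First, $L$. By its definition, from $(\sigma,A,N)$ it retains only $H=A\cup N$ and redraws the whole triple from $\pi(\,\cdot\mid A'\cup N'=H)$. Acting on functions, this means $(Lg)(\sigma,A,N)=\E_\pi[g\mid H](\sigma,A,N)$, so $L$ is the orthogonal projection in $L^2(\pi)$ onto the $\sigma(H)$-measurable functions. Hence $L=L^*$ and $L^2=L$. If one wants to see this without invoking generalities, note that $\langle f,Lg\rangle_\pi=\E_\pi\bigl[\E_\pi[f\mid H]\,\E_\pi[g\mid H]\bigr]$, which is symmetric in $f,g$.

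Second, and this is the step that needs actual checking, I will show that $(\mathcal L g)=\E_\pi[g\mid Y]$ with $Y=(N,A_{E\setminus N})$. The move $\mathcal L$ keeps $N'=N$ and $A'_R=A_R$ for $R=E\setminus N$, so it preserves $Y$. It then remains to check that the new pair $(A',\sigma')$ has the conditional law of $(A,\sigma)$ under $\pi$ given $Y$. Under $\pi$, $N\sim\operatorname{Ber}(r)^{\otimes E}$ is independent of $(\sigma,A)$, and $(\sigma,A)$ follows the Edwards--Sokal law $\pi_{\ES}$. So conditionally on $N$ and on $A_R$:
\begin{itemize}
\item the edge configuration $A$ has law $\phi(\,\cdot\mid A_R)$, which is exactly the first resampling in $\mathcal L$;
\item given all of $A$, and since $\sigma$ is conditionally independent of $(N,A_R)$ given $A$, the spins $\sigma$ are an independent uniform colouring of the components of $A$, by the Edwards--Sokal coupling. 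This is exactly the recolouring in $\mathcal L$.
\end{itemize}
Thus the kernel of $\mathcal L$ is the regular conditional law $\pi(\,\cdot\mid Y)$. In particular, $\mathcal L$ is also a self-adjoint projection.

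Third, since $H=N\cup\{e\in E\setminus N: A_e=1\}$ is a function of $Y$, we have $\sigma(H)\subseteq\sigma(Y)$. The tower property then gives $\E_\pi[\E_\pi[g\mid Y]\mid H]=\E_\pi[g\mid H]$, i.e.\ $L\mathcal L=L$. Since $Lg$ is $\sigma(H)$- and hence $\sigma(Y)$-measurable, it also gives $\E_\pi[\E_\pi[g\mid H]\mid Y]=\E_\pi[g\mid H]$, i.e.\ $\mathcal LL=L$. These hold in either order of composition, so the operator-ordering convention of this section causes no issue. Finally, $L=L^2=L(\mathcal LL)=L\mathcal L L=L\mathcal L L^*$, using $L=L^*$.

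The only nontrivial point is the identification of $\mathcal L$ with $\E_\pi[\,\cdot\mid Y]$. This relies on the independence of $N$ from $(\sigma,A)$ and on the conditional structure of the Edwards--Sokal measure, namely $\phi$-conditionals for the edges and uniform component colourings for the spins. Everything else is formal.
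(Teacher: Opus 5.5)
Your proof is correct and follows the paper's argument essentially step for step. You identify $L=\E_\pi[\,\cdot\mid H]$ and $\mathcal L=\E_\pi[\,\cdot\mid Y]$, the latter via independence of $N$ from $(\sigma,A)$ and the factorization $\pi(\cdot\mid N,A_R)=\phi(A\mid A_R)\,\pi_{\ES}(\sigma\mid A)$; you read off self-adjointness and idempotence from the projection property, get $L\mathcal L=\mathcal L L=L$ from the tower property using $\sigma(H)\subseteq\sigma(Y)$, and conclude $L=L\mathcal L L^*$ exactly as the paper does.
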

\begin{proof}
By definition, $L$ redraws the triple from $\pi(\,\cdot\mid A'\cup N'=H)$,
so $Lg=\E_\pi[g\mid H]$. We claim that also $\mathcal Lg=\E_\pi[g\mid Y]$.
Under $\pi$, $N$ is independent of $(\sigma,A)$, and $A_R$ is a function
of $A$. Hence
\[
 \pi\bigl(\sigma,A,N\mid N, A_R\bigr)
 =\phi(A\mid A_R)\,\pi_{\ES}(\sigma\mid A),
\]
which is exactly
the output law of $\mathcal L$ started from $Y$,
and it proves the claim.
 
Thus $L$ and $\mathcal L$ are conditional expectations on $L^2(\pi)$, with
respect to $\sigma(H)$ and $\sigma(Y)$. A conditional expectation
$\Pi=\E_\pi[\,\cdot\mid\mathcal G]$ is an orthogonal projection. It is
self-adjoint, because
$\langle g,\Pi h\rangle_\pi=\E_\pi\bigl[\E_\pi[g\mid\mathcal G]\,
\E_\pi[h\mid\mathcal G]\bigr]=\langle\Pi g,h\rangle_\pi$. It is
idempotent, because $\E_\pi[\E_\pi[g\mid\mathcal G]\mid\mathcal G]
=\E_\pi[g\mid\mathcal G]$. This gives $L=L^*$ and $L=L^2$, and also
$\mathcal L=\mathcal L^*=\mathcal L^2$.
 
Since $H=N\cup\{e\in E\setminus N:\ A_e=1\}$ is a function of $Y$, we
have $\sigma(H)\subseteq\sigma(Y)$. The tower property gives
$L\mathcal Lg=\E_\pi[\E_\pi[g\mid Y]\mid H]=\E_\pi[g\mid H]=Lg$.
Also $Lg$ is $\sigma(H)$-measurable, hence $\sigma(Y)$-measurable, so
$\mathcal LLg=Lg$. Finally $L\mathcal LL^*=L\mathcal LL=LL=L$.
\end{proof}

Now we provide the proofs of Lemmas~\ref{fact:1}, \ref{fact:2} and \ref{fact:3}. 
\begin{proof}[Proof of Lemma~\ref{fact:1}]
Let $f\in L^2(\mu)$ and $g=\mathsf P^*f$, i.e.\ $g(\sigma,A,N)=f(\sigma)$.
Since $\mu(\sigma)\mathsf P(\sigma;A,N)=\pi(\sigma,A,N)$, for any function
$h\in L^2(\pi)$,
\[
 \langle f,\mathsf Ph\rangle_\mu
 =\sum_{\sigma,A,N}\pi(\sigma,A,N)\,f(\sigma)\,h(\sigma,A,N)
 =\langle g,h\rangle_\pi .
\]
Taking $h=\mathcal L g$ and using $\mathcal B_{1-r}=\mathsf P\mathcal L\mathsf P^*$
together with $\mathcal L=\mathcal L^2=\mathcal L^*$ (Lemma~\ref{fact:4}),
\[
 \langle f,\mathcal B_{1-r}f\rangle_\mu
 =\langle f,\mathsf P\mathcal Lg\rangle_\mu
 =\langle g,\mathcal Lg\rangle_\pi
 =\langle g,\mathcal L^*\mathcal Lg\rangle_\pi
 =\langle \mathcal Lg,\mathcal Lg\rangle_\pi
 =\|\mathcal Lg\|_\pi^2 .
\]
Thus, $\mathcal B_{1-r}=\mathsf P\mathcal L\mathsf P^*$ is self-adjoint,
with $\langle f,\mathcal B_{1-r}f\rangle_\mu \ge0$.
It is also ergodic, since $\mathcal B_{1-r}(\sigma,\cdot)\ge r^m\mu(\cdot)$.
In the same way, taking $h=Lg$ and using $M=\mathsf PL\mathsf P^*$ and $L=L^2=L^*$,
\[
 \langle f,Mf\rangle_\mu
 =\langle g,Lg\rangle_\pi
 =\|Lg\|_\pi^2 .
\]
Since $L=L\mathcal L$ and the projection $L$ is a contraction,
\[
 \langle f,Mf\rangle_\mu=\|L\mathcal Lg\|_\pi^2
 \le\|\mathcal Lg\|_\pi^2=\langle f,\mathcal B_{1-r}f\rangle_\mu .
\]
Equivalently, $\cE_M(f)\ge\cE_{\mathcal B_{1-r}}(f)$ for every $f$.

Both chains are reversible w.r.t.~$\mu$, so
$\gap(M)\ge\gap(\mathcal B_{1-r})$.
\end{proof}
 
\begin{proof}[Proof of Lemma~\ref{fact:2}]
Throughout this proof, $h$ denotes a function in $L^2(\phi)$ of the random-cluster
configuration $A\in\{0,1\}^E$, and $f$ denotes a function in $L^2(\mu)$ of
the spin configuration $\sigma\in[q]^V$.

For $R\subseteq E$ define $\Pi_R:L^2(\phi)\to L^2(\phi)$ by
$\Pi_R h=\E_\phi[h\mid A_R]$, the operator that redraws $A$ from $\phi$
given $A_R$. By definition of $K_s$,
\[
 K_s=\sum_{R\subseteq E}s^{|R|}(1-s)^{m-|R|}\,\Pi_R .
\]
Each $\Pi_R$ is an orthogonal projection on $L^2(\phi)$ so $K_s$ is a convex combination of self-adjoint
positive semidefinite operators, hence itself self-adjoint and positive
semidefinite.
 It is also ergodic for $s<1$, since 
$K_s(A,\cdot)\ge(1-s)^m\phi(\cdot)$.
Consequently, for every $h\in L^2(\phi)$ with $\E_\phi h=0$,
\[
 \cE_{K_s}(h)=\|h\|_\phi^2-\langle h,K_sh\rangle_\phi\le\|h\|_\phi^2,
\]
so $0\le\gap(K_s)\le1$.

Recall that $\mathcal C^*:L^2(\mu)\to L^2(\phi)$ is given by
$(\mathcal C^*f)(A)=\E_{\pi_{\ES}}[f(\sigma)\mid A]$, the average of $f$
over independent uniform colourings of the components of $A$, and that
$\mathcal C:L^2(\phi)\to L^2(\mu)$ is its adjoint.

Let $f\in L^2(\mu)$ with $\E_\mu f=0$, and set $h=\mathcal C^*f\in L^2(\phi)$.
Then $\E_\phi h=\E_\mu f=0$, and by Jensen's inequality
$\|h\|_\phi^2=\sum_A\phi(A)\bigl(\E_{\pi_{\ES}}[f\mid A]\bigr)^2
\le\|f\|_\mu^2$. Since $\mathcal B_s=\mathcal CK_s\mathcal C^*$,
\[
 \begin{aligned}
 \cE_{\mathcal B_s}(f)
 &=\|f\|_\mu^2-\langle f,\mathcal CK_s\mathcal C^*f\rangle_\mu
 =\|f\|_\mu^2-\langle h,K_sh\rangle_\phi
 =\|f\|_\mu^2-\|h\|_\phi^2+\cE_{K_s}(h)\\
 &\ge\|f\|_\mu^2-\bigl(1-\gap(K_s)\bigr)\|h\|_\phi^2
 \ \ge\ \gap(K_s)\,\|f\|_\mu^2,
 \end{aligned}
\]
where the last step uses $1\ge \gap(K_s)$ and
$\|h\|_\phi\le\|f\|_\mu$. 
Therefore, we prove that $\gap(\mathcal B_s)\ge\gap(K_s)$. 
\end{proof}

\begin{proof}[Proof of Lemma~\ref{fact:3}]
Let $G_k$ choose $k$ edges from $E$ uniformly and jointly
resample their states conditional on the other bonds.
Let $G_0=I$.
Lee's discrete-time block comparison~\cite[Corollary~3.6]{Lee24},
applied to the random-cluster law on $\{0,1\}^{E}$, gives, for $1\le k\le |E|$,
\[
 \gap(G_k)
 \ge1-\left(1-\frac{k}{|E|+1}\right)^{1/(C_{\rm gap}+1)}
 \ge\frac{k}{(C_{\rm gap}+1)(|E|+1)}.
\]
The last inequality follows from the fact that $(1-x)^a\le 1-ax$ for $0<a,x<1$.

Define
$J:=|E\setminus R|$, the number of resampled edges in one step of $K_s$.
Each edge is resampled independently with probability $1-s$, so
$J\sim\operatorname{Bin}(|E|,1-s)$. Conditional on $J=k$, the set
$E\setminus R$ is uniform among the $k$-element subsets of $E$.
Hence
\[
 K_s= \sum_{k=0}^m\PP(J=k)G_k = \E[G_J],
\]
and for every function $h:\{0,1\}^{E}\rightarrow\mathbb R$ such that $\E_\phi h=0$,
\[
 \begin{aligned}
 \cE_{K_s}(h)
 &=\langle h,(I-K_s)h\rangle_\phi
 =\sum_{k=0}^m\PP(J=k)\langle h,(I-G_k)h\rangle_\phi
 =\sum_{k=0}^m\PP(J=k)\cE_{G_k}(h)
 =\E[\cE_{G_J}(h)]\\
 &\ge \E[\gap(G_J)] \langle h,h\rangle_\phi
 \ge \E\left[\frac{J}{(C_{\rm gap}+1)(|E|+1)}\right]\langle h,h\rangle_\phi
 \ge\frac{(1-s)|E|}{(C_{\rm gap}+1)(|E|+1)}\langle h,h\rangle_\phi.
 \end{aligned}
\]
Thus, we establish the lower bound for $\gap(K_s)$.
\end{proof}

\subsection{Entropy contraction for \texorpdfstring{$M$}{M}}
\label{sec:ums-entropy}
In this section we prove the entropy contraction inequality as in Lemma~\ref{lem:entropy-M}. 
For this, we assume that approximate tensorization of entropy holds for the random-cluster measure $\phi$, which is a stronger assumption than even the optimal mixing time of the FK dynamics, and we will prove this assumption in Section~\ref{sec:fk-entropy}.
Our main result of this section is the following theorem.

\begin{theorem}
\label{thm:ums-M-entropy}
Suppose on a $\Delta$-regular graph $G=(V,E)$, there is a constant $C_{\rm ent}>1$ such that for every function $h:\{0,1\}^E\rightarrow \mathbb R_{\ge 0}$, we have 
\begin{equation}\label{eq:ums-fk-entropy}
 \Ent_\phi(h)\le C_{\rm ent}\sum_{e\in E}
       \E_\phi\bigl[\Ent(h\mid A_{-e})\bigr],
\end{equation}
where $\Ent(h\mid A_{-e})$ denotes the entropy of $h$ under $\phi(\,\cdot\mid A_{-e})$.
Then, for every $f:[q]^V \rightarrow \mathbb R_{\ge 0}$,
we get that
\begin{equation}\label{eq:ums-M-entropy-margin}
 \E_{H\sim\PP_{\mu, \rm{M}}}\Ent_{\mu_H}(f)
 \ge\frac{rm}{(C_{\rm ent}+1)(m+1)}\Ent_\mu(f).
\end{equation}
\end{theorem}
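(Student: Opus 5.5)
The plan is to mirror the variance argument of Lemmas~\ref{fact:1}--\ref{fact:3}, replacing orthogonal projections by conditional entropies. On the triple space with law $\pi$, set $g(\sigma,A,N)=f(\sigma)$. Since $\mu_H$ is the spin marginal of $\pi(\cdot\mid H)$, and $g$ depends only on $\sigma$, we have $\E_{\PP_{\mu,\rm M}}\Ent_{\mu_H}(f)=\E_\pi[\Ent_\pi(g\mid H)]$. Similarly, $\Ent_\mu(f)=\Ent_\pi(g)$.

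\textbf{Step 1: reduce to the finer $\sigma$-algebra.} Recall that $\sigma(H)\subseteq\sigma(Y)$ with $Y=(N,A_{E\setminus N})$. By the chain rule for entropy,
\[
\E_\pi\Ent_\pi(g\mid H)=\E_\pi\Ent_\pi(g\mid Y)+\E_\pi\Ent_\pi\bigl(\E_\pi[g\mid Y]\bigm| H\bigr)\ge \E_\pi\Ent_\pi(g\mid Y).
\]
This is the entropy analogue of Lemma~\ref{fact:1}. It therefore suffices to lower bound $\E_\pi\Ent_\pi(g\mid Y)$ by the stated multiple of $\Ent_\pi(g)$.

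\textbf{Step 2: pass to the random-cluster side.} Conditioning on $Y$ means keeping $N$ and $A_R$ with $R=E\setminus N$, where $R\sim\mathrm{Ber}(1-r)^{E}$ is independent of $(\sigma,A)$. Thus
\[
\E_\pi\Ent_\pi(g\mid Y)=\E_R\,\E_\phi\bigl[\Ent_{\pi_{\ES}}(f\mid A_R)\bigr].
\]
Apply the chain rule again, now through $A$, writing $h=\mathcal C^*f=\E_{\pi_{\ES}}[f\mid A]$:
\[
\Ent_{\pi_{\ES}}(f\mid A_R)\ \text{averaged}\ =\ \E_{\pi_{\ES}}\Ent(f\mid A)+\E_\phi\Ent_\phi(h\mid A_R).
\]
The first term does not depend on $R$. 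Also, $\Ent_\mu(f)=\E\,\Ent(f\mid A)+\Ent_\phi(h)$. So it suffices to prove
\[
\E_R\,\E_\phi\Ent_\phi(h\mid A_R)\ \ge\ c\,\Ent_\phi(h)\qquad\text{with } c=\frac{rm}{(C_{\rm ent}+1)(m+1)}\le 1,
\]
since then $\E\,\Ent(f\mid A)+c\,\Ent_\phi(h)\ge c\,\Ent_\mu(f)$. This parallels Lemma~\ref{fact:2}.

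\textbf{Step 3: block entropy factorization.} This is the main obstacle, the entropy version of Lemma~\ref{fact:3}. Let $S=E\setminus R$ be the resampled set, which is $\mathrm{Bin}(m,r)$ in size and uniform given its size $k$. We need
\[
\E_{|S|=k}\,\E_\phi\Ent_\phi(h\mid A_{E\setminus S})\ \ge\ \frac{k}{(C_{\rm ent}+1)(m+1)}\Ent_\phi(h).
\]
Averaging over $k$ then gives $c$. To get this from the single-site approximate tensorization~\eqref{eq:ums-fk-entropy}, I would use the uniform block factorization argument of Lee~\cite{Lee24}, or the Chen--Liu--Vigoda style ``approximate tensorization implies $k$-uniform block factorization'' argument. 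Both rest on monotonicity of $k\mapsto \binom{m}{k}^{-1}\sum_{|S|=k}\E\Ent(h\mid A_{-S})$ and the subadditivity of entropy under the product structure of block averages. This yields the constant $1-(1-\tfrac{k}{m+1})^{1/(C_{\rm ent}+1)}$, in exactly the form used in Lemma~\ref{fact:3}. The step I most expect to need care is verifying that Lee's result is stated for entropy (and not only variance) in discrete time with the same constant. If it is not, I would instead derive it directly from the entropy recursion
\[
k\,\E\Ent(h\mid A_{-S_k})\ \ge\ \Bigl(k-\tfrac{\cdots}{C_{\rm ent}}\Bigr)\cdots
\]
obtained by applying~\eqref{eq:ums-fk-entropy} to conditional measures $\phi(\cdot\mid A_{-S})$. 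This requires the approximate tensorization to hold under pinnings, so I would check whether Section~\ref{sec:fk-entropy} provides it in that form. Finally, $(1-x)^a\le 1-ax$ converts the constant to the linear bound.
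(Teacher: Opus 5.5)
Your proposal is correct and follows the paper's proof essentially step for step. Step~1 is Lemma~\ref{fact:ums-ent1}, obtained from the chain rule rather than data processing. Step~2 is Lemma~\ref{fact:ums-ent2}: you keep the $\E\,\Ent(f\mid A)$ term and use $c\le 1$ where the paper uses Jensen's $\Ent_\phi(\mathcal C^*f)\le\Ent_\mu(f)$, which is equivalent. Step~3 is Lemma~\ref{fact:ums-ent3}, where the paper applies the entropy statement \cite[Theorem~3.5(2)]{Lee24} directly from the unpinned hypothesis~\eqref{eq:ums-fk-entropy}, so your fallback requiring tensorization under pinnings is not needed.
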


Similar to the proof of Lemma~\ref{lem: gap of M}, the proof of Theorem~\ref{thm:ums-M-entropy} follows from three comparisons of Markov operators. 
However, it is easier to perform the comparisons in terms of  entropy of operators corresponding to the ``half steps'' of the Markov chains $M, K_s $ and $\mathcal{B}_s$.  

We review and set up some notations. Throughout this section, we fix a graph $G$ that satisfies \eqref{eq:ums-fk-entropy}, we let $h:\{0,1\}^E\rightarrow \mathbb R_{\ge 0}$ be a function with $\E_{\phi} h =1$, and $f:[q]^V \rightarrow \mathbb R_{\ge 0}$ be a function with $\E_\mu f =1$.
By scaling, it is sufficient to work with these normalized functions.
Recall the operators $\mathsf{P}, L, \mathcal{L}$, $\mathcal{C}$ and their adjoints from  Section~\ref{sec:ums}; they are the half-step operators used to compose the Markov updates.
Moreover, we write
\[
 F_s(h)=\E_{R\sim\operatorname{Ber}(s)^E}
       \Ent_\phi\bigl(\E_\phi[h\mid A_R]\bigr).
\]
This is the relative entropy after the first half step of a $K_s$ update under the observation $(R, A_R)$.
Lastly, recall that $\mathcal{C}^*f$ is seen as a function on $\{0,1\}^E$ given by $(\mathcal C^*f)(A)=\E_{\pi_{\ES}}[f(\sigma)\mid A]$.

Then we have the following facts for comparisons. 

\begin{fact}\label{fact:ums-ent1}
$\Ent_\pi(L\mathsf P^*f)\le
  \Ent_\pi(\mathcal L\mathsf P^*f)$.
\end{fact}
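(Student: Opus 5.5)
The plan is to use the description of $L$ and $\mathcal L$ as conditional expectations on $L^2(\pi)$, established in Lemma~\ref{fact:4}: $Lg=\E_\pi[g\mid H]$ and $\mathcal Lg=\E_\pi[g\mid Y]$. Moreover $H$ is a function of $Y$, so $\sigma(H)\subseteq\sigma(Y)$. Put $g=\mathsf P^*f$, i.e.\ $g(\sigma,A,N)=f(\sigma)$. Then $g\ge0$ and $\E_\pi g=\E_\mu f=1$.

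With this notation the claim reads
\[
\Ent_\pi\bigl(\E_\pi[g\mid H]\bigr)\le\Ent_\pi\bigl(\E_\pi[g\mid Y]\bigr).
\]
This is monotonicity of entropy under coarsening of $\sigma$-algebras. To prove it, set $u=\E_\pi[g\mid Y]$. By the tower property (the identity $L\mathcal L=L$ from Lemma~\ref{fact:4}), $\E_\pi[g\mid H]=\E_\pi[u\mid H]$. The function $\Phi(x)=x\log x$ is convex, so the conditional Jensen inequality gives, pointwise,
\[
\Phi\bigl(\E_\pi[u\mid H]\bigr)\le\E_\pi[\Phi(u)\mid H].
\]
Taking $\E_\pi$ of both sides, and noting that both functions have mean $\E_\pi g=1$ (so the subtracted terms $\E\log\E$ coincide and vanish), yields $\Ent_\pi(Lg)\le\Ent_\pi(\mathcal Lg)$.

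Equivalently, one can phrase this as a data-processing statement. $\Ent_\pi(\E_\pi[g\mid\mathcal G])$ is the relative entropy between the $\mathcal G$-marginals of $g\,d\pi$ and $\pi$, and this quantity is monotone under coarsening $\mathcal G$. The only detail to check is that the identifications of $L$ and $\mathcal L$ as conditional expectations hold exactly as operators on $L^2(\pi)$, with $\pi$ being the law of $(\sigma,A,N)$; this was verified in the proof of Lemma~\ref{fact:4}. The argument has no real obstacle beyond handling zeros of $g$, which the convention $0\log0=0$ and the finiteness of the state space take care of.
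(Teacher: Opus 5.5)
Your proposal is correct and follows essentially the same route as the paper: write $L\mathsf P^*f=L\mathcal L\mathsf P^*f$ using $L\mathcal L=L$ from Lemma~\ref{fact:4}, then apply data processing. Your conditional Jensen argument for $\Phi(x)=x\log x$ is simply an explicit proof of that data processing step.
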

\begin{fact}\label{fact:ums-ent2}
$\Ent_\pi(\mathcal L\mathsf P^*f)=F_{1-r}(\mathcal C^*f)$ and
$\Ent_\phi(\mathcal C^*f)\le\Ent_\mu(f)$.
\end{fact}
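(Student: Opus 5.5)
We prove the two parts of Lemma~\ref{fact:ums-ent2} separately. Both rest on the fact that the half-step operators are conditional expectations under the joint law $\pi$, so entropies of their outputs are entropies of conditional expectations.

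\textbf{The identity $\Ent_\pi(\mathcal L\mathsf P^*f)=F_{1-r}(\mathcal C^*f)$.} Let $g=\mathsf P^*f$, so $g(\sigma,A,N)=f(\sigma)$. By the proof of Lemma~\ref{fact:4}, $\mathcal Lg=\E_\pi[g\mid Y]$, where $Y=(N,A_R)$ and $R=E\setminus N$.

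First we compute this conditional expectation. Under $\pi$, $N$ is independent of $(\sigma,A)$, and $\sigma$ given $A$ has law $\pi_{\ES}(\cdot\mid A)$. By the tower property through $A$,
\[
\E_\pi[f(\sigma)\mid N,A_R]=\E_\phi\bigl[(\mathcal C^*f)(A)\mid A_R\bigr],
\]
where $R=E\setminus N$ is now treated as fixed. Hence $\mathcal Lg$ depends only on $(R,A_R)$ and equals $\E_\phi[h\mid A_R]$ with $h=\mathcal C^*f$.

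Next we compute its entropy. Note $\E_\pi[\mathcal Lg]=\E_\mu f=1$. Under $\pi$, the variable $R$ has law $\operatorname{Ber}(1-r)^E$ and is independent of $A\sim\phi$. Therefore
\[
\Ent_\pi(\mathcal Lg)=\E_R\,\E_\phi\bigl[\Psi(\E_\phi[h\mid A_R])\bigr]-\Psi(1),
\qquad \Psi(x)=x\log x .
\]
For each fixed $R$ we have $\E_\phi\bigl[\E_\phi[h\mid A_R]\bigr]=\E_\phi h=1$, so the inner term is exactly $\Ent_\phi(\E_\phi[h\mid A_R])$. Averaging over $R$ gives $F_{1-r}(\mathcal C^*f)$.

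\textbf{The inequality $\Ent_\phi(\mathcal C^*f)\le\Ent_\mu(f)$.} Recall $\mathcal C^*f(A)=\E_{\pi_{\ES}}[f(\sigma)\mid A]$, and that $\pi_{\ES}$ has marginals $\mu$ and $\phi$. Since $\Psi$ is convex, the conditional Jensen inequality gives
\[
\E_\phi\,\Psi(\mathcal C^*f)\le\E_{\pi_{\ES}}\Psi(f)=\E_\mu\Psi(f).
\]
The means agree, $\E_\phi\,\mathcal C^*f=\E_\mu f=1$, so subtracting $\Psi(1)$ from both sides yields the inequality. (Equivalently, this is the data processing inequality for $\KL$ applied to the densities $f\mu$ and $(\mathcal C^*f)\phi$.)

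\textbf{Main point of care.} The only step needing attention is the identification $\mathcal Lg=\E_\phi[\mathcal C^*f\mid A_R]$. It requires that conditioning on $N$ adds nothing beyond fixing $R$, which holds by the independence of $N$ from $(\sigma,A)$. It also requires that the law of $\sigma$ given $(N,A)$ is $\pi_{\ES}(\cdot\mid A)$, which is how the triple law $\pi$ was defined. Once this is in place, everything else is Jensen's inequality together with the independence of $R$ and $A$ under $\pi$.
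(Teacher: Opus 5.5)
Your proof is correct and follows essentially the same route as the paper: you identify $\mathcal L\mathsf P^*f=\E_\phi[\mathcal C^*f\mid A_R]$ using the tower property through $(A,N)$ and the independence of $N$ from $(\sigma,A)$, and you then compute the entropy by averaging over $R\sim\operatorname{Ber}(1-r)^E$. The paper phrases that last step as the entropy chain rule conditional on $N$, whose first term vanishes because $\E_\pi[\mathcal L\mathsf P^*f\mid N]=1$; your direct expansion is the same computation. Your conditional-Jensen argument for $\Ent_\phi(\mathcal C^*f)\le\Ent_\mu(f)$ is exactly the paper's.
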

\begin{fact}\label{fact:ums-ent3}
Suppose~\eqref{eq:ums-fk-entropy} holds for every function $h$. Then for every 
$0\le s\le1$,
\[
 F_s(h)\le
 \left(1-\frac{(1-s)|E|}{(C_{\rm ent}+1)(|E|+1)}\right)\Ent_\phi(h).
\]
\end{fact}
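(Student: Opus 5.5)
The plan is to mirror Lemma~\ref{fact:3}, with entropy in place of variance. First I rewrite $F_s(h)$ by grouping according to the set of resampled edges. Let $J=|E\setminus R|\sim\operatorname{Bin}(|E|,1-s)$. Conditional on $J=k$, the set $E\setminus R$ is uniform among $k$-subsets. Hence
\[
F_s(h)=\E_J\bigl[\E_{S}\Ent_\phi(\E_\phi[h\mid A_{E\setminus S}])\bigr],
\]
with $S$ uniform of size $J$. By the chain rule, $\Ent_\phi(h)=\Ent_\phi(\E_\phi[h\mid A_{E\setminus S}])+\E_\phi[\Ent(h\mid A_{E\setminus S})]$. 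So it suffices to prove a block-factorization (uniform $k$-block approximate tensorization) statement: for each $k$,
\[
\frac{1}{\binom{|E|}{k}}\sum_{|S|=k}\E_\phi\bigl[\Ent(h\mid A_{E\setminus S})\bigr]\ \ge\ \frac{k}{(C_{\rm ent}+1)(|E|+1)}\Ent_\phi(h).
\]
Averaging over $J$ then gives $\E[J]=(1-s)|E|$, which is exactly the claimed factor.

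For the block statement I will use the entropy version of Lee's block comparison, the same result \cite[Corollary~3.6]{Lee24} invoked in Lemma~\ref{fact:3}. That result converts approximate tensorization with constant $C_{\rm ent}$, i.e.\ hypothesis~\eqref{eq:ums-fk-entropy}, into uniform $k$-block factorization with constant $1-(1-\frac{k}{|E|+1})^{1/(C_{\rm ent}+1)}$. Lee's result is stated for entropy contraction of the down--up $k$-block walk $G_k$, and that is exactly this quantity. The elementary bound $(1-x)^a\le 1-ax$ then gives $\ge k/((C_{\rm ent}+1)(|E|+1))$.

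If Lee's corollary is only available in variance form, the fallback is a direct monotonicity argument. I would show that the map $k\mapsto\frac{1}{\binom{m}{k}}\sum_{|S|=k}\E\Ent(h\mid A_{E\setminus S})$ is, after the normalization $a_k$, superadditive in the right sense. Concretely, I would use the Shearer-type/subadditivity inequality $\E\Ent(h\mid A_{E\setminus(S\cup\{e\})})\ge \E\Ent(h\mid A_{E\setminus S})+\E\Ent(h\mid A_{-e})\cdot(\text{factor})$ together with the base case $k=1$, which is \eqref{eq:ums-fk-entropy} itself. This is the main obstacle: entropy block factorization does not follow from single-site factorization as trivially as in the variance (spectral) case. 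The cleanest route is to cite the entropic form of Lee's comparison, which derives it from the one-site inequality by an $\ell$-step induction on the down--up walks. The loss incurred there, $1/(C_{\rm ent}+1)$ in the exponent, accounts for the $+1$ in the final constant.

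Finally, I would combine the pieces. Since $F_s(h)=\Ent_\phi(h)-\E_J[\cdots]$, the block bound gives $F_s(h)\le \Ent_\phi(h)-\E\bigl[\frac{J}{(C_{\rm ent}+1)(|E|+1)}\bigr]\Ent_\phi(h)$, which is the statement. The cases $s=1$ (so $J=0$, trivially equality) and $k=0$ are consistent with the formula.
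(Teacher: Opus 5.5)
Your proposal is correct and follows essentially the same route as the paper: condition on $J=|E\setminus R|\sim\operatorname{Bin}(|E|,1-s)$, use Lee's entropic block comparison to get $\E_{R:\,|R|=m-k}\Ent_\phi(\E_\phi[h\mid A_R])\le(1-\frac{k}{m+1})^{1/(C_{\rm ent}+1)}\Ent_\phi(h)$, linearize with $(1-x)^a\le 1-ax$, and average over $J$. The only correction concerns the citation. The paper takes the entropy statement from \cite[Theorem~3.5(2)]{Lee24}, not Corollary~3.6, which it uses only for the variance version. That theorem gives the product $\prod_{j=m-k+1}^{m}(1-\frac{1}{j(C_{\rm ent}+1)})$, which the paper converts to the power form via $1-x\le e^{-x}$ and a harmonic-sum bound, so your Shearer-type fallback is not needed.
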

With these facts, we conclude the proof of Theorem~\ref{thm:ums-M-entropy}.
\begin{proof}[Proof of Theorem~\ref{thm:ums-M-entropy}]
    Lemmas~\ref{fact:ums-ent1}, \ref{fact:ums-ent2} and \ref{fact:ums-ent3} together give
     \begin{align*}
 \Ent_\pi(L\mathsf P^*f)
 &\le\Ent_\pi(\mathcal L\mathsf P^*f)
 =F_{1-r}(\mathcal C^*f)
 \le\left(1-\frac{r m}{(C_{\rm ent}+1)(m+1)}\right)\Ent_\phi(\mathcal{C}^* f)\\
 &\le \left(1-\frac{rm}{(C_{\rm ent}+1)(m+1)}\right)\Ent_\mu( f) \end{align*}
 Since $L\mathsf P^*f=\E_\pi[f(\sigma)\mid H]$, the law of total entropy gives
\[
 \Ent_\mu(f)=\Ent_\pi(L\mathsf P^*f)
             +\E_{H\sim\PP_{\mu, \rm{M}}}\Ent_{\mu_H}(f).
\]
Therefore we establish the inequality \eqref{eq:ums-M-entropy-margin}.
\end{proof}

We prove the three lemmas next.

\begin{proof}[Proof of Lemma~\ref{fact:ums-ent1}]
By Lemma~\ref{fact:4}, $L\mathcal L=L$, so it yields that
\[
 \Ent_\pi(L\mathsf P^*f)
 =\Ent_\pi(L\mathcal L\mathsf P^*f)
 \le\Ent_\pi(\mathcal L\mathsf P^*f),
\]
where the last step is due to data processing inequality.
\end{proof}

\begin{proof}[Proof of Lemma~\ref{fact:ums-ent2}]
Put $h:=\mathcal C^*f$, so that $h(A)=\E_\pi[f(\sigma)\mid A]$. Under~$\pi$ the
marginal of $\sigma$ is $\mu$ and the marginal of $A$ is $\phi$, so
$\E_\phi h=\E_\mu f=1$.
By definition $(\mathsf P^*f)(\sigma,A,N)=f(\sigma)$, and  $\mathcal L(\cdot)=\E_\pi[\,\cdot\mid Y]$ with $Y=(N,A_R)$ and $R=E\setminus N$.
Since $Y$ is a function of $(A,N)$, the tower property gives
\[
 \mathcal L\mathsf P^*f
 =\E_\pi\bigl[\,\E_\pi[f(\sigma)\mid A,N]\bigm| Y\bigr].
\]
Under~$\pi$, $N$ is independent of $(\sigma,A)$, so
$\E_\pi[f(\sigma)\mid A,N]=\E_\pi[f(\sigma)\mid A]=h(A)$. By the same
independence,
$\pi(A=\cdot \mid N)=\phi(A=\cdot)$.
Hence,
\begin{equation}\label{eq:LPf-identification}
 (\mathcal L\mathsf P^*f)(\sigma,A,N)=\E_\phi[h\mid A_R]
 =:h_R(A_R).
\end{equation}
Let $g:=\mathcal L\mathsf P^*f$. By the chain rule for entropy, 
\[
 \Ent_\pi(g)=\Ent_\pi\bigl(\E_\pi[g\mid N]\bigr)
 +\E_\pi\bigl[\Ent_\pi(g\mid N)\bigr],
\]
where $\Ent_\pi(g\mid N)$ is the entropy of $g$ under $\pi(\,\cdot\mid N)$.
By~\eqref{eq:LPf-identification}, we have
$\E_\pi[g\mid N]=\E_\phi[h_R]=\E_\phi h=1$, so the first term vanishes.
Again by~\eqref{eq:LPf-identification}, conditionally on $N$ we have
$A\sim\phi$ and $g=h_R(A_R)$, so
$\Ent_\pi(g\mid N=n)=\Ent_\phi\bigl(\E_\phi[h\mid A_R]\bigr)$.
Since $R\sim\operatorname{Ber}(1-r)^E$,
\[
 \Ent_\pi(\mathcal L\mathsf P^*f)
 =\E_{R\sim\operatorname{Ber}(1-r)^E}
   \Ent_\phi\bigl(\E_\phi[h\mid A_R]\bigr)
 =F_{1-r}(h).
\]

It remains to show
$\Ent_\phi(h)\le\Ent_\mu(f)$.
Let $\Phi(x)=x\log x$ with the convention $0\log0 = 0$. Since $\E_\phi h=\E_\mu f=1$, Jensen's inequality
applied to $h(A)=\sum_\sigma\pi(\sigma\mid A)f(\sigma)$ gives
\[
 \Ent_\phi(h)=\sum_{A}\phi(A)\,\Phi(h(A))
 \le\sum_{\sigma,A}\pi(\sigma,A)\,\Phi(f(\sigma))
 =\sum_{\sigma}\mu(\sigma)\,\Phi(f(\sigma))
 =\Ent_\mu(f),
\]
and this concludes the proof.
\end{proof}

\begin{proof}[Proof of Lemma~\ref{fact:ums-ent3}]
For $0\le j\le m$, let $\E_{R:\,|R|=j}$ denote the expectation over a set
$R\subseteq E$ chosen uniformly at random among the $\binom{m}{j}$ subsets of
$E$ of size~$j$.
By~\eqref{eq:ums-fk-entropy} and~\cite[Theorem~3.5(2)]{Lee24}, we obtain, for $0\le k\le m$,
\[
 \begin{aligned}
 \E_{R:\,|R|=m-k}\Ent_\phi\bigl(\E_\phi[h\mid A_R]\bigr)
 &\le\prod_{j=m-k+1}^{m}
       \left(1-\frac1{j(C_{\rm ent}+1)}\right)\Ent_\phi(h)
 \le\left(1-\frac{k}{m+1}\right)^{1/(C_{\rm ent}+1)}\Ent_\phi(h)\\
 &\le\left(1-\frac{k}{(C_{\rm ent}+1)(m+1)}\right)\Ent_\phi(h).
 \end{aligned}
\]
The second inequality uses $1-x\le e^{-x}$ and
$\sum_{j=m-k+1}^m j^{-1}\ge\log\frac{m+1}{m-k+1}$; the third uses
$(1-x)^{a}\le1-ax$ for $x\in[0,1]$ and $a=1/(C_{\rm ent}+1)\in(0,1)$.

Finally, let $R\sim\operatorname{Ber}(s)^E$ and
$J:=|E\setminus R|\sim\operatorname{Bin}(m,1-s)$. Conditional on $J=k$, $R$ is
uniformly random among subsets of size $m-k$. Therefore
\[
 \begin{aligned}
 F_s(h)
 &=\sum_{k=0}^m\PP(J=k)\,
   \E_{R:\,|R|=m-k}\Ent_\phi\bigl(\E_\phi[h\mid A_R]\bigr)
 \le\sum_{k=0}^m\PP(J=k)\left(1-\frac{k}{(C_{\rm ent}+1)(m+1)}\right)\Ent_\phi(h)\\
 &
 = \left(1-\frac{\E[J]}{(C_{\rm ent}+1)(m+1)}\right)\Ent_\phi(h)
 =\left(1-\frac{(1-s)m}{(C_{\rm ent}+1)(m+1)}\right)\Ent_\phi(h).
 \end{aligned}
\]
and the proof is complete.
\end{proof}

\section{Log-Sobolev inequality for the FK dynamics}
\label{sec:fk-entropy}
 
Our goal in this section is to establish approximate tensorization of entropy for the
random-cluster distribution with high probability over the random
graph in the uniqueness regime, as formalized in the following theorem.
\begin{theorem}
    \label{thm:fk-at}
    For integers $\Delta \geq 3$ and $q \geq 2$, let $p < p_u$.
  There is a constant $C_{\rm ent}=C_{\rm ent}(\Delta, q, p)>1$ such that with probability $1-o(1)$, a random $\Delta$-regular graph $G=(V,E)$ satisfies  for every function $h:\{0,1\}^E\rightarrow \mathbb R_{\ge 0}$,
\begin{equation}\label{eq:ums-fk-entropy2}
 \Ent_\phi(h)\le C_{\rm ent}\sum_{e\in E}
       \E_\phi\bigl[\Ent(h\mid A_{-e})\bigr],
\end{equation}
where $\Ent(h\mid A_{-e})$ denotes the entropy of $h$ under $\phi(\,\cdot\mid A_{-e})$.
\end{theorem}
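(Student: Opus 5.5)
We mirror the argument of Section~\ref{sec:skeleton-of-proof}, now entirely on the edge side: we use the FK-version of the shattering--escape dichotomy, Lemma~\ref{lem:fk-source}, together with a global auxiliary chain on edges whose entropy contraction is already known. The natural global chain is the one underlying the FK mixing bound of~\cite{BG21}. It is a block dynamics that, given $A$, keeps a random subset of edges and resamples the rest. Alternatively, one can use the representation of $\phi$ on a graph whose short cycles are vertex-disjoint (property~3 of Lemma~\ref{lem:poly-geometry}). Conditional on the event that no component contains a cycle longer than $\ell$, each component's induced graph is "treelike plus disjoint short cycles." On such graphs the random-cluster measure nearly factorizes, since cut-edges are independent $\mathrm{Ber}(\hat p)$ and each short cycle contributes a bounded-size interaction.

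\textbf{Step 1 (good configurations).} Let $\mathrm{Good}$ be the set of $A$ with $W_{\rm long}(A)=0$. We build a chain $\widetilde K$ that resamples $A$ only on components that are free of long cycles. On $\mathrm{Good}$, the conditional law of $\phi$ restricted to these components is a product over the disjoint short cycles and tree parts, with uniformly bounded interaction (the cycles have length $\le\ell$, but a cycle contributes only a single factor $q$). By the tensorization of entropy over this product, together with Lemma~\ref{lem:local-kl}(2) applied on each cycle, we obtain the comparison
\[
\mathcal H_{\widetilde K}(h)\le C\sum_e \E_\phi[\Ent(h\mid A_{-e})],
\]
which plays the role of Lemma~\ref{lem: tilda M vs G}. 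For a cycle of length up to $\alpha\log n$ the constant in Lemma~\ref{lem:local-kl}(2) is too large, so on a single cycle we instead use that the measure is a one-parameter tilt of a product Bernoulli law (the weight depends only on whether all edges of the cycle are open), whose approximate tensorization constant is $O(1)$ uniformly in the length.

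\textbf{Step 2 (global entropy decay).} We need a chain $K$ with $a\Ent_\phi(h)\le\mathcal H_K(h)$ for a constant $a>0$. Our candidate is the edge analogue of $M$: sprinkle, i.e.\ pass to the Edwards--Sokal spins with sprinkled edges, and use that in uniqueness the resulting spin marginal is a high-temperature Potts model. This is circular, however, so instead we plan a spectral-independence-style argument under the shattering: entropy factorizes over components, and the field-dynamics or down-up walk that closes edges with probability $1-\theta$ yields a subcritical percolation. We then invoke~\cite{ChenEldan} in its entropic form, with the measure's correlations bounded via the exploration under $Q$.

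\textbf{Step 3 (dichotomy).} As in the proof of Theorem~\ref{mainthm:Glauber}, we bound $\mathcal H_K(h)-\mathcal H_{\widetilde K}(h)\le C\,\E_\phi[h]\,\E_{\nu_h}[W_{\rm long}]$, with $\nu_h=h\phi/\E_\phi h$. By Lemma~\ref{lem:fk-source} this is at most $C\E_\phi[h](C I_\phi(\nu_h)+o(1))$. The edge analogue of the computation in Lemma~\ref{lem:function-information} gives $\E_\phi[h]I_\phi(\nu_h)=\sum_e\E_\phi[\Ent(h\mid A_{-e})]$ exactly. The $o(1)\E_\phi h$ term is removed as before, by Rothaus's lemma together with the FK spectral gap of~\cite{BG21}; note that this step needs the $o(1)$ in Lemma~\ref{lem:fk-source} to be polynomially small, which its proof provides. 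The main obstacle is Step~2: obtaining an $O(1)$ entropy contraction for a global edge chain without circularity. We expect it can be handled by applying the same dichotomy to a sprinkled down-up walk, whose stationary conditional laws are exactly the product measures controlled in Step~1.
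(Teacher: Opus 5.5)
\textbf{There is a genuine gap, and it is exactly where you flag it.}

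Step~2 is not a technical detail; it is the whole content of the theorem. You do not have an edge chain $K$ with $a\Ent_\phi(h)\le\mathcal H_K(h)$, and none of your suggested routes is carried out. The sprinkled route is circular, since Theorem~\ref{thm:ums-M-entropy} takes this very approximate tensorization as its input. The entropic-localization route needs control of influences under arbitrary pinnings. The exploration under $Q$ gives no such control: Lemma~\ref{lem:fk-source} only bounds $\E_\nu[\wLong]$.

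Step~1 has a related defect. In the spin setting, $\widetilde M$ is a conditional expectation on a joint space because its components come from an auxiliary $H$. Here the components are functions of $A$ itself. Resampling the edges of $G[U]$ can split $U$, so the reverse move resamples a smaller edge set. Hence ``resample $A$ on the long-cycle-free components'' is not reversible. Conditioning on the component partition instead gives the random-cluster measure on $G[U]$ conditioned on $U$ being connected, which is not the product measure you want. So the Step~3 bound on $\mathcal H_K(h)-\mathcal H_{\widetilde K}(h)$ has no objects to apply to.

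\textbf{The missing idea: no global chain is needed.} Promote your Step~1 picture to a global \emph{reference measure}. Let $\pi_\ell$ be the product of the cycle random-cluster laws over the vertex-disjoint cycles of length at most $\ell$, with $\mathrm{Ber}(\hat p)$ on all other edges. Your uniform-in-length argument for a tilted cycle (Holley--Stroock with factor $q$) gives $\Ent_{\pi_\ell}\le q^2\mathcal S_{\pi_\ell}$, where $\mathcal S_\nu(h)=\sum_e\E_\nu[\Ent_\nu(h\mid A_{-e})]$.

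Then $\phi=Z^{-1}q^{\Lambda}\pi_\ell$, where $\Lambda(A)$ is the cycle rank of $(V,A)$ minus the number of open short cycles, and $0\le\Lambda\le\frac{\Delta}{2}\wLong$. This gives two comparisons.
\begin{itemize}
\item Expanding the entropies gives $\Ent_\phi(h)\le Z^{-1}\Ent_{\pi_\ell}(q^\Lambda h)+(\log Z)\E_\phi[h]$. Here $0\le\log Z\le(\log q)\E_\phi[\Lambda]=o(1)$ by Lemma~\ref{lem:fk-source} at $\nu=\phi$.
\item Single-edge conditionals of $\phi$ and $\pi_\ell$ differ by the factor $q^{\Lambda(A\cup e)-\Lambda(A\setminus e)}$. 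These increments sum over $e$ to at most $\frac{\Delta}{2}\wLong(A)$. Hence $Z^{-1}\mathcal S_{\pi_\ell}(q^\Lambda h)\le\mathcal S_\phi(h)+\frac{\Delta\log q}{2}\E_\phi[h\wLong]$.
\end{itemize}

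Combine these with your identity $\E_\phi[h]I_\phi(\nu_h)=\mathcal S_\phi(h)$ and Lemma~\ref{lem:fk-source} applied to $\nu_h$. This yields $\Ent_\phi(g^2)\le K\mathcal S_\phi(g^2)+o(1)\E_\phi[g^2]$, hence a defective log-Sobolev inequality for the FK dynamics. Your Rothaus plus \cite{BG21} step then removes the defect, and the resulting LSI implies the stated approximate tensorization. Any bounded defect suffices in that last step, so polynomial smallness of the $o(1)$ is not needed.
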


The high-level idea of the proof is to consider a distribution
with a product structure $\pi_\ell$, for which approximate
tensorization of entropy is straightforward, and then
compare $\pi_\ell$ with the random-cluster distribution $\phi$.
The product structure  of $\pi_\ell$ will stem from the fact that
with high probability over the random graph,
cycles of length at most $\alpha \log n$ are pairwise vertex-disjoint for a small constant $\alpha > 0$.
Note that longer cycles are plentiful in $G$, but the expected
total size of random-cluster components whose induced subgraphs contain such cycles is $o(1)$. The influence of those longer cycles, and the fact that they do not affect the order of the modified log-Sobolev constant of $\phi$ compared to $\pi_\ell$ comes from the FK shattering-escape dichotomy which we recall from Lemma~\ref{lem:fk-source}.

Let $\mathcal G_n$ be the probability $1-o(1)$ graph event of Lemma~\ref{lem:fk-source}. Let $\mathcal C_\ell$ be the family
of edge sets of simple cycles in $G$ of length at most $\ell = \alpha \log n$ for a small constant $\alpha$.
These cycles are pairwise vertex-disjoint. Recall that
$\theta=\frac{B-1}{B+q-1}$ and consider the following product distribution over cycles in $\mathcal C_\ell$ and their complement: 
\begin{equation}\label{eq:fk-reference}
  \pi_\ell
 =\bigotimes_{C\in\mathcal C_\ell}\eta_C
   \;\otimes\!\bigotimes_{\substack{e\in E\\
       e\notin\bigcup_{C\in\mathcal C_\ell}C}}
       \operatorname{Ber}(\theta),
  \end{equation}
  where $\eta_C$ is the random-cluster distribution over the cycle $C$: i.e., for $X\subset E(C)$, 
  \begin{equation}
 \eta_C(X)
 =\frac{1}{1+(q-1)\theta^{|C|}}
       \theta^{|X|}(1-\theta)^{|C|-|X|}q^{\one\{X=C\}}\,.
  \end{equation}
 
The product structure of $\pi_\ell$ gives tensorization of entropy across cycles in $\mathcal C_\ell$ and other edges directly;
see, e.g.,~\cite[eq.~(1.2)]{CMT15}.
By applying a standard change of measure comparison, we get approximate tensorization across all edges for $\pi_\ell$.

\begin{fact}
\label{fact:fk-reference}
On every $G\in\mathcal G_n$, for every function $h\ge 0$,
\begin{equation}\label{eq:fk-product}
 \Ent_{\pi_\ell}(h)
 \le q^2\sum_{e\in E}\E_{\pi_\ell}[\Ent(h\mid A_{-e})].
\end{equation}
\end{fact}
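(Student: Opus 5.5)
The plan is to combine exact tensorization of entropy for the product structure of $\pi_\ell$ with an approximate tensorization inside each factor. On $G\in\mathcal G_n$ the cycles in $\mathcal C_\ell$ are pairwise vertex-disjoint, hence edge-disjoint. Therefore $\pi_\ell$ is a product measure over the blocks $\{C\}_{C\in\mathcal C_\ell}$ together with the singleton blocks $\{e\}$ for $e\notin\bigcup_{C}C$. The standard tensorization of entropy for product measures (\cite[eq.~(1.2)]{CMT15}) gives
\[
\Ent_{\pi_\ell}(h)\le\sum_{C\in\mathcal C_\ell}\E_{\pi_\ell}\bigl[\Ent_{\eta_C}(h\mid A_{E\setminus C})\bigr]+\sum_{e\notin\cup C}\E_{\pi_\ell}\bigl[\Ent(h\mid A_{-e})\bigr].
\]
For a singleton block, the conditional law under $\pi_\ell$ given $A_{-e}$ is exactly $\operatorname{Ber}(\theta)$. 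So these terms already have the required form, with constant $1\le q^2$.

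It remains to treat each cycle block. Fix $C$ and the exterior configuration; the function $g=h(\cdot,A_{E\setminus C})$ then lives on $\{0,1\}^{C}$ under $\eta_C$. We need
\[
\Ent_{\eta_C}(g)\le q^2\sum_{e\in C}\E_{\eta_C}\bigl[\Ent_{\eta_C}(g\mid X_{C\setminus e})\bigr],
\]
and note that $\eta_C(\cdot\mid X_{C\setminus e})=\pi_\ell(\cdot\mid A_{-e})$ for $e\in C$, by the product structure. The density of $\eta_C$ with respect to $\nu_C:=\operatorname{Ber}(\theta)^{\otimes C}$ is $q^{\one\{X=C\}}/Z_C$, where $Z_C=1+(q-1)\theta^{|C|}\in[1,q]$. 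The density therefore lies in $[1/q,q]$ and its ratio of extremes is at most $q$. We then run the standard change-of-measure (Holley--Stroock type) comparison in two steps:
\[
\Ent_{\eta_C}(g)\le \max\frac{d\eta_C}{d\nu_C}\,\Ent_{\nu_C}(g),\qquad \Ent_{\nu_C}(g)\le\sum_{e}\E_{\nu_C}\Ent_{\nu_C}(g\mid X_{-e}).
\]
The first uses the variational formula $\Ent_\mu(g)=\inf_{t>0}\E_\mu[g\log(g/t)-g+t]$, whose integrand is nonnegative. The second is exact tensorization for the product measure $\nu_C$. Finally each term is transferred back with the factor $\max\frac{d\nu_C}{d\eta_C}$, applied to the entropy over the pair $(X_{-e},X_e)$. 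The total loss is at most $\max(d\eta_C/d\nu_C)\cdot\max(d\nu_C/d\eta_C)\le q\cdot q=q^2$, which explains the constant.

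The main obstacle is the last transfer. The single-edge conditional laws $\eta_C(\cdot\mid X_{-e})$ and $\operatorname{Ber}(\theta)$ differ only when $X_{-e}=C\setminus e$; there the open probability becomes $q\theta/(q\theta+1-\theta)$. The cleanest route is to apply the variational formula to the joint averaged quantity $\E_{\nu_C}\Ent_{\nu_C}(g\mid X_{-e})$: write it as an infimum over functions $t(X_{-e})$ of $\E_{\nu_C}[g\log(g/t)-g+t]$, which has a nonnegative integrand. We then bound it by $\max(d\nu_C/d\eta_C)$ times the same expression under $\eta_C$, and choose the optimal $t=\E_{\eta_C}[g\mid X_{-e}]$. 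Since $\max(d\nu_C/d\eta_C)\le Z_C\le q$, this yields the claimed $q^2$ uniformly in $|C|$ and in the boundary. Summing over the blocks then gives \eqref{eq:fk-product}.
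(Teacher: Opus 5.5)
Your proof is correct and follows the paper's route: block tensorization over the vertex-disjoint short cycles and single edges, a Holley--Stroock change of measure from $\eta_C$ to $\operatorname{Ber}(\theta)^{\otimes C}$, exact tensorization there, and a transfer back. The only difference is the transfer back. The paper does it in two comparisons: the single-edge conditional laws (cost $q$), then the outer expectation over $X_{C-e}$ (cost $Z_C$). Your single joint variational step, with an $X_{-e}$-measurable $t$, costs only $\max d\nu_C/d\eta_C = Z_C$, so your argument actually gives the constant $(q/Z_C)\cdot Z_C = q$ rather than the $q^2$ you claim.
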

 
We next formalize the comparison between the random-cluster distribution $\phi$ and the product distribution $\pi_\ell$.
On a graph $G\in\mathcal G_n$, let
$N_\ell(A):=|\{C\in\mathcal C_\ell: C\subseteq A\}|$ be the number of short cycles, of length at most $\alpha \log n$ for a small constant $\alpha > 0$,
that are entirely open in $A$, and define
\[
 \Lambda(A):=|A|-|V|+|\cC(A)|-N_\ell(A)
 \qquad\text{and}\qquad
 Z:=\E_{\pi_\ell}\bigl[q^{\Lambda}\bigr]=\sum_{A\subseteq E}q^{\Lambda(A)}\pi_\ell(A).
\]
Here $|A|-|V|+|\cC(A)|$ is the cycle rank of $(V,A)$, i.e., the number of
independent cycles, so $\Lambda(A)$ counts the open cycles that are not
accounted for by the short open cycles.
The following identity follows directly from the
definitions of $\phi$ and $\pi_\ell$.
\begin{fact}\label{fact:fk-change-of-measure}
On every $G\in\mathcal G_n$, we have
$
 \phi(A)=Z^{-1}q^{\Lambda(A)}\pi_\ell(A).
$
\end{fact}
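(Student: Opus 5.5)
The identity is a direct computation: I will write both sides as explicit functions of $A$ up to $A$-independent factors, check that they agree, and then fix the constant by normalization. The only property of $G\in\mathcal G_n$ I need is that the cycles in $\mathcal C_\ell$ are pairwise vertex-disjoint, hence edge-disjoint. This makes the product in~\eqref{eq:fk-reference} a well-defined measure on $\{0,1\}^E$, and it means each short cycle contributes its own factor independently.

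\emph{First, simplify $\pi_\ell(A)$.} Each edge not in $\bigcup_{C\in\mathcal C_\ell}C$ contributes $\theta^{A_e}(1-\theta)^{1-A_e}$. Each cycle $C$ contributes
\[
\bigl(1+(q-1)\theta^{|C|}\bigr)^{-1}\theta^{|A\cap C|}(1-\theta)^{|C|-|A\cap C|}q^{\one\{C\subseteq A\}}.
\]
Multiplying these factors, the normalizers form a constant $c_1$ independent of $A$, and
\[
\pi_\ell(A)=c_1\Bigl(\tfrac{\theta}{1-\theta}\Bigr)^{|A|}(1-\theta)^{|E|}q^{N_\ell(A)} .
\]

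\emph{Second, multiply by $q^{\Lambda(A)}$.} Since $\Lambda(A)=|A|-|V|+|\cC(A)|-N_\ell(A)$, the short-cycle factor $q^{N_\ell(A)}$ cancels. This leaves
\[
q^{\Lambda(A)}\pi_\ell(A)=c_2\Bigl(\tfrac{q\theta}{1-\theta}\Bigr)^{|A|}q^{|\cC(A)|},
\]
with $c_2$ independent of $A$. Using $\theta=\frac{B-1}{B+q-1}$ and $1-\theta=\frac{q}{B+q-1}$, we get $\frac{q\theta}{1-\theta}=B-1$. Using $p=\frac{B-1}{B}$ and $1-p=\frac1B$, we also get $\frac{p}{1-p}=B-1$. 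Hence $q^{\Lambda(A)}\pi_\ell(A)$ is proportional to $\bigl(\frac{p}{1-p}\bigr)^{|A|}q^{|\cC(A)|}$, which is exactly the random-cluster weight $\propto\phi(A)$.

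\emph{Third, normalize.} Since $\phi$ is a probability measure, the proportionality constant must be the reciprocal of $\sum_A q^{\Lambda(A)}\pi_\ell(A)=Z$. This gives $\phi(A)=Z^{-1}q^{\Lambda(A)}\pi_\ell(A)$.

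There is no real obstacle here. The two points needing care are the edge-disjointness of the cycles in $\mathcal C_\ell$, so that $N_\ell(A)$ counts exactly the $q$-factors from the $\eta_C$'s, and the parameter identity $q\theta/(1-\theta)=p/(1-p)$.
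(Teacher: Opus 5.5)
Your proof is correct and follows essentially the same route as the paper. Both arguments use the disjointness of the short cycles to write $\pi_\ell(A)\propto q^{N_\ell(A)}\theta^{|A|}(1-\theta)^{|E|-|A|}$, use the identity $q\theta/(1-\theta)=B-1=p/(1-p)$ to match $q^{\Lambda(A)}\pi_\ell(A)$ with the random-cluster weight $\bigl(\tfrac{p}{1-p}\bigr)^{|A|}q^{|\cC(A)|}$, and then identify the constant $Z^{-1}$ by normalization.
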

For a positive distribution $\nu$ over edge configurations, write
\[
 \mathcal S_\nu(h)=\sum_{e\in E}\E_\nu[\Ent_\nu(h\mid A_{-e})]
\]
for the right-hand side of approximate tensorization, so that
Theorem~\ref{thm:fk-at} reads $\Ent_\phi(h)\le C_{\rm ent}\,\mathcal S_\phi(h)$.
To transfer Lemma~\ref{fact:fk-reference} from $\pi_\ell$ to $\phi$, we compare
both sides of the inequality under the two measures. The point is that $\pi_\ell$
already accounts for every short open cycle, so $\phi$ and $\pi_\ell$ differ only
through long open cycles. For $\Ent(h)$, this is because the factor $q^{\Lambda}$ in
Lemma~\ref{fact:fk-change-of-measure} equals $1$ unless $A$ has an open cycle
longer than $\ell$. For $\mathcal S (h)$, it is because the conditional laws of $\phi$
and $\pi_\ell$ at an edge $e$ coincide unless opening $e$ closes such a cycle. In
both cases the discrepancy is confined to the components counted by $\wLong$. The
next two lemmas make this precise. Lemma~\ref{lemma:fk-global-entropy} compares
$\Ent_\phi$ with $\Ent_{\pi_\ell}$, with an error $o(1)\E_\phi[h]$, and
Lemma~\ref{lemma:fk-conditional-entropy} compares $\mathcal S_{\pi_\ell}$ with
$\mathcal S_\phi$, with an error $\E_\phi[h\wLong]$. The latter error is then
controlled by~\eqref{eq:fk-source} in the proof of Theorem~\ref{thm:fk-at}.
 
\begin{lemma}
\label{lemma:fk-global-entropy}
On every $G\in\mathcal G_n$, for every $h:\{0,1\}^E\rightarrow  \mathbb R_{\ge 0}$,
\begin{equation}\label{eq:fk-global-entropy-comparison}
 \Ent_\phi(h)\le\frac1Z\Ent_{\pi_\ell}(q^{\Lambda}h)+o(1)\E_\phi[h].
\end{equation}
\end{lemma}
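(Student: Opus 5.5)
The plan is to compute $\Ent_\phi(h)$ exactly in terms of $\Ent_{\pi_\ell}(q^\Lambda h)$, using the change of measure of Lemma~\ref{fact:fk-change-of-measure}. This produces two correction terms: one with a favourable sign, and one equal to $\E_\phi[h]\log Z$. The whole task then reduces to showing $\log Z=o(1)$, which I would get from the FK shattering bound of Lemma~\ref{lem:fk-source} applied with $\nu=\phi$.

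\emph{Step 1 (exact identity).} Put $g=q^{\Lambda}h$. By Lemma~\ref{fact:fk-change-of-measure}, $\E_{\pi_\ell}[g]=Z\,\E_\phi[h]$ and
\[
\E_{\pi_\ell}[g\log g]=Z\,\E_\phi[h\log h]+Z\log q\,\E_\phi[h\Lambda].
\]
Dividing by $Z$ and subtracting $\E_\phi[h]\log(Z\,\E_\phi[h])$ then gives
\[
\Ent_\phi(h)=\frac1Z\Ent_{\pi_\ell}(q^\Lambda h)-\log q\,\E_\phi[h\Lambda]+\E_\phi[h]\log Z .
\]

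\emph{Step 2 ($\Lambda\ge0$).} On $G\in\mathcal G_n$ the short cycles in $\mathcal C_\ell$ are pairwise vertex-disjoint. The fully open ones are therefore independent elements of the cycle space of $(V,A)$, so $N_\ell(A)$ is at most the cycle rank, i.e.\ $\Lambda(A)\ge0$. Hence the middle term is $\le0$ and can be dropped, leaving only $\E_\phi[h]\log Z$ to control.

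\emph{Step 3 ($\log Z=o(1)$), the main point.} From $\phi(\Lambda=0)=\pi_\ell(\Lambda=0)/Z\le 1/Z$ we get $\log Z\le -\log\phi(\Lambda=0)$. It remains to show $\phi(\Lambda>0)=o(1)$, and for this I would prove the combinatorial claim that $\Lambda(A)>0$ implies $\wLong(A)\ge1$. Since cycle rank is additive over components, some component $U$ has open cycle rank strictly larger than the number of fully open $\mathcal C_\ell$-cycles inside it. If every simple cycle of $(U,A_U)$ belonged to $\mathcal C_\ell$, the cycle space of $(U,A_U)$ would be spanned by these vertex-disjoint cycles, and its dimension would equal their number, a contradiction. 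So $(U,A_U)$, and hence $G[U]$, contains a simple cycle not in $\mathcal C_\ell$, i.e.\ one of length $>\ell$, and $U$ contributes to $\wLong$. Therefore
\[
\phi(\Lambda>0)\le\E_\phi[\wLong]\le C\,I_\phi(\phi)+o(1)=o(1)
\]
by Lemma~\ref{lem:fk-source} with $\nu=\phi$, since $I_\phi(\phi)=0$. Consequently $\log Z\le-\log(1-o(1))=o(1)$. If $\log Z<0$ the term only helps, so in either case $\E_\phi[h]\log Z\le o(1)\E_\phi[h]$, which yields~\eqref{eq:fk-global-entropy-comparison}.

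The only step needing care is the cycle-space argument in Step 3. There one must check that the short cycles being vertex-disjoint (property 3 of Lemma~\ref{lem:poly-geometry}) really forces any excess cycle rank to come from a long cycle. The rest is a direct calculation.
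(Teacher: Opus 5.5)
Your proof is correct. Steps~1 and~2 coincide with the paper's argument: the same exact entropy identity, and $\Lambda\ge0$ because the fully open short cycles are disjoint and hence independent in the cycle space.

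The difference is in Step~3, the bound on $\log Z$. The paper writes $Z^{-1}=\E_\phi[q^{-\Lambda}]$ and applies Jensen's inequality to get $\log Z\le(\log q)\E_\phi[\Lambda]$. It then proves the quantitative pointwise bound $0\le\Lambda\le\frac{\Delta}{2}\wLong$, component by component: a component not counted by $\wLong$ contributes nothing to $\Lambda$, and a counted one contributes at most its number of open edges. You instead lower bound the same quantity $Z^{-1}=\E_\phi[q^{-\Lambda}]$ by restricting to the event $\{\Lambda=0\}$, which gives $Z^{-1}\ge\phi(\Lambda=0)$. You therefore only need the qualitative inclusion $\{\Lambda>0\}\subseteq\{\wLong\ge1\}$, which your cycle-space argument establishes cleanly, plus Markov's inequality.

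Both routes give $\log Z=O(\E_\phi[\wLong])=o(1)$ via Lemma~\ref{lem:fk-source} with $\nu=\phi$. Yours dispenses with the linear bound $\Lambda\le\frac{\Delta}{2}\wLong$. The only cost is the harmless requirement $\phi(\Lambda>0)<1$, which holds for large $n$. Finally, $\Lambda\ge0$ already gives $Z\ge1$, so the case $\log Z<0$ that you hedge against never occurs.
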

\begin{lemma}
\label{lemma:fk-conditional-entropy}
On every $G\in\mathcal G_n$, for every $h:\{0,1\}^E\rightarrow \mathbb R_{\ge 0}$,
\begin{equation}\label{eq:fk-functional-comparison}
 \frac1Z\mathcal S_{\pi_\ell}(q^{\Lambda}h)
 \le\mathcal S_\phi(h)+\frac{\Delta\log q}{2}\E_\phi[h\wLong].
\end{equation}
\end{lemma}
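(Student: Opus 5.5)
The plan is to compare the two sides edge by edge, at the level of the two-point conditional laws at a single edge, and then sum using the change of measure of Lemma~\ref{fact:fk-change-of-measure}. Fix $e\in E$ and a configuration $A_{-e}$. Write $\nu_e$ for the conditional law of $\pi_\ell$ on $\{A_{-e},A_{-e}\cup\{e\}\}$, and set $\rho:=q^{\Lambda}$ restricted to these two states. By Lemma~\ref{fact:fk-change-of-measure}, the conditional law $\phi_e$ of $\phi$ given $A_{-e}$ is $\rho\,\nu_e/\E_{\nu_e}[\rho]$.

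The first step is the elementary identity, valid for any positive $\rho$ and $h\ge0$,
\[
 \Ent_{\nu_e}(\rho h)=\E_{\nu_e}[\rho]\Big(\Ent_{\phi_e}(h)+\E_{\phi_e}\Big[h\log\frac{\rho}{\E_{\nu_e}\rho}\Big]\Big).
\]
It follows by expanding $\rho h\log(\rho h)$ and using $\E_{\nu_e}[\rho g]=\E_{\nu_e}[\rho]\,\E_{\phi_e}[g]$. The correction term is at most $\E_{\phi_e}[h]\log(\max\rho/\min\rho)$. On two points, $\rho$ takes the values $q^{\Lambda(A_{-e})}$ and $q^{\Lambda(A_{-e}\cup e)}$. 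I will check that $\Lambda(A\cup e)-\Lambda(A\setminus e)\in\{0,1\}$: the cycle rank rises by $1$ exactly when the endpoints of $e$ are already connected in $A\setminus e$, and $N_\ell$ rises by at most $1$ because $e$ lies in at most one short cycle, short cycles being vertex-disjoint on $\mathcal G_n$. So the correction is $0$ when $\Lambda$ does not depend on $\omega_e$, and at most $\log q\,\E_{\phi_e}[h]$ otherwise. I then multiply by $Z^{-1}$, take $\E_{\pi_\ell}$ over $A_{-e}$, and use $Z^{-1}\E_{\pi_\ell}[\E_{\nu_e}[\rho]\,F(A_{-e})]=\E_\phi[F(A_{-e})]$ (again Lemma~\ref{fact:fk-change-of-measure}). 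Summing over $e$ should give
\[
 \frac1Z\mathcal S_{\pi_\ell}(q^\Lambda h)\le \mathcal S_\phi(h)+\log q\;\E_\phi\Big[h\sum_{e\in E}\one\{\Lambda(A\cup e)\ne\Lambda(A\setminus e)\}\Big].
\]

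The second step, which I expect to be the real content, is the deterministic combinatorial bound
\[
\sum_{e}\one\{\Lambda(A\cup e)\neq\Lambda(A\setminus e)\}\le \tfrac{\Delta}{2}\wLong(A).
\]
Suppose $\Lambda$ changes at $e=xy$. Then $x,y$ are connected in $A\setminus e$, and $e$ closes no fully open short cycle. Otherwise, opening $e$ would add one to both the cycle rank and $N_\ell$; here I need that $e$ closes at most one short open cycle, which again uses vertex-disjointness and the fact that two distinct short cycles through $e$ would share vertices. So some path in $A\setminus e$ from $x$ to $y$, together with $e$, forms a cycle that is not in $\mathcal C_\ell$, hence a simple cycle of length $>\ell$. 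Let $U$ be the common vertex set of the component of $x$ in $A\setminus e$ and in $A\cup e$; these coincide since $x,y$ are already connected, so $U$ is also the component of $x$ in $A$. Then $G[U]$ contains a cycle longer than $\ell$, so $U$ is counted in $\wLong(A)$ and $e\in E(G[U])$. Each such $U$ has at most $\Delta|U|/2$ edges of $G[U]$. Summing over components therefore gives the claimed bound, and with it~\eqref{eq:fk-functional-comparison}.

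The main obstacle is the bookkeeping in the combinatorial step: making sure that a change in $\Lambda$ really forces a long cycle inside $G[U]$, including the case where $x,y$ are joined both by a short open cycle and by another path. In that case, the symmetric difference of the short cycle with a second $x$–$y$ path yields a cycle. I will argue this cycle is long because, by vertex-disjointness, any second short cycle would meet the first. I would handle this by taking a spanning-tree/fundamental-cycle viewpoint as in the proof of Lemma~\ref{lem:fk-source}.
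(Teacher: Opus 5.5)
Your proposal is correct and is essentially the paper's proof. Your per-edge identity for $\Ent_{\nu_e}(\rho h)$ is the entropy form of the paper's comparison $I_{\pi_\ell}(\nu_h)\le I_\phi(\nu_h)+\frac{\Delta\log q}{2}\E_{\nu_h}[\wLong]$, via the identities $\E_\phi[h]I_\phi(\nu_h)=\mathcal S_\phi(h)$ and $Z\E_\phi[h]I_{\pi_\ell}(\nu_h)=\mathcal S_{\pi_\ell}(q^\Lambda h)$, and your combinatorial bound is the paper's \eqref{eq:fk-residual}. The case you flag in your last paragraph needs no separate treatment: once $\Lambda$ changes at $e=xy$, no short cycle through $e$ has its other edges open, so any simple $x$--$y$ path in $A\setminus\{e\}$ together with $e$ is already a cycle of length greater than $\ell$ in $G[U]$, exactly as your second paragraph argues.
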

 
We now prove Theorem~\ref{thm:fk-at} assuming these lemmas. Lemma~\ref{lem:fk-source} was proved in Section~\ref{sec:conclusions}, and we postpone the proofs of Lemmas~\ref{fact:fk-reference},~\ref{fact:fk-change-of-measure},~\ref{lemma:fk-global-entropy} and~\ref{lemma:fk-conditional-entropy} to Section~\ref{subsec:fk-reference-proof}.
\label{subsec:fk-main-proof}
 
\begin{proof}[Proof of Theorem~\ref{thm:fk-at}]
 
It suffices to prove a logarithmic Sobolev inequality for the FK dynamics, which is known to imply approximate tensorization (see e.g. \cite[Proposition 1.1]{CMT15}). In particular, if for every real function $g:\{0,1\}^E\rightarrow \mathbb R$, we have
\begin{equation}\label{eq:lsi}
\Ent_\phi(g^2)\le C_{\rm ent}m\cE_\phi(g,g),
\end{equation}
then for $h:\{0,1\}^E\rightarrow \mathbb R_{\ge 0}$, set $g=\sqrt h$, and the standard inequality
$\Var_\rho(\sqrt h)\le\Ent_\rho(h)$ in each conditional edge law gives
\[
 m\cE_\phi(\sqrt h,\sqrt h)
 =\sum_e\E_\phi[\Var(\sqrt h\mid A_{-e})]
 \le\mathcal S_\phi(h).
\]
Combining this with \eqref{eq:lsi} proves the theorem.
 
Fix a real function $g$ with $\E_\phi[g^2]>0$; the case
$\E_\phi[g^2]=0$ is immediate since both sides of~\eqref{eq:lsi} become $0$.
Lemma~\ref{lemma:fk-global-entropy}
gives
\[
 \Ent_\phi(g^2)
 \le\frac1Z\Ent_{\pi_\ell}(q^{\Lambda}g^2)+o(1)\E_\phi[g^2].
\]
Combined with the inequalities in
Lemma~\ref{fact:fk-reference} (applied to $q^{\Lambda}g^2$)
and Lemma~\ref{lemma:fk-conditional-entropy} we obtain
\begin{equation}\label{eq:fk-long-cycle-error}
 \Ent_\phi(g^2)\le q^2\mathcal S_\phi(g^2)
       +\frac{q^2\Delta\log q}{2}\E_\phi[g^2\wLong]+o(1)\E_\phi[g^2].
\end{equation}
 
We focus next on bounding  $\E_\phi[g^2\wLong]$. Observe that
  Lemma~\ref{lem:fk-source} bounds $\E_\nu[\wLong]$ under any probability
  distribution $\nu$. The idea is to incorporate the factor $g^2$
  into the distribution.
  For this, define the distribution $\nu_g$ that is given by 
  $
   \nu_g(A)=\frac{g^2(A)\phi(A)}{\E_\phi[g^2]}
  $.
  Applying
  Lemma~\ref{lem:fk-source} to $\nu_g$ gives
  $
   \E_{\nu_g}[\wLong]\le C_{\rm long} I_\phi(\nu_g)+o(1),
  $
and since $\E_{\nu_g}[\wLong]= \frac{\E_\phi[g^2\wLong]}{\E_\phi[g^2]}$, we obtain
\begin{equation}
\label{eq:s2}
   \E_\phi[g^2\wLong]\le C_{\rm long} I_\phi(\nu_g)\E_\phi[g^2] +o(1)\E_\phi[g^2].
\end{equation}
  Next, we bound $I_\phi(\nu_g)\E_\phi[g^2]$ in terms of
  $\mathcal S_\phi(g^2)$.
 Let $\nu_g(A_{-e})$ and $\phi(A_{-e})$
  denote the marginal probabilities of the configuration outside of the edge $e$.
  Then,
  \begin{align*}
   \nu_g(A_{-e})
   &=\nu_g(A_{-e},e=1)+\nu_g(A_{-e},e=0)\\
   &=\frac{1}{\E_\phi[g^2]}\Big[g^2(A_{-e},e=1)\phi(A_{-e},e=1)
                +g^2(A_{-e},e=0)\phi(A_{-e},e=0)\Big],
  \end{align*}
  and thus
  \[
  \begin{aligned}
   \frac{\nu_g(A_{-e})}{\phi(A_{-e})}
   &=\frac{1}{\E_\phi[g^2]}\Bigl[g^2(A_{-e},e=1)\phi(e=1\mid A_{-e})
  +g^2(A_{-e},e=0)\phi(e=0\mid A_{-e})\Bigr]
   =\frac{\E_\phi[g^2\mid A_{-e}]}{\E_\phi[g^2]}.
  \end{aligned}
  \]
  From this, we get
  $
   \frac{\nu_g(A_e\mid A_{-e})}{\phi(A_e\mid A_{-e})} =\frac{g^2(A)}{\E_\phi[g^2\mid A_{-e}]},
  $
  which combined with the definitions of $\nu_g$ and $I_\phi(\nu_g)$ gives
  \begin{equation}\label{eq:fk-density}
  \begin{aligned}
   \E_\phi[g^2] I_\phi(\nu_g)
   &=\E_\phi[g^2] \sum_{e\in E}\E_{\nu_g}\!\left[
       \log\frac{\nu_g(A_e\mid A_{-e})}{\phi(A_e\mid A_{-e})}\right]
   =\sum_{e\in E}\E_\phi\!\left[
       g^2\log\frac{g^2}{\E_\phi[g^2\mid A_{-e}]}\right]\\
   &=\sum_{e\in E}\E_\phi[\Ent_\phi(g^2\mid A_{-e})] = \mathcal S_\phi(g^2),
  \end{aligned}
  \end{equation}
  where the second to last equality follows from the definition of the conditional
  entropy.
  Plugging this identity into~\eqref{eq:s2} gives
  \begin{equation}\label{eq:fk-source-functional}
   \E_\phi[g^2\wLong]\le C_{\rm long}\mathcal S_\phi(g^2)+o(1)\E_\phi[g^2].
  \end{equation}
  Consequently, the inequality in~\eqref{eq:fk-long-cycle-error}
  becomes
  \begin{equation}\label{eq:fk-defective-at}
   \Ent_\phi(g^2)\le K\mathcal S_\phi(g^2)+o(1)\E_\phi[g^2],
  \end{equation}
    for a constant $K=K(\Delta,q,B)>0$.
 
 For every edge $e$, the conditional edge law $\phi(\cdot \mid A_{-e})$ assigns positive probability at least $\min\{\hat p, 1-p\}$ to each of its two states, so its log-Sobolev constant is bounded below uniformly. Hence, uniformly for every edge $e$, we have $\Ent_\phi(g^2 \mid A_{-e})\le J\Var_\phi(g\mid A_{-e})$, for a suitable constant $J = J(p,q) > 0$. Thus, by the definition of $\mathcal S_\phi$,
  \begin{equation}\label{eq:fk-source-energy}
   \mathcal S_\phi(g^2)
   \le J\sum_{e\in E}\E_\phi[\Var_\phi(g\mid A_{-e})]
   =Jm\cE_\phi(g,g).
  \end{equation}
  Combining this with \eqref{eq:fk-defective-at} yields
  \begin{equation}\label{eq:fk-defective-lsi}
   \Ent_\phi(g^2)\le KJm\cE_\phi(g,g)+o(1)\E_\phi[g^2].
  \end{equation}
Applying \eqref{eq:fk-defective-lsi} to $g-\E_\phi[g]$, we get
$$
 \Ent_\phi((g-\E_\phi[g])^2)\le KJm\cE_\phi(g,g)+o(1)\Var_\phi(g),
$$
since subtracting a constant leaves the Dirichlet form unchanged.
By Rothaus' centering inequality, 
\[
 \Ent_\phi(g^2)
 \le\Ent_\phi\bigl((g-\E_\phi[g])^2\bigr)+2\Var_\phi(g)
 \le KJm\cE_\phi(g,g)+(2+o(1))\Var_\phi(g).
\]
Finally, to bound the variance, note that
the results in~\cite{BG21}
give a graph event $\mathcal H_n$ of probability
$1-o(1)$ such that for every $G\in\mathcal H_n$, we have
$t_{\rm mix}(n^{-a})\le O(m\log n)$ for some $a>0$; see, e.g.,~\cite[the proof of Theorem~1.1]{BG21}. 
This immediately implies that
$\gap(P_{\rm FK})\ge\frac1{C_{\rm BG}m}$ for a suitable constant $C_{\rm BG} > 0$; see, e.g.,~\cite[Theorem~12.5]{LP17}.
So for a $G\in\mathcal G_n\cap\mathcal H_n$, we have
$
 \Var_\phi(g)\le C_{\rm BG}m\cE_\phi(g,g).
$
Substituting this variance bound into the preceding entropy estimate gives
\[
 \Ent_\phi(g^2)
 \le\bigl(KJ+(2+o(1))C_{\rm BG}\bigr)m\cE_\phi(g,g),
\]
as desired.
\end{proof}
 
\subsection{Deferred proofs of lemmas}
\label{subsec:fk-reference-proof}
 
It remains for us to provide the proofs of Lemmas~\ref{fact:fk-reference},~\ref{fact:fk-change-of-measure},~\ref{lemma:fk-global-entropy} and~\ref{lemma:fk-conditional-entropy}.
 
\begin{proof}[Proof of Lemma~\ref{fact:fk-reference}]
On $G\in\mathcal G_n$ the cycles in $\mathcal C_\ell$ are vertex-disjoint, so
$\pi_\ell$ is a product measure over the blocks $C\in\mathcal C_\ell$ and the
single edges outside them. By~\cite[eq.~(1.2)]{CMT15} applied to these blocks,
\[
 \Ent_{\pi_\ell}(h)\le\sum_{C\in\mathcal C_\ell}\E_{\pi_\ell}[\Ent(h\mid A_{-C})]
   +\sum_{e\notin\cup\mathcal C_\ell}\E_{\pi_\ell}[\Ent(h\mid A_{-e})].
\]
Since $\pi_\ell(\cdot\mid A_{-C})=\eta_C$ and
$\pi_\ell(\cdot\mid A_{-e})=\eta_C(\cdot\mid A_{C-e})$ for $e\in C$, and since
$q\ge1$, it suffices to show that for each $C\in\mathcal C_\ell$ and every $f\ge0$,
\begin{equation}\label{eq:cycle-at}
 \Ent_{\eta_C}(f)\le q^2\sum_{e\in C}\E_{\eta_C}[\Ent_{\eta_C}(f\mid X_{C-e})].
\end{equation}
Let $\mathbb B_C=\operatorname{Ber}(\theta)^{\otimes C}$ and
$Z_C=1+(q-1)\theta^{|C|}$, so that
$\eta_C/\mathbb B_C=q^{\one\{X=C\}}/Z_C$, and hence
$Z_C^{-1}\mathbb B_C\le\eta_C\le qZ_C^{-1}\mathbb B_C$.
For $e\in C$, the conditional laws $\eta_C(\cdot\mid X_{C-e})$ and
$\mathbb B_C(\cdot\mid X_{C-e})$ coincide unless $C-e$ is entirely open, in
which case
$\mathbb B_C(\cdot\mid X_{C-e})\le(1+(q-1)\theta)\,\eta_C(\cdot\mid X_{C-e})\le q\,\eta_C(\cdot\mid X_{C-e})$.
We use the Holley--Stroock perturbation bound~\cite{HS87}
(see also~\cite[proof of Lemma~2.2]{CMT15}): if $\mu\le b\,\nu$, then
$\Ent_\mu(f)\le b\Ent_\nu(f)$. Applying it once globally and once to each
conditional law, together with~\cite[eq.~(1.2)]{CMT15} for $\mathbb B_C$ and
$\mathbb B_C\le Z_C\,\eta_C$, gives
\[
\begin{aligned}
 \Ent_{\eta_C}(f)
 &\le\frac{q}{Z_C}\Ent_{\mathbb B_C}(f)
 \le\frac{q}{Z_C}\sum_{e\in C}\E_{\mathbb B_C}[\Ent_{\mathbb B_C}(f\mid X_{C-e})]\\
 &\le\frac{q^2}{Z_C}\sum_{e\in C}\E_{\mathbb B_C}[\Ent_{\eta_C}(f\mid X_{C-e})]
 \le q^2\sum_{e\in C}\E_{\eta_C}[\Ent_{\eta_C}(f\mid X_{C-e})].
\end{aligned}
\]
This proves~\eqref{eq:cycle-at}, and hence~\eqref{eq:fk-product} with constant $q^2$.
\end{proof}
 
\begin{proof}[Proof of Lemma~\ref{fact:fk-change-of-measure}]
Recall that $\phi(A)\propto p^{|A|}(1-p)^{|E|-|A|}q^{|\cC(A)|}$ with $p=1-1/B$.
Since $p/(1-p)=B-1=q\,\theta/(1-\theta)$, we have
\[
 \phi(A)\propto q^{|A|+|\cC(A)|}\,\theta^{|A|}(1-\theta)^{|E|-|A|}
 \propto q^{|A|-|V|+|\cC(A)|}\,\mathbb B(A),
\]
where $\mathbb B=\operatorname{Ber}(\theta)^{\otimes E}$.
On the other hand, since the cycles in $\mathcal C_\ell$ are disjoint,
\eqref{eq:fk-reference} gives
\[
 \pi_\ell(A)=\prod_{C\in\mathcal C_\ell}\frac{q^{\one\{C\subseteq A\}}}{1+(q-1)\theta^{|C|}}\;\mathbb B(A)
 \propto q^{N_\ell(A)}\,\mathbb B(A).
\]
Thus we have $\phi(A)\propto q^{\Lambda(A)}\pi_\ell(A)$, and the normalizing
constant is $\bigl(\sum_{A}q^{\Lambda(A)}\pi_\ell(A)\bigr)^{-1}=Z^{-1}$.
\end{proof}
 
\begin{proof}[Proof of Lemma~\ref{lemma:fk-global-entropy}]
  Lemma~\ref{fact:fk-change-of-measure} gives
  $\E_{\pi_\ell}[q^{\Lambda}h]=Z\E_\phi[h]$. Then, the definition of entropy gives
  \[
  \begin{aligned}
   \frac1Z\Ent_{\pi_\ell}(q^{\Lambda}h)
   &=\frac1Z\E_{\pi_\ell}\!\left[
       q^{\Lambda}h\log\frac{q^{\Lambda}h}{\E_{\pi_\ell}[q^{\Lambda}h]}\right]
   =\E_\phi\!\left[h\log\frac{q^{\Lambda}h}{Z\E_\phi[h]}\right]\\
   &=\E_\phi\!\left[h\log\frac{h}{\E_\phi[h]}\right]
     +(\log q)\E_\phi[h\Lambda]-(\log Z)\E_\phi[h].
  \end{aligned}
  \]
  Rearranging, we have
  \begin{equation}
  \label{eq: rewriting ent h}
   \Ent_\phi(h)
   =\frac1Z\Ent_{\pi_\ell}(q^{\Lambda}h)-(\log q)\E_\phi[h\Lambda]
      +(\log Z)\E_\phi[h].
  \end{equation}
  We now bound the last two terms in \eqref{eq: rewriting ent h}.
  By Lemma~\ref{lem:fk-source}, the cycles counted by $N_\ell(A)$
  are edge-disjoint, so $N_\ell(A)$ is at most the cycle rank
  $|A|-|V|+|\cC(A)|$. Consequently,
  $\Lambda(A)\ge0$, and since $h\ge0$,
  we have $-(\log q)\E_\phi[h\Lambda]\le0$.
 
  To bound the second term, it suffices to show that $\log Z=o(1)$.
  Lemma~\ref{fact:fk-change-of-measure} gives
  \[
   \E_\phi[q^{-\Lambda}]
   =\sum_{A\subseteq E}q^{-\Lambda(A)}Z^{-1}q^{\Lambda(A)}\pi_\ell(A)
   =Z^{-1},
  \]
  and Jensen's inequality  yields
  $
   -\log Z=\log\E_\phi[q^{-\Lambda}]\ge-(\log q)\E_\phi[\Lambda].
  $
  Also $\Lambda\ge0$ implies $Z=\E_{\pi_\ell}[q^{\Lambda}]\ge1$.
  Thus $0\le\log Z\le(\log q)\E_\phi[\Lambda]$, and we only need to bound
  $\E_\phi[\Lambda]$.
 
  By the definition of $\Lambda$, each open
  component $U$ contributes
  \[
   \#\{\text{open edges in }U\}-(|U|-1)
   -\#\{\text{open short cycles in }U\}
  \]
  to $\Lambda(A)$.
 If $U$ is not counted by $\wLong$, all its open cycles are short and
edge-disjoint. Removing one edge from each leaves a tree with
$|U|-1$ edges. Hence the number of open edges in $U$ equals
$|U|-1$ plus the number of open short cycles, so its contribution
to $\Lambda(A)$ is zero.
  Otherwise, if $U$ is counted by $\wLong$, its contribution is at most its number
  of open edges which can be bounded by $\Delta|U|/2$.
  Summing over these components, whose total number of
  vertices is $\wLong(A)$, gives
  \begin{equation}\label{eq:fk-potential}
   0\le\Lambda(A)\le\frac{\Delta}{2}\wLong(A).
  \end{equation}
  Applying Lemma~\ref{lem:fk-source} with $\nu=\phi$ and using
  $I_\phi(\phi)=0$, we obtain $\E_\phi[\wLong]=o(1)$.
  Consequently, $\E_\phi[\Lambda]\le(\Delta/2)\E_\phi[\wLong]=o(1)$.
  As such, we have obtained that
  \[
   \Ent_\phi(h)\le\frac1Z\Ent_{\pi_\ell}(q^{\Lambda}h)+o(1)\E_\phi[h]\,. \qedhere
  \]
\end{proof}
 
\begin{proof}[Proof of Lemma~\ref{lemma:fk-conditional-entropy}]
By Lemma~\ref{fact:fk-change-of-measure}, we have
$\phi=Z^{-1}q^{\Lambda}\pi_\ell$.
Fix an edge $e$ and a configuration $A$. Then
$\Lambda(A\cup\{e\})-\Lambda(A\setminus\{e\})\in\{0,1\}$.
Indeed, adding $e$ closes at most one simple cycle of length at most
$\ell$, and doing so also increases the cycle rank $|A|-|V|+|\cC(A)|$ by one. If $\Lambda(A\cup\{e\})-\Lambda(A\setminus\{e\})=1$, an open
path in $A\setminus\{e\}$ joining the endpoints of $e$, together with $e$, forms a simple cycle
of length greater than $\ell$;
this event can only happen when both endpoints 
of $e$ lie in a component $U\in \mathcal{C}(A\setminus \{e\})$ whose induced subgraph $G[U]$  contains a long cycle. 
Consequently,
\begin{equation}\label{eq:fk-residual}
 \sum_{e\in E}\bigl[\Lambda(A\cup\{e\})-\Lambda(A\setminus\{e\})\bigr]\le\frac\Delta2 \wLong(A).
\end{equation}
The conditional probabilities $\phi(A_e\mid A_{-e})$ and $\pi_\ell(A_e\mid A_{-e})$ differ by at most the factor
$q^{\Lambda(A\cup\{e\})-\Lambda(A\setminus\{e\})}$, 
so with any distribution $\nu$ on edge configurations we obtain
\begin{align}
I_{\pi_\ell}(\nu)
&=\sum_{e\in E}\E_{\nu}\Bigl[\sum_{x\in\{0,1\}}\nu(A_e=x\mid A_{-e})
\log\frac{\nu(A_e=x\mid A_{-e})}{\pi_\ell(A_e=x\mid A_{-e})}\Bigr]\nonumber\\
&=I_\phi(\nu)+\sum_{e\in E}\E_{\nu}\Bigl[\sum_{x\in\{0,1\}}\nu(A_e=x\mid A_{-e})
\log\frac{\phi(A_e=x\mid A_{-e})}{\pi_\ell(A_e=x\mid A_{-e})}\Bigr]\nonumber\\
&\le I_\phi(\nu)+\sum_{e\in E}\E_{\nu}\Bigl[\sum_{x\in\{0,1\}}\nu(A_e=x\mid A_{-e})
\bigl[\Lambda(A\cup\{e\})-\Lambda(A\setminus\{e\})\bigr]\log q\Bigr]\nonumber\\
&=I_\phi(\nu)+\log q\;\E_{\nu}\Bigl[\sum_{e\in E}\bigl[\Lambda(A\cup\{e\})-\Lambda(A\setminus\{e\})\bigr]\Bigr]\nonumber\\
&\le I_\phi(\nu)+\frac{\Delta\log q}{2}\E_\nu[\wLong].\label{eq:fk-information-comparison}
\end{align}

Now let $h:\{0,1\}^E\rightarrow \mathbb R_{\ge 0}$ be a function with $\E_\phi[h]>0$ and let $\nu_h$ be the distribution on edge configurations given by
$\nu_h(A)=h(A)\phi(A)/\E_\phi[h]$ for $A\subseteq E$.
By Lemma~\ref{fact:fk-change-of-measure}, we have $\E_{\pi_\ell}[q^{\Lambda}h]=Z\E_\phi[h]$, and
$\nu_h(A)=q^{\Lambda(A)}h(A)\pi_\ell(A)/\E_{\pi_\ell}[q^{\Lambda}h]$.
Using these two interpretations of $\nu_h$,
the same calculation leading to \eqref{eq:fk-density} gives
\begin{equation}\label{eq:nu-h-identities}
\E_\phi[h]I_\phi(\nu_h)=\mathcal S_\phi(h) \quad \text{and} \quad 
Z\E_\phi[h]I_{\pi_\ell}(\nu_h)
    =
 \E_{\pi_\ell}[q^\Lambda h]I_{\pi_\ell}(\nu_h)
 =\mathcal S_{\pi_\ell}(q^{\Lambda}h).
\end{equation}
The normalization in $\nu_h$ also gives
\begin{equation}\label{eq:nu-h-wlong}
 \E_\phi[h]\,\E_{\nu_h}[\wLong]
 =\E_\phi[h]\sum_{A\subseteq E}
       \frac{h(A)\phi(A)}{\E_\phi[h]}\wLong(A)
 =\E_\phi[h\wLong].
\end{equation}
Therefore,
\begin{align*}
\frac1Z\mathcal S_{\pi_\ell}(q^{\Lambda}h)
&=\E_\phi[h]\,I_{\pi_\ell}(\nu_h)
&&\text{by the second identity in \eqref{eq:nu-h-identities}}\\
&\le\E_\phi[h]\Bigl(I_\phi(\nu_h)+\frac{\Delta\log q}{2}\,\E_{\nu_h}[\wLong]\Bigr)
&&\text{by \eqref{eq:fk-information-comparison} with $\nu=\nu_h$, and $\E_\phi[h]>0$}\\
&=\E_\phi[h]\,I_\phi(\nu_h)+\frac{\Delta\log q}{2}\,\E_\phi[h]\,\E_{\nu_h}[\wLong]\\
&=\mathcal S_\phi(h)+\frac{\Delta\log q}{2}\,\E_\phi[h\wLong]
&&\text{by the first identity in \eqref{eq:nu-h-identities} and \eqref{eq:nu-h-wlong}.}
\end{align*}
This is \eqref{eq:fk-functional-comparison} for $h$ with $\E_\phi[h]>0$.
If $\E_\phi[h]=0$, then $h\equiv0$ since $\phi$ gives positive mass to every configuration,
so both sides of \eqref{eq:fk-functional-comparison} vanish.
Hence \eqref{eq:fk-functional-comparison} holds for every $h:\{0,1\}^E\rightarrow\mathbb R_{\ge0}$,
which proves the lemma.
\end{proof}

\section{Logarithmic mixing for Swendsen--Wang dynamics}
\label{sec:rr-sw}

Our goal in this section is to establish
Theorem~\ref{mainthm:Swendsen-Wang}.
Fix $L=C_0\log n$ as in Lemma~\ref{lem:entropy-mass} and 
let $K_w\ge C_0$ in
Lemma~\ref{lem:poly-geometry}. 
Our proof relies on the following two facts.

\begin{lemma}\label{lemma:rr-sw-local}
There exists a constant
$\alpha=\alpha(\Delta,q,B)>0$, such that with probability $1-o(1)$ a random $\Delta$-regular graph $G$ is such that 
for every function $f:[q]^V\to\mathbb R_{\ge0}$ we have
\[
 \alpha\mathcal H_{\widetilde M}(f)
 \le\Ent_\mu(f)-\Ent_\mu(P_{\rm SW}f).
\]
\end{lemma}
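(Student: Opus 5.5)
The plan is to split $\Ent_\mu(f)-\Ent_\mu(P_{\rm SW}f)$ through the Edwards--Sokal coupling and then compare component by component with the resampling of $\widetilde M$. The step I expect to be hardest is a uniform-in-$L$ entropy factorization on small components (Step 2).

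\textbf{Step 1: the edge half-step.} Write $P_{\rm SW}=\mathcal C\mathcal C^*$, with $\mathcal C,\mathcal C^*$ the half-step operators of Section~\ref{sec:ums}. By the data processing inequality, $\Ent_\mu(\mathcal C\mathcal C^*f)\le\Ent_\phi(\mathcal C^*f)$. The law of total entropy under $\pi_{\ES}$ gives
\[
\Ent_\mu(f)=\Ent_\phi(\mathcal C^*f)+\E_{A\sim\phi}\bigl[\Ent_{\pi_{\ES}(\cdot\mid A)}(f)\bigr].
\]
Hence
\[
\Ent_\mu(f)-\Ent_\mu(P_{\rm SW}f)\ \ge\ \E_{A\sim\phi}\bigl[\Ent_{\pi_{\ES}(\cdot\mid A)}(f)\bigr].
\]
Here $\pi_{\ES}(\cdot\mid A)$ colours the $A$-components independently and uniformly. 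It therefore suffices to show
\[
\alpha\,\mathcal H_{\widetilde M}(f)\le \E_{A\sim\phi}\bigl[\Ent_{\pi_{\ES}(\cdot\mid A)}(f)\bigr].
\]

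\textbf{Step 2: reduction to single small components.} Pass to the triple law $\pi$ of $(\sigma,A,N)$ and condition on $H=A\cup N$. Since $\sigma$ given $(A,N)$ depends only on $A$, the right-hand side equals
\[
\E_H\,\E_{A\mid H}\bigl[\Ent_{\pi_{\ES}(\cdot\mid A)}(f)\bigr].
\]
This is the conditional entropy given $(H,A)$ under $\pi(\cdot\mid H)$. Moreover, given $H$, the pair $(\sigma,A)$ is exactly the Edwards--Sokal coupling of the $\tilde\beta$-Potts model $\mu_H$ on $(V,H)$: an edge of $H$ is in $A$ with probability $p/(r\tilde B/B)=1-1/\tilde B$ when monochromatic, and never otherwise. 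Both sides now factorize over the components of $H$. Conditioning additionally on $\sigma_{U^+(H)}$ can only decrease the right-hand side, by the law of total entropy. By the standard tensorization for conditionally independent blocks, the claim reduces to the following statement for each component $F$ of $H$ with $|F|\le L$, with the exterior fixed:
\begin{equation*}
\Ent_{\mu_F}(g)\ \le\ \alpha^{-1}\,\E_{A_F\sim\phi_F^{\tilde\beta}}\bigl[\Ent_{\pi_{\ES}(\cdot\mid A_F)}(g)\bigr].
\end{equation*}
Here $\mu_F$ is the $\tilde B$-Potts measure on $F$ and $\phi_F^{\tilde\beta}$ is its random-cluster marginal. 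In words, this says that one Swendsen--Wang step at inverse temperature $\tilde\beta$ on $F$ contracts entropy by a constant factor, \emph{uniformly} in $|F|\le L$.

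\textbf{Step 3: trees, then bounded cycle rank.} This is the main obstacle, because naive bounds (for instance via $\mu_{F,\min}$) lose $e^{O(L)}$. I would first treat $F$ a tree. There the Edwards--Sokal measure is explicit: the edges of $A_F$ are i.i.d.\ Bernoulli$(\hat p(\tilde\beta))$ and the spins form a broadcast process. I would try to prove the entropy factorization edge by edge, either via the uniform entropy contraction of Swendsen--Wang on trees (as in~\cite{SwendsenWangTrees}, which I expect gives an $O(1)$ constant at every finite temperature), or directly by induction on leaves using the product structure of $A_F$. By Lemma~\ref{lem:poly-geometry}, property~2, every component with at most $L\le K_w\log n$ vertices has cycle rank at most $t_w$. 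So $\mu_F$ and its Edwards--Sokal law differ from those of a spanning tree by a density bounded between $\tilde B^{\pm t_w}$ and $q^{\pm t_w}$. A Holley--Stroock perturbation, applied both to the global entropy and to the conditional entropies given $A_F$ as in the proof of Lemma~\ref{fact:fk-reference}, then transfers the tree bound at the cost of a constant $e^{O(t_w)}$. This yields $\alpha=\alpha(\Delta,q,B)>0$ on the graph event of Lemma~\ref{lem:poly-geometry}.
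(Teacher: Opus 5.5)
Your reduction is correct, but it spends the two Edwards--Sokal half steps differently from the paper. The paper writes $P_{\rm SW}=\sum_H\PP_{\sigma,\rm M}(H)S_H$, where $S_H$ is a Swendsen--Wang step at interaction $\tilde B$ on $(V,H)$. By Jensen it gets $\Ent_\mu(f)-\Ent_\mu(P_{\rm SW}f)\ge\sum_H\PP_{\mu,\rm M}(H)[\Ent_{\mu_H}(f)-\Ent_{\mu_H}(S_Hf)]$. It then replaces $S_H$ by the chain $S_H^-$ that updates only the small components, so it needs only the \emph{full-step} bound $\Ent_{\mu_F}(S_Fg)\le(1-\alpha)\Ent_{\mu_F}(g)$.

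You instead discard the edge-to-spin half step at the outset and identify the law of $(\sigma,A)$ given $H$ as the $\tilde B$ Edwards--Sokal coupling on $(V,H)$; this checks, since $p/(r\tilde B/B)=1-1/\tilde B$. In place of the $S_H^-$ argument you use that extra conditioning on $\sigma_{U^+(H)}$ lowers expected conditional entropy, and your tensorization over small components is valid by the chain rule and convexity of $\Ent$. This is a clean alternative, but it costs something. For fixed $H$, your lower bound is $\Ent_{\mu_H}(f)$ minus the entropy of $\E[f\mid A]$ under the law of $A$ given $H$. That is smaller than the paper's, since $\Ent_{\mu_H}(S_Hf)$ is at most that entropy by data processing. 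Consequently your per-component statement is the stronger \emph{half-step} contraction $\Ent(\E[g\mid A_F])\le(1-\alpha)\Ent_{\mu_F}(g)$.

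This is why your first option for trees does not close the argument. Entropy contraction of Swendsen--Wang on trees, as in \cite{SwendsenWangTrees}, controls the full step $\mathcal C\mathcal C^*$, not the half step you reduced to. Your second option does work, but the product structure you need is that of the joint law of $(\sigma,A)$, not of $A_F$ alone; this is the paper's key observation. On a tree, $(\sigma,A)$ is in bijection with the root colour together with the pairs $(\delta_e,A_e)_e$, where $\delta_e$ is the difference of the endpoint spins mod $q$. These coordinates are independent: the root colour is uniform, $\delta_e$ has law $\omega(a)\propto\tilde B^{\one\{a=0\}}$, and $A_e\sim{\rm Ber}(1-1/\tilde B)$ if $\delta_e=0$ while $A_e=0$ otherwise. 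Hence $g\mapsto\E[g\mid A]$ erases the root coordinate and acts coordinatewise through the channel $\delta_e\mapsto A_e$. That channel has Dobrushin coefficient $1-1/\tilde B$, because every input can produce $A_e=0$. Tensorization of the KL contraction coefficient over product inputs and product channels then gives a tree constant $\alpha_T\ge1/\tilde B$, independent of $|T|$.

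For the transfer to cycle rank at most $t_w$, note that the two Edwards--Sokal laws live on different edge sets. Conditional laws given an $A_F$ with an open excess edge are not comparable to those given $A_T$, so the two-sided density bound you invoke is not available; only a one-sided bound is needed. With $Z_F,Z_T$ the $\tilde B$-Potts partition functions, $\pi_F(\sigma,A_T,A_{E(F)\setminus E(T)}\equiv0)=(Z_T/Z_F)\,\pi_T(\sigma,A_T)\ge\tilde B^{-t_w}\pi_T(\sigma,A_T)$. On this event the conditional law of $\sigma$ is $\pi_T(\cdot\mid A_T)$. Together with $\mu_F\le\tilde B^{t_w}\mu_T$, this gives $\alpha_F\ge\tilde B^{-2t_w}\alpha_T$ in the form $\alpha_F\Ent_{\mu_F}(g)\le\E[\Ent(g\mid A_F)]$. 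This is the analogue of the paper's $S_F\ge\tilde B^{-\exc(F)}S_T$ and $d_+/d_-\le\tilde B^{\exc(F)}$.
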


\begin{lemma}\label{lemma:rr-sw-smoothing}
On every graph of maximum degree $\Delta$, there is a constant
$\kappa=\kappa(\Delta,q,B)>0$ such that, for every nonnegative
function $f$ on $[q]^V$ with $\E_\mu[f]=1$, we have
\begin{equation}\label{eq:rr-sw-entropy-ineq3}
 I_\mu((P_{\rm SW}f)\mu)
 \le\kappa\bigl[\Ent_\mu(f)-\Ent_\mu(P_{\rm SW}f)\bigr].
\end{equation}
\end{lemma}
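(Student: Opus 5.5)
The plan is to sandwich both sides of the inequality through the Edwards--Sokal coupling, using the factorization $P_{\rm SW}=\mathcal C\mathcal C^*$ from Section~\ref{sec:ums}. Write $h:=\mathcal C^*f$, so $h(A)=\E_{\pi_{\ES}}[f(\sigma)\mid A]$, and $g:=P_{\rm SW}f=\mathcal C h$, so $g(\sigma)=\E_{\pi_{\ES}}[h(A)\mid\sigma]$.

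\textbf{Step 1 (lower bound on the entropy drop).} By the chain rule for entropy under $\pi_{\ES}$,
\[
\Ent_\mu(f)=\Ent_\phi(h)+\E_{A\sim\phi}\bigl[\Ent_{\pi_{\ES}(\cdot\mid A)}(f)\bigr].
\]
By data processing (the argument of Lemma~\ref{fact:ums-ent2}), $\Ent_\mu(g)=\Ent_\mu(\mathcal C h)\le\Ent_\phi(h)$. Together these give
\[
\Ent_\mu(f)-\Ent_\mu(P_{\rm SW}f)\ \ge\ D:=\E_{A\sim\phi}\bigl[\Ent_{\pi_{\ES}(\cdot\mid A)}(f)\bigr].
\]
Recall that $\pi_{\ES}(\cdot\mid A)$ is the product measure assigning independent uniform colors to the components of $A$. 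It therefore suffices to prove $I_\mu(g\mu)\le\kappa D$.

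\textbf{Step 2 (rewriting the left side).} Since $\E_\mu[g]=1$, the calculation \eqref{eq:function-information-calculation} in the proof of Lemma~\ref{lem:function-information} gives
\[
I_\mu(g\mu)=\sum_x\E_\mu\bigl[\Ent_{\mu(\cdot\mid\sigma_{-x})}(g)\bigr].
\]

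\textbf{Step 3 (localizing at a vertex $x$).} Fix $x$ and condition on $\sigma_{-x}$. Under $\mathbb P_{\sigma,{\rm SW}}$, the edges not incident to $x$ are independent of $\sigma_x$. So $g=\E_{A_{-\partial x}}\bigl[k_{A_{-\partial x}}(\sigma_x)\bigr]$, where $k$ averages $h$ over the at most $\Delta$ edges at $x$. By convexity of $\Ent$,
\[
\Ent_{\mu(\cdot\mid\sigma_{-x})}(g)\le\E_{A_{-\partial x}}\bigl[\Ent(k)\bigr].
\]
All weights involved ($\mu(\sigma_x\mid\sigma_{-x})\ge(qB^\Delta)^{-1}$, edge probabilities $p,1-p$) are bounded below by $e^{-O(\Delta)}$. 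I would use this, via Holley--Stroock as in Lemma~\ref{fact:fk-reference}, to change measure: first to the joint law in which all edges at $x$ are closed, and then, through $h=\E[f\mid A]$, to the recoloring in which $x$ is an isolated $A$-component. On that event $\sigma_x$ is uniform and independent given $A$. The target bound is
\[
\E_\mu\bigl[\Ent_{\mu(\cdot\mid\sigma_{-x})}(g)\bigr]\le e^{O(\Delta)}\,\E_{A\sim\phi}\Bigl[\Ent_{\pi_{\ES}(\cdot\mid A)}\bigl(\E[f\mid A,\,a_{\mathcal C(x)}]\bigr)\Bigr],
\]
where $a_{\mathcal C(x)}$ is the color of the component of $x$ in $A$ (possibly after an $e^{O(\Delta)}$-cost modification of $A$ near $x$). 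I expect this transfer to be the main obstacle. The dependence of $k$ on $\sigma_x$ enters only through whether edges at $x$ can open, and one must show that the entropy this creates is dominated by genuine recoloring entropy of $f$, not by entropy of $h$ (which $D$ does not control). What I would try first is to use the monotone coupling $\mathbb P_{\sigma,{\rm SW}}(A)\ge(1-p)^{\Delta}\,\mathbb P_{\sigma,{\rm SW}}(A\setminus\partial x)$ to compare $k$ with its value on configurations where $x$ is isolated.

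\textbf{Step 4 (summing over $x$).} For a fixed $A$, the coordinates of the product measure $\pi_{\ES}(\cdot\mid A)$ are the cluster colors. Shearer-type subadditivity for product measures, $\sum_i\Ent(\E[f\mid X_i])\le\Ent(f)$, bounds the sum over clusters by $\Ent_{\pi_{\ES}(\cdot\mid A)}(f)$. Each cluster is hit by at most $\Delta+1$ of the modified configurations near different $x$ (or one controls the multiplicity via the boundedness of the Step 3 modification). Summing the Step 3 bound over $x$ then gives $I_\mu(g\mu)\le e^{O(\Delta)}D$. Combined with Step 1, this yields \eqref{eq:rr-sw-entropy-ineq3} with $\kappa=e^{O(\Delta\log(qB))}$, valid on every graph of maximum degree $\Delta$.

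As a sanity check, $D=0$ forces $f$ to be invariant under recolorings of clusters of every $A$ with $\phi(A)>0$. This includes $A=\emptyset$, so $f$ is constant and $g\equiv1$, consistent with the claimed inequality.
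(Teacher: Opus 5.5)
Steps 1 and 2 of your proposal are correct. The gap is where you anticipated: Step 3 is never established, and the target inequality displayed there is false, so Step 4 has nothing valid to sum. Let $u,w$ be two neighbours of $x$ (e.g.\ on the path $u-x-w$) and take $f=\one\{\sigma_u=\sigma_w\}/\mu(\sigma_u=\sigma_w)$. For any edge configuration $A'$, modified near $x$ or not, the cluster colours under $\pi_{\ES}(\cdot\mid A')$ are i.i.d.\ uniform. Hence $\E[f\mid A',a_{\mathcal C(x)}]$ does not depend on $a_{\mathcal C(x)}$: it equals $1/\mu(\sigma_u=\sigma_w)$ if $u,w$ share a cluster and $1/(q\,\mu(\sigma_u=\sigma_w))$ otherwise. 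So your right-hand side is $0$. On the other hand, $g=P_{\rm SW}f$ is an increasing affine function of $\PP_{\sigma,{\rm SW}}(u\leftrightarrow w)$. When $\sigma_u=\sigma_w=1$, this probability is strictly larger for $\sigma_x=1$ than for $\sigma_x=2$ (on the path, $p^2$ versus $0$). Therefore $\E_\mu[\Ent_{\mu(\cdot\mid\sigma_{-x})}(g)]>0$.

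The sensitivity of $g$ to $\sigma_x$ lives entirely in how $h$ varies with the edges at $x$, i.e.\ in which neighbouring clusters merge through $x$. Changing measure to configurations where $x$ is isolated discards exactly this. In terms of $f$, it is controlled only by the joint recolouring entropy of all clusters adjacent to $x$. If you enlarge the conditioning to those clusters, the Shearer bound in Step 4 breaks. A cluster $C$ is adjacent to up to $\Delta|C|$ vertices, so the overlap multiplicity is unbounded, and you lose a factor of the largest cluster size. The lemma is claimed for all $B$ on every graph, including regimes with giant clusters.

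The missing idea is to keep the second half-step drop $\Ent_\phi(h)-\Ent_\mu(\mathcal Ch)=\E_\mu[\Ent_{\pi_{\ES}(\cdot\mid\sigma)}(h)]$, which you discard in Step 1. This is the entropy of $h$ under resampling edges given spins. Since $\sigma_x$ enters $g=\mathcal Ch$ only through the edges at $x$, this is the right currency, and each edge lies in only two stars, so there is no multiplicity problem. The paper proves $I_\mu((\mathcal Ch)\mu)\le\kappa[\Ent_\phi(h)-\Ent_\mu(\mathcal Ch)]$ for every $h\ge0$ with $\E_\phi h=1$, as follows:
\begin{itemize}
\item It splits $V$ into at most $\Delta+1$ independent sets $J$ and conditions on everything outside the stars of $J$, so these stars are independent.
\item It derives a local entropy inequality from a uniform spectral gap of the star chain $\tfrac12(K_v+\hat K_v)$. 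Here $K_v$ resamples the star edges given spins, and $\hat K_v$ resamples $\sigma_v$ given the rest.
\item It telescopes along $T_i=K_{v_i}\cdots K_{v_1}$, using that $T_{i-1}F$ does not depend on $\sigma_{v_i}$.
\item It takes $h=\mathcal C^*f$ and uses $\Ent_\phi(\mathcal C^*f)\le\Ent_\mu(f)$.
\end{itemize}
Your reduced target is not hopeless in itself: its variance linearization follows from the paper's inequality, because $P_{\rm SW}-P_{\rm SW}^2\le \mathrm{Id}-P_{\rm SW}$. What fails is trying to prove it through single-cluster recolouring entropy of $f$.
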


\begin{proof}[Proof of Theorem~\ref{mainthm:Swendsen-Wang}]
Suppose $G$ is on the probability $1-o(1)$ event that is the intersection of the graph events in Lemmas~\ref{lem:entropy-mass},~\ref{lem:entropy-M},~\ref{lem:poly-geometry} and~\ref{lemma:rr-sw-local}. 
Let $f$ be an arbitrary nonnegative function on $[q]^V$ with
$\E_\mu[f]=1$. Since $P_{\rm SW}f\ge0$ and $\E_\mu[P_{\rm SW}f]=1$,
$(P_{\rm SW}f)\mu$ is a probability distribution.
Applying~\eqref{eq: entropy M vs tilda M} to
$\sqrt{P_{\rm SW}f}$ and using Lemma~\ref{lem:entropy-mass}, we get
\[
 \mathcal H_M(P_{\rm SW}f)
 \le\mathcal H_{\widetilde M}(P_{\rm SW}f)
       +C_1 I_\mu((P_{\rm SW}f)\mu)+C_1n^{-5},
\]
for a suitable constant $C_1>0$.
Applying Lemma~\ref{lemma:rr-sw-local} to $P_{\rm SW}f$ and
Lemma~\ref{lemma:rr-sw-smoothing} to $f$, 
for a suitable $C_2>0$, we obtain
\begin{align}
 \mathcal H_M(P_{\rm SW}f)
 &\le\alpha^{-1}
       [\Ent_\mu(P_{\rm SW}f)-\Ent_\mu(P_{\rm SW}^2f)]
      +C_1\kappa
       [\Ent_\mu(f)-\Ent_\mu(P_{\rm SW}f)]
       +C_1n^{-5}\notag\\
 &\le C_2\bigl[\Ent_\mu(f)-\Ent_\mu(P_{\rm SW}^2f)\bigr]
       +C_1n^{-5}.
 \label{eq:rr-sw-entropy-ineq2}
\end{align}
By Lemma~\ref{lem:entropy-M}, we have
$a\Ent_\mu(P_{\rm SW}f)\le\mathcal H_M(P_{\rm SW}f)$
for a constant $a>0$.
Combining this with~\eqref{eq:rr-sw-entropy-ineq2}, and using
$\Ent_\mu(P_{\rm SW}^2f)\le\Ent_\mu(P_{\rm SW}f)$, we get that there exists $\rho \in (0,1)$ such that
\[
 \Ent_\mu(P_{\rm SW}^2f)
 \le \rho \Ent_\mu(f)
       +O(n^{-5}).
\]
Iterating gives for every $k\ge 0$,
\[
 \Ent_\mu(P_{\rm SW}^{2k}f)
 \le\rho^k\Ent_\mu(f)+O(n^{-5}).
\]
We now follow standard arguments (see, e.g., the proof of \cite[Lemma~2.4]{BlancaEtAl2022}), while 
retaining the additive error.
For any configuration $\sigma$, let
$f_\sigma=\one_{\{\sigma\}}/\mu(\sigma)$.
Pinsker's inequality gives
\[
 \|P_{\rm SW}^{2k}(\sigma,\cdot)-\mu\|_{\rm TV}^2
 \le\frac12\Ent_\mu(P_{\rm SW}^{2k}f_\sigma)
 \le\frac12\left[\rho^k \Ent_\mu(f_\sigma)+O(n^{-5})\right],
\]
and the result follows by noting that
$\Ent_\mu(f_\sigma)=\log\frac1{\mu(\sigma)}=O(n)$, and thus
$t_{\rm mix}(P_{\rm SW})=O(\log n)$.
\end{proof}

We provide next the missing proofs of
Lemmas~\ref{lemma:rr-sw-local}
and~\ref{lemma:rr-sw-smoothing}.

\begin{proof}[Proof of Lemma~\ref{lemma:rr-sw-local}]
Suppose $G$ is on the probability $1-o(1)$ event that is the intersection of the graph events in Lemmas~\ref{lem:entropy-mass},~\ref{lem:entropy-M}, and~\ref{lem:poly-geometry}. 
The idea is to compare the SW and $M$ chains after they draw the same
auxiliary graph $H$. For every subgraph $G' \subseteq G$, let $S_{G'}$ denote one SW step acting only on the vertices of $G'$ at interaction $\tilde B$. For fixed $H$, $S_H$ is the product of the kernels over the connected components of $H$. Since $\frac{r\tilde B}{B}(1-(\tilde B)^{-1})=p$, we have
$$
P_{\rm SW}(\sigma,\tau)=\sum_H\mathbb P_{\sigma,\rm M}(H)S_H(\sigma,\tau).$$
Thus, after sampling the subgraph $H$, the SW dynamics performs an $S_H$ update, whereas
$\widetilde M$ resamples every component of size at most $L$,
leaving the spins on larger components unchanged.

Fix $f:[q]^V\to\mathbb R_{\ge0}$. 
By convexity of the map $x \mapsto x\log x$ on $x \ge 0$ with the convention $0 \log 0 = 0$, Jensen's inequality gives for each $\sigma$
\[
 (P_{\rm SW}f)(\sigma)\log(P_{\rm SW}f)(\sigma)
 \le\sum_H\mathbb P_{\sigma,\rm M}(H)
       (S_Hf)(\sigma)\log(S_Hf)(\sigma).
\]
Averaging this inequality over $\sigma\sim\mu$ and using the fact that
$\mu(\sigma)\mathbb P_{\sigma,\rm M}(H)
=\mathbb P_{\mu,\rm M}(H)\mu_H(\sigma)$, see~\eqref{eq: reversibility}, we obtain:
\begin{equation}
\label{eq:sw:ent:1}
 \E_\mu[(P_{\rm SW}f)\log(P_{\rm SW}f)]
 \le\sum_H\mathbb P_{\mu,\rm M}(H)
       \E_{\mu_H}[(S_Hf)\log(S_Hf)].
\end{equation}
Identity~\eqref{eq: reversibility} also gives
$
 \E_\mu[f\log f]
 =\sum_H\mathbb P_{\mu,\rm M}(H)\E_{\mu_H}[f\log f].
$
Since $\E_\mu[P_{\rm SW}f]=\E_\mu[f]$ and $\E_{\mu_H}[S_Hf]=\E_{\mu_H}[f]$, we get from~\eqref{eq:sw:ent:1} and the definition of entropy that
\begin{align}\label{eq:rr-sw-averaged-entropy-loss}
 \Ent_\mu(f)-\Ent_\mu(P_{\rm SW}f)
 &=\E_\mu[f\log f]
   -\E_\mu[(P_{\rm SW}f)\log(P_{\rm SW}f)]\notag\\
   &\ge \sum_H\mathbb P_{\mu,\rm M}(H)\Big(\E_{\mu_H}[f\log f]- 
       \E_{\mu_H}[(S_Hf)\log(S_Hf)]\Big)\notag \\
    &= \sum_H\mathbb P_{\mu,\rm M}(H)
       [\Ent_{\mu_H}(f)-\Ent_{\mu_H}(S_Hf)].
\end{align}
We will prove below that
\begin{equation}\label{eq:rr-sw-component-entropy}
 \Ent_{\mu_F}(S_Fg)\le(1-\alpha)\Ent_{\mu_F}(g)
\end{equation}
for every component $F$ of size at most $L$ and every function
$g:[q]^{V(F)}\to\mathbb R_{\ge0}$, with a common $\alpha>0$.
Assuming this bound for now, for fixed $H$, let $U=U^+(H)$ be the union of the
components with more than $L$ vertices, and let $S_H^-$ be the
transition matrix for the Markov chain on $[q]^V$ that performs SW updates at interaction
$\tilde B$ on the small components leaving the configuration on $U$ fixed.
This auxiliary Markov chain serves as a bridge in the comparison between the SW dynamics and $M$.
The updates on the large components commute with $S_H^-$, so
\[
 \Ent_{\mu_H}(S_Hf)\le\Ent_{\mu_H}(S_H^-f).
\]

Conditional on a configuration $\sigma_U$ on $U$, $\mu_H$ and
$S_H^-$ are products over the small components. 
Therefore,  the entropy chain rule and convexity, together with~\eqref{eq:rr-sw-component-entropy} give
\[
 \Ent_{\mu_H}(S_H^-f\mid\sigma_U)
 \le(1-\alpha)\Ent_{\mu_H}(f\mid\sigma_U).
\]
We also have
$
 \E_{\mu_H}[S_H^-f\mid\sigma_U]
 =\E_{\mu_H}[f\mid\sigma_U]$, so the law of total entropy yields that
\[
 \Ent_{\mu_H}(f)
 =\E_{\mu_H}[\Ent_{\mu_H}(f\mid\sigma_U)]
  +\Ent_{\mu_H}\!\left(\E_{\mu_H}[f\mid\sigma_U]\right),
\]
the first term is exactly the entropy removed by resampling the
small components while keeping $\sigma_U$ fixed. The second term
is unchanged by $S_H^-$. Thus
\begin{align*}
 \Ent_{\mu_H}(f)-\Ent_{\mu_H}(S_Hf)
 &\ge\Ent_{\mu_H}(f)-\Ent_{\mu_H}(S_H^-f)\\
 &=\E_{\mu_H}\!\left[
     \Ent_{\mu_H}(f\mid\sigma_U)
     -\Ent_{\mu_H}(S_H^-f\mid\sigma_U)\right]\\
 &\ge\alpha\E_{\mu_H}[\Ent_{\mu_H}(f\mid\sigma_U)].
\end{align*}
Plugging this inequality into~\eqref{eq:rr-sw-averaged-entropy-loss} gives
\begin{align*}
 \Ent_\mu(f)-\Ent_\mu(P_{\rm SW}f)
 &\ge\alpha\sum_H\mathbb P_{\mu,\rm M}(H)
       \E_{\mu_H}\!\left[
         \Ent_{\mu_H}(f\mid\sigma_{U^+(H)})\right]=\alpha\mathcal H_{\widetilde M}(f).
\end{align*}

It remains to prove~\eqref{eq:rr-sw-component-entropy}.
First suppose that $F=T$ is a tree. Then, there exists $\hat \alpha \in (0,1)$ such that for every $f\ge0$ we have
\begin{equation}\label{eq:rr-sw-tree-entropy}
 \Ent_{\mu_T}(S_Tf)
 \le(1-\hat \alpha)\Ent_{\mu_T}(f).
\end{equation}
This follows from the product structure of both the Potts distribution and the SW dynamics on
the tree with the coordinates given by the root color and the edge differences modulo $q$. The root color is uniform, and the
differences are independent with distribution
$\omega(a)=\frac{(\tilde B)^{\one\{a=0\}}}{\tilde B+q-1}$.
Each coordinate update contracts
entropy at a constant rate, and tensorization gives
\eqref{eq:rr-sw-tree-entropy}.

If $F$ is not a tree, let $T$ be a spanning tree of $F$.
Let $\exc(F)$ be the number of excess edges, whose deletion leaves $T$.
In the opening step of $S_F$, all these excess edges are closed
with probability at least $(\tilde B)^{-\exc(F)}$. Conditional on this event,
the update is exactly $S_T$. Thus,
$S_F\ge(\tilde B)^{-\exc(F)}S_T$ entrywise.
Moreover, adding the excess edges multiplies each spin weight
by a factor between $1$ and $(\tilde B)^{\exc(F)}$. Thus, if $d_-$ and $d_+$
are the minimum and maximum of the ratio $\mu_F/\mu_T$, then
$d_+/d_-\le(\tilde B)^{\exc(F)}$.
Consequently, by~\eqref{eq:rr-sw-tree-entropy}, we get
\begin{align*}
 \Ent_{\mu_F}(g)-\Ent_{\mu_F}(S_Fg)
 &\ge d_-(\tilde B)^{-\exc(F)}
       [\Ent_{\mu_T}(g)-\Ent_{\mu_T}(S_Tg)]\\
 &\ge\frac{\hat\alpha d_-}{\,d_+}(\tilde B)^{-\exc(F)}\Ent_{\mu_F}(g)
 \ge \hat\alpha(\tilde B)^{-2\exc(F)}\Ent_{\mu_F}(g).
\end{align*}
Lemma~\ref{lem:poly-geometry} bounds ${\exc(F)}$ by a constant 
for every component of size at most $L\le K_w\log n$ and
thus~\eqref{eq:rr-sw-component-entropy} follows.
\end{proof}

\begin{proof}[Proof of Lemma~\ref{lemma:rr-sw-smoothing}]
  Throughout this proof there is no sprinkling: $\pi_{\ES}$ is the joint Edwards--Sokal law of $(\sigma,\omega)$, and $\mathcal C$, $\mathcal C^*$ are the Edwards--Sokal operators of Section~\ref{sec:M-FK-comparison}, so that $(\mathcal C h)(\sigma)=\E_{\pi_{\ES}}[h(\omega)\mid\sigma]$.

We first show that for every function $h\ge0$ on random-cluster configurations with
$\E_\phi[h]=1$ we have
\begin{equation}\label{eq:rr-sw-reverse}
 I_\mu((\mathcal C h)\mu)
 \le\kappa\bigl[\Ent_\phi(h)-\Ent_\mu(\mathcal C h)\bigr].
\end{equation}
Let $G_v$ be the heat-bath Glauber update at vertex $v$, so that
$P_{\rm Gl}=n^{-1}\sum_vG_v$. By the law of total entropy and
Definition~\ref{def:local-relent},
\[
 I_\mu((\mathcal C h)\mu)
 =\sum_{v\in V}\bigl[
       \Ent_\mu(\mathcal C h)-\Ent_\mu(G_v\mathcal C h)\bigr],
\]
and thus~\eqref{eq:rr-sw-reverse} is equivalent to
\[
 \sum_{v\in V}
       [\Ent_\mu(\mathcal C h)-\Ent_\mu(G_v\mathcal C h)]
 \le\kappa[\Ent_\phi(h)-\Ent_\mu(\mathcal C h)].
\]
Partition $V$ into at most $\Delta+1$ independent sets, and fix
one such set $J$. Condition on all spins outside $J$ and all
edges with both endpoints outside $J$. Write $E_v$ for the
edges incident to vertex $v$, and let $\pi_v$
be the conditional joint distribution under $\pi_{\ES}$ on the star centered at $v$. Since $J$ is an independent set, these stars are independent under this conditioning.
Let $\Omega_v=\operatorname{supp}(\pi_v)$,
$\Omega_J=\prod_{v\in J}\Omega_v$, and
$\pi_J=\bigotimes_{v\in J}\pi_v$.
All functions and operators below are defined on $\Omega_J$.

For $v\in J$, let $K_v$ resample the edges in $E_v$ given
the spins, and let $L_v$ resample both the spin and edge
configuration in the star centered at $v$ from $\pi_v$.
Order $J=(v_1,\ldots,v_m)$ and set
\[
 T_i=K_{v_i}\cdots K_{v_1};
\]
we let $T_0=I$ and $K=T_m$.

Let $F:\Omega_J\to\mathbb R_{\ge0}$ be a function that depends only
on the edge configuration. We claim that, for a suitable constant
$C_*=C_*(\Delta,q,B) > 0$, we have
\begin{equation}\label{eq:rr-sw-entropy-inner}
 \Ent_{\pi_J}(T_iF)-\Ent_{\pi_J}(L_{v_i}T_iF)
 \le C_*\bigl[
       \Ent_{\pi_J}(T_{i-1}F)-\Ent_{\pi_J}(T_iF)\bigr].
\end{equation}
To prove the inequality, let $\hat K_v$ be the operator on
$\Omega_J$ that resamples only the spin at $v$, conditional on
all other coordinates. Fix a configuration $\zeta$ of the starts in $J\setminus\{v\}$. Regarding $K_v$ and $\hat K_v$ as operators on $\Omega_v$ with all other stars fixed at $\zeta$, let $
 A_v=\frac12(K_v+\hat K_v)$.
Since $A_v$ is an irreducible Markov chain on the star centered at $v$ with finitely many boundary conditions, there exist constants
$\lambda, b>0$, depending only on $\Delta,q,B$, such that
$\gap(A_v)\ge\lambda$ and $\min_{x\in\Omega_v}\pi_v(x)\ge b$
uniformly over $v$ and the exterior conditioning.
Because $K_v$ and $\hat K_v$ are heat-bath
operators, for every function $g:\Omega_v\to\mathbb R$, we have
\begin{align*}
 \lambda\Var_{\pi_v}(g)
 &\leq\frac12\left[
   \E_{\pi_v}\left[\Var_{\pi_v(\cdot\mid \sigma_v)}(g)\right]
   +\E_{\pi_v}\left[\Var_{\pi_v(\cdot\mid \omega_v)}(g)\right]
 \right],
\end{align*}
where $\sigma_v$ and $\omega_v$ denote the spin and edge configuration on $v$'s star.
For any function $\ell:\Omega_J \rightarrow \mathbb R_{\ge0}$, let
$\ell_\zeta=\ell(\cdot,\zeta)$. Applying the preceding inequality to $g=\sqrt{\ell_\zeta}$, 
and using the standard comparison between entropy and variance, gives
\begin{align*}
 \Ent_{\pi_v}(\ell_\zeta)
 &\le \frac4b\Var_{\pi_v}(\sqrt{\ell_\zeta})\\
 &\le \frac{2}{b\lambda}\left[
   \E_{\pi_v} \left[
     \Var_{\pi_v(\cdot\mid \sigma_v)}(\sqrt{\ell_\zeta})\right]
   +\E_{\pi_v}\!\left[
     \Var_{\pi_v(\cdot\mid \omega_v)}(\sqrt{\ell_\zeta})\right]
 \right]\\
 &\le \frac{2}{b\lambda}\left[
   \E_{\pi_v}\!\left[
     \Ent_{\pi_v(\cdot\mid \sigma_v)}(\ell_\zeta)\right]
   +\E_{\pi_v}\!\left[
     \Ent_{\pi_v(\cdot\mid \omega_v)}(\ell_\zeta)\right]
 \right].
\end{align*}
The law of total entropy gives
\begin{align*}
 \E_{\pi_v}\!\left[
   \Ent_{\pi_v(\cdot\mid \sigma_v)}(\ell_\zeta)\right]
 &=\Ent_{\pi_v}(\ell_\zeta)
   -\Ent_{\pi_v}\!\left(\E_{\pi_v}[\ell_\zeta\mid \sigma_v]\right)
 =\Ent_{\pi_v}(\ell_\zeta)-\Ent_{\pi_v}(K_v\ell_\zeta),\\
 \E_{\pi_v}\!\left[
   \Ent_{\pi_v(\cdot\mid \omega_v)}(\ell_\zeta)\right]
 &=\Ent_{\pi_v}(\ell_\zeta)
   -\Ent_{\pi_v}\!\left(\E_{\pi_v}[\ell_\zeta\mid \omega_v]\right)
 =\Ent_{\pi_v}(\ell_\zeta)-\Ent_{\pi_v}(\hat K_v\ell_\zeta).
\end{align*}
Consequently, choosing $c\ge4/(b\lambda)$ with $c>2$, we obtain
\[
 \Ent_{\pi_v}(\ell_\zeta)
 \le\frac c2\left[
   2\Ent_{\pi_v}(\ell_\zeta)
   -\Ent_{\pi_v}(K_v\ell_\zeta)
   -\Ent_{\pi_v}(\hat K_v\ell_\zeta)
 \right].
\]
For either $H=K_v$ or $H=\hat K_v$, the tower property gives
\[
 \E_{\pi_v}[H\ell_\zeta]=\E_{\pi_v}[\ell_\zeta].
\]
Thus, when applying the law of total entropy to $\ell$ and $H\ell$,
the entropy terms involving these conditional means cancel, yielding
\[
 \E_{\zeta\sim\pi_{J\setminus\{v\}}}\!\left[
   \Ent_{\pi_v}(\ell_\zeta)-\Ent_{\pi_v}(H\ell_\zeta)
 \right]
 =\Ent_{\pi_J}(\ell)-\Ent_{\pi_J}(H\ell).
\]
Similarly, since $(L_v\ell)(\,\cdot\,,\zeta)
=\E_{\pi_v}[\ell_\zeta]$, the law of total entropy gives
\[
 \E_{\zeta\sim\pi_{J\setminus\{v\}}}
   [\Ent_{\pi_v}(\ell_\zeta)]
 =\Ent_{\pi_J}(\ell)-\Ent_{\pi_J}(L_v\ell).
\]
Averaging the local inequality over $\zeta$ therefore gives,
for every $\ell:\Omega_J\to\mathbb R_{\ge0}$,
\begin{equation}\label{eq:rr-sw-star-entropy}
 \Ent_{\pi_J}(\ell)-\Ent_{\pi_J}(L_v\ell)
 \le\frac c2\bigl[
       2\Ent_{\pi_J}(\ell)-\Ent_{\pi_J}(K_v\ell)
                         -\Ent_{\pi_J}(\hat K_v\ell)\bigr].
\end{equation}

The function $g=T_{i-1}F:\Omega_J\to\mathbb R_{\ge0}$ is independent
of the spin at $v_i$. Thus $\hat K_{v_i}g=g$.
Applying this inequality to vertex $v_i$ and the function $g$,
we obtain
\[
 \Ent_{\pi_J}(g)-\Ent_{\pi_J}(L_{v_i}g)
 \le\frac c2[\Ent_{\pi_J}(g)-\Ent_{\pi_J}(K_{v_i}g)].
\]
Since $K_{v_i}L_{v_i}=L_{v_i}K_{v_i}=L_{v_i}$, we obtain
\begin{align*}
 \Ent_{\pi_J}(T_iF)-\Ent_{\pi_J}(L_{v_i}T_iF)
 &=\Ent_{\pi_J}(K_{v_i}g)-\Ent_{\pi_J}(L_{v_i}g)\\
 &=[\Ent_{\pi_J}(g)-\Ent_{\pi_J}(L_{v_i}g)]
     -[\Ent_{\pi_J}(g)-\Ent_{\pi_J}(K_{v_i}g)]\\
 &\le\left(\frac c2-1\right)
     [\Ent_{\pi_J}(g)-\Ent_{\pi_J}(K_{v_i}g)];
\end{align*}
this proves~\eqref{eq:rr-sw-entropy-inner} with $C_*=c/2-1$.

Now, fix $i$ and let $R_i=K_{v_m}\cdots K_{v_{i+1}}$ so that
$K=R_iT_i$. The operator $R_i$ commutes with $L_{v_i}$ because
its factors act on stars other than the one centered at $v_i$.
It also preserves the product law on those stars. Using these
facts and convexity of conditional entropy, for every $u\ge0$
we obtain
\begin{align*}
 \Ent_{\pi_J}(R_iu)-\Ent_{\pi_J}(L_{v_i}R_iu)
 &=\E_{\pi_{J\setminus\{v_i\}}}
       [\Ent_{\pi_{v_i}}(R_iu)]\\
 &\le\E_{\pi_{J\setminus\{v_i\}}}
       [R_i\Ent_{\pi_{v_i}}(u)]\\
 &=\Ent_{\pi_J}(u)-\Ent_{\pi_J}(L_{v_i}u).
\end{align*}
Therefore, applying this inequality to $u=T_iF$ and combining
with~\eqref{eq:rr-sw-entropy-inner}, we deduce that
\begin{align}
 \sum_{v\in J}
   [\Ent_{\pi_J}(KF)-\Ent_{\pi_J}(L_vKF)]
 &\le\sum_{i=1}^m
   [\Ent_{\pi_J}(T_iF)-\Ent_{\pi_J}(L_{v_i}T_iF)]\notag\\
 &\le C_*\sum_{i=1}^m
       [\Ent_{\pi_J}(T_{i-1}F)-\Ent_{\pi_J}(T_iF)]\notag\\
 &=C_*[\Ent_{\pi_J}(F)-\Ent_{\pi_J}(KF)].
 \label{eq:rr-sw-entropy-ineq1}
\end{align}

Let $E_J=\bigcup_{v\in J}E_v$.
For each fixed spin configuration $\xi$ on $V\setminus J$,
and each edge configuration $\eta$ on $E\setminus E_J$, let
\[
 \phi^\xi_{E\setminus E_J}(\eta)
 =\pi_{\ES}(\omega_{E\setminus E_J}=\eta
                  \mid\sigma_{V\setminus J}=\xi),
\]
be the conditional distribution of the exterior edges.
Keeping $\xi$ fixed, define
$F_\eta,F:\Omega_J\to\mathbb R_{\ge0}$ by
\[
 F_\eta(\sigma_J,\omega_{E_J})=h(\omega_{E_J},\eta),
 \qquad
 F=\E_{\eta\sim\phi^\xi_{E\setminus E_J}}[F_\eta].
\]
Here $\pi_J$ depends on $\xi$; conditional on $\xi$, the
exterior edges are independent of the stars.
From these definitions we get the pointwise identity
\begin{equation}\label{eq:rr-sw-entropy-pointwise}
 (KF)(\sigma_J,\omega_{E_J})
 =(\mathcal C h)(\sigma_J,\xi).
\end{equation}
If a function $u(\sigma_J,\omega_{E_J})=u^\perp(\sigma_J)$ depends
only on spins, then
$(L_vu)(\sigma_J,\omega_{E_J})=(G_vu^\perp)(\sigma_J)$ with the
exterior spins fixed. Averaging~\eqref{eq:rr-sw-entropy-ineq1}
therefore gives
\begin{align*}
 \sum_{v\in J}
       [\Ent_\mu(\mathcal C h)-\Ent_\mu(G_v\mathcal C h)]
 &=\E_{\xi\sim\mu_{V\setminus J}}\left[
     \sum_{v\in J}
       [\Ent_{\pi_J}(KF)-\Ent_{\pi_J}(L_vKF)]\right]\\
 &\le C_*\E_{\xi\sim\mu_{V\setminus J}}
       [\Ent_{\pi_J}(F)-\Ent_{\pi_J}(KF)].
\end{align*}
For each fixed $\xi$, put
$m(\xi)=\E_{\pi_J}[F]=\E_{\pi_{\ES}}[h\mid\sigma_{V\setminus{ }J}=\xi]$.
The same mean is obtained for $KF$. Jensen's inequality for
$x\log x$ gives
\[
 \E_\xi\Ent_{\pi_J}(F)
 \le\Ent_\phi(h)-\Ent_{\mu_{V\setminus J}}(m).
\]
Moreover, from~\eqref{eq:rr-sw-entropy-pointwise} and the entropy
chain rule, we have
\[
 \E_\xi\Ent_{\pi_J}(KF)
 =\Ent_\mu(\mathcal C h)-\Ent_{\mu_{V\setminus J}}(m).
\]
Hence, we obtain that
\begin{align*}
 \sum_{v\in J}
       [\Ent_\mu(\mathcal C h)-\Ent_\mu(G_v\mathcal C h)]
 &\le C_*[\Ent_\phi(h)-\Ent_\mu(\mathcal C h)].
\end{align*}
Summing over the at most $\Delta+1$ independent sets gives
\eqref{eq:rr-sw-reverse} with $\kappa=(\Delta+1)C_*$.
Finally, apply this bound to $h=\mathcal C^*f$. Since
$P_{\rm SW}=\mathcal C\mathcal C^*$ and conditional Jensen's
inequality gives $\Ent_\phi(\mathcal C^*f)\le\Ent_\mu(f)$,
\[
 I_\mu((P_{\rm SW}f)\mu)
 \le\kappa\bigl[\Ent_\phi(\mathcal C^*f)-\Ent_\mu(P_{\rm SW}f)\bigr]
 \le\kappa\bigl[\Ent_\mu(f)-\Ent_\mu(P_{\rm SW}f)\bigr],
\]
as required.
\end{proof}

Lemma~\ref{lemma:rr-sw-smoothing} yields the following spectral
gap and modified log-Sobolev comparisons with the Glauber dynamics which are of independent interest.

\begin{corollary}
\label{cor:rr-sw-relaxation}
\label{cor:rr-sw-entropy-comparison}
Fix integers~$\Delta\ge2$ and $q\ge2$, and let $B>1$.
There exists a constant $K=K(\Delta,q,B)>0$ such that,
on every $n$-vertex graph of maximum degree at most~$\Delta$,
\begin{enumerate}[(i)]
    \item $\gap(P_{\rm SW})^{-1} \le \frac{K}{n}\gap(P_{\rm Gl})^{-1}$;
  \item $\rho(P_{\rm SW})^{-1}
 \le \frac{K}{n}\rho(P_{\rm Gl})^{-1}$,
where $\rho(P)$ denotes the modified log-Sobolev constant of $P$.
\end{enumerate}
\end{corollary}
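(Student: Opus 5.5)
The plan is to read both parts off Lemma~\ref{lemma:rr-sw-smoothing}: it says the entropy lost in one SW step controls the aggregate local entropy of the output. The aggregate local entropy is a sum of single-site entropy drops, which is essentially $n$ times the Glauber Dirichlet form.

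For (ii), fix $f\ge0$ with $\E_\mu f=1$ and set $g=P_{\rm SW}f$. The identity used in the proof of Lemma~\ref{lemma:rr-sw-smoothing} gives
\[
I_\mu(g\mu)=\sum_v\bigl[\Ent_\mu(g)-\Ent_\mu(G_vg)\bigr]=\sum_v\E_\mu\bigl[\Ent_\mu(g\mid\sigma_{-v})\bigr].
\]
The right-hand side is the entropy analogue of $n\,\cE_{\rm Gl}$, and it dominates $n\,\cE_{\rm Gl}(g,\log g)\cdot c$ from below, so approximate-tensorization form of the Glauber MLSI applies. The cleanest route is to define the Glauber entropy-contraction constant through $\Ent_\mu(g)\le \rho_{\rm Gl}^{-1}\, n^{-1}\sum_v\E_\mu[\Ent(g\mid\sigma_{-v})]$. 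Using that the single-site entropy drop is at most $\cE_{G_v}(g,\log g)$, this follows from the MLSI $\rho(P_{\rm Gl})$. Then
\[
\rho(P_{\rm Gl})\, n\,\Ent_\mu(P_{\rm SW}f)\le \kappa\bigl[\Ent_\mu(f)-\Ent_\mu(P_{\rm SW}f)\bigr],
\]
so $\Ent_\mu(P_{\rm SW}f)\le (1+n\rho_{\rm Gl}/\kappa)^{-1}\Ent_\mu(f)$. This is an entropy contraction of rate $\gtrsim \min(1,n\rho_{\rm Gl}/\kappa)$ per SW step. Since $\rho_{\rm Gl}\le O(1/n)$ always, the rate is $\gtrsim n\rho_{\rm Gl}/K$. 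Because $P_{\rm SW}$ is self-adjoint and positive, entropy contraction of $P_{\rm SW}$ gives the MLSI constant, since $\mathcal E_{P}(f,\log f)\ge \Ent(f)-\Ent(Pf)$ by convexity. This yields (ii).

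For (i), linearize. Apply the argument to $f=1+\epsilon\varphi$ with $\E_\mu\varphi=0$ and let $\epsilon\to0$. The entropies become $\tfrac{\epsilon^2}{2}$ times variances, so Lemma~\ref{lemma:rr-sw-smoothing} yields
\[
\sum_v \E_\mu\bigl[\Var(P_{\rm SW}\varphi\mid\sigma_{-v})\bigr]\le\kappa\bigl[\Var_\mu(\varphi)-\Var_\mu(P_{\rm SW}\varphi)\bigr].
\]
The left side equals $n\,\cE_{\rm Gl}(P_{\rm SW}\varphi)\ge n\,\gap(P_{\rm Gl})\Var_\mu(P_{\rm SW}\varphi)$. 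Hence $\|P_{\rm SW}\varphi\|^2\le(1+n\gap_{\rm Gl}/\kappa)^{-1}\|\varphi\|^2$ on mean-zero functions. Since $P_{\rm SW}=\mathcal C\mathcal C^*$ is positive semidefinite, its second eigenvalue $\lambda$ satisfies $\lambda^2\le(1+n\gap_{\rm Gl}/\kappa)^{-1}$. This gives $1-\lambda\gtrsim\min(1,n\gap_{\rm Gl})$, and with $\gap_{\rm Gl}\le O(1/n)$ this is (i).

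The main obstacle is the bookkeeping in (ii): matching exactly which functional ``$\rho$'' denotes. The sum $\sum_v\E[\Ent(g\mid\sigma_{-v})]$ is an approximate-tensorization quantity, not literally $n\,\cE_{\rm Gl}(g,\log g)$. I would first try the inequality $\E[\Ent(g\mid\sigma_{-v})]\le\cE_{G_v}(g,\log g)$ in the right direction. If it points the wrong way, I would instead state (ii) with the entropy-contraction/approximate-tensorization constant of Glauber, which is equivalent to the MLSI constant up to constants depending on $\Delta,q,B$ by bounded marginals.
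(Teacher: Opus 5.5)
Your part (i) is the paper's argument: linearize Lemma~\ref{lemma:rr-sw-smoothing} at $f=1+\varepsilon\varphi$, identify the left side as $n\cE_{\rm Gl}(P_{\rm SW}\varphi)$, apply the Glauber Poincar\'e inequality, and use positive semidefiniteness of $P_{\rm SW}$ together with $n\gap(P_{\rm Gl})\le1$. That part is fine.

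Part (ii) has a genuine gap. For $h=P_{\rm SW}f$ you need $\rho(P_{\rm Gl})\Ent_\mu(h)\le\frac{C}{n}\sum_v\E_\mu[\Ent(h\mid\sigma_{-v})]=\frac Cn I_\mu(h\mu)$. The MLSI only gives $\rho(P_{\rm Gl})\Ent_\mu(h)\le\cE_{\rm Gl}(h,\log h)=\frac1n\sum_v\E_\mu[\operatorname{Cov}_{\mu_v}(h,\log h)]$, where $\mu_v=\mu(\cdot\mid\sigma_{-v})$. The pointwise inequality you invoke, $\Ent_{\mu_v}(h)\le\operatorname{Cov}_{\mu_v}(h,\log h)$, points the wrong way: it shows that approximate tensorization implies MLSI, not the converse.

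Your fallback does not repair this. Bounded marginals do not make the two single-site functionals comparable. On a two-point fiber with weights $a,1-a$ and $g=(\varepsilon,1)$, $\Ent(g)$ stays bounded while $\operatorname{Cov}(g,\log g)=a(1-a)(1-\varepsilon)\log(1/\varepsilon)\to\infty$. Moreover, stating (ii) with the approximate-tensorization rate would prove a strictly weaker result, since that rate is at most $\rho(P_{\rm Gl})$.

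The missing idea is that the single-site comparison does hold for the specific function $h=P_{\rm SW}f$, because one SW step smooths. Let $Q_v$ be the SW kernel with all edges at $v$ deleted. Then $Q_vf$ does not depend on $\sigma_v$, and $B^{-\Delta}Q_vf\le P_{\rm SW}f\le q^{\Delta}Q_vf$. The lower bound comes from forcing the edges at $v$ closed; the upper bound holds because adding them merges at most $\Delta$ components. Hence $h(\sigma)/h(\sigma')\le R:=(qB)^\Delta$ whenever $\sigma,\sigma'$ differ at a single vertex.

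On each fiber with $m\le h\le M$ and $M/m\le R$, the mean value theorem gives $\operatorname{Cov}_{\mu_v}(h,\log h)\le\Var_{\mu_v}(h)/m$. Strong convexity of $x\log x$ gives $\Ent_{\mu_v}(h)\ge\Var_{\mu_v}(h)/(2M)$. Together these yield $\operatorname{Cov}_{\mu_v}(h,\log h)\le2R\,\Ent_{\mu_v}(h)$, and therefore $\rho(P_{\rm Gl})\Ent_\mu(h)\le\frac{2R}{n}I_\mu(h\mu)\le\frac{2R\kappa}{n}\bigl[\Ent_\mu(f)-\Ent_\mu(h)\bigr]$.

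From there the rest of your argument goes through as written: entropy contraction, $\rho(P_{\rm Gl})\le2/n$, and $\Ent_\mu(f)-\Ent_\mu(P_{\rm SW}f)\le\cE_{P_{\rm SW}}(f,\log f)$.
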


\begin{proof}
Let $\kappa$ be the constant in Lemma~\ref{lemma:rr-sw-smoothing}. By the total law of entropy, Lemma~\ref{lemma:rr-sw-smoothing} can be written as
\[
 \sum_{v\in V}
 \bigl[\Ent_\mu(P_{\rm SW}f)-\Ent_\mu(G_vP_{\rm SW}f)\bigr]
 \le\kappa\bigl[\Ent_\mu(f)-\Ent_\mu(P_{\rm SW}f)\bigr]
\]
for every $f\ge0$ with $\E_\mu[f]=1$.
Fix a function $g$ with $\E_\mu[g]=0$ and set $f=1+\varepsilon g$,
which is nonnegative for sufficiently small $|\varepsilon|$.
By the Taylor expansion of $(1+x)\log(1+x) = x + \frac{x^2}{2} + o(x^2)$, we have
\[
 \Ent_\mu(1+\varepsilon g)
 =\varepsilon\E_\mu[g]
   +\frac{\varepsilon^2}{2}\E_\mu[g^2]
   +o(\varepsilon^2)
 =\frac{\varepsilon^2}{2}\Var_\mu(g)+o(\varepsilon^2).
\]
The same expansion applies with $g$ replaced by $P_{\rm SW}g$
or $G_vP_{\rm SW}g$. Dividing the preceding entropy inequality
by $\varepsilon^2/2$ and letting $\varepsilon\to0$ therefore gives
\[
 \sum_{v\in V}
 \bigl[\Var_\mu(P_{\rm SW}g)-\Var_\mu(G_vP_{\rm SW}g)\bigr]
 \le\kappa\bigl[\Var_\mu(g)-\Var_\mu(P_{\rm SW}g)\bigr].
\]

For every vertex $v \in V$, we have $G_vP_{\rm SW}g =\E_\mu[P_{\rm SW}g\mid\sigma_{-v}]$, so the
law of total variance gives
\[
 \Var_\mu(P_{\rm SW}g)-\Var_\mu(G_vP_{\rm SW}g)
 =
 \E_\mu\!\left[
   \Var_{\mu(\cdot\mid\sigma_{-v})}
       (P_{\rm SW}g)
 \right].
\]
Consequently,
\[
 \sum_{v\in V}\E_\mu\!\left[
   \Var_{\mu(\cdot\mid\sigma_{-v})}
       (P_{\rm SW}g)
 \right]
 \le\kappa\bigl[\Var_\mu(g)-\Var_\mu(P_{\rm SW}g)\bigr].
\]

We next apply the Poincar\'e inequality for the Glauber dynamics to $P_{\rm SW}g$. Since $P_{\rm Gl}=n^{-1}\sum_{v\in V}G_v$,
\[
 n\mathcal E_{P_{\rm Gl}}(P_{\rm SW}g,P_{\rm SW}g)
 =
 \sum_{v\in V}
 \bigl[
   \Var_\mu(P_{\rm SW}g)
   -\Var_\mu(G_vP_{\rm SW}g)
 \bigr]=
 \sum_{v\in V}
 \E_\mu\!\left[
   \Var_{\mu(\cdot\mid\sigma_{-v})}
   (P_{\rm SW}g)
 \right].
\]
Therefore, by the Poincar\'e inequality and the estimate above, we have
\[
n\gap(P_{\rm Gl})\Var_\mu(P_{\rm SW}g)
 \le
 \kappa\bigl[
   \Var_\mu(g)-\Var_\mu(P_{\rm SW}g)
 \bigr].
\]
Rearranging this inequality, we have
\[
 \Var_\mu(P_{\rm SW}g)
 \le
 \frac{\kappa}{\kappa+n\gap(P_{\rm Gl})}\Var_\mu(g).
\]
Finally, reversibility and positive semidefiniteness of $P_{\rm SW}$ imply
\[
 \gap(P_{\rm SW})^{-1}
 \le
 \left(1-\sqrt{
   \frac{\kappa}{\kappa+n\gap(P_{\rm Gl})}
 }\right)^{-1}
 \le 2+\frac{2\kappa}{n}\gap(P_{\rm Gl})^{-1}.
\]

For the MLSI comparison, write $\rho_{\rm Gl}=\rho(P_{\rm Gl})$. By homogeneity, it suffices to consider a function $f\ge0$ with $\E_\mu[f]=1$. We now set $h=P_{\rm SW}f$. For every pair of configurations $\sigma$ and $\tau$, $P_{\rm SW}(\sigma, \tau) > 0$. Hence $h$ is strictly positive, so the ratios and the logarithms involving $h$ below are well-defined. We first claim that
\[
 \frac{h(\sigma)}{h(\sigma')}\le R:=(qB)^\Delta
\]
whenever $\sigma$ and $\sigma'$ differ at only one vertex $v$.

Indeed, let $Q_v$ denote the Swendsen--Wang kernel with all edges
incident to~$v$ deleted. Then $Q_vf$ is independent of the initial
spin at~$v$, and
\[
 B^{-\Delta}Q_vf\le P_{\rm SW}f\le q^\Delta Q_vf.
\]
The lower bound follows by requiring all edges incident to~$v$
to be closed. For the upper bound, conditional on the remaining
bonds, adding these edges merges at most~$\Delta$ components and
hence increases any recoloring probability by at most~$q^\Delta$.
The claimed ratio bound follows.

Writing $\mu_v=\mu(\cdot\mid\sigma_{-v})$, this gives
\[
 \operatorname{Cov}_{\mu_v}(h,\log h)
 \le 2R\,\Ent_{\mu_v}(h).
\]
To verify this inequality, let $m = \min_{\tau : \tau_{-v} = \sigma_{-v}}h(\tau)$ and $M = \max_{\tau : \tau_{-v} = \sigma_{-v}}h(\tau)$. Note that $M / m \leq R$ from the argument above.

Let $\tau, \tau'$ be independent configurations from $\mu_v$. Since the map $x \mapsto \log x$ is $(m^{-1})$-Lipschitz on the interval $[m, M]$, the mean value theorem gives $|\log h(\tau) - \log h(\tau')| \leq m^{-1}|h(\tau)-h(\tau')|$, and we have
\begin{align*}
 \operatorname{Cov}_{\mu_v}(h,\log h)
 &=\frac12\E\left[
   (h(\tau)-h(\tau'))
   (\log h(\tau)-\log h(\tau'))
 \right]\\
 &\le\frac1{2m}\E\left[
   (h(\tau)-h(\tau'))^2
 \right]
 =\frac{\Var_{\mu_v}(h)}{m}.
\end{align*}

We now consider the function $\lambda(x) = x\log x$ on the interval $[m, M]$. Then, $\lambda''(x) = 1/x \geq 1/M$ on $[m, M]$. For fixed $x_0 \in [m, M]$, the convexity of $\lambda$ and Taylor's theorem gives $\lambda(x) \geq \lambda(x_0) + \lambda'(x_0)(x-x_0) + \frac{1}{2M}(x-x_0)^2$, equivalently, $\E[\lambda(x)] \geq \lambda(x_0) + \lambda'(x_0)\E[x-x_0] + \frac{1}{2M}\E[(x-x_0)^2]$. Therefore, with $x_0 = \E_{\mu_v}[h]$, 
\[
 \Ent_{\mu_v}(h)
 =\E_{\mu_v}[\lambda(h)]
   -\lambda(\E_{\mu_v}[h])
 \ge\frac{\Var_{\mu_v}(h)}{2M}.
\]

By the definitions of the MLSI constant and the Dirichlet form, we obtain
\[
\rho_{\rm Gl}\Ent_\mu(h)
\le \mathcal E_{P_{\rm Gl}}(h,\log h)=\frac1n\sum_{v\in V}
   \E_{\mu_{-v}}\!\left[
     \operatorname{Cov}_{\mu_v}(h,\log h)
   \right].
\]
Combining the two estimates above and using the law of total entropy, we obtain
\[
    \frac1n\sum_{v\in V}
   \E_{\mu_{-v}}\!\left[\operatorname{Cov}_{\mu_v}(h,\log h)\right] \leq \frac{2R}n\sum_{v\in V}
   \E_{\mu_{-v}}\!\left[\Ent_{\mu_v}(h)\right]
    =\frac{2R}n\sum_{v\in V}
   \E_\mu\!\left[\Ent_\mu(h)-\Ent_\mu(G_vh)\right] 
\]
Finally, Lemma~\ref{lemma:rr-sw-smoothing} gives
\[
\rho_{\rm Gl}\Ent_\mu(h) \leq \frac{2R}n\sum_{v\in V}\E_\mu\!\left[\Ent_\mu(h)-\Ent_\mu(G_vh)\right]  \leq \frac{2R\kappa}{n}
   \bigl[\Ent_\mu(f)-\Ent_\mu(h)\bigr].
\]
Consequently,
\[
 \Ent_\mu(P_{\rm SW}f)
 \le
 \left(
   1-\frac{n\rho_{\rm Gl}}{n\rho_{\rm Gl}+2R\kappa}
 \right)\Ent_\mu(f).
\]

The Poincar\'e quotient with a nonconstant function depending only on the spin at one vertex gives $\gap(P_{\rm Gl})\le 1/n$, thus we have $\rho_{\rm Gl}\le2\gap(P_{\rm Gl})\le2/n$. The first inequality comes from the standard comparison between the MLSI constant and the spectral gap. Taking $K=2+2R\kappa$ yields
\[
 \Ent_\mu(P_{\rm SW}f)
 \le
 \left(1-\frac{n\rho_{\rm Gl}}{K}\right)\Ent_\mu(f).
\]
Together with $\Ent_\mu(f)-\Ent_\mu(P_{\rm SW}f) \le \mathcal E_{P_{\rm SW}}(f,\log f)$, we finally obtain
\[\mathcal E_{P_{\rm SW}}(f,\log f) \geq \frac{n\rho_{\rm Gl}}{K}\Ent_\mu(f).\]
This holds for any function $f\geq 0$ with $\E_\mu[f]=1$, therefore, $\rho(P_{\rm SW})^{-1} \leq \frac{K}{n} \rho (P_{\rm Gl})^{-1}$.

Finally, $n\gap(P_{\rm Gl})\le1$ and $R\ge1$ allow the additive constant in the preceding spectral-gap estimate to be absorbed into the same~$K$.
\end{proof}

\section{Potts Glauber dynamics on $\mathbb Z^d$}
\label{var:sec:torus-extension}

In this section, we prove Theorem~\ref{mainthm:Glauber-lattice}. We now use $d \ge 2$ to be the lattice dimension, and use $\beta_c = \beta_c(q,\mathbb Z^d)$. We will prove Theorem~\ref{mainthm:Glauber-lattice} in the context of the torus $(\mathbb Z/m\mathbb Z)^d$, then in Remark~\ref{rem:free-bc} explain the (minor geometric) changes needed for the exploration process along the boundary if one were instead considering Potts Glauber dynamics on the box $\{1,\ldots,m\}^d \subset \mathbb Z^d$ with free boundary. 

For fixed $\beta<\beta_c$, we use $\mu$ for the Potts distribution on $(\mathbb Z/m\mathbb Z)^d$ and $\mathcal E_{\rm Gl}$ for the Glauber dynamics' Dirichlet form, as before. We set $B=e^\beta$, $B_c=e^{\beta_c}$, and choose fixed
\[
 0<r<\frac{B_c-B}{B_c+q-1},\qquad
 \tilde B=1+\frac{B-1}{r},\qquad
 B_+=\frac{B+(q-1)r}{1-r}<B_c.
\]

The proof will follow the same skeleton as Section~\ref{sec:spectral-gap-rrg}, with the same chains $M$ and
$\widetilde M$ of Definitions~\ref{def:M-SW-chain} and \ref{def:M-tilde-chain} with sprinkling parameter $r$, the same aggregate local entropy $I_\mu(\nu)$ of Definition~\ref{def:local-relent}, and the
same large-component vertex count $W_L$~\eqref{eq:W-L}. The role of $B_+$ will be that the sprinkled configuration $H\sim \mathbb P_{\mu,M}$ is easily seen to be stochastically below a parameter $B_+$ random-cluster model, and therefore satisfy exponential decay on its origin-to-boundary connection probabilities in boxes with all-wired boundary conditions. This ``sharpness" of the FK phase transition was established in $\mathbb Z^d$ in~\cite{duminil2019sharp}. This in particular means that if $F_N$ is a box of side-length $N$ centered at $0$ in $\mathbb Z^d$, and $\mu_{F_N}^\tau$ is the Potts measure at parameter $B$ with boundary condition $\tau$, and $\phi_{F_N,B_+}^1$ is the random-cluster distribution on $F_N$ with wired boundary, then since $B<B_c$, there is a constant $c(q,B,d)$ such that 
\begin{align}\label{eq:exp-decay-high-temp-FK}
   \mathbb P_{\mu_F^\tau,M} (0 \leftrightarrow \partial F_N ) \le  \phi_{B_+}^1(0\leftrightarrow \partial F_N) \le \exp( - c N)\,.
\end{align}

Going through the proof skeleton of Section~\ref{sec:spectral-gap-rrg}, we observe that Lemma~\ref{lem: gap of M} holds on any graph. 

Lemma~\ref{lemma:dirichlet:stronger} relied on the fact that the cut-width of all connected subgraphs of a random $\Delta$-regular graph of size at most $L$ had cut-width at most $\Delta \log L + O(1)$. The following is the lattice replacement for that ingredient of simulating the recoloring steps on small components. 

\begin{lemma}\label{lemma:dirichlet:stronger-lattice}
Fix $d\ge 2$, $q\ge2$, $\tilde B>1$. There exists $C$ such that if the cutoff for $\widetilde M$ is at $L \le m/2$, and the chains $P_{\rm Gl},P_{\widetilde M}$ are for Potts on  either $(\mathbb Z/m\mathbb Z)^d$ or side-length $m$ boxes of $\mathbb Z^d$, then 
\begin{equation*}
 \cE_{\rm Gl}(f)\ge\frac{1}{Cn e^{ C L^{(d-1)/d}}}\cE_{\widetilde M}(f)\,.
\end{equation*}
\end{lemma}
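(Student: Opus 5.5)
The plan is to copy the proof of Lemma~\ref{lemma:dirichlet:stronger} exactly and change only the geometric input: the cutwidth bound for small connected subgraphs. In that proof the random-graph structure entered in just one place. This was the per-component Poincaré inequality~\eqref{eq:tilde-M-component-poincare}, coming from the cutwidth bound of~\cite[Section~2.1]{BKMP05},
\[
\Var_{\mu_F}(g)\le Cs\tilde B^{2\operatorname{cw}(F)}\sum_{x\in F}\mu_F\big[\Var_{\mu_F}(g\mid\sigma_{F\setminus\{x\}})\big].
\]
Here $F$ is a connected component of $H$ with $s\le L$ vertices, and $\mu_F$ is the $\tilde B$-Potts measure on $(V(F),E(F))$, with $E(F)\subseteq E(G)$. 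Everything after that step carries over word for word to the torus and the box. This includes tensorization over the small components resampled by $\widetilde M$, the identity~\eqref{eq: reversibility}, and the law of total variance showing that conditioning additionally on $H$ only lowers $\E[\Var(f\mid\sigma_{-x})]$. So it suffices to show that every connected subgraph $F$ of $(\mathbb Z/m\mathbb Z)^d$, or of a side-$m$ box, with $s\le L\le m/2$ vertices satisfies
\[
\operatorname{cw}(F)\le C s^{(d-1)/d}.
\]
The factor $s\le L$ is then absorbed into $e^{CL^{(d-1)/d}}$.

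For the cutwidth bound, the first observation is that the condition $L\le m/2$ prevents $F$ from wrapping around the torus. A connected vertex set of size at most $m/2$ has diameter less than $m/2$ in each coordinate direction. Therefore $F$ lifts isomorphically to a subgraph of $\mathbb Z^d$, and the same holds trivially in the box case. It is enough to bound the cutwidth of subgraphs of $\mathbb Z^d$ with $s$ vertices.

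I would do this by recursive bisection, following the standard separator construction. Order the vertices of $F$ by a recursive coordinate-hyperplane split. Cut the bounding region along the coordinate of largest extent at a median hyperplane, order the left part before the right part, and recurse. The cutwidth of such an ordering is at most the sum, over the recursion levels, of the number of edges crossing the splitting hyperplanes active at a given prefix.

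The technically delicate step is making the crossing counts sum geometrically. A median hyperplane cut of a set of $t$ points in $\mathbb Z^d$ can a priori be crossed by up to $t$ edges, for example when the set is a thin slab. The fix is to cut the region the way a $k$-d tree does, alternating coordinates so that the pieces remain roughly cubical, and to bound the crossing edges of a piece of volume $t$ by the hyperplane area $O(t^{(d-1)/d})$. Summing over a geometric sequence of piece sizes then gives $O(s^{(d-1)/d})$. An alternative I would fall back on avoids the median issue entirely: take a bounding cube of side $\ell$, embed $F$ in it, and use the known cutwidth $O(\ell^{d-1})$ of the grid $[\ell]^d$. Cutwidth is monotone under taking subgraphs, so this applies to $F$. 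Since $F$ is connected with $s$ vertices, one can first tile $\mathbb Z^d$ into cubes of side $s^{1/d}$ and order cube by cube. The main obstacle is making this uniform for arbitrary, possibly elongated, connected sets. This is where I expect the real work, namely a careful dyadic decomposition where each dyadic cube of side $2^k$ contributes at most $O(2^{k(d-1)})$ boundary edges.

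Once $\operatorname{cw}(F)\le Cs^{(d-1)/d}$ holds, \eqref{eq:tilde-M-component-poincare} becomes
\[
\Var_{\mu_F}(g)\le CL e^{C L^{(d-1)/d}}\sum_{x\in F}\mu_F\big[\Var_{\mu_F}(g\mid\sigma_{F\setminus\{x\}})\big].
\]
Plugging this into the chain of inequalities in the proof of Lemma~\ref{lemma:dirichlet:stronger} gives the statement.
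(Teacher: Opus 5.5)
Your reduction is the paper's. Once $\operatorname{cw}(F)\le C|F|^{(d-1)/d}$ holds for the components, the rest of the proof of Lemma~\ref{lemma:dirichlet:stronger} carries over verbatim, and $L\le m/2$ is used only to lift components off the torus. The paper does not prove this cutwidth bound; it cites it as a standard consequence of isoperimetry, obtained by applying a separator theorem recursively. You try to prove it instead. That is the only non-routine step, and your sketch does not establish it; you say yourself that the ``real work'' remains.

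The specific routes you propose do not work as stated:
\begin{itemize}
\item In the $k$-d tree argument you bound the crossing edges of a piece by the hyperplane area of a region of ``volume $t$''. But the cuts are medians in vertex count, and a piece with $t$ vertices can sit in a region of volume far larger than $t$.
\item After the first cut the pieces are generally disconnected. So what you actually need is a balanced edge separator of size $O(t^{(d-1)/d})$ for an \emph{arbitrary} $t$-point subset of $\mathbb Z^d$.
\item A median cut in a prescribed coordinate, whether chosen by largest extent or by alternation, does not provide one. In $\mathbb Z^2$, take the ladder $\{0,1\}\times\{1,\dots,t/2\}$ together with the points $(\pm t,0)$. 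Then $x_1$ is the direction of largest extent, yet every balanced cut $x_1=c$ crosses $t/2$ edges.
\item The bounding-cube fallback gives $O(\ell^{d-1})$, and $\ell$ can be as large as $s$ (a straight path).
\item Ordering an arbitrarily placed tiling cube by cube fails, because $F$ can zigzag across a tiling hyperplane $\Theta(s)$ times.
\end{itemize}

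The missing idea for your tiling route is to average over translates.
\begin{itemize}
\item Take $\ell=\lceil s^{1/d}\rceil$ and the cubes $a+\ell z+\{0,\dots,\ell-1\}^d$, $z\in\mathbb Z^d$.
\item For a uniformly random shift $a\in\{0,\dots,\ell-1\}^d$, each edge of $F$ joins two different cubes with probability exactly $1/\ell$. Since $|E(F)|\le ds$, some shift has at most $ds/\ell\le ds^{(d-1)/d}$ inter-cube edges.
\item Order the cubes arbitrarily, and the vertices of $F$ inside each cube lexicographically.
\item Any prefix is then crossed only by inter-cube edges and by edges of the current cube that cross a lexicographic prefix of $[\ell]^d$. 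There are at most $\sum_{i=1}^d\ell^{d-i}\le d\ell^{d-1}\le d2^{d-1}s^{(d-1)/d}$ of the latter.
\end{itemize}
This gives $\operatorname{cw}(F)\le d(1+2^{d-1})s^{(d-1)/d}$ for every $s$-vertex subgraph of $\mathbb Z^d$, with no connectivity or recursion needed. Alternatively, do as the paper does: cite a balanced separator theorem valid for arbitrary finite subsets of $\mathbb Z^d$ and sum the separator sizes geometrically along the recursion. With either fix, the rest of your argument goes through.
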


\begin{proof}
    If we establish that for any connected subgraph $F$ of $\mathbb Z^d$, one has cutwidth 
    \begin{align*}
        {\rm cw}(F) \le C_d |F|^{(d-1)/d}
    \end{align*}
    for a constant $C_d$, the remainder of the proof will be identical to that of Lemma~\ref{lemma:dirichlet:stronger}. (The assumption $|F|\le m/2$ ensures that if it is a subgraph of $(\mathbb Z/m\mathbb Z)^d$ it can be embedded in $\mathbb Z^d$.) This cutwidth bound is a standard consequence of isoperimetry in $\mathbb Z^d$: For instance, this follows from recursive application of Theorem 2.2.1 of~\cite{SeparatorsIsoperimetry}. 
\end{proof}

Lemma~\ref{lem:function-information} comparing the aggregate local entropy to the Dirichlet form also held for all graphs and parameter values. That brings us to the lattice version of the shattering-escape dichotomy.

\subsection{The shattering-escape dichotomy on the lattice}
The following lemma is the replacement for Lemma~\ref{lem:entropy-mass}. In the proof of Lemma~\ref{lem:entropy-mass}, the treelike structure of the random regular graph was used significantly; we describe how the proof changes compared to Section~\ref{sec:key-stationary-estimate} on the lattice. 

\begin{lemma}\label{var:lem:torus-mass}
Fix $d\ge 2$, $q\ge 2$, and $\beta<\beta_c$. 
For fixed $C_0$ sufficiently large and $L= C_0\log n $,
every spin distribution $\nu$ on configurations on $(\mathbb Z/m\mathbb Z)^d$ satisfies
\begin{equation}\label{var:eq:torus-mass-general}
 \mathbb E_{\nu,M}[W_L(H)]\le C I_\mu(\nu)+m^{-5 d}.
\end{equation}
\end{lemma}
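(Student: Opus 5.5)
We follow the architecture of Section~\ref{sec:key-stationary-estimate}: a $P$ versus $Q$ comparison of an exploration process (Lemma~\ref{lem:exploration-comparison} and Corollary~\ref{cor:entropy-transfer}), combined with an exponential tail for component sizes under $Q$. The exploration is now coarse-grained by blocks. Fix a large constant $N$ and tile the torus by boxes of side $N$; write $\hat F_N(x)$ for the box of side $3N$ centred at the block containing $x$. We explore the components of $H$ block by block. When the current component $U$ first reaches a new block $b$, we reveal at once all edges of $H$ inside that block and on its boundary, i.e.\ all of $H\cap E(b^+)$. This is a \emph{block test}. It is \emph{good} if the enlarged box $F$ around $b$ (side $3N$, say) is disjoint from the already-explored vertex set and from the closed edges recorded so far, apart from the entry edge.

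At a good block test, the intermediate law $Q$ is defined as in Definition~\ref{def:Q-revealing-distribution}. We draw $\tau=\sigma_{V\setminus F}$ from $\nu_h$, then $\sigma_F\sim\mu_F^\tau$, then the edges of $H$ inside $F$ given $\sigma$. Observation~\ref{obs:exploration-local-laws} (spin case) holds verbatim: given the exterior spins and a history that only records edges outside $F$, the law inside $F$ is $\mu_F^\tau$.

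The KL comparison of Lemma~\ref{lem:exploration-comparison} then goes through unchanged. Local KL is at most $B^{O(N^d)}$ times local aggregate entropy by Lemma~\ref{lem:local-kl}. Each vertex $i$ lies in $A(h)$ for at most $O(1)$ block tests, since each block is tested at most once and only $3^d$ boxes contain $i$. The averaging argument \eqref{eq:KL-two-orders} then gives $\KL(P\Vert Q)\le C_{\rm obs}(N)I_\mu(\nu)$.

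The main obstacle is subcriticality under $Q$, replacing Lemma~\ref{lem:H-local-opening}. On the lattice there is no tree recursion, so we instead use \eqref{eq:exp-decay-high-temp-FK}. The edge-opening probability of $H$ given spins is $r\tilde B/B$ on agreeing edges and $r$ otherwise. Summing over $\sigma_F\sim\mu_F^\tau$ then shows, via Edwards--Sokal, that $H$ restricted to $F$ is stochastically dominated by the wired random-cluster measure at parameter $B_+$, uniformly in $\tau$; this is the content of the choice of $B_+<B_c$. Hence the probability that the entering cluster crosses from the inner block to $\partial F$ (distance $\ge N$) is at most $|b|e^{-cN}$.

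We therefore declare a block test \emph{successful} if the explored cluster exits $F$ through the face opposite the entry region. The component then grows only through successful tests into new blocks, and each block has at most $3^d$ neighbouring blocks. Choosing $N$ so large that $3^d N^{d} e^{-cN}<1/2$ makes the block exploration a subcritical branching process.

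Bad tests arise only when $F$ meets previously explored territory. This is handled by a domination and not by counting excess edges. Wired boundary conditions already dominate any information carried by revealed closed edges and explored vertices adjacent to $F$, because the latter are fixed closed and sit outside the current component. Consequently every block test can be treated as good, at the cost of working with the wired measure, and the "bad test" bookkeeping of Lemma~\ref{lem:exploration-tails} disappears.

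Under $Q$ this gives $Q(|\mathcal C|\ge m\mid \mathcal R_j=h)\le C_1e^{-2c_0 m/N^d}$, uniformly in the history. The argument of \eqref{eq:large-one-moment}--\eqref{eq:large-moment} then yields $\log \E_Q e^{c_0 W_L/N^d}\le C n e^{-c_0 L/N^d}$. Taking $C_0$ large relative to $N^d$ makes the right side at most $m^{-6d}$, and Corollary~\ref{cor:entropy-transfer} with $\lambda=c_0/N^d$ gives \eqref{var:eq:torus-mass-general} with $C=C_{\rm obs}N^d/c_0$.
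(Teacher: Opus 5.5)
\textbf{There is a genuine gap.} It sits at the step where you say every block test ``can be treated as good, at the cost of working with the wired measure.''

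\emph{Your good-test condition never holds on $\mathbb Z^d$.} The current component $U$ reaches a new block $b$ from a neighbouring block $b'$. That block $b'$ lies inside the side-$3N$ box $F$ around $b$. So $F$ always contains explored vertices of $U$ and revealed \emph{open} edges of $H$, including the entry edge and the other boundary edges between $b'$ and $b$, which were already revealed when $b'$ was tested.

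Two requirements then conflict.
\begin{itemize}
\item For the KL step of Lemma~\ref{lem:exploration-comparison} to cost only $O(I_\mu(\nu))$, $Q_h$ must resample $\sigma_F$ from the true conditional $\mu(\sigma_F\in\cdot\mid\sigma_{V\setminus F}=\tau,\mathcal R_j=h)$. If you use $\mu_F^\tau$ instead, then already for $\nu=\mu$ we have $P_h\ne Q_h$. For example, the entry edge is open under $P_h$ but open with probability at most $p+(1-p)r<1$ under $Q_h$. So $\KL(P\Vert Q)$ grows linearly in the number of block tests instead of vanishing.
\item Under the true conditional, the history is not neutral. In the joint law of $(\sigma,H)$, a closed edge contributes the spin-independent factor $B(1-r\tilde B/B)=1-r$, so closed edges are harmless: they just delete interactions. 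An open edge, however, contributes $r\tilde B^{\mathbf 1\{\sigma_x=\sigma_y\}}$, raising the coupling to $\tilde B$ along $U\cap F$. This increasing information is not dominated by the wired measure on $F$ with only $\partial F$ wired. If you also wire $U\cap F$, the entry vertex already sits next to the wired set, and $U$ may stretch across $F$. Either way, the $e^{-cN}$ crossing bound no longer controls growth.
\end{itemize}
Your remark that explored vertices ``sit outside the current component'' holds only for previously completed components, not for $U$. On random regular graphs the paper handles this case by counting excess edges, so bad tests are rare. On the lattice there is no such count: essentially every test is bad.

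\emph{The branching bookkeeping also fails.} A block test reveals only $H\cap E(b^+)$, yet you define success by the cluster exiting the larger box $F$, which that test does not determine. Moreover, the component can step into any of the $3^d-1$ neighbouring blocks, all inside $F$, with probability bounded below and without any long crossing. So the number of new blocks per test is not controlled by $N^de^{-cN}$.

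What is true is that every block visited by a component with more than $L$ vertices is \emph{open}, i.e.\ connected to $\partial_{\rm in}F_x$ inside $F_x$. These events overlap, so they must be decoupled. The paper does this with no sequential conditioning:
\begin{enumerate}
\item It properly colours the $F$-adjacency graph with $7^d$ colours.
\item For each colour $j$, it compares $P$ with a static law $Q_j$. This law draws $\sigma_{V\setminus U_j}$ from $\nu$, then all of $\sigma_{U_j}$ at once from $\mu(\cdot\mid\sigma_{V\setminus U_j})$, where $U_j=\bigcup_{\mathrm{col}(x)=j}F_x$.
\item Lemma~\ref{lem:local-kl} gives $\KL(P\Vert Q_j)\le C_R\sum_{v\in U_j}I_v(\nu)$.
\item By~\eqref{eq:exp-decay-high-temp-FK}, the colour-$j$ open marks are conditionally independent, each with probability at most $e^{-cR}$.
\item A large component forces a connected set of at least $L/(2R)^d$ open boxes, which is $j$-dense for some $j$.
\item A Peierls-type count bounds $\log\E_{Q_j}[e^{Y_j}]$, and Corollary~\ref{cor:entropy-transfer} finishes the proof.
\end{enumerate}
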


\begin{proof}
Write $V=(\mathbb Z/m\mathbb Z)^d$ and $n=m^d$. Let $R$ be a sufficiently large constant to be chosen later, and for the cleanest exposition suppose $R$ divides $m$ (if not, the box sizes  in the partition will be in $[R,2R]$ instead of exactly $R$ and the argument will otherwise be mutatis mutandis). Partition $(\mathbb Z/m\mathbb Z)^d$ into $m/R$ many boxes $(B_x)_{x\in\mathcal X}$ of side-length $R$, indexed by $\mathcal X= R (\mathbb Z/(m/R)\mathbb Z)^d \subset (\mathbb Z/m\mathbb Z)^d$. Give $\mathcal X$ its nearest-neighbor graph structure.

For each box $B_x$, let
\begin{equation*}
F_x=\bigcup_{\substack{y\in\mathcal X:  \|x-y\|_\infty \le R}}B_y,
\qquad
\partial_{\rm in}F_x
=\{v\in F_x:\ v\sim w\text{ for some }w\notin F_x\}\,.
\end{equation*}
Thus $F_x$ is the union of the $3^d$ boxes given by $B_x$ and all surrounding $B_y$. Every vertex of $B_x$ is at graph distance at least $R$ from $\partial_{\rm in}F_x$. Note that  $|B_x|\le (2R)^d$, $|F_x|\le (6R)^d$, and each vertex of $V$ belongs to at most $3^d$ of the sets $F_x$.

The analogue of the exploration process Definition~\ref{def:edge-exploration-process} is the following: Fix an arbitrary ordering of the vertices and edges. To explore the component $\mathcal C_v$ of a vertex $v$ in $H$, initially declare $v$ discovered.
\begin{enumerate}
\item At each step, record the indicator $\mathbf 1\{e\in H\}$ for the first untested edge incident to an already discovered vertex. 
\item If $e\in H$, declare its other endpoint discovered as well. Continue until every edge incident to the discovered set has been tested; this set is then exactly $\mathcal C_v$. 
\end{enumerate}
Once the component $\mathcal C_v$ of a vertex $v$ is explored, repeat from the least remaining vertex until all components have been explored.

The record of this process contains only the tested edge indicators and determines $H$. Write $P$ for its law when $\sigma\sim\nu$ and $H\sim \mathbb P_{\sigma,M}$. 
We next define the comparison law (which is the analogue of the revealing $Q$ from Definition~\ref{def:Q-revealing-distribution}). We say $F_x,F_y$ are adjacent if $\|x-y\|_\infty \le 3R$ (equivalently $F_x,F_y$ are at $(\mathbb Z/m\mathbb Z)^d$-distance at most one. The $F$-graph thus has maximum degree at most $7^{d}-1$ and admits a proper coloring with $k=7^d$ colors. We use an arbitrary such proper coloring $\rm col$ and for each color $j\in [7^d]$, let
\begin{equation*}
U_j=\bigcup_{x:\,\operatorname{col}(x)=j}F_x\,.
\end{equation*}
The $F_x\in U_j$ are at $(\mathbb Z/m\mathbb Z)^d$-distance at least two from one another. 

To reveal components under law $Q_j$, first sample
$\sigma_{V\setminus U_j}=\tau$ from its marginal under $\nu$, then sample $\sigma_{U_j} \sim \mu(\cdot \mid \sigma_{V\setminus U_j} = \tau)$, and finally draw $H$ using $\mathbb P_{\sigma, M}$. Conditional on the exterior spins $\sigma_{V\setminus U_j}=\tau$, the spin distributions in the regions $(F_x)_{x: {\rm col}(x) = j}$ are independent Potts distributions on boxes with fixed boundary conditions $\tau$. By Lemma~\ref{lem:local-kl}, the relative entropy between the two revealing procedures is bounded by the aggregate local entropy: namely, by the steps between~\eqref{eq:local-record-cost-a}--\eqref{eq:record-cost-reduction}, the KL-divergence between the edge-revealing procedures are at most the KL-divergence between their spin distributions, and then Lemma~\ref{lem:local-kl} can be applied to give  
\begin{align}
\operatorname{KL}(P\,\Vert\, Q_j)
&\le
\E_{\nu_{V\setminus U_j}}
\Big[\operatorname{KL}\!\left(
 \nu(\sigma_{U_j}\in\cdot\mid\sigma_{V\setminus U_j})
 \,\|\,
 \mu(\sigma_{U_j}\in\cdot\mid\sigma_{V\setminus U_j})
\right)\Big]\nonumber\\
&\le
\sum_{x:\,\operatorname{col}(x)=j}
\E_\nu\Big[\operatorname{KL}\!\left(
 \nu(\sigma_{F_x}\in\cdot\mid\sigma_{V\setminus F_x})
 \,\|\,
 \mu(\sigma_{F_x}\in\cdot\mid\sigma_{V\setminus F_x})
\right)\Big]\nonumber\\
&\le C_R\sum_{v\in U_j}I_v(\nu)\,,
\label{var:eq:reference-kl}
\end{align}
where $I_v(\nu)$ is the summand at $v$ in $I_\mu(\nu)$, and one may take $C_R=4q^{(2R)^d}B^{4d (6R)^d}$. 

Now mark a box index $x$ \emph{open} if $B_x$ is connected to $\partial_{\rm in}F_x$ through the edges of $H$ in $F_x$. By the high-temperature implication~\eqref{eq:exp-decay-high-temp-FK},
\begin{equation}\label{eq:block-is-open-probability}
\mathbb P_{\mu(\,\cdot\mid\sigma_{V\setminus F_x}=\tau),M}
   (x\text{ is open})
\le\rho_R,\qquad
\rho_R\le e^{-cR}\,.
\end{equation}
Consequently, conditional on $\sigma_{V\setminus U_j}$, the color-$j$ marks under $Q_j$ are independent and each has probability at most $\rho_R$ of being open.

Set $\ell=  L/(2R)^d$. If an $H$-component has more than $L$ vertices, then necessarily, the set of boxes $(F_x)_{x\in \mathcal X}$ it visits  form a $\mathcal X$-connected set with at least $\ell$ vertices. Thus every vertex $v\in V$ counted by $W_L$ lies in a box $F_x$ belonging to a $\mathcal X$-connected component of size at least $\ell$ of open boxes.

For each color $j\in [7^d]$, call a $\mathcal X$-connected set $S\subseteq\mathcal X$
$j$-dense if at least $|S|/k$ of its indices are open boxes
of color $j$. Let $Y_j$ be the maximum total size of a vertex-disjoint
family of $j$-dense connected sets, each of size at least $\ell$; i.e., if $\mathcal F$ is a family of vertex disjoint connected subsets $(S_i)_i \subset \mathcal X$ such that for all $i\le |\mathcal F|$,  $S_i$ is $j$-dense and $|S_i|\ge \ell$, then 
\begin{align*}
    Y_j := \max_{\mathcal F} \sum_{i} |S_i|\,.
\end{align*}
Each connected component of open boxes is
$j$-dense for the color $j$ occurring most frequently in it, so by assigning it to that color, we see that 
\begin{equation}\label{var:eq:cover}
W_L(H)=\sum_{v\in V}\one\{|\mathcal C_v|>L\} 
\le (2R)^d\sum_{j=1}^{7^d} Y_j\,.
\end{equation}
By construction, each $Y_j$ depends only on the open-status of blocks in $U_j$ and these are independent.  

Namely, even conditional on $V\setminus U_j$, under $Q_j$, by~\eqref{eq:block-is-open-probability}, a fixed subset $S\subset \{x:{\rm col}(x)=j\}$ is $j$-dense with probability at most $2^{|S|} \rho_R^{|S|/7^d}$. 
We therefore get 
\begin{equation*}
\E_{Q_j}\!\left[e^{Y_j}\mid\sigma_{V\setminus U_j}\right]
\le \sum_{\mathcal F} \prod_{S\in \mathcal F}  \E_{Q_j} [ e^{|S|} \mathbf 1_{S \text{ is $j$-dense}} \mid \sigma_{V\setminus U_j} ]\le 
\prod_{\substack{S\subseteq\mathcal X\text{ connected}\\|S|\ge\ell}}
 \left(1+u^{|S|}\right),
\qquad
u=2e\rho_R^{1/7^d}\,.
\end{equation*}
There are at most $n (2d)^{s}$ possible $\mathcal X$-connected sets of size $s$. By choosing $R$ sufficiently large (depending on $c$ in $\rho_R$, as well as on $d$), we get a bound on the right-hand side as 
\begin{equation*}
\log\E_{Q_j}[e^{Y_j}]   
\le \sum_{s\ge\ell} n (2d)^{s} u^s
\le n 2^{-\ell}\,.
\end{equation*}
Now using~\eqref{eq:entropy-transfer} and~\eqref{var:eq:reference-kl} we get 
\begin{equation*}
\E_P[Y_j]
\le \operatorname{KL}(P\, \Vert\,  Q_j)+\log\E_{Q_j}[e^{Y_j}]
\le C_R\sum_{v\in U_j}I_v(\nu)+ n2^{-\ell}\,.
\end{equation*}
Plugging this into the $\E_P$-expectation of~\eqref{var:eq:cover}, and plugging in for $\ell$,
\begin{align*}
\E_{\nu,M}[W_L(H)]
&\le 3^d(2R)^d C_R I_\mu(\nu)
     +2k (2R)^d n2^{- L/(2R)^d} \le C I_\mu(\nu)+Cn e^{-cL}\,.
\end{align*}
For $L= C_0\log n$, choosing $C_0$ sufficiently large makes the remainder at most $n^{-5}=m^{-5d}$. 
\end{proof}

\begin{remark}\label{rem:free-bc}
Lemma~\ref{var:lem:torus-mass} holds with essentially the same proof when the underlying graph is the box $\Lambda_m=\{1,\ldots,m\}^d$ with free boundary conditions. Near the boundary of $\Lambda_m$, intersect each region $F_x$ with $\Lambda_m$ and define its inner boundary relative to $\Lambda_m$:
\[
\partial_{\rm in}F_x
=\{v\in F_x:\ v\sim w
  \text{ for some }w\in\Lambda_m\setminus F_x\}.
\]
The only modification is then in the crossing probability for these truncated regions. There are no pinned spins beyond the boundary of $\Lambda_m$. By wired domination and monotonicity of the random-cluster model, the probability that $B_x$ connects to $\partial_{\rm in}F_x$ is bounded above by the corresponding crossing probability in the full distance-$R$ enlargement of $B_x$ in $\mathbb Z^d$, with wired boundary conditions on its entire boundary. Applying~\eqref{eq:exp-decay-high-temp-FK} therefore still gives the same upper bound $\rho_R$ on the probability of marking these boxes open. The remainder of the proof is unchanged.
\end{remark}

\begin{proof}[\textbf{\emph{Proof of Theorem~\ref{mainthm:Glauber-lattice}}}]
Following the steps in the proof of Theorem~\ref{mainthm:Glauber:gap} in Section~\ref{sec:spectral-gap-rrg}, the steps between~\eqref{eq: var indicator} and~\eqref{eq: var M vs tilda M} were graph and parameter independent. Thus we obtain 
by Markov's inequality, and Lemma~\ref{var:lem:torus-mass} (with $C_1= C$) in the torus case and Remark~\ref{rem:free-bc} in the case of an $m\times \cdots \times m$ box, 
\begin{align*}
    \cE_M(f)-\cE_{\widetilde M}(f)
  \le  \E_\mu[f^2]( C_2 I_\mu(\nu_f)+n^{-5} )\,.
\end{align*}
By Lemma~\ref{lem:function-information} (with $C_2 = C$), this yields 
\begin{equation}
\label{eq:lattice-M-truncation-loss}
 \cE_M(f)-\cE_{\widetilde M}(f)
 \le C_1 C_2n\cE_{\rm Gl}(f)+n^{-5}\E_\mu[f^2]\,.
\end{equation}
Combining this with the bounds from Lemma~\ref{lemma:dirichlet:stronger-lattice}, the graph independent comparison of Lemma~\ref{lem: gap of M}, and the spectral gap bound of~\cite{gheissari2024spatial} for the FK dynamics (Corollary 1.1(i) for the torus case and Corollary 1.2(i) in the box case), 
we obtain for some $a(q,d,\beta)>0$ that 
\begin{equation}
     a\Var_\mu(f) \le \mathcal E_{M}(f) 
 \le (Ce^{CL^{(d-1)/d}} + C_1 C_2) n\cE_{\rm Gl}(f)+n^{-5}\E_\mu[f^2]\,. \label{eq: var bound almost done lattice}
\end{equation}
To remove the last term, apply \eqref{eq: var bound almost done lattice} with $g=f-\E_{\mu}[f]$ and subtract the last term from both sides: 
\[
 \bigl(a-n^{-5}\bigr)\Var_\mu(f)\le (C e^{CL^{(d-1)/d}} + C_1 C_2) n\cE_{\rm Gl}(f).
\]
For $n$ large enough that $n^{-5}\le a/2$, and with $L= C_0\log n$,
this gives the claimed Poincar\'e inequality, i.e.\ $\gap(P_{\rm Gl})^{-1}= n e^{O((\log n)^{(d-1)/d})}$, and the mixing time bound with the extra factor of $n$.
\end{proof}

\bibliographystyle{alpha}
\bibliography{references}

\end{document}